\begin{document}
\title[Existence results for the fractional Nirenberg problem]
{Existence results for the fractional Nirenberg problem}
\author{Yan-Hong Chen}
\address{School of Mathematical Science, Nankai University, Tianjin 300071, P.R. China}
\email{cyh1801@163.com}
\author{Chungen Liu}
\address{School of Mathematical Science and LPMC, Nankai University, Tianjin 300071, P.R. China}
\email{liucg@nankai.edu.cn}
\author{Youquan Zheng}
\address{School of Science, Tianjin University, Tianjin 300072, P. R. China.}
\email{zhengyq@tju.edu.cn}
\newcommand{\optional}[1]{\relax}
\setcounter{secnumdepth}{3}
\setcounter{section}{0} \setcounter{equation}{0}
\numberwithin{equation}{section}
\newcommand{\MLversion}{1.1}
\thanks{The third author was partially supported by NSFC of China (11271200)}
\keywords{Critical points at infinity, Fractional Laplacian, Morse theory, Nirenberg problem, Yamabe flow}
\date{\today}
\begin{abstract}
We consider the fractional Nirenberg problem on the standard sphere $\mathbb{S}^n$ with $n\geq 4$. Using
the theory of critical points at infinity, we establish an Euler-Hopf type formula and 
obtain some existence results for curvature satisfying assumptions of Bahri-Coron type.
\end{abstract}
\maketitle
\section{Introduction}
The famous Nirenberg problem in conformal geometry is: on the sphere $\mathbb{S}^n$ ($n\geq 2$) with standard metric $g_0$, is there a representation $g$ of the conformal class $[g_0]$ such that $g$ has scalar curvature (Gauss curvature for $n = 2$) equal to a prescribed function $K$? This problem is equivalent to the following equations
\begin{equation*}\label{e:Yamabe1}
-\Delta_{g_0}u + 1 = K e^u,\quad\text{ on }\mathbb{S}^2,
\end{equation*}
\begin{equation}\label{e:Yamabe}
-\Delta_{g_0}u + \frac{n - 2}{4(n-1)}R_{g_0}u = K u^{\frac{n + 2}{n - 2}},\quad\text{ on }\mathbb{S}^n,\quad n\geq 3,
\end{equation}
where $R_g$ is the scalar curvature of $g$.

The linear operator on the left of (\ref{e:Yamabe}) is known as the conformal Laplacian associated to the metric $g_0$ and is denoted as $P_1^{g_0}$. Another conformally covariant operator is
\begin{equation*}\label{e:Paneitz}
P_2^g = (-\Delta_g)^2 - {\rm div}_g (a_n R_gg + b_nRic_g)d + \frac{n -4}{2}Q_n^g,
\end{equation*}
which was discovered by Paneitz, see \cite{Paneitz2008} and \cite{DjadliHebeyLedoux2000}. Here $Q_n^g$, $Ric_g$ are the standard $Q$-curvature and the Ricci curvature of $g$ respectively, $a_n$, $b_n$ are constants depending on $n$. $P_1$ and $P_2$ (with $g$ be omitted when there is no ambiguity) are the first two terms of a sequence of conformally covariant elliptic operators $\{P_k\}$, which exists for all $k\in \mathbb{N}$ when $n$ is odd, but only for $k\in\{1,\cdot\cdot\cdot, n/2\}$ when $n$ is even.
The first construction of these operators was by Graham, Jenne, Masion and Sparling in \cite{GrahamJenneSparling1992}. Thus a natural question is: are there any conformally covariant pseudodifferential operators of noninteger orders? In \cite{Peterson2000}, the author constructed an intrinsically defined, arbitrary real number order, conformally covariant pseudo-differential operator. In the work of Graham and Zworski \cite{GramZworski2003}, it was showed that $P_k$ can be realized as the residues at $\gamma = k$ of a meromorphic family of
scattering operators. Using this view point, a family of conformally covariant
pseudodifferential operators $P_{\gamma}^g$ for noninteger $\gamma$ was given.

In recent years, there are extensive works on the properties of the fractional Laplacian as non-local operators together with their applications to various problems, for example, \cite{Caffarelli&Silvestre07}, \cite{CaffarelliRoquejoffreSavin}, \cite{CaffarelliSalsa&Silvestre}, \cite{CaffarelliValdinoci2011}, \cite{CaffarelliVasseurDFDQGE2010} and so on. It is well known that $(-\Delta)^\gamma$ on $\mathbb{R}^{n}$ with $\gamma\in (0, 1)$ is a nonlocal operator. In the remarkable work of Caffarelli and Silvestre \cite{Caffarelli&Silvestre07}, the authors express this nonlocal operator as a generalized Dirichlet-Neumann map for an elliptic boundary value problem with local differential operators defined on $\mathbb{R}^{n+1}_{+}$. And in the work of Chang and Gonzalez \cite{ChangGonzalez2011}, the authors extended the work of \cite{Caffarelli&Silvestre07} and characterized $P_{\gamma}^g$ as such a Dirichlet-to-Neumann operator on a conformally compact Einstein manifold.

The operator $P_\gamma^g$ with $\gamma\in (0, \frac{n}{4})$ has the following conformally covariant property:
if $g = v^{\frac{4}{n - 2\gamma}}g_0$, then
\begin{equation}\label{e:conformalltransform}
P_{\gamma}^{g_0}(vf) = v^{\frac{n + 2\gamma}{n - 2\gamma}}P_{\gamma}^g(f)
\end{equation}
for any smooth function $f$, see \cite{ChangGonzalez2011}. Generalizing the formula for scalar curvature and the Paneitz Branson $Q$-curvature, the $Q$-curvature for $g$ of order $\gamma$, is defined as
\begin{equation*}
Q_{\gamma}^{g} = P_{\gamma}^{g}(1).
\end{equation*}

In this paper, we are interested in the fractional Nirenberg problem on the standard sphere $\mathbb{S}^n$.
That is to say, we want to find a representation $g$ of the conformal class $[g_0]$ such that $Q_{\gamma}^{g}$ equals to a prescribed function $K$.
This problem is equivalent to solving the following semi-linear equation,
\begin{equation}\label{e:main}
\left
\{
\begin{array}{lll}
P_\gamma u = K u^{\frac{n + 2\gamma}{n - 2\gamma}}\text{ on }\mathbb{S}^{n},\\
u > 0,
\end{array}
\right.
\end{equation}
where $P_\gamma$ is the $2\gamma$ order conformal Laplacian on $\mathbb{S}^{n}$. This is an intertwining operator and
$$
P_\gamma = \frac{\Gamma(B + \frac{1}{2} +\gamma)}{\Gamma(B + \frac{1}{2} -\gamma)},\quad B = \sqrt{-\Delta_{g_0} + \left(\frac{n - 1}{2}\right)^2}.
$$
On the standard $n$ dimensional Riemannian sphere, the prescribing fractional curvature problem was considered in \cite{AbdelhediChtiouiNPISRLJFA2013}, \cite{ChenGuoyuanZhengYouquanNonlinearAnalysis}, \cite{ChenZhengFQ2014}, \cite{JinLiyanyanXiongNirenbergproblemBlowupanalysis}, \cite{JinLiyanyanXiongNirenbergproblemBlowupanalysisII} and \cite{JinXiongfYfsapplications2013}. On general manifolds, we refer the interested readers to the works \cite{GonzalezJGA2012}, \cite{GonzalezAPDE2013} and \cite{QingRaske2006} and the references therein.

Let $S^2_\gamma(\mathbb{S}^{n})$ be the completion of $C^\infty(\mathbb{S}^{n})$ by means of the norm
$$\|u\|^2 = \int_{\mathbb{S}^{n}}uP_\gamma u dvol_{g_0},$$
$$\Sigma = \{u\in S^2_\gamma(\mathbb{S}^{n})| \|u\| = 1\}$$
and
$$\Sigma^+ = \{u\in \Sigma| u\geq 0\}.$$
For $u\in S^2_\gamma(\mathbb{S}^{n})$, we consider the following functional
\begin{equation*}
J(u) = \frac{\|u\|^2}{\left(\int_{\mathbb{S}^{n}} K u^{\frac{2n}{n - 2\gamma}}dvol_{g_0}\right)^{\frac{n -2\gamma}{n}}}.
\end{equation*}
It is easy to see that a critical point of $J$ in $\Sigma^+$ corresponding to a solution of (\ref{e:main}). The functional $J$ fails to satisfy the Palais-Smale condition on $\Sigma^+$, a description of the sequences which do not satisfy the Palais-Smale condition is given in Lemma \ref{l:lossofcompactness} of Section 2. Thinking of these sequences as critical points, a natural idea is to expand the functional $J$ near the sets of such critical points.

We assume that $K: \mathbb{S}^{2n + 1}\to \mathbb{R}$ is a $C^2$ positive function and satisfies the following condition:
\begin{center}
({\bf nd}) {each critical point of $K$, denoted by $\xi$, is non degenerate, i.e., $\Delta K(\xi) \neq 0$}.
\end{center}
Denote
$$
I^+:=\{\xi_i\in \mathbb{S}^{2n + 1}| \nabla K(\xi_i) = 0\text{ and }-\Delta K(\xi_i) > 0\}
$$
and by $\sharp I^+$ the cardinality of $I^+$. Let $F^+$ be the set
$$
F^+ = \{(y_{i_1},\cdot\cdot\cdot, y_{i_p})\in (I^+)^p| y_{i_j}\neq y_{i_k}\text{ if }j\neq k, 1\leq p\leq \sharp I^+\}.
$$
In Section 5, we will prove that (Lemma \ref{l:criticalpointatinfinity} and Lemma \ref{l:morseindex})
\begin{lemma}\label{l:lemma1.1}
The critical points at infinity of $J$ (see Section 2 for its definition) in $\Sigma^+$ corresponding to
$$
\sum_{j = 1}^p\frac{1}{K(y_{i_j})^{\frac{n - 2\gamma}{4\gamma}}}\delta_{y_{i_j}, +\infty}: = (y_{i_1},\cdot\cdot\cdot, y_{i_p})_\infty,
$$
with $(y_{i_1},\cdot\cdot\cdot, y_{i_p})\in F^+$. The Morse index of such a critical point at infinity is
$$
ind((y_{i_1},\cdot\cdot\cdot, y_{i_p})_\infty) = p - 1 + \sum_{j = 1}^p(n - ind(K, y_{i_j})),
$$
where $ind(K, y_{i_j})$ is the Morse index of $K$ at $y_{i_j}$.
\end{lemma}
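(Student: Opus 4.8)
The plan is to run the finite-dimensional reduction of the theory of critical points at infinity and then to compute directly the inertia of the reduced Hessian. By Lemma~\ref{l:lossofcompactness}, every non-compact orbit of a pseudo-gradient of $J$ on $\Sigma^{+}$ enters, for each $\varepsilon>0$, the standard neighbourhood at infinity
\[
V(p,\varepsilon)=\Bigl\{u\in\Sigma^{+}:\ \bigl\|u-\textstyle\sum_{j=1}^{p}\alpha_{j}\delta_{a_{j},\lambda_{j}}\bigr\|<\varepsilon,\ \lambda_{j}>\varepsilon^{-1},\ \varepsilon_{jk}<\varepsilon\Bigr\},
\]
where $\varepsilon_{jk}=\bigl(\lambda_{j}/\lambda_{k}+\lambda_{k}/\lambda_{j}+\lambda_{j}\lambda_{k}|a_{j}-a_{k}|^{2}\bigr)^{-(n-2\gamma)/2}$ is the interaction of two fractional bubbles. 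First I would fix the optimal decomposition $u=\sum_{j}\alpha_{j}\delta_{a_{j},\lambda_{j}}+v$ and use the fact, available from the earlier sections, that $v$ realises a strict non-degenerate minimum of $J$ in the directions orthogonal to the bubbles and their $\lambda_{j}$- and $a_{j}$-derivatives, with $\|v\|$ controlled by $\sum_{j}\lambda_{j}^{-2}+\sum_{j\neq k}\varepsilon_{jk}$. This reduces the study of $J$ near infinity to that of a function of the parameters $(\alpha_{j},a_{j},\lambda_{j})_{1\le j\le p}$ and already shows that the $v$-block contributes $0$ to the index.

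Second, I would use the expansion of $J$ on $V(p,\varepsilon)$: with $q=\tfrac{2n}{n-2\gamma}$, and for fixed constants $c_{n,\gamma},c_{0},c_{1}>0$ and positive bounded weights $\omega_{j},\omega_{jk}$,
\[
J(u)=c_{n,\gamma}\,\frac{\sum_{j}\alpha_{j}^{2}}{\bigl(\sum_{j}K(a_{j})\alpha_{j}^{q}\bigr)^{(n-2\gamma)/n}}\Bigl(1-c_{0}\sum_{j}\frac{\Delta K(a_{j})}{\lambda_{j}^{2}K(a_{j})}\,\omega_{j}-c_{1}\sum_{j\neq k}\varepsilon_{jk}\,\omega_{jk}+R\Bigr),
\]
where $R$ is of lower order; since $n-2\gamma>2$ in the range under consideration, $\varepsilon_{jk}$ is subordinate to $\lambda_{j}^{-2}$ except when two bubbles collide. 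From this I would build a bounded pseudo-gradient $W$ on $V(p,\varepsilon)$ that moves $a_{j}$ along $\nabla K(a_{j})$ while $|\nabla K(a_{j})|$ is bounded below (coming from the factor $K(a_{j})$ in the quotient), so the $a_{j}$ are driven to critical points of $K$; once $a_{j}$ is close to a critical point $\xi$ of $K$ with $-\Delta K(\xi)<0$, decreases $\lambda_{j}$ and so leaves $V(p,\varepsilon)$; and, in the collision region where some $\varepsilon_{jk}$ fails to be small, uses the interaction force to decrease $J$, while otherwise pushing all $\lambda_{j}$ to $+\infty$ at comparable rates. The outcome, which is Lemma~\ref{l:criticalpointatinfinity}, is that the only non-compact orbits on which $J$ stays bounded have all $a_{j}$ converging to distinct points $y_{i_{j}}\in I^{+}$ and all $\lambda_{j}\to+\infty$; optimising $J$ in $\alpha$ on $\Sigma$ then forces $\alpha_{j}\propto K(y_{i_{j}})^{-(n-2\gamma)/(4\gamma)}$, so the critical points at infinity are exactly the $(y_{i_{1}},\dots,y_{i_{p}})_{\infty}$ with $(y_{i_{1}},\dots,y_{i_{p}})\in F^{+}$.

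For the index (Lemma~\ref{l:morseindex}) I would split the remaining reduced variables into three blocks. On the $(p-1)$-dimensional constraint manifold $\{\sum_{j}\alpha_{j}^{2}=\mathrm{const}\}$: at the fully supported critical point of $f(\alpha):=\sum_{j}K(y_{i_{j}})\alpha_{j}^{q}$ one has, with Lagrange multiplier $\mu>0$, the identity $K(y_{i_{j}})\alpha_{j}^{q-2}\equiv\tfrac{2\mu}{q}$, whence the Hessian of $f$ restricted to the tangent space equals $2\mu(q-2)$ times the identity, which is positive definite since $q>2$; hence $f$ has there a strict local minimum and $J$ a strict local maximum, contributing $p-1$ to the index. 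In the $a_{j}$-block: as a function of $a_{j}$ near $y_{i_{j}}$, the reduced functional has Hessian a negative multiple of $D^{2}K(y_{i_{j}})$, so its number of negative directions equals the dimension of the positive eigenspace of $D^{2}K(y_{i_{j}})$, namely $n-ind(K,y_{i_{j}})$; summing over $j$ gives $\sum_{j=1}^{p}(n-ind(K,y_{i_{j}}))$. In the $\lambda_{j}$-block: because $-\Delta K(y_{i_{j}})>0$, the sign of the coefficient of $\lambda_{j}^{-2}$ forces $J$ to decrease monotonically to its limiting value along each $\lambda_{j}\to+\infty$; these are the \emph{directions at infinity} that characterise the critical point at infinity and, by definition, are not counted in its index. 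Adding the three contributions,
\[
ind\bigl((y_{i_{1}},\dots,y_{i_{p}})_{\infty}\bigr)=p-1+\sum_{j=1}^{p}\bigl(n-ind(K,y_{i_{j}})\bigr).
\]

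The hard part will be the construction of a single globally defined pseudo-gradient on $V(p,\varepsilon)$ for configurations not close to $F^{+}$: one has to glue the $\nabla K$-force on the $a_{j}$, the $\Delta K$-force on the $\lambda_{j}$ and the interaction force through the transition regions, and in particular handle the collision region where $\varepsilon_{jk}$ dominates $\lambda_{j}^{-2}$ --- the classical source of difficulty in Bahri--Coron type arguments. A necessary preliminary is the expansion of $J$ displayed above with a sufficiently sharp remainder in the non-local setting; this rests on the Caffarelli--Silvestre type extension and on precise estimates for the fractional bubbles and their interactions developed in the earlier sections, together with fixing the sign of the coefficient of $\Delta K(a)/\lambda^{2}$, on which both the definition of $I^{+}$ and the index count depend.
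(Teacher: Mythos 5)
Your proposal follows essentially the same route as the paper: the identification of the critical points at infinity is exactly the content of Lemma \ref{l:characterization} and Lemma \ref{l:criticalpointatinfinity} (pseudogradient construction plus the Morse lemma at infinity, which kills the $v$-block), and your block-diagonal index count ($p-1$ from the $\alpha$-sphere where $J$ is maximal, $n-ind(K,y_{i_j})$ from each $a_j$-block since the reduced Hessian there is a negative multiple of $D^{2}K(y_{i_j})$, nothing from the $\lambda$-directions) reproduces the normal form (\ref{e:equation2}) of Lemma \ref{l:morseindex}. Your Lagrange-multiplier verification that the $\alpha$-point is a strict constrained maximum is a correct, slightly more explicit version of the paper's homogeneity argument.
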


Let $F^+_\infty$ be the set of critical points at infinity of $J$. Then by Lemma \ref{l:lemma1.1} we have $$F^+_\infty = \{\phi_\infty^p := (y_{i_1},\cdot\cdot\cdot, y_{i_p})_\infty| (y_{i_1},\cdot\cdot\cdot, y_{i_p})\in F^+\}.$$ If $\phi_\infty^p\in F^+_\infty$, let $W_u(\phi_\infty^p)$ denote its unstable manifold and $W_s(\phi_\infty^p)$ its stable manifold, with respect to the $C^1$ vector field $-\partial J$. Then we have
$$
dim W_u(\phi_\infty^p) = codim W_s(\phi_\infty^p) = ind(\phi_\infty^p).
$$
For any $k\in \mathbb{N}$ and any subset $X_k$ of $\{\phi_\infty^p\in F^+_\infty: ind(\phi^p_\infty)\leq k\}$, we consider the following set
$$
X_k^\infty = \cup_{\phi_\infty^p\in X_k}W_u(\phi_\infty^p),
$$
which is a stratified set of dimension at most $k$. Since $\Sigma^+$ is a contractible set, $X_k^\infty$ is contractible in $\Sigma^+$ and let $\psi(X_k^\infty)$ be a contraction of $X_k^\infty$ in $\Sigma^+$.

Then we have
\begin{theorem}\label{t:main}
Assume that $n \geq 4$, $\gamma\in (0, 1)$ and $K$ satisfies condition ({\bf nd}). If there exists $k_0\in \mathbb{N}$ and $X_{k_0}\subset \{\phi_\infty^p\in F^+_\infty: ind(\phi_\infty^p)\leq k_0\}$ such that
\begin{enumerate}
\item[(H1)]
$$
\sum_{\phi_\infty^p\in X_{k_0}}(-1)^{ind(\phi_\infty^p)}\neq 1,
$$
\item[(H2)]
$$
\psi(X_{k_0}^\infty)\cap W_s(\phi_\infty^p) = \emptyset, \text{ for all }\phi_{\infty}^p\in F^+_\infty\setminus X_{k_0} \text{ with }ind(\phi_\infty^p)\leq k_0 + 1,
$$
\end{enumerate}
then there exits a solution $w$ of (\ref{e:main}) satisfying $ind(w)\leq k_0 + 1$.
\end{theorem}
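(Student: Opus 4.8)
\noindent\emph{Sketch of the argument.} The plan is to run Bahri's Morse theory at infinity and argue by contradiction: assume that (\ref{e:main}) has no solution $w$ with $ind(w)\le k_0+1$. Since the solutions of (\ref{e:main}) lying in $\Sigma^+$ are exactly the critical points of $J$, this means $J$ has no critical point of index $\le k_0+1$, while by Lemma \ref{l:lossofcompactness} and Lemma \ref{l:lemma1.1} the only critical points at infinity of $J$ in $\Sigma^+$ are the $\phi_\infty^p\in F^+_\infty$, with indices given by Lemma \ref{l:lemma1.1}. The first ingredient is the deformation lemma attached to this situation, to be established in Section 2 in the $\Sigma^+$-framework: for a decreasing pseudo-gradient $-\partial J$ and regular levels $a<b$, the sublevel set $J_b=\{u\in\Sigma^+:J(u)\le b\}$ is, up to homotopy, obtained from $J_a$ by attaching, for each critical point at infinity $\phi_\infty^p$ with critical value in $[a,b]$, a cell of dimension $ind(\phi_\infty^p)$, namely a copy of its unstable manifold $W_u(\phi_\infty^p)$. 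We shall also use that every stable manifold $W_s(\phi_\infty^p)$ is invariant under the flow of $-\partial J$, hence so is its complement.

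Next I would bring in (H2) together with the contractibility of $\Sigma^+$. Set
$$
Y:=\Sigma^+\setminus\bigcup W_s(\phi_\infty^p),
$$
the union being over all $\phi_\infty^p\in F^+_\infty\setminus X_{k_0}$ with $ind(\phi_\infty^p)\le k_0+1$. By the remark above $Y$ is invariant under $-\partial J$, and by (H2) the contraction $\psi(X_{k_0}^\infty)$ is contained in $Y$, so that $X_{k_0}^\infty$ is contractible within $Y$. On the other hand, following the flow of $-\partial J$ inside $Y$, a flow line in $Y$ cannot converge to a genuine critical point of index $\le k_0+1$ (there is none), nor to a $\phi_\infty^p\in F^+_\infty\setminus X_{k_0}$ with $ind(\phi_\infty^p)\le k_0+1$ (its stable manifold has been removed); the remaining critical objects, namely the genuine critical points and the $\phi_\infty^p\notin X_{k_0}$ of index $\ge k_0+2$, have stable manifolds of codimension $\ge k_0+2$, which can be discarded by a general position argument since one only cares about the homotopy type of $X_{k_0}^\infty$ (of dimension $\le k_0$) and of its contraction (of dimension $\le k_0+1$). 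Hence $Y$ deformation retracts onto $X_{k_0}^\infty=\bigcup_{\phi_\infty^p\in X_{k_0}}W_u(\phi_\infty^p)$, so the inclusion $X_{k_0}^\infty\hookrightarrow Y$ is both a homotopy equivalence and, by (H2), null-homotopic; this forces $Y$, and hence $X_{k_0}^\infty$, to be contractible.

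The contradiction is now an Euler characteristic count, which is an instance of the Euler-Hopf type formula announced in the abstract. Since $F^+_\infty$ is finite, the cell description recorded above endows $X_{k_0}^\infty$ with the structure of a finite CW complex whose open cells are the $W_u(\phi_\infty^p)$, $\phi_\infty^p\in X_{k_0}$, so that
$$
\chi(X_{k_0}^\infty)=\sum_{\phi_\infty^p\in X_{k_0}}(-1)^{ind(\phi_\infty^p)}.
$$
But $X_{k_0}^\infty$ is contractible, whence $\chi(X_{k_0}^\infty)=1$; this contradicts (H1). Therefore (\ref{e:main}) admits a solution; and since the argument only used the absence of genuine critical points of index $\le k_0+1$ (any higher-index solutions having stable manifolds of codimension $\ge k_0+2$ and playing no role above), there is in fact a solution $w$ with $ind(w)\le k_0+1$.

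The main obstacle is this deformation lemma near the critical points at infinity. One must convert the expansions of $J$ and $\partial J$ in the neighbourhood of the $\phi_\infty^p$ obtained in Section 5 (Lemmas \ref{l:criticalpointatinfinity} and \ref{l:morseindex}) into an explicit pseudo-gradient whose flow has the normal form ``attach an $ind(\phi_\infty^p)$-cell'', and to do this while keeping all flow lines inside $\Sigma^+$, i.e.\ respecting the sign constraint $u\ge 0$ so that no trajectory escapes; here the nonlocal nature of $P_\gamma$ and the structure of the fractional standard bubbles make the estimates delicate. A secondary point is to make rigorous the general position and transversality arguments used above to discard the critical objects of index $\ge k_0+2$, which again rely on the local description of the gradient flow near infinity.
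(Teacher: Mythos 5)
Your overall architecture is the one the paper uses: argue by contradiction, push the contraction $\psi(X_{k_0}^\infty)$ (or an ambient set containing it) along the pseudo-gradient flow, use (H2) to avoid the stable manifolds of the excluded critical points at infinity of index $\le k_0+1$ and a transversality/general-position argument to avoid everything of index $\ge k_0+2$, conclude that $X_{k_0}^\infty$ is contractible, and then contradict (H1) through the Euler--Hopf count $\chi(X_{k_0}^\infty)=\sum_{\phi_\infty^p\in X_{k_0}}(-1)^{ind(\phi_\infty^p)}$ coming from the CW structure whose cells are the unstable manifolds. The Euler characteristic step and the use of (H1), (H2) are exactly as in the paper.

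There is, however, a genuine gap in your load-bearing intermediate step: the claim that $Y=\Sigma^+\setminus\bigcup W_s(\phi_\infty^p)$ deformation retracts onto $X_{k_0}^\infty$. As stated this is false (or at least far from justified): $Y$ is an open, infinite-dimensional set which still contains the stable manifolds of all critical points (at infinity or genuine) of index $\ge k_0+2$, and its points need not flow into $\bigcup_{\phi\in X_{k_0}}W_s(\phi)$; your ``general position'' remark legitimately discards codimension-$(k_0+2)$ sets only when deforming a set of dimension $\le k_0+1$, not when retracting the open set $Y$ itself. Moreover, even the union of the stable manifolds of the elements of $X_{k_0}$ does not retract onto $\bigcup_{\phi\in X_{k_0}}W_u(\phi)$ without the cell-attachment machinery. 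The paper avoids this entirely: it deforms only the $(k_0+1)$-dimensional stratified set $\psi(X_{k_0}^\infty)$ by the flow of $-\partial J$ and invokes the retraction theorem of Bahri--Rabinowitz (Proposition 7.24 and Theorem 8.2 of \cite{BahriandRabinowitzAIHN}), which identifies the deformed set, up to homotopy, with the union of the unstable manifolds of the \emph{dominated} critical points at infinity of index $\le k_0+1$; (H2) then kills the contribution of those outside $X_{k_0}$, giving $\psi(X_{k_0}^\infty)\simeq X_{k_0}^\infty$ directly. You should replace the retraction of $Y$ by this finite-dimensional deformation. A second, smaller point: the characterization of the critical points at infinity (Lemma \ref{l:criticalpointatinfinity}) is established under the hypothesis that \eqref{e:main} has \emph{no} solution at all, not merely none of index $\le k_0+1$; the paper therefore first derives existence under that hypothesis and only afterwards extracts the index bound $ind(w)\le k_0+1$ from the same transversality remark, and your version should be organized the same way.
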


As an application of Theorem \ref{t:main}, let $K$ assumes the following form
\begin{center}
{(\bf P)} $K(x) = 1 + \varepsilon K_0(x)$, $\forall x\in \mathbb{S}^n$, where $K_0\in C^2(\mathbb{S}^n)$ and $|\varepsilon|$ is small.
\end{center}
Set
\begin{equation}
\mathbb{T} = \{k\in \mathbb{N}|\forall y\in I^+, n - ind(K, y)\neq k + 1\}.
\end{equation}
Then we have
\begin{theorem}\label{t:main1}
Assume that $n \geq 4$, $\gamma\in (0, 1)$ and $K$ satisfies condition ({\bf nd}) and ({\bf P}). If
\begin{equation}\label{e:equation3}
\max_{k\in \mathbb{T}}\left|1 - \sum_{y\in I^+, n - ind(K, y)\leq k}(-1)^{n - ind(K, y)}\right|\neq 0,
\end{equation}
then when $|\varepsilon|$ is small, there exits a solution $w$ of (\ref{e:main}) satisfying $ind(w)\leq k_0 + 1$. Here $k_0$ achieves the maximum of (\ref{e:equation3}).
\end{theorem}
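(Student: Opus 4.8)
The plan is to derive Theorem~\ref{t:main1} from Theorem~\ref{t:main} by taking for $X_{k_0}$ the family of \emph{single--bubble} critical points at infinity of index at most $k_0$, namely
\[
X_{k_0}=\Big\{(y)_\infty:=\tfrac{1}{K(y)^{(n-2\gamma)/(4\gamma)}}\,\delta_{y,+\infty}\ :\ y\in I^+,\ n-\mathrm{ind}(K,y)\le k_0\Big\},
\]
where $k_0\in\mathbb{T}$ is chosen to realize the maximum in \eqref{e:equation3}. Since $-\Delta K(\xi)>0$ for $\xi\in I^+$ forces the Hessian of $K$ at $\xi$ to have at least one negative eigenvalue, one has $\mathrm{ind}(K,\xi)\ge1$, so the numbers $n-\mathrm{ind}(K,y)$ with $y\in I^+$ all lie in $\{0,1,\dots,n-1\}$, and one may and I would take $k_0\le n-1$. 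As $K$ is a Morse function on $\mathbb{S}^n$, $I^+$ is finite, hence $X_{k_0}$ is finite and $X_{k_0}^\infty=\bigcup_{\phi_\infty^p\in X_{k_0}}W_u(\phi_\infty^p)$ is a compact stratified subset of $\Sigma^+$ of dimension $\le k_0$.

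Checking (H1) is immediate. By Lemma~\ref{l:lemma1.1}, $\mathrm{ind}((y)_\infty)=n-\mathrm{ind}(K,y)$, whence
\[
\sum_{\phi_\infty^p\in X_{k_0}}(-1)^{\mathrm{ind}(\phi_\infty^p)}=\sum_{y\in I^+,\ n-\mathrm{ind}(K,y)\le k_0}(-1)^{n-\mathrm{ind}(K,y)},
\]
and the right--hand side differs from $1$ precisely because its distance to $1$ is the quantity in \eqref{e:equation3}, which is nonzero by hypothesis and is attained at $k_0$; this is exactly (H1).

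The core of the argument is (H2), and this is where assumption $(\mathbf{P})$ enters, through the energy levels of the critical points at infinity. First, using $k_0\in\mathbb{T}$, I would show that the only $\phi_\infty^p\in F^+_\infty\setminus X_{k_0}$ with $\mathrm{ind}(\phi_\infty^p)\le k_0+1$ are the multi--bubble ones ($p\ge2$): a single bubble $(y)_\infty$ outside $X_{k_0}$ has $n-\mathrm{ind}(K,y)\ge k_0+1$, and if also $\mathrm{ind}((y)_\infty)=n-\mathrm{ind}(K,y)\le k_0+1$ then $n-\mathrm{ind}(K,y)=k_0+1$, which is excluded since $k_0\in\mathbb{T}$. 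Next, by the asymptotic expansion of $J$ near its critical points at infinity, $(y)_\infty$ sits at the level $c((y)_\infty)=S_{n,\gamma}\,K(y)^{-(n-2\gamma)/n}$, while $\phi_\infty^p=(y_{i_1},\dots,y_{i_p})_\infty$ sits at $c(\phi_\infty^p)=S_{n,\gamma}\big(\sum_{j=1}^{p}K(y_{i_j})^{-(n-2\gamma)/(2\gamma)}\big)^{2\gamma/n}$, where $S_{n,\gamma}$ is the best constant in the fractional Sobolev inequality on $\mathbb{S}^n$. Under $(\mathbf{P})$, $K=1+\varepsilon K_0$, so $c((y)_\infty)=S_{n,\gamma}(1+O(\varepsilon))$ uniformly in $y$, whereas $c(\phi_\infty^p)=S_{n,\gamma}\,p^{2\gamma/n}(1+O(\varepsilon))$; since $p^{2\gamma/n}\ge2^{2\gamma/n}>1$ for $p\ge2$, there is $\varepsilon_0>0$ such that for $|\varepsilon|<\varepsilon_0$
\[
a_\varepsilon:=\max_{y\in I^+}c((y)_\infty)\ <\ b_\varepsilon:=\min\big\{c(\phi_\infty^p):\phi_\infty^p\ \text{multi--bubble}\big\}.
\]
As the pseudo--gradient flow of $-\partial J$ decreases $J$, each $W_u((y)_\infty)\subset\{J\le c((y)_\infty)\}$, hence $X_{k_0}^\infty\subset\{J\le a_\varepsilon\}$, while each $W_s(\phi_\infty^p)\subset\{J\ge c(\phi_\infty^p)\}\subset\{J\ge b_\varepsilon\}$ for multi--bubbles. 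Thus, if the contraction $\psi$ of $X_{k_0}^\infty$ in $\Sigma^+$ is chosen so that $\psi(X_{k_0}^\infty)\subset\{J<b_\varepsilon\}$, then $\psi(X_{k_0}^\infty)\cap W_s(\phi_\infty^p)=\emptyset$ for every multi--bubble $\phi_\infty^p$, in particular for all those with index $\le k_0+1$, which is (H2).

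The remaining, and I expect most delicate, point is to produce such a contraction inside $\{J<b_\varepsilon\}$. Reasoning by contradiction, assume \eqref{e:main} has no solution; then, by the description of the non--compact Palais--Smale sequences in Lemma~\ref{l:lossofcompactness} together with the deformation lemma of Section~2, $\{J<b_\varepsilon\}$ carries exactly the single--bubble critical points at infinity, all lying below $a_\varepsilon<b_\varepsilon$, and retracts by deformation onto $\bigcup_{y\in I^+}W_u((y)_\infty)$, of which $X_{k_0}^\infty$ is the union of the strata of dimension $\le k_0$. I would then show that $X_{k_0}^\infty$ can be contracted within $\{J<b_\varepsilon\}$ by pushing a contraction of $X_{k_0}^\infty$ in the contractible set $\Sigma^+$ downwards along the flow of $-\partial J$ while keeping it off the closed set $\bigcup_{p\ge2}W_s(\phi_\infty^p)\subset\{J\ge b_\varepsilon\}$, which is disjoint from $X_{k_0}^\infty$; here the choice of $k_0$ as the maximizer in \eqref{e:equation3} and the low dimension of $X_{k_0}^\infty$ are what make the construction possible. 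Once (H1) and (H2) are verified in this way, Theorem~\ref{t:main} yields a solution $w$ of \eqref{e:main} with $\mathrm{ind}(w)\le k_0+1$, which is the assertion of Theorem~\ref{t:main1}.
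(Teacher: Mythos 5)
Your overall architecture coincides with the paper's: the same choice of $X_{k_0}$ (single--bubble critical points at infinity of index $\le k_0$), the same verification of (H1) via the index formula of Lemma \ref{l:morseindex}, the same use of $k_0\in\mathbb{T}$ to rule out single bubbles of index $k_0+1$, and the same energy--gap observation that under ({\bf P}) the single--bubble levels are $S^{2\gamma/n}(1+O(\varepsilon))$ while the $p$--bubble levels tend to $S^{2\gamma/n}p^{2\gamma/n}$ as $\varepsilon\to0$.

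There is, however, a genuine gap at the step you yourself flag as delicate: producing a contraction of $X_{k_0}^\infty$ whose image stays in $\{J<b_\varepsilon\}$. Neither of your suggested mechanisms works as stated. Flowing a contraction taken in $\Sigma^+$ down by $-\partial J$ cannot ``keep it off'' $\bigcup_{p\ge2}W_s(\phi_\infty^p)$: the flow preserves stable manifolds, and for a multi--bubble $\phi_\infty^p$ of index $\le k_0+1$ the set $W_s(\phi_\infty^p)$ has codimension $\le k_0+1$ while $\psi(X_{k_0}^\infty)$ has dimension $k_0+1$, so general position only reduces the intersection to dimension $0$, not to $\emptyset$ --- (H2) is precisely the nontrivial hypothesis of Theorem \ref{t:main} and cannot be obtained by transversality alone. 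Your other observation, that $\{J<b_\varepsilon\}$ deformation retracts onto $\bigcup_{y\in I^+}W_u((y)_\infty)$, does not help either: the contractibility of that union is exactly what the Euler--characteristic argument is designed to contradict, so it cannot be invoked. The paper's actual mechanism, absent from your proposal, is the comparison with the Yamabe functional: the uniform estimate $J(u)=J_0(u)(1+O(\varepsilon))$ yields the nested sublevel sets $J^{\sigma+\eta}\subset J_0^{\sigma+2\eta}\subset J^{\sigma+3\eta}$ with $\sigma=S^{2\gamma/n}$ and $\eta=\sigma/4$; since for $|\varepsilon|$ small no critical point at infinity of $J$ has level in $(\sigma+\eta,\sigma+3\eta]$, one gets $J^{\sigma+3\eta}\simeq J^{\sigma+\eta}$ and hence $J^{\sigma+\eta}\simeq J_0^{\sigma+2\eta}$, which is contractible. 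One then contracts $X_{k_0}^\infty$ inside the contractible set $J^{\sigma+\eta}$, so that $\psi(X_{k_0}^\infty)$ automatically avoids every multi--bubble stable manifold. Without this (or an equivalent) argument your verification of (H2) is incomplete.
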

\begin{remark}
Theorem \ref{t:main} and \ref{t:main1} also holds when $n = 3$ and $\gamma \in (0, \frac{1}{2})$. This completes the study of Wael Abdelhedi and Hichem Chtioui \cite{AbdelhediChtiouiNPISRLJFA2013} in the  sense that, in this paper, the cases $n = 2$, $\gamma\in (0, 1)$ and $n = 3$, $\gamma\in (\frac{1}{2}, 1)$ were considered.
\end{remark}

We shall prove Theorem \ref{t:main} and \ref{t:main1} by contradiction, therefore we assume that (\ref{e:main}) has no solution. Our argument is based on a technical Morse lemma at infinity which involves the construction of a suitable pseudogradient for the function $J$ as in \cite{BahriPRNIMS1989, BahriDukeMathJYambetypeflows, BenChenChtiouiHamamiDukeMath1996, GamaraPSCCRANS2002, YacoubANS2013}. The Palais-Smale condition is satisfied along the decreasing flow lines of this pseudogradient, as long as these flow lines do not enter the neighborhood of a finite number of critical points of $K$. Finally, we obtain a Euler-Hopf type formula, this achieves a contradiction.

This paper is organized as follows. In Section 2, we introduce the general variational framework. In Section 3, we will give the expansion of the functional and its gradient near the sets of its critical points at infinity. In Section 4, we establish the Morse lemma at infinity, which allows us to refine the expansion of the function. In Section 5, we give the proof of Theorem \ref{t:main}. In Section 6, we give the proof of Theorem \ref{t:main1}. In Appendices A-C, we will give the estimates used in the proof.

\section{Variational structure}
Following \cite{BahriPRNIMS1989}, \cite{BahriCoronSCPSTDS1991} and \cite{BahriDukeMathJYambetypeflows}, we will use the following variational structure.
Consider the functional
\begin{equation*}
J(u) = \frac{\|u\|^2}{\left(\int_{\mathbb{S}^{n}} K u^{\frac{2n}{n - 2\gamma}}dvol_{g_0}\right)^{\frac{n -2\gamma}{n}}}
\end{equation*}
defined on $\Sigma$ which is the unit sphere of $S^{2}_{\gamma}(\mathbb{S}^{n})$. Let $\Sigma^+ = \{u\in \Sigma| u\geq 0\}$, problem (\ref{e:main}) will be reduced to finding critical points of $J$ subjected to the constraint $u\in\Sigma^+$. The exponent $\frac{2n}{n - 2\gamma}$ is critical for the Sobolev embedding $S^{2}_{\gamma}(\mathbb{S}^{n})\hookrightarrow L^q(\mathbb{S}^n)$. This embedding is continuous but not compact, so the functional $J$ does not satisfy the Palais Smale condition. This means that there exists a sequence along which $J$ is bounded, its gradient goes to zero but it does not converge. The characterization of sequences failing the Palais Smale condition can be analyzed along the ideas introduced in \cite{BahriPRNIMS1989}, \cite{BahriCoronSCPSTDS1991} and \cite{BahriDukeMathJYambetypeflows}. In order to describe such a characterization in our case, we need to introduce some notations.

For $a\in \mathbb{S}^n$ and $\lambda > 0$, let
$$
\delta_{a, \lambda}(x) = c_n\left(\frac{\lambda}{1 + \frac{\lambda^2 - 1}{2}(1 - cos(d(x, a)))}\right)^{\frac{n - 2\gamma}{2}},
$$
where $d(\cdot, \cdot)$ is the distance induced by the standard metric $g_0$, $c_n$ is chosen so that $\delta_{a, \lambda}$ is the family of the solutions for
\begin{equation}\label{e:yamabe2}
P_\gamma u = u^{\frac{n + 2\gamma}{n - 2\gamma}},\quad u > 0\quad\text{on}\quad\mathbb{S}^{n}.
\end{equation}
By the stereographic projection, (\ref{e:yamabe2}) can be transformed into the following equation
\begin{equation}\label{e:yamabe3}
(-\Delta)^\gamma u = u^{\frac{n + 2\gamma}{n - 2\gamma}},\quad u > 0\quad\text{on}\quad\mathbb{R}^{n}.
\end{equation}
And all positive regular solutions of (\ref{e:yamabe3}) are of form
$$
w_{g, \lambda}(x) = c_n\left(\frac{\lambda}{1 + \lambda^2|x - g|^2}\right)^{\frac{n - 2\gamma}{2}},
$$
see \cite{ChenWenxiongLicongmingOubiaoCPAM2006}, \cite{LiYanyan2004} and \cite{Lieb1983}.

For $p\geq 1$, we set
$$
\mathcal{E}^p = (0, +\infty)^p\times (\mathbb{S}^{n})^p\times (0, +\infty)^p,
$$
which is the space of the variables $(\alpha, a, \lambda) = (\alpha_1,\cdot\cdot\cdot, \alpha_p, a_1,\cdot\cdot\cdot, a_p, \lambda_1,\cdot\cdot\cdot, \lambda_p)$. For any small $\varepsilon > 0$ and $p\in \mathbb{N}^+$,  we will use the following subset of $\mathcal{E}^p$,
$$
B_\varepsilon = \left\{(\alpha, a, \lambda)\in \mathcal{E}^p|\varepsilon_{ij}\leq \varepsilon, \lambda_i > \frac{1}{\varepsilon}\right\},
$$
where
$$
\varepsilon_{ij} = \left(\frac{\lambda_i}{\lambda_j} + \frac{\lambda_j}{\lambda_i} + \lambda_i\lambda_jd(a_i, a_j)^2\right)^{-\frac{n - 2\gamma}{2}}.
$$
Now, we define the set $V(p, \varepsilon)$ of potential critical points at infinity to be
\begin{equation*}
V(p, \varepsilon) =\left\{u\in \Sigma\left|\right.
\begin{array}{lll}
\exists (\alpha, a, \lambda)\in B_\varepsilon, \text{such that } \|u - \sum_{i = 1}^p\alpha_i\delta_{a_i,\lambda_i}\| < \varepsilon\\
\text{ and } \left|J(u)^{\frac{n}{n - 2\gamma}}\alpha_j^{\frac{4\gamma}{n-2\gamma}}K(a_j) - 1\right| < \varepsilon
\end{array}
\right\}.
\end{equation*}

If $u$ is a function in $V(p, \varepsilon)$, one can find an optimal representation, following the ideas introduced in \cite{BahriPRNIMS1989}. Namely, we have
\begin{lemma}\label{l:decomposition}
For any $p\in\mathbb{N}^*$, there exists $\varepsilon_p > 0$ such that if $\varepsilon\leq \varepsilon_p$ and $u\in V(p, \varepsilon)$, then the following minimizing problem
\begin{equation}\label{e:minimization}
\min\left\{\|u - \sum_{i = 1}^p\alpha_i\delta_{a_i, \lambda_i}\|, \alpha_i > 0, a_i\in \mathbb{S}^n, \lambda_i > 0\right\}
\end{equation}
has a unique solution $(\bar{\alpha}, \bar{a}, \bar{\lambda})$. Thus, we can write $u$ as follows,
$$
u = \sum_{i = 1}^p\alpha_i\delta_{a_i, \lambda_i} + v
$$
where $v$ belongs to $S^{2}_{\gamma}(\mathbb{S}^{n})$ and satisfies the following condition
\begin{equation}\label{e:orthonormal}
\quad\quad\langle v, \varphi_i\rangle = 0\text{ for } i = 1,\cdot\cdot\cdot, p, \text{ and } \varphi_i  = \delta_{a_i, \lambda_i}, \frac{\partial \delta_{a_i, \lambda_i}}{\partial \lambda_i}, \frac{\partial \delta_{a_i, \lambda_i}}{\partial a_i}.
\end{equation}
Here $\langle\cdot, \cdot\rangle$ denotes the inner product in $S^{2}_{\gamma}(\mathbb{S}^{n})$ defined by
$$
\langle u, v\rangle = \int_{\mathbb{S}^{n}}vP_\gamma u dvol_{g_0}.
$$
\end{lemma}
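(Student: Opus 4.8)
The plan is to run the standard Bahri-type local inversion argument. For $u\in V(p,\varepsilon)$ define the smooth function on $\mathcal{E}^p$
$$
F_u(\alpha,a,\lambda)=\Big\|u-\sum_{i=1}^p\alpha_i\delta_{a_i,\lambda_i}\Big\|^2 .
$$
By definition of $V(p,\varepsilon)$ there is an approximate representation $(\alpha^0,a^0,\lambda^0)\in B_\varepsilon$ with $F_u(\alpha^0,a^0,\lambda^0)<\varepsilon^2$. If $(\bar\alpha,\bar a,\bar\lambda)$ is any interior critical point of $F_u$ and we set $v=u-\sum_i\bar\alpha_i\delta_{\bar a_i,\bar\lambda_i}$, then $\partial_{\alpha_i}F_u=-2\langle v,\delta_{\bar a_i,\bar\lambda_i}\rangle$, $\partial_{\lambda_i}F_u=-2\bar\alpha_i\langle v,\partial_{\lambda_i}\delta_{\bar a_i,\bar\lambda_i}\rangle$, $\partial_{a_i}F_u=-2\bar\alpha_i\langle v,\partial_{a_i}\delta_{\bar a_i,\bar\lambda_i}\rangle$, and since $\bar\alpha_i>0$ the vanishing of these derivatives is exactly the orthogonality system (\ref{e:orthonormal}). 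So it is enough to prove that $F_u$ has a unique critical point on $\mathcal{E}^p$; this point is then automatically the unique minimizer in (\ref{e:minimization}), and the claimed decomposition follows with $v\in S^2_\gamma(\mathbb{S}^n)$ orthogonal to the $\varphi_i$.

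First I would prove local uniqueness near $(\alpha^0,a^0,\lambda^0)$. It is convenient to differentiate in the normalized directions $\delta_{a_i,\lambda_i}$, $\lambda_i\partial_{\lambda_i}\delta_{a_i,\lambda_i}$ and $\lambda_i^{-1}\partial_{a_i}\delta_{a_i,\lambda_i}$, whose norms are bounded above and below by constants depending only on $n$ and $\gamma$. In these coordinates the Hessian of $F_u$ at $(\alpha^0,a^0,\lambda^0)$ is $2G+\mathrm{o}(1)$, where $G$ is the Gram matrix of these $(n+2)p$ functions, the error coming from the $\mathrm{O}(\varepsilon)$-size of the residual $u-\sum_i\alpha^0_i\delta_{a^0_i,\lambda^0_i}$. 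Since $\varepsilon_{ij}\le\varepsilon$, the interaction entries of $G$ between distinct bubbles are $\mathrm{O}(\varepsilon_{ij})=\mathrm{o}(1)$, so $G$ is $\mathrm{o}(1)$-close to a block-diagonal matrix whose $i$-th block is the Gram matrix attached to a single bubble; that block is positive definite (the three families of functions for a single bubble are mutually orthogonal after normalization, a standard essentially explicit computation, consistent with the nondegeneracy of the bubbles solving (\ref{e:yamabe3}), see \cite{ChenWenxiongLicongmingOubiaoCPAM2006, LiYanyan2004, Lieb1983}). Hence, choosing $\varepsilon_p$ small, $G$ is uniformly positive definite, $F_u$ is strictly convex on a neighborhood $\mathcal{N}$ of $(\alpha^0,a^0,\lambda^0)$, and since $\nabla F_u(\alpha^0,a^0,\lambda^0)=\mathrm{O}(\varepsilon)$, a quantitative inverse function theorem yields a unique critical point $(\bar\alpha,\bar a,\bar\lambda)$ of $F_u$ inside $\mathcal{N}$, a nondegenerate local minimum with $F_u(\bar\alpha,\bar a,\bar\lambda)<\varepsilon^2$.

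It then remains to promote this to a global statement: $\inf_{\mathcal{E}^p}F_u$ is attained, only inside $\mathcal{N}$, hence only at $(\bar\alpha,\bar a,\bar\lambda)$. This is the main obstacle. The point is to show that along a minimizing sequence for $F_u$ no parameter degenerates: if some $\alpha_i\to 0$ or $\alpha_i\to\infty$, or the quantity $\lambda_i/\lambda_j+\lambda_j/\lambda_i+\lambda_i\lambda_j d(a_i,a_j)^2$ diverges for a pair $i\ne j$ (two bubbles splitting apart), or two concentration points collide, then — using that $\|u\|=1$ and that $u$ is $\varepsilon$-close to a sum of $p$ highly concentrated, well separated bubbles — one gets $F_u\ge c_0$ for a fixed $c_0>\varepsilon^2$, contradicting $F_u\le F_u(\bar\alpha,\bar a,\bar\lambda)<\varepsilon^2$. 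Thus a minimizing sequence stays in a compact part of $\mathcal{E}^p$, the infimum is attained at an interior point, and by the rigidity of multi-bubble expansions (two bubble-sums that are $\mathrm{o}(1)$-close in $\|\cdot\|$ have $\mathrm{o}(1)$-close parameters once the known interaction $\varepsilon_{ij}$ is quotiented out) every global minimizer lies in $\mathcal{N}$, hence equals $(\bar\alpha,\bar a,\bar\lambda)$ by the previous paragraph. Finally $v=u-\sum_i\bar\alpha_i\delta_{\bar a_i,\bar\lambda_i}\in S^2_\gamma(\mathbb{S}^n)$ and satisfies (\ref{e:orthonormal}) by the Euler--Lagrange equations above. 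The two delicate ingredients are the single-bubble Gram-matrix computation (which in the fractional case rests on the classification of solutions of (\ref{e:yamabe3})) and, above all, the quantitative lower bound on $F_u$ away from $\mathcal{N}$.
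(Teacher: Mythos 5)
The paper offers no proof of this lemma at all: it is stated as a known consequence of the ideas of Bahri (and Bahri--Coron), so there is no in-paper argument to compare against. Your sketch is precisely that classical argument, and it is organized correctly: the Euler--Lagrange computation showing that interior critical points of $F_u$ are exactly the parameter tuples for which the remainder satisfies (\ref{e:orthonormal}); the Hessian being a small perturbation of the Gram matrix of the normalized functions $\delta_{a_i,\lambda_i}$, $\lambda_i\partial_{\lambda_i}\delta_{a_i,\lambda_i}$, $\lambda_i^{-1}\partial_{a_i}\delta_{a_i,\lambda_i}$, which is block-diagonal up to $O(\varepsilon_{ij})$ and whose single-bubble blocks are in fact diagonal and positive by the explicit orthogonality relations $\langle\delta,\partial_\lambda\delta\rangle=\langle\delta,\partial_a\delta\rangle=\langle\partial_\lambda\delta,\partial_a\delta\rangle=0$ (so the non-degeneracy result is not even needed here, only linear independence); and then the passage from local to global uniqueness. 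You correctly identify that all the real work sits in the last step — the quantitative lower bound on $F_u$ when a parameter degenerates or two bubbles interact, and the rigidity statement that two $o(1)$-close multi-bubble sums have $o(1)$-close parameters — and these are asserted rather than proved; they are exactly the content of the Bahri--Coron appendix that the authors implicitly invoke. As a proof outline the proposal is correct and complete in structure; to be a self-contained proof it would need those two estimates written out.
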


Based on the uniqueness result of the corresponding problem at infinity (see \cite{ChenWenxiongLicongmingOubiaoCPAM2006} and \cite{LiYanyanJEMS2004}), the failure of the Palais Smale condition can be characterized following the ideas of \cite{BrezisCoronARMA1985}, \cite{GamaraPSCCRANS2002} and \cite{StruweMathZ1984}.
\begin{lemma}\label{l:lossofcompactness}
Assume that (\ref{e:main}) has no solution and let $\{u_k\}\subseteq \Sigma^+ $ be a sequence satisfying $J(u_k)\to c$, $J'(u_k)\to 0$. Then, there exist an integer $p \geq 1$, a positive sequence $\{\varepsilon_k\}$$(\varepsilon_k\to 0)$ and an extracted subsequence of $\{u_k\}$, still denoted by $\{u_k\}$, such that $u_k\in V(p, \varepsilon_k)$.
\end{lemma}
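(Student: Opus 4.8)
The plan is to run a Struwe-type profile decomposition for the Palais--Smale sequence $\{u_k\}$, adapted to the nonlocal operator $P_\gamma$ through the Caffarelli--Silvestre / Chang--Gonzalez extension to $\mathbb{R}^{n+1}_+$, and to invoke the classification of the entire positive solutions of the limit problem (\ref{e:yamabe3}) to identify each concentration profile with a standard bubble $\delta_{a,\lambda}$. Throughout, write $q_* = \frac{n+2\gamma}{n-2\gamma}$ and $\beta_k = J(u_k)^{\frac{n}{n-2\gamma}}$.

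First I would translate the variational hypotheses into an approximate equation. Since $u_k\in\Sigma$ we have $\|u_k\|=1$, so $\{u_k\}$ is bounded in $S^2_\gamma(\mathbb{S}^n)$; computing $J'$ along the constraint and using $J(u_k)\to c$ together with the positivity and $C^2$ bounds on $K$ gives
$$
P_\gamma u_k - \beta_k\,K\,u_k^{q_*} = o(1)\quad\text{in }(S^2_\gamma(\mathbb{S}^n))^*,
$$
with $\beta_k$ bounded away from $0$ and $\infty$. Passing to a subsequence, $u_k\rightharpoonup u_\infty$ weakly in $S^2_\gamma$, strongly in $L^r$ for every $r<\frac{2n}{n-2\gamma}$ (the subcritical embedding is compact) and a.e., so $u_\infty\ge 0$ is a weak solution of $P_\gamma u_\infty=\beta_\infty K u_\infty^{q_*}$. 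By the regularity theory for $P_\gamma$ and the maximum principle, either $u_\infty\equiv 0$ or $u_\infty>0$; in the latter case an appropriate rescaling of $u_\infty$ solves (\ref{e:main}), contradicting the standing assumption. Hence $u_\infty\equiv 0$ and $v_k:=u_k\rightharpoonup 0$ while $\|v_k\|^2\to 1$.

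Next I would extract the bubbles. Because $v_k\rightharpoonup 0$ but $\|v_k\|\not\to 0$, the failure of compactness in the critical embedding forces concentration: a Lions-type concentration lemma (applied to the extension $\widetilde v_k$ on $\mathbb{R}^{n+1}_+$, or directly via the fractional Sobolev inequality) excludes vanishing, so there are $a_1^k\in\mathbb{S}^n$ and $\lambda_1^k\to+\infty$ such that, after the stereographic change of variables and rescaling, $u_k$ converges weakly to a nontrivial entire positive solution of $(-\Delta)^\gamma w=w^{q_*}$ on $\mathbb{R}^n$; by the uniqueness results of \cite{ChenWenxiongLicongmingOubiaoCPAM2006} and \cite{LiYanyanJEMS2004} this is a standard bubble, i.e.\ a $\delta_{a_1^k,\lambda_1^k}$ in the original variables. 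Setting $u_k^{(1)}:=u_k-\alpha_1^k\delta_{a_1^k,\lambda_1^k}$, with $\alpha_1^k>0$ fixed by matching leading coefficients, the nonlocal interaction estimates of Section 3 and the Appendix show that $u_k^{(1)}$ is again an approximate Palais--Smale sequence, now at the strictly lower level $c-c_0$, where $c_0>0$ is the (quantized) energy of a single bubble. Iterating, at each stage either the current remainder converges strongly to $0$ in $S^2_\gamma$, in which case we stop, or a new bubble splits off and the energy drops by at least $c_0$; finiteness of the energy forces termination after some $p\ge 1$ steps, yielding
$$
u_k=\sum_{i=1}^p\alpha_i^k\,\delta_{a_i^k,\lambda_i^k}+o(1)\quad\text{in }S^2_\gamma(\mathbb{S}^n),
$$
with $\lambda_i^k\to+\infty$ and $\varepsilon_{ij}^k\to 0$ for $i\neq j$ (otherwise two profiles would merge, contradicting the termination of the extraction). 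To conclude that $u_k\in V(p,\varepsilon_k)$ it remains only to verify the normalization $J(u_k)^{\frac{n}{n-2\gamma}}(\alpha_j^k)^{\frac{4\gamma}{n-2\gamma}}K(a_j^k)\to 1$; this follows by testing the approximate equation for $u_k$ against each $\delta_{a_j^k,\lambda_j^k}$, expanding $\langle u_k,\delta_{a_j^k,\lambda_j^k}\rangle$ and $\int_{\mathbb{S}^n}Ku_k^{q_*}\delta_{a_j^k,\lambda_j^k}\,dvol_{g_0}$ as in Section 3, discarding the cross terms by $\varepsilon_{ij}^k\to 0$, and using the continuity of $K$ and $\lambda_j^k\to\infty$ to reduce the local integral to $K(a_j^k)$ times a universal constant. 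Choosing $\varepsilon_k\to 0$ dominating $\|v_k\|$, $\varepsilon_{ij}^k$, $1/\lambda_i^k$ and the defect in the normalization completes the argument.

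The main obstacle I anticipate is the bubble extraction in the nonlocal setting: establishing that each concentration has exactly the standard-bubble profile (which is where the extension problem and the Liouville-type classification are essential) and, simultaneously, proving the interaction estimates that make the energies of distinct bubbles asymptotically additive and guarantee that subtracting a bubble leaves an approximate Palais--Smale sequence at a strictly lower level. Controlling these nonlocal interaction terms and pinning down the quantization constant $c_0$ — so that the iteration is forced to terminate after finitely many steps — is the technical heart of the proof.
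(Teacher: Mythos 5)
Your proposal follows exactly the route the paper itself relies on: a Struwe/Brezis--Coron type iterative bubble extraction combined with the Liouville-type classification of entire positive solutions of $(-\Delta)^\gamma w = w^{\frac{n+2\gamma}{n-2\gamma}}$ from \cite{ChenWenxiongLicongmingOubiaoCPAM2006} and \cite{LiYanyanJEMS2004}, with the no-solution hypothesis forcing the weak limit to vanish so that $p\geq 1$. The paper gives no proof and simply cites these references, so your sketch is a correct and faithful rendering of the intended argument.
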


Following Bahri and Coron in \cite{BahriPRNIMS1989}, \cite{BahriCoronSCPSTDS1991} and \cite{BahriDukeMathJYambetypeflows}, we will use the following definition and notations later.

{\bf\noindent Definition} A critical point at infinity of $J$ on $\Sigma^+$ is a limit of a flow line $u(s)$ of the equation
\begin{equation*}
\left
\{
\begin{array}{lll}
\frac{\partial u(s)}{\partial s} = - J(u(s)),\\
u(0) = u_0,
\end{array}
\right.
\end{equation*}
such that $u(s)$ remains in $V(p, \varepsilon(s))$ for $s\geq s_0$, where $\varepsilon(s)\to 0$ as $s\to +\infty$ and $u_0$ is an initial condition.

Using Lemma \ref{l:decomposition}, $u(s)$ can be written as
$$
u(s) = \sum_{i = 1}^p\alpha_i(s)\delta_{a_i(s), \lambda_i(s)} + v(s).
$$
Let $a_i: = \lim_{s\to\infty}a_i(s)$ and $\alpha_i = \lim_{s\to\infty}\alpha_i(s)$, then such a critical point at infinity is denoted by
$$
(a_1,\cdot\cdot\cdot, a_p)_\infty\quad\text{ or }\quad \sum_{i = 1}^p\alpha_i\delta_{a_i, \infty}.
$$

\section{Expansion of the functional and its gradient}
In this section, we give the expansion of the functional and its gradient near the potential critical points at infinity.
\subsection{Expansion of the functional.}
\begin{lemma}\label{l:expansion}
For $\varepsilon > 0$ small enough and $u = \sum_{i = 1}^p\alpha_i\delta_{a_i, \lambda_i} + v\in V(p, \varepsilon)$ with $v$ satisfies condition (\ref{e:orthonormal}), we have the following equation,
\begin{eqnarray*}
J(u) &=& \frac{\sum_{i = 1}^p \alpha_i^2 S}{\left(\sum_{i = 1}^p \alpha_i^{\frac{2n}{n - 2\gamma}}K(a_i) S\right)^{\frac{n - 2\gamma}{n}}}\left[1 - \frac{n - 2\gamma}{n}\frac{c_2}{\Gamma_1}\sum_{i= 1}^p\alpha_i^{\frac{2n}{n - 2\gamma}}\frac{\Delta K(a_i)}{\lambda_i^2}\right]\\
&&+ \frac{\sum_{i = 1}^p \alpha_i^2 S}{\left(\sum_{i = 1}^p \alpha_i^{\frac{2n}{n - 2\gamma}}K(a_i) S\right)^{\frac{n - 2\gamma}{n}}}\left[\sum_{i\neq j}c_0^{\frac{2n}{n - 2\gamma}}c_1\omega_n\varepsilon_{ij}\left(\frac{\alpha_i\alpha_j}{\Gamma_2} -\frac{2\alpha_i^{\frac{n + 2\gamma}{n - 2\gamma}}\alpha_j K(a_i)}{\Gamma_1} \right)\right]\\
&&+ \frac{\sum_{i = 1}^p \alpha_i^2 S}{\left(\sum_{i = 1}^p \alpha_i^{\frac{2n}{n - 2\gamma}}K(a_i) S\right)^{\frac{n - 2\gamma}{n}}}\left[f(v) + Q(v, v) + o\left(\sum_{i\neq j}\varepsilon_{ij}\right) + o\left(\|v\|^2\right)\right],
\end{eqnarray*}
with
$$
f(v) = - \frac{2}{\Gamma_1}\int_{\mathbb{S}^{n}}K \left(\sum_{i = 1}^p\alpha_i\delta_{a_i, \lambda_i}\right)^{\frac{n + 2\gamma}{n - 2\gamma}}v\,\,dvol_{g_0},
$$
$$
Q(v, v) = \frac{1}{\Gamma_2}\|v\|^2 - \frac{2n + 4\gamma}{\Gamma_1(n + 2\gamma)}\sum_{i = 1}^p\int_{\mathbb{S}^{n}} K \left(\alpha_i\delta_{a_i, \lambda_i}\right)^{\frac{4\gamma}{n - 2\gamma}}v^2\,\,dvol_{g_0},
$$
$$
\Gamma_1 = \sum_{i = 1}^p \alpha_i^{\frac{2n}{n - 2\gamma}}K(a_i) S,\quad \Gamma_2 = \sum_{i = 1}^p \alpha_i^2 S.
$$
Furthermore, the operator norm of $f$ satisfies
$$
\|f\| = O\left(\sum_{i\neq j} \left(\varepsilon_{ij}\right)^{\frac{n + 2\gamma}{2n}} \left(\log \varepsilon_{ij}^{-1}\right)^{\frac{n - 2\gamma}{n}}+ \sum_{i = 1}^p\left(\frac{|\nabla K(a_i)|}{\lambda_i}+\frac{1}{\lambda_i^2}\right)\right).
$$
\end{lemma}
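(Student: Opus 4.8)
The plan is to expand the numerator $\|u\|^{2}$ and the denominator $D(u):=\int_{\mathbb{S}^{n}}Ku^{2n/(n-2\gamma)}\,dvol_{g_{0}}$ of $J$ separately, each to first order in the small quantities controlling $V(p,\varepsilon)$ (the interactions $\varepsilon_{ij}$, the curvature terms $\Delta K(a_{i})/\lambda_{i}^{2}$ and $\nabla K(a_{i})/\lambda_{i}$, and the remainder $v$), and then to recombine them through the Taylor expansion of $s\mapsto(1+s)^{-\frac{n-2\gamma}{n}}$. Throughout, write $\sigma=\sum_{i=1}^{p}\alpha_{i}\delta_{a_{i},\lambda_{i}}$ and $q=\frac{2n}{n-2\gamma}$, so $q-1=\frac{n+2\gamma}{n-2\gamma}$ and $q-2=\frac{4\gamma}{n-2\gamma}>0$. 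For the numerator, bilinearity of $\langle\cdot,\cdot\rangle$ together with the orthogonality conditions (\ref{e:orthonormal}) — which in particular kill every term $\langle\delta_{a_{i},\lambda_{i}},v\rangle$ — gives
$$\|u\|^{2}=\sum_{i=1}^{p}\alpha_{i}^{2}\|\delta_{a_{i},\lambda_{i}}\|^{2}+\sum_{i\neq j}\alpha_{i}\alpha_{j}\langle\delta_{a_{i},\lambda_{i}},\delta_{a_{j},\lambda_{j}}\rangle+\|v\|^{2}.$$
Since $\delta_{a,\lambda}$ solves (\ref{e:yamabe2}), $\|\delta_{a,\lambda}\|^{2}=\int_{\mathbb{S}^{n}}\delta_{a,\lambda}^{q}\,dvol_{g_{0}}=:S$ is independent of $(a,\lambda)$, so the first sum is $\Gamma_{2}$; and by the interaction estimate of the Appendix, $\langle\delta_{a_{i},\lambda_{i}},\delta_{a_{j},\lambda_{j}}\rangle=\int\delta_{a_{i},\lambda_{i}}^{q-1}\delta_{a_{j},\lambda_{j}}\,dvol_{g_{0}}=c_{0}^{q}c_{1}\omega_{n}\,\varepsilon_{ij}+o(\varepsilon_{ij})$ for $\varepsilon_{ij}$ small, using $P_{\gamma}\delta_{a_{i},\lambda_{i}}=\delta_{a_{i},\lambda_{i}}^{q-1}$.

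For the denominator, Taylor-expand $t\mapsto(\sigma+tv)^{q}$ to second order,
$$D(u)=\int K\sigma^{q}\,dvol_{g_{0}}+q\int K\sigma^{q-1}v\,dvol_{g_{0}}+\tfrac{q(q-1)}{2}\int K\sigma^{q-2}v^{2}\,dvol_{g_{0}}+o(\|v\|^{2}).$$
In the leading term, the elementary inequality $|(a+b)^{q}-a^{q}-b^{q}|\le C(a^{q-1}b+ab^{q-1})$ (valid since $q>1$) together with a decomposition of $\mathbb{S}^{n}$ around the concentration points yields $\int K\sigma^{q}=\sum_{i}\alpha_{i}^{q}\int K\delta_{a_{i},\lambda_{i}}^{q}+q\sum_{i\neq j}\alpha_{i}^{q-1}\alpha_{j}\int K\delta_{a_{i},\lambda_{i}}^{q-1}\delta_{a_{j},\lambda_{j}}+o(\sum_{i\neq j}\varepsilon_{ij})$; pulling the self-term back through the stereographic projection and Taylor-expanding $K$ at $a_{i}$, the gradient term integrates to zero by the radial symmetry of the standard bubble while the Hessian term, by convergence of $\int_{\mathbb{R}^{n}}|y|^{2}(1+|y|^{2})^{-n}dy$, gives $\int K\delta_{a_{i},\lambda_{i}}^{q}=K(a_{i})S+c_{2}\Delta K(a_{i})/\lambda_{i}^{2}+o(\lambda_{i}^{-2})$; in the interaction term $\delta_{a_{i},\lambda_{i}}^{q-1}$ concentrates at $a_{i}$, so $\int K\delta_{a_{i},\lambda_{i}}^{q-1}\delta_{a_{j},\lambda_{j}}=K(a_{i})\langle\delta_{a_{i},\lambda_{i}},\delta_{a_{j},\lambda_{j}}\rangle+o(\varepsilon_{ij})$. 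For the $v$-linear term, $P_{\gamma}\delta_{a_{i},\lambda_{i}}=\delta_{a_{i},\lambda_{i}}^{q-1}$ and (\ref{e:orthonormal}) give $\int\delta_{a_{i},\lambda_{i}}^{q-1}v=0$, hence $\int K\sigma^{q-1}v=\int\big(K\sigma^{q-1}-\sum_{i}K(a_{i})\alpha_{i}^{q-1}\delta_{a_{i},\lambda_{i}}^{q-1}\big)v$; after the normalisation carried out below (division by $\Gamma_{1}$, multiplication by $-\frac{n-2\gamma}{n}$, and $-\frac{n-2\gamma}{n}q=-2$) this term becomes exactly $f(v)$. For the $v$-quadratic term, since $q-2>0$ the interactions in $\sigma^{q-2}$ are lower order, so $\int K\sigma^{q-2}v^{2}=\sum_{i}\int K(\alpha_{i}\delta_{a_{i},\lambda_{i}})^{q-2}v^{2}+o(\|v\|^{2})$, which combines with the $\|v\|^{2}/\Gamma_{2}$ coming from the numerator to give $Q(v,v)$.

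Now write $\|u\|^{2}=\Gamma_{2}(1+X)$ and $D(u)=\Gamma_{1}(1+Y)$ with $X,Y=o(1)$ on $V(p,\varepsilon)$, so that
$$J(u)=\frac{\Gamma_{2}}{\Gamma_{1}^{(n-2\gamma)/n}}\,(1+X)(1+Y)^{-\frac{n-2\gamma}{n}}=\frac{\Gamma_{2}}{\Gamma_{1}^{(n-2\gamma)/n}}\Big(1+X-\tfrac{n-2\gamma}{n}Y+O(X^{2}+Y^{2})\Big).$$
Every product of two small quantities ($\varepsilon_{ij}\varepsilon_{kl}$, $\varepsilon_{ij}\|v\|$, $\Delta K(a_{i})\lambda_{i}^{-2}\|v\|$, the $v^{4}$-type terms hidden in $Y^{2}$, $\|f\|^{2}\|v\|^{2}$, and so on) falls into $o(\sum_{i\neq j}\varepsilon_{ij})+o(\|v\|^{2})$. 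Substituting the expansions of $X$ (first paragraph) and $Y$ (second paragraph), noting that $\Gamma_{2}/\Gamma_{1}^{(n-2\gamma)/n}$ is the claimed prefactor, and collecting the $\Delta K(a_{i})/\lambda_{i}^{2}$-terms (arising only from $Y$, with coefficient $-\frac{n-2\gamma}{n}\frac{c_{2}}{\Gamma_{1}}$), the $\varepsilon_{ij}$-terms (the $\alpha_{i}\alpha_{j}/\Gamma_{2}$-part from $X$, the $\alpha_{i}^{(n+2\gamma)/(n-2\gamma)}\alpha_{j}K(a_{i})/\Gamma_{1}$-part from $Y$ with coefficient $-2=-\frac{n-2\gamma}{n}q$), the form $f(v)$, and the quadratic form $Q(v,v)$ (whose curvature part carries the coefficient $-\frac{n-2\gamma}{n}\frac{q(q-1)}{2}\frac{1}{\Gamma_{1}}$) reproduces the asserted identity.

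It remains to bound the operator norm of $f$. Viewing $f$ as a bounded linear functional on $S^{2}_{\gamma}(\mathbb{S}^{n})$, Hölder's inequality with the conjugate exponents $\frac{2n}{n+2\gamma}$, $\frac{2n}{n-2\gamma}$ and the Sobolev embedding $S^{2}_{\gamma}(\mathbb{S}^{n})\hookrightarrow L^{2n/(n-2\gamma)}(\mathbb{S}^{n})$ give $\|f\|\le C\big\|K\sigma^{q-1}-\sum_{i}K(a_{i})\alpha_{i}^{q-1}\delta_{a_{i},\lambda_{i}}^{q-1}\big\|_{L^{2n/(n+2\gamma)}}$. Split the bracket as $K\big(\sigma^{q-1}-\sum_{i}(\alpha_{i}\delta_{a_{i},\lambda_{i}})^{q-1}\big)+\sum_{i}(K-K(a_{i}))(\alpha_{i}\delta_{a_{i},\lambda_{i}})^{q-1}$: the first piece has $L^{2n/(n+2\gamma)}$-norm $O(\sum_{i\neq j}\varepsilon_{ij}^{(n+2\gamma)/(2n)}(\log\varepsilon_{ij}^{-1})^{(n-2\gamma)/n})$ by the interaction estimate of the Appendix, the logarithm being the borderline integrability in the zone where two bubbles overlap, while the second piece has norm $O(\sum_{i}(|\nabla K(a_{i})|/\lambda_{i}+\lambda_{i}^{-2}))$ by Taylor's formula $|K(x)-K(a_{i})|\le C(|\nabla K(a_{i})|\,|x-a_{i}|+|x-a_{i}|^{2})$ and the scaling $\big\||x-a_{i}|^{m}\delta_{a_{i},\lambda_{i}}^{q-1}\big\|_{L^{2n/(n+2\gamma)}}\sim\lambda_{i}^{-m}$ (valid for $m\in\{1,2\}$ since $n\ge4$, $\gamma\in(0,1)$). \emph{The main obstacle} is not the bookkeeping above but these sharp interaction estimates — $\langle\delta_{a_{i},\lambda_{i}},\delta_{a_{j},\lambda_{j}}\rangle=c_{0}^{q}c_{1}\omega_{n}\varepsilon_{ij}(1+o(1))$, the clean $\varepsilon_{ij}$-splitting of $\int K\sigma^{q}$, and the $L^{2n/(n+2\gamma)}$-bound with its logarithmic factor — which require decomposing $\mathbb{S}^{n}$ into the balls $\{d(x,a_{i})\le\frac{1}{2}d(a_{i},a_{j})\}$, the symmetric region around $a_{j}$, and the intermediate zone, together with the convexity inequalities for $(a+b)^{q}$ in the regimes $q\le2$ and $q>2$ and explicit integration of powers of $\delta_{a,\lambda}$; we carry these out in Appendices A--C and here only record how they combine.
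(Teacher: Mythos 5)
Your proposal follows essentially the same route as the paper's Appendix A: expand numerator and denominator separately using the orthogonality (\ref{e:orthonormal}), the self-energy $S$ and the interaction estimate $\langle\delta_{a_i,\lambda_i},\delta_{a_j,\lambda_j}\rangle=c_0^{2n/(n-2\gamma)}c_1\omega_n\varepsilon_{ij}(1+o(1))$, Taylor-expand the denominator to second order in $v$, recombine through $(1+Y)^{-(n-2\gamma)/n}$, and bound $\|f\|$ by the same H\"older-plus-splitting argument as the paper's corresponding appendix lemma, so the proof is correct and not a different method. One caveat: your (correct) second-order Taylor coefficient gives the $v^2$-term the weight $\frac{n-2\gamma}{n}\cdot\frac{q(q-1)}{2}=\frac{n+2\gamma}{n-2\gamma}$, which does not equal the $\frac{2n+4\gamma}{n+2\gamma}=2$ appearing in the stated $Q(v,v)$ unless $n=6\gamma$, so your derivation does not literally reproduce the displayed identity --- but the mismatch traces back to the paper's own expansion (which writes $\frac{2n^2+4n\gamma}{n^2-4\gamma^2}=\frac{2n}{n-2\gamma}$ where $\frac{q(q-1)}{2}=\frac{n(n+2\gamma)}{(n-2\gamma)^2}$ is meant), so it is a slip in the source rather than a gap in your argument.
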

We will give the proof of this lemma in Appendix A.
\subsection{Expansion of the gradient.} In this subsection, we will compute the gradient of $J$ at $\lambda_j\frac{\partial \delta_{a_j, \lambda_j}}{\partial \lambda_j}$ and $\frac{1}{\lambda_j}\frac{\partial \delta_{a_j, \lambda_j}}{\partial a_j}$, respectively. We have
\begin{lemma}\label{l:expansionofgradient1}
Let $K$ be a $C^2$ positive function satisfying condition ({\bf nd}). Then for each $u = \sum_{i = 1}^p \alpha_i \delta_{a_i, \lambda_i}\in V(p, \varepsilon)$, we have the following expansion,
\begin{eqnarray*}
- J'(u) \left(\lambda_j\frac{\partial \delta_{a_j, \lambda_j}}{\partial \lambda_j}\right) &=& 2\lambda(u)\left[- \alpha_j\frac{n - 2\gamma}{2n}c_2\frac{\Delta K(a_j)}{K(a_j)\lambda_j^2} (1 + o(1)) \right]\\
&&+ 2\lambda(u)\left[\sum_{i\neq j}-\alpha_i\frac{4\gamma}{n + 2\gamma}c_0^{\frac{2n}{n-2\gamma}}c_1\omega_n\lambda_j\frac{\partial\varepsilon_{ij}}{\partial \lambda_j}(1 + o(1)) + o\left(\sum_{i\neq j}\varepsilon_{ij})\right)\right].
\end{eqnarray*}
\end{lemma}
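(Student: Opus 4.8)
The plan is to differentiate the functional $J$ directly in the direction $\lambda_j\partial_{\lambda_j}\delta_{a_j,\lambda_j}$, using the representation $u=\sum_{i=1}^p\alpha_i\delta_{a_i,\lambda_i}$ and the fact that the $\delta_{a_i,\lambda_i}$ solve the model equation \eqref{e:yamabe2}. Writing $N=\frac{2n}{n-2\gamma}$ and recalling that for any $h\in S^2_\gamma(\mathbb{S}^n)$,
\[
J'(u)h=\frac{2}{\|u\|^2_{*}}\Big(\langle u,h\rangle-J(u)^{\frac{n}{n-2\gamma}}\int_{\mathbb{S}^n}Ku^{N-1}h\,dvol_{g_0}\Big),
\]
where $\|u\|^2_{*}=\big(\int K u^N\big)^{(n-2\gamma)/n}$, I would first isolate the normalization factor (which after the constraint $|J(u)^{n/(n-2\gamma)}\alpha_j^{4\gamma/(n-2\gamma)}K(a_j)-1|<\varepsilon$ becomes the $2\lambda(u)$ appearing in the statement, where $\lambda(u)$ is the relevant ratio of $\|u\|^2$ and the curvature integral). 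The key cancellation is that $\langle\delta_{a_i,\lambda_i},\lambda_j\partial_{\lambda_j}\delta_{a_j,\lambda_j}\rangle$ and $\int\delta_{a_i,\lambda_i}^{N-1}\lambda_j\partial_{\lambda_j}\delta_{a_j,\lambda_j}$ either coincide (the $i=j$ self-interaction terms, which cancel because $\delta$ solves the model equation and $\lambda_j\partial_{\lambda_j}\delta_{a_j,\lambda_j}$ is tangent to the constraint sphere up to lower order) or, for $i\ne j$, are both governed by $\varepsilon_{ij}$ and its $\lambda_j$-derivative. What survives is exactly the curvature term, coming from replacing $K$ by $K(a_j)+\nabla K(a_j)\cdot(x-a_j)+\tfrac12 D^2K(a_j)[x-a_j]^2+o(|x-a_j|^2)$ inside $\int Ku^{N-1}\lambda_j\partial_{\lambda_j}\delta_{a_j,\lambda_j}$, plus the interaction terms.

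Concretely I would proceed in three steps. First, expand $\langle u,\lambda_j\partial_{\lambda_j}\delta_{a_j,\lambda_j}\rangle=\sum_i\alpha_i\langle\delta_{a_i,\lambda_i},\lambda_j\partial_{\lambda_j}\delta_{a_j,\lambda_j}\rangle$: the $i=j$ term is $O(\varepsilon_{ij})$-free and essentially vanishes by the orthogonality structure in \eqref{e:orthonormal}, while the $i\ne j$ terms give $\alpha_i\lambda_j\partial_{\lambda_j}\varepsilon_{ij}$ times the universal constant $c_0^{2n/(n-2\gamma)}c_1\omega_n$ (up to $1+o(1)$), which I would extract from the estimates in the appendices. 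Second, expand $J(u)^{n/(n-2\gamma)}\int_{\mathbb{S}^n}Ku^{N-1}\lambda_j\partial_{\lambda_j}\delta_{a_j,\lambda_j}$: using $u^{N-1}=\sum_i\alpha_i^{N-1}\delta_{a_i,\lambda_i}^{N-1}+(\text{cross terms }O(\varepsilon_{ij}))$, the diagonal $i=j$ piece, after inserting the Taylor expansion of $K$ at $a_j$ and using that $\int\delta^{N-1}\lambda\partial_\lambda\delta=0$ while $\int\delta^{N-1}_{a_j,\lambda_j}\,\frac{(1-\cos d(x,a_j))}{\lambda_j^{-2}}\,\lambda_j\partial_{\lambda_j}\delta_{a_j,\lambda_j}$ produces the constant $c_2$, yields the leading term $-\alpha_j\frac{n-2\gamma}{2n}c_2\frac{\Delta K(a_j)}{K(a_j)\lambda_j^2}(1+o(1))$; note the first-order term $\nabla K(a_j)$ integrates against an odd profile (in the stereographic coordinates centered at $a_j$) and drops out, leaving only the Laplacian. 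Third, combine the two expansions, factor out $2\lambda(u)$ using the constraint, and absorb all remaining errors — $\|v\|$-terms (absent here since $v=0$ by hypothesis), higher-order terms in $\varepsilon_{ij}$, and the $o(1/\lambda_j^2)$ remainder from the $C^2$ Taylor expansion of $K$ — into the stated $o(\cdot)$ symbols.

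The main obstacle I anticipate is the bookkeeping of the constants and the careful separation of genuinely leading terms from errors: in particular, verifying that the self-interaction $i=j$ contributions to $\langle u,\lambda_j\partial_{\lambda_j}\delta_{a_j,\lambda_j}\rangle$ and to the curvature integral cancel to the order claimed (so that no spurious $O(1/\lambda_j)$ or $O(1)$ term survives), and checking that the interaction derivatives $\lambda_j\partial_{\lambda_j}\varepsilon_{ij}$ are indeed the right order compared to $\varepsilon_{ij}$ and to $1/\lambda_j^2$. This rests on the precise asymptotics of the integrals $\int_{\mathbb{S}^n}\delta_{a_i,\lambda_i}^a\delta_{a_j,\lambda_j}^b$ for various exponents, which are collected in Appendices A--C; once those estimates are in hand the present lemma is an exercise in assembling them, differentiating $\varepsilon_{ij}$ explicitly, and invoking condition ({\bf nd}) to guarantee $\Delta K(a_j)\ne 0$ so that the displayed term is the genuine leading order whenever the concentration points approach critical points of $K$.
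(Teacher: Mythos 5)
Your proposal follows essentially the same route as the paper's Appendix B: write $J(u)=\lambda(u)\|u\|^2$, compute $J'(u)W=2\lambda(u)\bigl[\langle u,W\rangle-\lambda(u)^{n/(n-2\gamma)}\int_{\mathbb{S}^n}Ku^{\frac{n+2\gamma}{n-2\gamma}}W\bigr]$ for $W=\lambda_j\partial_{\lambda_j}\delta_{a_j,\lambda_j}$, use $\langle\delta_{a_j,\lambda_j},\lambda_j\partial_{\lambda_j}\delta_{a_j,\lambda_j}\rangle=0$ and the interaction estimates to reduce the linear pairing to $\lambda_j\partial_{\lambda_j}\varepsilon_{ij}$ terms, Taylor-expand $K$ at $a_j$ in the diagonal nonlinear term (the gradient term dropping by oddness, the Hessian term producing $c_2\Delta K(a_j)/\lambda_j^2$), and normalize via $\lambda(u)^{n/(n-2\gamma)}\alpha_j^{4\gamma/(n-2\gamma)}K(a_j)\to1$. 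The only remaining work, as you note, is the constant bookkeeping carried out in the appendix lemmas, so the proposal is correct and matches the paper's proof.
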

and
\begin{lemma}\label{l:expansionofgradient2}
Let $K$ be a $C^2$ positive function satisfying condition ({\bf nd}). Then for each $u = \sum_{i = 1}^p \alpha_i \delta_{a_i, \lambda_i}\in V(p, \varepsilon)$, we have the following expansion,
\begin{eqnarray*}
&&-J'(u) \left(\frac{1}{\lambda_j}\frac{\partial \delta_{a_j, \lambda_j}}{\partial a_j}\right) = 2\lambda(u)\left[\frac{n - 2\gamma}{2n}c_0^{\frac{2n}{n-2\gamma}}c_1\omega_n\frac{\alpha_j}{K(a_j)}\frac{\nabla K(a_j)}{\lambda_j}(1 + o(1))\right.\\ && \quad\quad\quad\quad\quad\quad\quad\quad\quad\quad\quad\quad\quad\quad\quad\quad\quad\quad\quad\quad\quad\quad\quad\quad\quad\quad\quad\quad + \left.O\left(\sum_{i\neq j}\varepsilon_{ij} + \frac{1}{\lambda_j^3}\right)\right].
\end{eqnarray*}
\end{lemma}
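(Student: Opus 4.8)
The plan is to evaluate $-J'(u)$ directly from the quotient form of $J$ and then substitute the explicit test direction. Writing $N(u):=\int_{\mathbb{S}^{n}}Ku^{\frac{2n}{n-2\gamma}}\,dvol_{g_0}$, differentiating $J(u)=\|u\|^{2}N(u)^{-\frac{n-2\gamma}{n}}$ gives, for every $h\in S^{2}_{\gamma}(\mathbb{S}^{n})$,
\[
-J'(u)(h)=\frac{2}{N(u)^{\frac{n-2\gamma}{n}}}\left[\frac{\|u\|^{2}}{N(u)}\int_{\mathbb{S}^{n}}Ku^{\frac{n+2\gamma}{n-2\gamma}}h\,dvol_{g_0}-\langle u,h\rangle\right],
\]
and I would recast the prefactor in the normalised form $2\lambda(u)$ of the statement by means of the two inequalities defining $V(p,\varepsilon)$ — in particular $J(u)^{\frac{n}{n-2\gamma}}\alpha_j^{\frac{4\gamma}{n-2\gamma}}K(a_j)=1+o(1)$ and $\|u\|^{2}=\sum_i\alpha_i^{2}S+O\!\big(\sum_{i\neq j}\varepsilon_{ij}\big)$. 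It then remains to expand the two inner products in the bracket with $u=\sum_{i=1}^{p}\alpha_i\delta_{a_i,\lambda_i}$ (there is no $v$-term here) and $h=\tfrac1{\lambda_j}\frac{\partial\delta_{a_j,\lambda_j}}{\partial a_j}$, all vector identities being read componentwise in a geodesic normal chart at $a_j$.

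For the linear term I would write $\langle u,h\rangle=\sum_i\alpha_i\big\langle\delta_{a_i,\lambda_i},\tfrac1{\lambda_j}\frac{\partial\delta_{a_j,\lambda_j}}{\partial a_j}\big\rangle$. The diagonal term $i=j$ vanishes identically: since $\|\delta_{a,\lambda}\|^{2}\equiv S$ does not depend on the concentration point, $\big\langle\delta_{a_j,\lambda_j},\frac{\partial\delta_{a_j,\lambda_j}}{\partial a_j}\big\rangle=\tfrac12\frac{\partial}{\partial a_j}\|\delta_{a_j,\lambda_j}\|^{2}=0$. For $i\neq j$ I would use the interaction estimate $\langle\delta_{a_i,\lambda_i},\delta_{a_j,\lambda_j}\rangle=c_0^{\frac{2n}{n-2\gamma}}c_1\omega_n\varepsilon_{ij}(1+o(1))$ from Appendix B together with a direct differentiation of the explicit algebraic expression of $\varepsilon_{ij}$; using the bounds $\varepsilon_{ij}\le\varepsilon$ and $\lambda_i,\lambda_j>1/\varepsilon$ valid on $B_\varepsilon$, this gives $\tfrac1{\lambda_j}\big|\frac{\partial\varepsilon_{ij}}{\partial a_j}\big|\le C\varepsilon_{ij}$ and hence $\langle u,h\rangle=O\!\big(\sum_{i\neq j}\varepsilon_{ij}\big)$.

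For the nonlinear term I would expand $u^{\frac{n+2\gamma}{n-2\gamma}}=\big(\sum_i\alpha_i\delta_{a_i,\lambda_i}\big)^{\frac{n+2\gamma}{n-2\gamma}}$ and isolate the pure $j$-th contribution. Since $K=K(x)$ does not depend on $a_j$ and $\delta_{a_j,\lambda_j}^{\frac{n+2\gamma}{n-2\gamma}}\frac{\partial\delta_{a_j,\lambda_j}}{\partial a_j}=\tfrac{n-2\gamma}{2n}\frac{\partial}{\partial a_j}\delta_{a_j,\lambda_j}^{\frac{2n}{n-2\gamma}}$, this pure term equals
\[
\alpha_j^{\frac{n+2\gamma}{n-2\gamma}}\,\frac{n-2\gamma}{2n}\,\frac1{\lambda_j}\,\frac{\partial}{\partial a_j}\int_{\mathbb{S}^{n}}K\,\delta_{a_j,\lambda_j}^{\frac{2n}{n-2\gamma}}\,dvol_{g_0}.
\]
I would then invoke the expansion $\int_{\mathbb{S}^{n}}K\delta_{a_j,\lambda_j}^{\frac{2n}{n-2\gamma}}dvol_{g_0}=c_0^{\frac{2n}{n-2\gamma}}c_1\omega_n K(a_j)+c_2\dfrac{\Delta K(a_j)}{\lambda_j^{2}}+o(\lambda_j^{-2})$ proved in Appendix A (this is the building block of Lemma \ref{l:expansion}; the absence of a $\lambda_j^{-1}$ term reflects the vanishing of the first spatial moment of the radially symmetric $\delta_{a_j,\lambda_j}$ about $a_j$), and differentiate it in $a_j$, so that the pure term becomes $\tfrac{n-2\gamma}{2n}c_0^{\frac{2n}{n-2\gamma}}c_1\omega_n\,\alpha_j^{\frac{n+2\gamma}{n-2\gamma}}\dfrac{\nabla K(a_j)}{\lambda_j}+O(\lambda_j^{-3})$. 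Every remaining term of the binomial expansion carries at least one factor $\delta_{a_i,\lambda_i}$ with $i\neq j$ paired against $\tfrac1{\lambda_j}\frac{\partial\delta_{a_j,\lambda_j}}{\partial a_j}$, and I would bound each of them by $O(\varepsilon_{ij})$ using the interaction estimates of Appendices B and C — for instance $\int_{\mathbb{S}^{n}}\delta_{a_i,\lambda_i}^{\frac{n+2\gamma}{n-2\gamma}}\tfrac1{\lambda_j}\frac{\partial\delta_{a_j,\lambda_j}}{\partial a_j}=\tfrac1{\lambda_j}\frac{\partial}{\partial a_j}\langle\delta_{a_i,\lambda_i},\delta_{a_j,\lambda_j}\rangle=O(\varepsilon_{ij})$ by the previous step. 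Substituting all of this into the formula for $-J'(u)(h)$ and converting the coefficient $\frac{\|u\|^{2}}{N(u)}\alpha_j^{\frac{n+2\gamma}{n-2\gamma}}$ into $\frac{\alpha_j}{K(a_j)}$ (the leftover powers being absorbed into $\lambda(u)$) by means of the $V(p,\varepsilon)$-constraints yields exactly the asserted expansion.

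The computation is routine; the delicate part is the bookkeeping of orders. I expect the main obstacle to be twofold: first, verifying that differentiating the interaction integrals in $a_j$ and dividing by $\lambda_j$ does not spoil the $\varepsilon_{ij}$-bound (this relies on the precise algebraic form of $\varepsilon_{ij}$ and on the definition of $B_\varepsilon$); second, checking that the vanishing of the diagonal self-interaction together with that of the first spatial moment drives the remainder past the $\nabla K(a_j)/\lambda_j$ term all the way down to $O(\lambda_j^{-3})$ rather than merely $O(\lambda_j^{-2})$. The only genuinely geometric point, absent from the Euclidean model, is that on $\mathbb{S}^{n}$ the field $\partial/\partial a_j$ is not an infinitesimal translation, so I would use the conformal covariance \eqref{e:conformalltransform} — equivalently the same change-of-variables estimates already employed for Lemma \ref{l:expansion} — to see that the resulting curvature corrections stay within $o(\lambda_j^{-2})$ and hence do not affect the stated error term.
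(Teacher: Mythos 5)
Your proposal follows essentially the same route as the paper's Appendix C: the Euler--Lagrange form of $-J'(u)$ applied to $h=\frac{1}{\lambda_j}\frac{\partial\delta_{a_j,\lambda_j}}{\partial a_j}$, the vanishing of the diagonal inner product $\langle\delta_{a_j,\lambda_j},\partial_{a_j}\delta_{a_j,\lambda_j}\rangle$, the exact identity $\delta_{a_j,\lambda_j}^{\frac{n+2\gamma}{n-2\gamma}}\partial_{a_j}\delta_{a_j,\lambda_j}=\frac{n-2\gamma}{2n}\partial_{a_j}\bigl(\delta_{a_j,\lambda_j}^{\frac{2n}{n-2\gamma}}\bigr)$ for the main term, $O(\varepsilon_{ij})$ bounds for all cross terms, and the normalisation $\lambda(u)^{\frac{n}{n-2\gamma}}\alpha_j^{\frac{4\gamma}{n-2\gamma}}K(a_j)\to 1$ to produce the coefficient $\alpha_j/K(a_j)$. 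The decomposition, the error accounting and the passage to stereographic coordinates all match the paper's sub-lemmas.

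One step is stated in a form that is not literally valid: you propose to obtain $\frac{1}{\lambda_j}\partial_{a_j}\int_{\mathbb{S}^n}K\delta_{a_j,\lambda_j}^{\frac{2n}{n-2\gamma}}$ (and likewise $\frac{1}{\lambda_j}\partial_{a_j}\langle\delta_{a_i,\lambda_i},\delta_{a_j,\lambda_j}\rangle$) by differentiating an already-established asymptotic expansion. An $o(\lambda_j^{-2})$ remainder carries no information about its derivative, so this inference is a fallacy as written. The repair is exactly what the paper does: differentiate under the integral sign first, then expand --- i.e.\ write the differentiated integral as $\int(\tilde K(x')-\tilde K(g_j))w_{g_j,\lambda_j}^{\frac{n+2\gamma}{n-2\gamma}}\frac{1}{\lambda_j}\partial_{g_j}w_{g_j,\lambda_j}\,dx'$, Taylor-expand $\tilde K$ at $g_j$, and use the oddness of $\frac{1}{\lambda_j}\partial_{g_j}w_{g_j,\lambda_j}$ in $x'-g_j$ (the moment cancellation you already invoke) to isolate $\frac{n-2\gamma}{2n}c_0^{\frac{2n}{n-2\gamma}}c_1\omega_n\frac{\nabla K(a_j)}{\lambda_j}$ with the stated remainder; for the interaction terms the bound $O(\varepsilon_{ij})$ follows from the pointwise inequality $\bigl|\frac{1}{\lambda_j}\partial_{a_j}\delta_{a_j,\lambda_j}\bigr|\leq C\delta_{a_j,\lambda_j}$ rather than from differentiating $\langle\delta_{a_i,\lambda_i},\delta_{a_j,\lambda_j}\rangle=c\,\varepsilon_{ij}(1+o(1))$. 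With that substitution the argument is the paper's proof.
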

We will give the proof of Lemma \ref{l:expansionofgradient1} in Appendix B and Lemma \ref{l:expansionofgradient2} in Appendix C.
\subsection{On the $v$-part of $u$.}
Set
$$
H_\varepsilon(a, \lambda) = \{v\in S^2_\gamma(\mathbb{S}^{n})|\,\, v \text{ satisfies } (\ref{e:orthonormal})\text{ and } \|v\|\leq \varepsilon\}.
$$
Then we have
\begin{lemma}\label{l:positivedefinite}
The quadratic form $Q$ in Lemma \ref{l:expansion} is positive definite in $H_\varepsilon(a, \lambda)$.
\end{lemma}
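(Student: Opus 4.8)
The plan is to reduce the positivity of $Q$ on $H_\varepsilon(a,\lambda)$ to the known spectral picture for the linearization of the model equation (\ref{e:yamabe3}) about a single standard bubble; the orthogonality conditions (\ref{e:orthonormal}) are exactly what buys a strict spectral gap. First I would use that, on $B_\varepsilon$, distinct bubbles are almost orthogonal: since $\varepsilon_{ij}\leq\varepsilon$ and $\lambda_i>1/\varepsilon$, all cross interactions $\int_{\mathbb{S}^n}\delta_{a_i,\lambda_i}^{\frac{n+2\gamma}{n-2\gamma}}\delta_{a_j,\lambda_j}$, $\int_{\mathbb{S}^n}(\alpha_i\delta_{a_i,\lambda_i})^{\frac{4\gamma}{n-2\gamma}}\delta_{a_j,\lambda_j}^2$ and the like are $o(1)$, while $|\nabla K(a_i)|/\lambda_i+1/\lambda_i^2\to 0$. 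Using a partition of unity subordinate to the concentration regions (equivalently, reading $v$ in the concentrated coordinates near each $a_i$) one gets $Q(v,v)=\sum_{i=1}^p Q_i(v,v)+o(\|v\|^2)$, with $Q_i(v,v)=\tfrac{1}{\Gamma_2}\|v\|_i^2-\tfrac{c}{\Gamma_1}\int_{\mathbb{S}^n}K(\alpha_i\delta_{a_i,\lambda_i})^{\frac{4\gamma}{n-2\gamma}}v^2\,dvol_{g_0}$ and $\sum_i\|v\|_i^2\leq(1+o(1))\|v\|^2$. It therefore suffices to prove $Q_i(v,v)\geq c_0\|v\|_i^2$ with $c_0>0$ independent of $i$ and of the (small) $\varepsilon$.

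For a fixed bubble, the conformal covariance (\ref{e:conformalltransform}) together with the stereographic projection sending $a_i$ to the origin transforms $\delta_{a_i,\lambda_i}$ into the standard solution $w_{0,\lambda_i}$ of (\ref{e:yamabe3}) on $\mathbb{R}^n$, and the dilation $x\mapsto\lambda_i x$ turns it into $U:=w_{0,1}$; since all these maps are isometries of the relevant space, the orthogonality (\ref{e:orthonormal}) becomes $\langle v,U\rangle_{\dot H^{\gamma}}=\langle v,\partial_\lambda U\rangle_{\dot H^{\gamma}}=\langle v,\partial_{x_k}U\rangle_{\dot H^{\gamma}}=0$, $k=1,\dots,n$. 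As $v$ lives where $\delta_{a_i,\lambda_i}$ concentrates and $K$ is continuous at $a_i$, $\int_{\mathbb{S}^n}K(\alpha_i\delta_{a_i,\lambda_i})^{\frac{4\gamma}{n-2\gamma}}v^2=(1+o(1))\,\alpha_i^{\frac{4\gamma}{n-2\gamma}}K(a_i)\int_{\mathbb{R}^n}U^{\frac{4\gamma}{n-2\gamma}}v^2$, and evaluating $\alpha_i,\Gamma_1,\Gamma_2$ through the defining inequalities of $V(p,\varepsilon)$ reduces $Q_i(v,v)\geq c_0\|v\|_i^2$ to the single inequality
\[
\int_{\mathbb{R}^n}\big|(-\Delta)^{\gamma/2}v\big|^2\;\geq\;\Big(\tfrac{n+2\gamma}{n-2\gamma}+o(1)\Big)\int_{\mathbb{R}^n}U^{\frac{4\gamma}{n-2\gamma}}v^2
\]
for every $v\in\dot H^{\gamma}(\mathbb{R}^n)$ orthogonal to $U,\partial_\lambda U,\partial_{x_1}U,\dots,\partial_{x_n}U$.

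This last inequality I would extract from the weighted eigenvalue problem $(-\Delta)^{\gamma}\phi=\mu\,U^{\frac{4\gamma}{n-2\gamma}}\phi$ in $\dot H^{\gamma}(\mathbb{R}^n)$. Because $U^{\frac{4\gamma}{n-2\gamma}}$ decays like $|x|^{-4\gamma}$ and hence lies in $L^{n/(2\gamma)}(\mathbb{R}^n)$, the operator $(-\Delta)^{-\gamma/2}U^{\frac{4\gamma}{n-2\gamma}}(-\Delta)^{-\gamma/2}$ is compact, positive and injective on $L^2(\mathbb{R}^n)$, so its eigenfunctions form a complete orthogonal system in $\dot H^{\gamma}$ with eigenvalues $0<\mu_1<\mu_2\leq\mu_3\leq\cdots\to\infty$. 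One has $\mu_1=1$ (simple, positive eigenfunction $U$, from $(-\Delta)^{\gamma}U=U^{\frac{n+2\gamma}{n-2\gamma}}$) and $\mu_2=\tfrac{n+2\gamma}{n-2\gamma}$ with eigenspace exactly $\mathrm{span}\{\partial_\lambda U,\partial_{x_1}U,\dots,\partial_{x_n}U\}$ — this is precisely the nondegeneracy of the standard fractional bubble contained in the uniqueness/classification results quoted in Section 2 (\cite{ChenWenxiongLicongmingOubiaoCPAM2006}, \cite{LiYanyanJEMS2004}). Hence $\mu_3>\mu_2$, and for $v$ orthogonal to the first $n+2$ eigenfunctions the min-max characterisation gives $\int|(-\Delta)^{\gamma/2}v|^2\geq\mu_3\int U^{\frac{4\gamma}{n-2\gamma}}v^2>(\mu_2+o(1))\int U^{\frac{4\gamma}{n-2\gamma}}v^2$ once $\varepsilon$ is small, which is the displayed inequality. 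Transporting back, $Q_i(v,v)\geq\tfrac12(1-\mu_2/\mu_3)\|v\|_i^2$, and summing over $i$ yields $Q(v,v)\geq\tfrac14(1-\mu_2/\mu_3)\|v\|^2>0$ on $H_\varepsilon(a,\lambda)$ for $\varepsilon$ small.

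The subtle point is not the spectral gap — nondegeneracy of the fractional bubble for $\gamma\in(0,1)$ is by now standard — but the multi-bubble bookkeeping: making the splitting $Q(v,v)=\sum_i Q_i(v,v)+o(\|v\|^2)$ with $\sum_i\|v\|_i^2\leq(1+o(1))\|v\|^2$ rigorous and uniform over $B_\varepsilon$, which leans on the interaction estimates of Appendix A and on $\varepsilon_{ij}\leq\varepsilon$, $\lambda_i>1/\varepsilon$. If one prefers to avoid explicit interaction computations, the conclusion also follows by contradiction: were $Q$ not uniformly positive definite there would be $\varepsilon_k\to0$, $(\alpha^k,a^k,\lambda^k)\in B_{\varepsilon_k}$ and $v_k$ with $\|v_k\|=1$ satisfying (\ref{e:orthonormal}) and $Q(v_k,v_k)\leq o(1)$; rescaling around the bubble carrying the bulk of $v_k$ (in the spirit of the concentration analysis behind Lemma \ref{l:lossofcompactness}) one extracts a weak $\dot H^{\gamma}(\mathbb{R}^n)$-limit $v_\infty$ orthogonal to $U,\partial_\lambda U,\partial_{x_k}U$ with $\int|(-\Delta)^{\gamma/2}v_\infty|^2\leq\tfrac{n+2\gamma}{n-2\gamma}\int U^{\frac{4\gamma}{n-2\gamma}}v_\infty^2$, forcing $v_\infty\equiv0$ by the gap, after which a lower-semicontinuity argument excludes $v_k\to0$ with $Q(v_k,v_k)\to0$. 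I would present the direct estimate as the main line and keep the contradiction version as a safeguard for the uniformity.
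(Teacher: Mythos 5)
Your argument is correct and is essentially the proof the paper has in mind: the paper omits the proof, deferring to \cite{AbdelhediChtiouiNonlinearAnalysis2007} and to the nondegeneracy of the fractional bubble in \cite{DavilaDelpinoSireYannickPAMS2013}, which is exactly the spectral gap $\mu_3>\mu_2=\tfrac{n+2\gamma}{n-2\gamma}$ for the weighted eigenvalue problem that you invoke, combined with the same almost-orthogonality of distinct bubbles on $B_\varepsilon$. One minor correction: the statement that the $\mu_2$-eigenspace is exactly $\mathrm{span}\{\partial_\lambda U,\partial_{x_1}U,\dots,\partial_{x_n}U\}$ is the separate nondegeneracy theorem of \cite{DavilaDelpinoSireYannickPAMS2013}, not a consequence of the classification results \cite{ChenWenxiongLicongmingOubiaoCPAM2006, LiYanyanJEMS2004} you attribute it to.
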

The proof is the same as the one in \cite{AbdelhediChtiouiNonlinearAnalysis2007} and uses the non degenerate result of \cite{DavilaDelpinoSireYannickPAMS2013}, so we omit it.
\begin{lemma}\label{l:vpartofu}
For any $u = \sum_{i = 1}^p \alpha_i \delta_{a_i, \lambda_i}\in V(p, \varepsilon)$, there exists a unique $\bar{v} = \bar{v}(\alpha, a, \lambda)$ which minimizes $J(u + v)$ with respect to $v\in E_\varepsilon$. Moreover, we have the following estimate
$$
\|\bar{v}\|\leq c\left[\sum_{i\neq j} \left(\varepsilon_{ij}\right)^{\frac{n + 2\gamma}{2n}} \left(\log \varepsilon_{ij}^{-1}\right)^{\frac{n - 2\gamma}{n}}+ \sum_{i = 1}^p\left(\frac{|\nabla K(a_i)|}{\lambda_i}+\frac{1}{\lambda_i^2}\right)\right].
$$
\end{lemma}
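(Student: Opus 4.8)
Fix $(\alpha,a,\lambda)\in B_{\varepsilon}$, set $u=\sum_{i=1}^{p}\alpha_{i}\delta_{a_{i},\lambda_{i}}$, and write $E=\{v\in S^{2}_{\gamma}(\mathbb S^{n}):v\text{ satisfies }(\ref{e:orthonormal})\}$ and $E_{\varepsilon}=H_{\varepsilon}(a,\lambda)=\{v\in E:\|v\|\le\varepsilon\}$. The plan is to produce $\bar v$ as the solution of the Euler--Lagrange equation of $J(u+\cdot)$ on $E$ by a contraction mapping, and then to identify it with the minimizer by a convexity argument. First I would invoke Lemma~\ref{l:expansion}: for $v\in E_{\varepsilon}$ it yields
$$
J(u+v)=\frac{\Gamma_{2}}{\Gamma_{1}^{(n-2\gamma)/n}}\Bigl(A+f(v)+Q(v,v)+g(v)\Bigr),
$$
where the prefactor $\Gamma_{2}\Gamma_{1}^{-(n-2\gamma)/n}$ is positive, $A$ depends only on $(\alpha,a,\lambda)$, $f$ and $Q$ are the linear functional and the quadratic form of Lemma~\ref{l:expansion} — so that $\|f\|$ is bounded by the right-hand side of the asserted estimate — and $g$ gathers the $v$-dependent remainder. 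Since $J(u+\cdot)$ is of class $C^{2}$ near $0$ (the Sobolev embedding being critical but $N(w):=\int_{\mathbb S^{n}}K|w|^{2n/(n-2\gamma)}\,dvol_{g_{0}}$ still $C^{2}$), $g$ is $C^{2}$ with $g(0)=g'(0)=0$, $g''(0)=0$ and $g''$ continuous; in particular $\|g'(v)\|=o(\|v\|)$ and $g'$ has arbitrarily small Lipschitz constant on sufficiently small balls. Minimizing $J(u+\cdot)$ over $E_{\varepsilon}$ thus amounts to minimizing $\Phi(v):=f(v)+Q(v,v)+g(v)$, and by Lemma~\ref{l:positivedefinite} there is $c_{1}>0$, uniform over $B_{\varepsilon}$, with $Q(v,v)\ge c_{1}\|v\|^{2}$ on $E_{\varepsilon}$.

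Next I would denote by $Q$ also the bounded self-adjoint positive operator on $E$ representing the form, so $\|Q^{-1}\|\le c_{1}^{-1}$, and rewrite the Euler--Lagrange equation $\Phi'(v)=f+2Qv+g'(v)=0$ (derivatives taken along $E$) as the fixed-point problem $v=T(v):=-\tfrac12 Q^{-1}\bigl(f+g'(v)\bigr)$. Put $\rho:=2c_{1}^{-1}\|f\|$; since the bound on $\|f\|$ from Lemma~\ref{l:expansion} forces $\|f\|$ to tend to $0$ much faster than $\varepsilon$ as one moves into $B_{\varepsilon}$, we have $\rho<\varepsilon$, and the properties of $g'$ give $T(\overline B_{\rho})\subseteq\overline B_{\rho}$ with $\mathrm{Lip}\,T|_{\overline B_{\rho}}<1$. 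Hence $T$ has a unique fixed point $\bar v\in\overline B_{\rho}$, with $\|\bar v\|\le c_{1}^{-1}\|f\|$; as $\bar v$ lies in the interior of $E_{\varepsilon}$, it is a genuine critical point of $J(u+\cdot)$. To see $\bar v$ is the minimizer over all of $E_{\varepsilon}$ I would use (i) that on $\overline B_{\rho}$ the monotonicity $\langle\Phi'(v_{1})-\Phi'(v_{2}),v_{1}-v_{2}\rangle\ge(2c_{1}-o(1))\|v_{1}-v_{2}\|^{2}$ makes $\Phi$ strictly convex, so $\bar v$ is its unique minimum there and $\Phi(\bar v)\le\Phi(0)=A$; and (ii) that for $v\in E_{\varepsilon}\setminus\overline B_{\rho}$ one has, using $Q(v,v)\ge c_{1}\|v\|^{2}$ and $|g(v)|=o(\|v\|^{2})$, $\Phi(v)\ge A-\|f\|\,\|v\|+\tfrac{c_{1}}{2}\|v\|^{2}\ge A\ge\Phi(\bar v)$ once $\varepsilon$ is small. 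These also force uniqueness of the minimizer. Substituting the bound for $\|f\|$ from Lemma~\ref{l:expansion} into $\|\bar v\|\le c_{1}^{-1}\|f\|$ gives exactly the claimed estimate.

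The hard part will be the rigorous, uniform control of $g$ and $g'$ — equivalently, the $C^{2}$-regularity and the (Hölder) continuity of the second differential of $N$ on $S^{2}_{\gamma}(\mathbb S^{n})$, uniformly for $(\alpha,a,\lambda)\in B_{\varepsilon}$. Since $\tfrac{2n}{n-2\gamma}$ is critical for $S^{2}_{\gamma}(\mathbb S^{n})\hookrightarrow L^{2n/(n-2\gamma)}(\mathbb S^{n})$, and for $n\ge4$, $\gamma\in(0,1)$ the exponent $\tfrac{n+2\gamma}{n-2\gamma}$ may be $<2$, $N$ is in general only $C^{2}$ with $N''$ Hölder of exponent $\beta=\min\{1,\tfrac{4\gamma}{n-2\gamma}\}$ — not $C^{3}$ — so one cannot Taylor expand to third order. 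Instead I would bound the pointwise remainders, for instance
$$
\Bigl|\,|u+v|^{\frac{4\gamma}{n-2\gamma}}(u+v)-u^{\frac{n+2\gamma}{n-2\gamma}}-\tfrac{n+2\gamma}{n-2\gamma}\,u^{\frac{4\gamma}{n-2\gamma}}v\Bigr|\le C\bigl(u^{\frac{4\gamma}{n-2\gamma}-\beta}|v|^{1+\beta}+|v|^{\frac{n+2\gamma}{n-2\gamma}}\bigr),
$$
integrate against $K$, and apply Hölder's inequality together with the above embedding — and the scale invariance $\|\delta_{a,\lambda}\|=\mathrm{const}$, which keeps $\|u\|$ bounded on $B_{\varepsilon}$ — to get $g(v)=O(\|v\|^{2+\beta})$, $\|g'(v)\|=O(\|v\|^{1+\beta})$, and $\|g'(v_{1})-g'(v_{2})\|\le C(\|v_{1}\|+\|v_{2}\|)^{\beta}\|v_{1}-v_{2}\|$, which is precisely what the contraction and the convexity step require. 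The uniformity of $c_{1}$ and of the operator norms over $B_{\varepsilon}$ follows, as in \cite{BahriPRNIMS1989,BahriCoronSCPSTDS1991,BahriDukeMathJYambetypeflows}, from the translation--dilation invariance of the model equation (\ref{e:yamabe3}) together with the nondegeneracy result behind Lemma~\ref{l:positivedefinite}; only the fractional Sobolev bookkeeping is new here.
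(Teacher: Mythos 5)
Your proposal is correct and follows essentially the same route as the paper: expand $J(u+v)$ via Lemma \ref{l:expansion}, use the positive definiteness of $Q$ from Lemma \ref{l:positivedefinite} to invert the linearized Euler--Lagrange equation, and conclude $\|\bar v\|\leq c\|f\|$, which gives the stated bound. The paper phrases the existence/uniqueness step as direct minimization of the coercive functional $f(v)+\tfrac12\langle Av,v\rangle+o(\|v\|^2)$ rather than as a contraction mapping, and it simply asserts the needed differentiability of the remainder; your explicit H\"older-$C^2$ control of the nonlinearity is a refinement of, not a departure from, that argument.
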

\begin{proof}
The proof of this lemma is similar to \cite{GamaraPSCCRANS2002} and \cite{YacoubANS2013}, for reader's convenience, we give it here. From Section 2, we know that the parameterization of $V(p, \varepsilon)$ is given by the following map
\begin{eqnarray*}
&&B_\varepsilon\times H_\varepsilon(a, \lambda)\to V(p, \varepsilon)
\end{eqnarray*}
\begin{eqnarray*}
&&(\alpha, a, \lambda, v)\to u= \sum_{i = 1}^p \alpha_i\delta_{a_i, \lambda_i} + v,
\end{eqnarray*}
where $(\alpha, a, \lambda)$ is the solution in $B_\varepsilon$ of the minimizing problem (\ref{e:minimization}), $v = u - \sum_{i = 1}^p\alpha_i\delta_{a_i, \lambda_i}\in H_\varepsilon(a, \lambda)$. Since $(\alpha, a, \lambda)\in B_\varepsilon$, $\varepsilon_{ij}$'s are small enough, then by Lemma \ref{l:positivedefinite}, the quadratic form $Q$ is definite positive in $H_\varepsilon(a, \lambda)$. Thus there exists a continuous self adjoint, positive definite and invertible operator $A$, such that $Q(v) = \frac{1}{2}\langle Av, v\rangle$ on $H_\varepsilon(a, \lambda)$ and $\beta_0 Id\leq A \leq \beta_1 Id$, here $\beta_1 > \beta_0 $ are positive constants. Then from Lemma \ref{l:expansion}, it holds that
\begin{eqnarray*}
J(u) &=& \frac{\sum_{i = 1}^p \alpha_i^2 S}{\left(\sum_{i = 1}^p \alpha_i^{\frac{2n}{n - 2\gamma}}K(a_i) S\right)^{\frac{n - 2\gamma}{n}}}\left[1 - \frac{n - 2\gamma}{n}\frac{c_2}{\Gamma_1}\sum_{i= 1}^p\alpha_i^{\frac{2n}{n - 2\gamma}}\frac{\Delta K(a_i)}{\lambda_i^2}\right]
\end{eqnarray*}
\begin{eqnarray*}
\quad\quad\quad\quad\quad\quad + \frac{\sum_{i = 1}^p \alpha_i^2 S}{\left(\sum_{i = 1}^p \alpha_i^{\frac{2n}{n - 2\gamma}}K(a_i) S\right)^{\frac{n - 2\gamma}{n}}}\left[\sum_{i\neq j}c_0^{\frac{2n}{n - 2\gamma}}c_1\omega_n\varepsilon_{ij}\left(\frac{\alpha_i\alpha_j}{\Gamma_2} -\frac{2\alpha_i^{\frac{n + 2\gamma}{n - 2\gamma}}\alpha_j K(a_i)}{\Gamma_1} \right)\right]
\end{eqnarray*}
\begin{eqnarray*}
\quad\quad\quad\quad\quad\quad + \frac{\sum_{i = 1}^p \alpha_i^2 S}{\left(\sum_{i = 1}^p \alpha_i^{\frac{2n}{n - 2\gamma}}K(a_i) S\right)^{\frac{n - 2\gamma}{n}}}\left[f(v) + \frac{1}{2}\langle Av, v\rangle + o\left(\sum_{i\neq j}\varepsilon_{ij}\right) + o\left(\|v\|^2\right)\right].
\end{eqnarray*}
Observe that the term $o(\|v\|^2)$ is, twice differentiable in $v$, and it's differential at the origin is $o(\|v\|)$. So the expansion of $J'$ along an increment $h$ near the origin in $H_\varepsilon(a, \lambda)$ is
\begin{eqnarray*}
\langle J'(u), h\rangle = \frac{\sum_{i = 1}^p \alpha_i^2 S}{\left(\sum_{i = 1}^p \alpha_i^{\frac{2n}{n - 2\gamma}}K(a_i) S\right)^{\frac{n - 2\gamma}{n}}}\left[f(h) + \langle Av, h\rangle + \langle o\left(\|v\|\right), h\rangle\right].
\end{eqnarray*}
Since the second differential of $o(\|v\|^2)$ is $o(1)$, the functional $f(v) + \frac{1}{2}\langle Av, v\rangle + o\left(\|v\|^2\right)$ is coercive in a neighborhood of the origin. Therefore, this functional has a unique minimum $\bar{v}$ in a neighborhood of zero in $H_\varepsilon(a, \lambda)$ and $\bar{v}$ satisfies
$$
f + A\bar{v} +o\left(\|\bar{v}\|\right) = 0.
$$
Now, since the operator $A + o(1)$ is positive and invertible in a neighborhood of the origin, the inverse $A^{-1}$ satisfy $\frac{2}{\beta_0} Id \geq A^{-1}\geq \frac{1}{2\beta_1}Id$ and $\|\bar{v}\|\leq c'\|A^{-1}f\|\leq c\|f\|$ for some constants $c$, $c' > 0$. This completes the proof.
\end{proof}
\begin{lemma}\label{l:change}
There exists $\varepsilon_0 > 0$ such that, for any $u = \sum_{i = 1}^p \alpha_i \delta_{a_i, \lambda_i} + v$, $v\in H_\varepsilon(a, \lambda)$, the following estimate holds
\begin{eqnarray*}
J(u) &=& \frac{\sum_{i = 1}^p \alpha_i^2 S}{\left(\sum_{i = 1}^p \alpha_i^{\frac{2n}{n - 2\gamma}}K(a_i) S\right)^{\frac{n - 2\gamma}{n}}}\left[1 + \frac{n - 2\gamma}{n}\frac{c_2}{\Gamma_1}\sum_{i= 1}^p\alpha_i^{\frac{2n}{n - 2\gamma}}\frac{-\Delta K(a_i)}{\lambda_i^2}\right]
\end{eqnarray*}
\begin{eqnarray*}
\quad\quad\quad\quad
&&+ \frac{\sum_{i = 1}^p \alpha_i^2 S}{\left(\sum_{i = 1}^p \alpha_i^{\frac{2n}{n - 2\gamma}}K(a_i) S\right)^{\frac{n - 2\gamma}{n}}}\left[\sum_{i\neq j}c_0^{\frac{2n}{n - 2\gamma}}c_1\omega_n\varepsilon_{ij}\left(\frac{\alpha_i\alpha_j}{\Gamma_2} -\frac{2\alpha_i^{\frac{n + 2\gamma}{n - 2\gamma}}\alpha_j K(a_i)}{\Gamma_1} \right)\right]
\end{eqnarray*}
\begin{eqnarray*}
\quad\quad\quad\quad
&&+ \frac{\sum_{i = 1}^p \alpha_i^2 S}{\left(\sum_{i = 1}^p \alpha_i^{\frac{2n}{n - 2\gamma}}K(a_i) S\right)^{\frac{n - 2\gamma}{n}}}\left[Q(v - \bar{v}, v - \bar{v}) + o\left(\|\bar{v}\|^2 + o\left(\sum_{i\neq j}\varepsilon_{ij}\right)\right)\right].
\end{eqnarray*}
\end{lemma}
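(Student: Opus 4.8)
The plan is to take the expansion of $J(u)$ already furnished by Lemma \ref{l:expansion}, namely the $v$-dependent bracket $f(v)+Q(v,v)+o\big(\sum_{i\neq j}\varepsilon_{ij}\big)+o(\|v\|^2)$, and to complete the square around the minimizer $\bar v=\bar v(\alpha,a,\lambda)$ produced in Lemma \ref{l:vpartofu}. First I would assemble the ingredients exactly as in the proof of Lemma \ref{l:vpartofu}: by Lemma \ref{l:positivedefinite} the quadratic form $Q$ is positive definite on $H_\varepsilon(a,\lambda)$, hence $Q(v,v)=\tfrac12\langle Av,v\rangle$ for a self-adjoint operator $A$ (depending on $\alpha,a,\lambda$ but not on $v$) with $\beta_0 Id\le A\le\beta_1 Id$; the $o(\|v\|^2)$ remainder is twice differentiable in $v$ with differential $o(\|v\|)$ at the origin, so the Euler--Lagrange relation for $\bar v$ is $f+A\bar v+o(\|\bar v\|)=0$, together with $\|\bar v\|\le c\|f\|$, the right-hand side being the explicit quantity displayed in Lemma \ref{l:vpartofu}. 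The replacement of $1-\tfrac{n-2\gamma}{n}\tfrac{c_2}{\Gamma_1}\sum\alpha_i^{2n/(n-2\gamma)}\Delta K(a_i)/\lambda_i^2$ by $1+\tfrac{n-2\gamma}{n}\tfrac{c_2}{\Gamma_1}\sum\alpha_i^{2n/(n-2\gamma)}(-\Delta K(a_i))/\lambda_i^2$ in the first bracket is purely cosmetic and requires nothing.

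The core step is the algebraic completion of the square. Identifying $f$ with its Riesz representative (as in the proof of Lemma \ref{l:vpartofu}), self-adjointness of $A$ gives the exact identity
\begin{equation*}
f(v)+Q(v,v)=Q(v-\bar v,v-\bar v)-Q(\bar v,\bar v)+\langle f+A\bar v,\,v\rangle .
\end{equation*}
Since $f+A\bar v=-o(\|\bar v\|)$, the cross term is $-\langle o(\|\bar v\|),\bar v\rangle-\langle o(\|\bar v\|),v-\bar v\rangle$, which by Cauchy--Schwarz and Young's inequality is $o(\|\bar v\|^2)+o(\|v-\bar v\|^2)$. Adding the remainder $o(\|v\|^2)=o(\|\bar v\|^2)+o(\|v-\bar v\|^2)$ of Lemma \ref{l:expansion} and carrying the $v$-independent term $o\big(\sum_{i\neq j}\varepsilon_{ij}\big)$ along, one obtains
\begin{equation*}
f(v)+Q(v,v)+o\!\Big(\sum_{i\neq j}\varepsilon_{ij}\Big)+o(\|v\|^2)=Q(v-\bar v,v-\bar v)-Q(\bar v,\bar v)+o(\|\bar v\|^2)+o(\|v-\bar v\|^2)+o\!\Big(\sum_{i\neq j}\varepsilon_{ij}\Big).
\end{equation*}
Because $Q$ is coercive ($Q(v-\bar v,v-\bar v)\ge\beta_0\|v-\bar v\|^2$), the residual $o(\|v-\bar v\|^2)$ is absorbed into $Q(v-\bar v,v-\bar v)$; and $Q(\bar v,\bar v)=O(\|\bar v\|^2)$, so $-Q(\bar v,\bar v)+o(\|\bar v\|^2)$ is of order $\|\bar v\|^2$ and, in view of $\|\bar v\|\le c\|f\|$, is exactly the quantity $\|\bar v\|^2$ appearing in the remainder of the statement. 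Multiplying through by the prefactor $\tfrac{\sum\alpha_i^2 S}{(\sum\alpha_i^{2n/(n-2\gamma)}K(a_i)S)^{(n-2\gamma)/n}}$ and collecting the three brackets yields the claimed formula; the choice of $\varepsilon_0$ is dictated by the smallness needed for $Q$ to be positive definite and for all the $o(\cdot)$'s above to hold uniformly on $B_\varepsilon$.

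I expect the one genuinely delicate point to be the bookkeeping of the quadratic remainder: one must keep track that the $o(\cdot)$ errors are measured against $\|\bar v\|$ and $\|v-\bar v\|$ (not against the norm of an arbitrary competitor $v$), that the term $-Q(\bar v,\bar v)$ — which is of exact order $\|\bar v\|^2$, not smaller — is what forces $\|\bar v\|^2$ into the final remainder, and that the leftover $o(\|v-\bar v\|^2)$ is uniformly dominated by the coercive term $Q(v-\bar v,v-\bar v)$ for all $(\alpha,a,\lambda)\in B_{\varepsilon_0}$. This uniformity is precisely what the bounds $\beta_0 Id\le A\le\beta_1 Id$ and the uniform smallness of the $\varepsilon_{ij}$ on $B_\varepsilon$ guarantee; everything else is the linear algebra of completing the square combined with the already-established estimate on $\|\bar v\|$ from Lemma \ref{l:vpartofu}.
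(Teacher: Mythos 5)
Your proposal is correct and follows essentially the same route as the paper: one completes the square in the $v$-bracket of Lemma \ref{l:expansion} around the minimizer $\bar v$ of Lemma \ref{l:vpartofu}, using its Euler--Lagrange relation $f + A\bar v + o(\|\bar v\|) = 0$ to kill the cross term. You are in fact more careful than the paper's two-line argument, which silently treats the exact-order term $-Q(\bar v,\bar v)$ as if it were $o(\|\bar v\|^2)$; your observation that it is only $O(\|\bar v\|^2)$ --- and is then harmless because $\|\bar v\|\le c\|f\|$ makes $\|\bar v\|^2$ of strictly lower order than the displayed $\varepsilon_{ij}$ and $\lambda_i^{-2}$ terms --- is the correct reading of the (somewhat garbled) remainder in the statement.
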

\begin{proof}
Since $\bar{v}$ is a minimizer, we have
$$
(f, \bar{v}) + Q(\bar{v},\bar{v}) + o(\|\bar{v}\|^2) = 0.
$$
This yields
$$
(f, v) + Q(v, v) + o(\|v\|^2) = Q(v - \bar{v}, v - \bar{v}) + o(\|\bar{v}\|^2).
$$
From this, we get the desired estimate.
\end{proof} 
\section{Morse lemma at infinity}
In this section, we prove the following Morse lemma, which completely get rid of the $v$-contributions and shows that the functional behaves, at infinity, as $J(\sum_{i= 1}^p\alpha_i\delta_{\tilde{a}_i, \tilde{\lambda}_i}) + |V|^2$, where $V$ is a variable completely independent of $\tilde{a_i}$, $\tilde{\lambda}_i$.
\begin{lemma}\label{l:morselemma}
There is a covering $\{O_l\}$, a subset $\{(\alpha_l,, a_l, \lambda_l)\}$ of the base space for the bundle $V(p, \varepsilon)$ and a diffeomorphism $\xi_l: V(p, \varepsilon)\to V(p, \varepsilon')$ for some $\varepsilon' > 0$ with
$$
\xi_l(\sum_{i= 1}^p\alpha_i\delta_{a_i, \lambda_i} + \bar{v}) = \sum_{i= 1}^p\alpha_i\delta_{\tilde{a}_i, \tilde{\lambda}_i}
$$
such that
$$
J(\sum_{i= 1}^p\alpha_i\delta_{a_i, \lambda_i} + v) = J(\sum_{i= 1}^p\alpha_i\delta_{\tilde{a}_i, \tilde{\lambda}_i}) + \frac{1}{2}J''(\sum_{i= 1}^p\alpha_i\delta_{a_i, \lambda_i})V_l\cdot V_l,
$$
where $(\alpha, a, \lambda)\in O_l$, $(\alpha, \tilde{a}, \tilde{\lambda})$ is independent of $O_l$ and $V_l$ is orthogonal to $\delta_{\tilde{a}_i, \tilde{\lambda}_i}$, $\frac{\partial \delta_{\tilde{a}_i, \tilde{\lambda}_i}}{\partial \tilde{\lambda}_i}$, $\frac{\partial \delta_{\tilde{a}_i, \tilde{\lambda}_i}}{\partial \tilde{a}_i}$.
\end{lemma}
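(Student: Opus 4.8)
The plan is to reduce $J$, restricted to $V(p,\varepsilon)$, to its finite-dimensional model in two stages. Working in the parametrization of $V(p,\varepsilon)$ by $(\alpha,a,\lambda,v)$ with $v\in H_\varepsilon(a,\lambda)$ from Lemma \ref{l:decomposition}, I would first absorb the infinite-dimensional variable $v$ into a fixed positive definite quadratic form, and then absorb the remaining contribution of the optimal $v$-part $\bar v$ of Lemma \ref{l:vpartofu} into a diffeomorphism of the concentration parameters $(a,\lambda)$ alone; $\xi_l$ is the composition of these two changes of variables, and the covering $\{O_l\}$ enters because neither step is global on $V(p,\varepsilon)$.

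\emph{Stage 1: normalizing the $v$-direction.} Put $V=v-\bar v$. Since $\bar v$ solves $f+A\bar v+o(\|\bar v\|)=0$, Lemma \ref{l:change} gives
\[
J(u)=J\!\left(\sum_{i=1}^p\alpha_i\delta_{a_i,\lambda_i}+\bar v\right)+c(\alpha,a,\lambda)\left[Q(V,V)+o(\|V\|^2)+o\!\left(\sum_{i\neq j}\varepsilon_{ij}\right)\right],
\]
with $c(\alpha,a,\lambda)=\bigl(\sum_i\alpha_i^2S\bigr)\bigl(\sum_i\alpha_i^{2n/(n-2\gamma)}K(a_i)S\bigr)^{-(n-2\gamma)/n}$; by Lemma \ref{l:positivedefinite} and the proof of Lemma \ref{l:vpartofu} one has $Q(V,V)=\tfrac12\langle A(\alpha,a,\lambda)V,V\rangle$ with $\beta_0\,Id\le A\le\beta_1\,Id$ uniformly on $B_\varepsilon$, and the remainder is twice differentiable with differential $o(\|V\|)$ and second differential $o(1)$ at $V=0$. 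Hence, for each $(\alpha,a,\lambda)$, the map $V\mapsto Q(V,V)+o(\|V\|^2)$ has a nondegenerate minimum at $V=0$ with uniformly bounded, uniformly invertible Hessian, and an infinite-dimensional Morse lemma with parameters, in the Hilbert space $H_\varepsilon(a,\lambda)$, produces a family of local diffeomorphisms $V\mapsto\bar V=\bar V(\alpha,a,\lambda,V)$, continuous in $(\alpha,a,\lambda)$, fixing $V=0$, with
\[
c(\alpha,a,\lambda)\!\left[Q(V,V)+o(\|V\|^2)\right]=\tfrac12\,J''\!\left(\sum_{i=1}^p\alpha_i\delta_{a_i,\lambda_i}\right)\!\bar V\cdot\bar V .
\]
To make this fiberwise construction into a map on $V(p,\varepsilon)$, trivialize the Hilbert bundle $\{H_\varepsilon(a,\lambda)\}$ over the members of a locally finite cover $\{O_l\}$ of $B_\varepsilon$ and carry it out over each $O_l$; this already fixes the section $v=\bar v$ and puts $J$ in the form $J(\sum_i\alpha_i\delta_{a_i,\lambda_i}+\bar v)+\tfrac12 J''(\sum_i\alpha_i\delta_{a_i,\lambda_i})V\cdot V$.

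\emph{Stage 2: absorbing $\bar v$.} It remains to produce over each $O_l$ a diffeomorphism $(\alpha,a,\lambda)\mapsto(\alpha,\tilde a,\tilde\lambda)$ with
\[
J\!\left(\sum_{i=1}^p\alpha_i\delta_{a_i,\lambda_i}+\bar v(\alpha,a,\lambda)\right)=J\!\left(\sum_{i=1}^p\alpha_i\delta_{\tilde a_i,\tilde\lambda_i}\right).
\]
I would obtain it by a deformation argument as in \cite{BahriPRNIMS1989, BahriDukeMathJYambetypeflows, GamaraPSCCRANS2002, YacoubANS2013}: connecting $\sum_i\alpha_i\delta_{a_i,\lambda_i}+\bar v$ to $\sum_i\alpha_i\delta_{a_i,\lambda_i}$ along $u_t=\sum_i\alpha_i\delta_{a_i,\lambda_i}+t\,\bar v$, $t\in[0,1]$, one looks for an accompanying flow $t\mapsto(a(t),\lambda(t))$ of the concentration parameters, with $(a(1),\lambda(1))=(a,\lambda)$, keeping $t\mapsto J(\sum_i\alpha_i\delta_{a_i(t),\lambda_i(t)}+t\,\bar v)$ constant; differentiating in $t$ turns this into an ordinary differential equation, solvable as soon as $\partial_tJ(u_t)$ is controlled by the gradient of $J$ in the $(a,\lambda)$-directions, and $(\tilde a,\tilde\lambda):=(a(0),\lambda(0))$ is the image. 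By $\langle\bar v,\varphi_i\rangle=0$ and Lemma \ref{l:expansion}, $\partial_tJ(u_t)=J'(u_t)\bar v=O(\|\bar v\|^2)$; on the other hand Lemma \ref{l:expansionofgradient1} and Lemma \ref{l:expansionofgradient2} give, for the normalized directions $\lambda_j\partial\delta_{a_j,\lambda_j}/\partial\lambda_j$ and $\lambda_j^{-1}\partial\delta_{a_j,\lambda_j}/\partial a_j$, a gradient of size comparable to $\sum_{i\neq j}\varepsilon_{ij}+\sum_i\bigl(|\nabla K(a_i)|/\lambda_i+1/\lambda_i^2\bigr)$, while Lemma \ref{l:vpartofu}, together with $\tfrac{n+2\gamma}{n}>1$ and $\lambda_i>1/\varepsilon$, yields
\[
\|\bar v\|^2=o\!\left(\sum_{i\neq j}\varepsilon_{ij}\right)+o\!\left(\sum_i\left(\frac{|\nabla K(a_i)|}{\lambda_i}+\frac{1}{\lambda_i^2}\right)\right)
\]
uniformly on $B_\varepsilon$, so $\partial_tJ(u_t)$ is negligible relative to the $(a,\lambda)$-gradient and the flow exists. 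Where the leading parts of that gradient cancel, one uses instead a pseudo-gradient built from a suitable combination of these directions with the same lower bound, the trichotomy being whether some $a_j$ stays away from the critical set of $K$, or all $a_j$ approach it (so that $-\Delta K(a_j)/\lambda_j^2$ is of definite sign by ({\bf nd})), or some interaction $\varepsilon_{ij}$ dominates. Composing the two stages, transporting $\bar V$ to the vector $V_l$ orthogonal to $\delta_{\tilde a_i,\tilde\lambda_i}$, $\partial\delta_{\tilde a_i,\tilde\lambda_i}/\partial\tilde\lambda_i$, $\partial\delta_{\tilde a_i,\tilde\lambda_i}/\partial\tilde a_i$, and choosing $\varepsilon'$ accordingly, gives $\xi_l$ with $\xi_l(\sum_i\alpha_i\delta_{a_i,\lambda_i}+\bar v)=\sum_i\alpha_i\delta_{\tilde a_i,\tilde\lambda_i}$ and the claimed identity (replacing $J''$ at $(\tilde a,\tilde\lambda)$ by $J''$ at $(a,\lambda)$ costs only an $o$-term).

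\emph{Main difficulty.} The delicate step is Stage 2, i.e.\ making the deformation uniform over all of $V(p,\varepsilon)$: this is exactly the construction---underlying the whole method and carried out in the references of the introduction---of a pseudo-gradient for $J$ whose $(a,\lambda)$-component dominates $\|\bar v\|^2$ in every concentration regime (bubbles near or far from the critical set of $K$, bubbles with comparable or with widely separated rates $\lambda_i$), so that cancellations among $-\Delta K(a_j)/\lambda_j^2$, $|\nabla K(a_j)|/\lambda_j$ and the $\varepsilon_{ij}$ do not destroy the lower bound; it rests on the sharp remainder estimates already contained in Lemmas \ref{l:expansion}--\ref{l:vpartofu}. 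A secondary nuisance is that, when $n>6\gamma$, the Hessian $J''$ is only Hölder continuous, so Stage 1 must be carried out through the uniform positivity of $Q$ and the quantitative bound on $\bar v$ rather than a higher-order Taylor expansion.
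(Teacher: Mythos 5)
Your two-stage reduction is correct, and its first stage is exactly the paper's Lemma \ref{l:taylorformular} (the parametrized infinite-dimensional Morse lemma in the $v$-variable, which the paper itself only quotes from the Riemannian case). The second stage is where you take a genuinely different route. The paper does not run a Moser-type homotopy shrinking $t\bar v$ to zero while compensating in $(a,\lambda)$; instead (Lemmas \ref{l:characterization} and \ref{l:lemma4.4}) it builds the bounded pseudogradient $W_0$ once and for all, flows the pure bubble configuration $\sum_i\alpha_i\delta_{a_i,\lambda_i}$ along $\pm W_0$, and defines $(\tilde a,\tilde\lambda)$ implicitly as the unique point on that flow line where $J(\sum_i\alpha_i\delta_{\tilde a_i,\tilde\lambda_i})$ equals $J(\sum_i\alpha_i\delta_{a_i,\lambda_i}+\bar v)$; existence and uniqueness of the matching time come from the uniform decrease rate $-J'(W_0)\geq C\bigl(\sum_{i\neq j}\varepsilon_{ij}+\sum_i(|\nabla K(a_i)|/\lambda_i+1/\lambda_i^2)\bigr)$ together with $J(\sum_i\alpha_i\delta+\bar v)\leq J(\sum_i\alpha_i\delta)$ and the smallness of $\|\bar v\|$ --- precisely the two quantitative inputs your ODE argument rests on, and your remark that $J'(u_t)\bar v=O(\|f\|\,\|\bar v\|)=o\bigl(\sum_{i\neq j}\varepsilon_{ij}+\sum_i(|\nabla K(a_i)|/\lambda_i+1/\lambda_i^2)\bigr)$ because $\tfrac{n+2\gamma}{n}>1$ is the comparison that makes either version work. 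The trade-off: your homotopy exhibits $(\tilde a,\tilde\lambda)$ explicitly and makes the smallness of the displacement transparent, whereas the paper's level-matching formulation packages the same estimates into properties (1)--(5) of $W_0$, which it then reuses verbatim in Section 5 to identify the critical points at infinity; also, the forward/backward flow $V(p,\varepsilon)\to V(p,\varepsilon_1)\to V(p,\varepsilon')$ is what the paper uses to see that $\xi_l$ is a diffeomorphism onto some $V(p,\varepsilon')$, a point you should extract from the invertibility of your $t$-homotopy. One caveat on your side: the pseudogradient lower bound must hold at $u_t=\sum_i\alpha_i\delta_{a_i,\lambda_i}+t\bar v$ for every $t\in[0,1]$, not only at the endpoints; this is what parts (1) and (2) of Lemma \ref{l:characterization} supply and it extends to intermediate $t$ by the same estimates, but it should be stated.
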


We will prove this lemma at the end of this section. We now need a few technical results. We start with the Morse lemma at infinity by isolating the contribution of $v - \bar{v}$.
\begin{lemma}\label{l:taylorformular}
For any $\sum_{i= 1}^p\bar{\alpha}_i\delta_{\bar{a}_i, \bar{\lambda}_i}\in V(p, \varepsilon)$, let $(\bar{\alpha}, \bar{a}, \bar{\lambda}) = ((\bar{\alpha}_1, \cdot\cdot\cdot, \bar{\alpha}_p), (\bar{a}_1, \cdot\cdot\cdot, \bar{a}_p), (\bar{\lambda}_1, \cdot\cdot\cdot, \bar{\lambda}_p))$. Then there is a neighborhood $U$ of $(\bar{\alpha}, \bar{a}, \bar{\lambda})$ such that
$$
J(\sum_{i= 1}^p\alpha_i\delta_{a_i, \lambda_i} + v) = J(\sum_{i= 1}^p\alpha_i\delta_{a_i, \lambda_i} + \bar{v}(\alpha, a, \lambda)) + \frac{1}{2}J''(\sum_{i= 1}^p\bar{\alpha}_i\delta_{\bar{a}_i, \bar{\lambda}_i} + \bar{v}(\bar{\alpha}, \bar{a}, \bar{\lambda}))V\cdot V
$$
for any $\sum_{i= 1}^p\alpha_i\delta_{a_i, \lambda_i} + v\in V(p, \varepsilon)$ with $(\alpha, a, \lambda)\in U$, where $V = V(\alpha, a, \lambda, v)$ is a $C^1$-diffeomorphism with range orthogonal to
$$
\displaystyle\cup_{i = 1}^p\left\{\delta_{a'_i, \lambda'_i}, \frac{\partial \delta_{a'_i, \lambda'_i}}{\partial \lambda'_i}, \frac{\partial \delta_{a'_i, \lambda'_i}}{\partial a'_i}\right\}
$$
for any $(\alpha', a', \lambda')\in U$ and $\|V\| = O(\|v\|)$.
\end{lemma}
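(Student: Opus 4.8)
\emph{Overall approach.} This is a parametrised Morse lemma: for $(\alpha,a,\lambda)$ near the fixed base point $(\bar\alpha,\bar a,\bar\lambda)$ I want a change of the $v$-variable absorbing \emph{all} of the $v$-dependence of $J$ into one quadratic form, and moreover one that is \emph{frozen} at the reference point. The plan is to combine three ingredients: (i) the fibre reduction of Lemmas \ref{l:vpartofu} and \ref{l:change}, which makes $v=\bar v(\alpha,a,\lambda)$ a nondegenerate minimum of $J$ along each $v$-fibre; (ii) a $C^1$ trivialisation of the bundle whose fibre over $(\alpha,a,\lambda)$ is the $\langle\cdot,\cdot\rangle$-orthogonal complement $E_{a,\lambda}$ of $N_{a,\lambda}:=\mathrm{span}\{\delta_{a_i,\lambda_i},\partial_{\lambda_i}\delta_{a_i,\lambda_i},\partial_{a_i}\delta_{a_i,\lambda_i}:1\le i\le p\}$; and (iii) the parametrised Morse--Palais lemma followed by a congruence of positive definite Hessians.

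\emph{Fibre reduction and trivialisation.} First I would set $w:=v-\bar v(\alpha,a,\lambda)\in E_{a,\lambda}$. By the proof of Lemma \ref{l:vpartofu} the minimiser $\bar v$ solves $f+A\bar v+o(\|\bar v\|)=0$, i.e. the $E_{a,\lambda}$-projection of $\partial_v J(\sum_i\alpha_i\delta_{a_i,\lambda_i}+v)=0$, so it is the fibre minimum; since the linearisation of this equation is the compressed Hessian $B_{\alpha,a,\lambda}:=P_{E_{a,\lambda}}\,J''(\sum_i\alpha_i\delta_{a_i,\lambda_i}+\bar v(\alpha,a,\lambda))|_{E_{a,\lambda}}$, uniformly positive definite for small $\varepsilon$ by Lemma \ref{l:positivedefinite}, the implicit function theorem gives that $\bar v$ is $C^1$ in $(\alpha,a,\lambda)$. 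Keeping the second order remainder in $w$, Lemma \ref{l:change} then reads
\[
J\Big(\sum_{i=1}^p\alpha_i\delta_{a_i,\lambda_i}+v\Big)=J\Big(\sum_{i=1}^p\alpha_i\delta_{a_i,\lambda_i}+\bar v(\alpha,a,\lambda)\Big)+\tfrac12\langle B_{\alpha,a,\lambda}w,w\rangle+o(\|w\|^2),
\]
the linear term being absent because $\bar v$ is critical along the fibre. Next, since $N_{a,\lambda}$ depends $C^1$ on $(a,\lambda)$, after shrinking $U$ the orthogonal projections $E_0\to E_{a,\lambda}$ (with $E_0:=E_{\bar a,\bar\lambda}$) are isomorphisms; taking polar parts I get a $C^1$ family of surjective isometries $L_{a,\lambda}\colon E_0\to E_{a,\lambda}$ with $L_{\bar a,\bar\lambda}=\mathrm{Id}$. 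Then $h_{\alpha,a,\lambda}(w'):=J(\sum_i\alpha_i\delta_{a_i,\lambda_i}+\bar v+L_{a,\lambda}w')-J(\sum_i\alpha_i\delta_{a_i,\lambda_i}+\bar v)$ is a $C^1$-in-parameters family of functionals on a neighbourhood of $0$ in the \emph{fixed} space $E_0$, each with nondegenerate minimum at $0$ and Hessian $\widetilde{B}_{\alpha,a,\lambda}:=L_{a,\lambda}^{*}B_{\alpha,a,\lambda}L_{a,\lambda}$.

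\emph{Morse--Palais, congruence, conclusion.} I would then apply the parametrised Morse--Palais lemma (as used in \cite{BenChenChtiouiHamamiDukeMath1996,GamaraPSCCRANS2002,YacoubANS2013}) to the family $h_{\alpha,a,\lambda}$, obtaining a $C^1$ family of local diffeomorphisms $\Psi_{\alpha,a,\lambda}$ of $(E_0,0)$ with $D\Psi_{\alpha,a,\lambda}(0)=\mathrm{Id}$ and $h_{\alpha,a,\lambda}(w')=\tfrac12\langle\widetilde{B}_{\alpha,a,\lambda}\Psi_{\alpha,a,\lambda}(w'),\Psi_{\alpha,a,\lambda}(w')\rangle$. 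Since $\widetilde{B}_{\alpha,a,\lambda}\to\widetilde{B}_{\bar\alpha,\bar a,\bar\lambda}=:B_0$ as $(\alpha,a,\lambda)\to(\bar\alpha,\bar a,\bar\lambda)$, all positive definite and mutually close on a small $U$, the $C^1$ family $C_{\alpha,a,\lambda}:=\widetilde{B}_{\alpha,a,\lambda}^{-1/2}B_0^{1/2}$ satisfies $C_{\alpha,a,\lambda}^{*}\widetilde{B}_{\alpha,a,\lambda}C_{\alpha,a,\lambda}=B_0$. I then define
\[
V=V(\alpha,a,\lambda,v):=C_{\alpha,a,\lambda}^{-1}\,\Psi_{\alpha,a,\lambda}\big(L_{a,\lambda}^{-1}(v-\bar v(\alpha,a,\lambda))\big)\in E_0,
\]
and tracing the substitutions back one gets $J(\sum_i\alpha_i\delta_{a_i,\lambda_i}+v)=J(\sum_i\alpha_i\delta_{a_i,\lambda_i}+\bar v(\alpha,a,\lambda))+\tfrac12\langle B_0V,V\rangle$, with $\langle B_0V,V\rangle=J''(\sum_i\bar\alpha_i\delta_{\bar a_i,\bar\lambda_i}+\bar v(\bar\alpha,\bar a,\bar\lambda))V\cdot V$ because $V\in E_0=N_{\bar a,\bar\lambda}^{\perp}$. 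By construction $(\alpha,a,\lambda,v)\mapsto V$ is $C^1$, for fixed $(\alpha,a,\lambda)$ it is a diffeomorphism of a neighbourhood of $0$ in $E_{a,\lambda}$ onto one in $E_0$, it vanishes exactly when $v=\bar v$, its range sits in the fixed subspace $E_0$ (which is the content of the orthogonality requirement in the statement, $U$ being shrunk accordingly), and $\|V\|=O(\|v-\bar v(\alpha,a,\lambda)\|)$ — whence the stated bound $\|V\|=O(\|v\|)$ — since $D\Psi(0)=\mathrm{Id}$ and $L^{-1},C^{-1}$ are bounded.

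\emph{The hard part.} The delicate step is regularity. For $\bar v$, the isometries $L_{a,\lambda}$, the Morse--Palais diffeomorphisms $\Psi_{\alpha,a,\lambda}$ and the congruences $C_{\alpha,a,\lambda}$ to depend jointly $C^1$ on $(\alpha,a,\lambda)$, and for $\Psi$ to depend $C^1$ on $v$, one needs $J$ to be $C^2$ on $V(p,\varepsilon)$ with sufficient (Lipschitz/H\"older) control on $J''$; this is exactly where $K\in C^2$, the exponent $\tfrac{2n}{n-2\gamma}>2$, and the fact that on $V(p,\varepsilon)$ the bubbles are highly concentrated all enter, and it is the reason the lemma is stated only on a small $U$. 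If one prefers to bypass the abstract Morse lemma altogether — in the spirit of the pseudogradient method of \cite{BahriPRNIMS1989,BahriDukeMathJYambetypeflows} — one can instead Taylor-expand $J$ about $\sum_i\alpha_i\delta_{a_i,\lambda_i}+\bar v$ and solve directly, by a contraction argument depending $C^1$ on all parameters, for the diffeomorphism that cancels the cubic remainder. Either way the finite-dimensional parameters enter only through smooth quantities, so joint $C^1$-regularity is the crux.
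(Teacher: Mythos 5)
Your proposal is correct and follows essentially the same route the paper takes: the paper gives no argument of its own here but simply defers to the Riemannian-case proof in \cite{BenChenChtiouiHamamiDukeMath1996}, and that proof is exactly the fibre reduction via $\bar v$ (Lemmas \ref{l:vpartofu} and \ref{l:change}), a trivialisation of the varying orthogonal complements, and the parametrised Morse--Palais lemma plus a congruence to freeze the Hessian at the base point, which is what you sketch. You are also right to single out the $C^1$-in-parameters regularity (and the limited smoothness of $t\mapsto t^{2n/(n-2\gamma)}$) as the only genuinely delicate point; that issue is inherited from, and handled in, the cited reference rather than in this paper.
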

The proof is similar to the one given for the Riemannian case, we refer the readers to \cite{BenChenChtiouiHamamiDukeMath1996} for the sake of completeness.

\begin{lemma}\label{l:characterization}
For any $u = \sum_{i = 1}^p\alpha_i \delta_{a_i, \lambda_i}\in V(p, \varepsilon')$, $\varepsilon'$ small enough, let $\bar{u} = u + \bar{v}(\alpha, a, \lambda)$. There is a vector field $W_0$ such that for some constants $C_1 > 0$, $C_2 > 0$, $C_3 > 0$ independent of $u = \sum_{i = 1}^p\alpha_i \delta_{a_i, \lambda_i}\in V(p, \varepsilon')$, it holds that
\begin{enumerate}
\item[(1)]
$$
\langle - \nabla J(\bar{u}),\,\, W_0 + \frac{\partial \bar{v}}{\partial (\alpha_i, a_i, \lambda_i)}(W_0)\rangle\geq C_1\left(\sum_{i = 1}^p\left(\frac{|\nabla K(a_i)|}{\lambda_i} + \frac{1}{\lambda_i^2}\right) + \sum_{i\neq j} \varepsilon_{ij}\right),
$$
\item[(2)]
$$
\langle - \nabla J(u), W_0\rangle\geq C_2\left(\sum_{i = 1}^p\left(\frac{|\nabla K(a_i)|}{\lambda_i} + \frac{1}{\lambda_i^2}\right) + \sum_{i\neq j} \varepsilon_{ij}\right),
$$
\item[(3)]
$|W_0|$ is bounded,
\item[(4)]
$d\lambda_{i}(W_0)\leq C_3 \lambda_{i}$,\quad $\forall i\in\{1,\cdot\cdot\cdot, p\}$,
\item[(5)]
The only region where the $\lambda_i$'s are not bounded along the decreasing flow lines of $W_0$ is where $(a_1,\cdot\cdot\cdot, a_p)$ is close to some $(y_{i_1}, \cdot\cdot\cdot, y_{i_p})\in F^+$, and the $\lambda_i$'s are comparable.
\end{enumerate}
\end{lemma}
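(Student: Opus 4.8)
The plan is to construct $W_0$ piece by piece over a suitable finite covering of $V(p,\varepsilon')$ and then glue the pieces with a subordinate partition of unity, following the scheme of Bahri--Coron and \cite{BahriCoronSCPSTDS1991, BenChenChtiouiHamamiDukeMath1996}, with the Euclidean gradient estimates there replaced by Lemmas \ref{l:expansionofgradient1} and \ref{l:expansionofgradient2}. First I would decompose $V(p,\varepsilon')$ according to the location of the concentration points $a_i$ and the relative sizes of the three quantities $\lambda_i^{-2}$, $|\nabla K(a_i)|\lambda_i^{-1}$ and $\sum_{i\neq j}\varepsilon_{ij}$ that appear on the right-hand sides of (1)--(2): roughly, (i) a region $R_1$ where the $a_i$ lie near $p$ \emph{distinct} points of $I^+$, with $\lambda_i|\nabla K(a_i)|$ bounded and $\sum_{i\neq j}\varepsilon_{ij}=o(\sum_i\lambda_i^{-2})$; (ii) a region $R_2$ where some $a_j$ is near a critical point $\xi$ of $K$ with $-\Delta K(\xi)<0$, or where two of the $a_i$ concentrate near the same critical point; (iii) a region $R_3$ where $\lambda_j|\nabla K(a_j)|$ is bounded below for some $j$ (in particular $a_j$ stays away from $\mathrm{Crit}(K)$ at the scale $\lambda_j^{-1}$); (iv) a region $R_4$ where some $\varepsilon_{ij}$ is not negligible. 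For $\varepsilon'$ small these cover $V(p,\varepsilon')$.

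On $R_1$ take $W_0=\sum_{i=1}^p\lambda_i\frac{\partial\delta_{a_i,\lambda_i}}{\partial\lambda_i}$; since $-\Delta K(a_i)>0$ there, Lemma \ref{l:expansionofgradient1} gives $\langle-\nabla J(u),W_0\rangle\ge C\sum_i\lambda_i^{-2}$, which dominates the whole right-hand side of (2) on that region. On $R_2$ take $W_0=-\sum_{j\in B}\lambda_j\frac{\partial\delta_{a_j,\lambda_j}}{\partial\lambda_j}$ over the set $B$ of bad indices; by Lemma \ref{l:expansionofgradient1} the term $\alpha_j\frac{n-2\gamma}{2n}c_2\frac{\Delta K(a_j)}{K(a_j)\lambda_j^2}$ is now positive of order $\lambda_j^{-2}$, and in the ``two masses at one critical point'' case the attractive sign of the interaction coefficient $\frac{\alpha_i\alpha_j}{\Gamma_2}-\frac{2\alpha_i^{(n+2\gamma)/(n-2\gamma)}\alpha_jK(a_i)}{\Gamma_1}$ makes $-\lambda_j\partial_{\lambda_j}\varepsilon_{ij}$ contribute with the correct sign (this flow actually drives $u$ out of $V(p,\varepsilon')$, consistent with there being no such critical point at infinity). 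On $R_3$ take $W_0=\sum_j\chi_j\,\frac{1}{\lambda_j}\sum_k\frac{(\nabla K(a_j))_k}{|\nabla K(a_j)|}\frac{\partial\delta_{a_j,\lambda_j}}{\partial (a_j)_k}$, a unit-speed motion of $a_j$ along $\nabla K(a_j)$; Lemma \ref{l:expansionofgradient2} then yields $\langle-\nabla J(u),W_0\rangle\ge C\sum_j|\nabla K(a_j)|\lambda_j^{-1}$. On $R_4$ use a field that pulls the nearly colliding points apart (decreasing the repulsive $\varepsilon_{ij}$) together with a downward push on the appropriate $\lambda_j$, giving a bound of order $\sum_{i\neq j}\varepsilon_{ij}$. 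In every overlap the local fields all contribute $\ge 0$ to $\langle-\nabla J(u),\cdot\rangle$, so a convex combination subordinate to a partition of unity refining $\{R_1,\dots,R_4\}$ still satisfies (2) with a uniform $C_2$.

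Then (1) and (3)--(5) follow quickly. For (1), Lemma \ref{l:vpartofu} controls $\bar v$ and, after differentiating the defining relation $f+A\bar v+o(\|\bar v\|)=0$, its derivatives $\partial\bar v/\partial(\alpha_i,a_i,\lambda_i)$; since $\|\bar v\|=O\big(\sum_{i\ne j}\varepsilon_{ij}^{(n+2\gamma)/2n}(\log\varepsilon_{ij}^{-1})^{(n-2\gamma)/n}+\sum_i(|\nabla K(a_i)|\lambda_i^{-1}+\lambda_i^{-2})\big)$, the quantities $\|\bar v\|^2$ and the mixed terms are $o$ of the right-hand side of (1), and together with Lemma \ref{l:change} (which shows that $J(\bar u)$ has the same leading $(\alpha,a,\lambda)$-dependence as $J(u)$) this gives $\langle-\nabla J(\bar u),W_0+\frac{\partial\bar v}{\partial(\alpha_i,a_i,\lambda_i)}(W_0)\rangle=\langle-\nabla J(u),W_0\rangle+o\big(\sum_i(|\nabla K(a_i)|\lambda_i^{-1}+\lambda_i^{-2})+\sum_{i\ne j}\varepsilon_{ij}\big)$, hence (1). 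Property (3) holds since $\|\lambda_i\partial_{\lambda_i}\delta_{a_i,\lambda_i}\|$ and $\|\lambda_i^{-1}\partial_{a_i}\delta_{a_i,\lambda_i}\|$ are bounded and $W_0$ is a convex combination of such fields; (4) holds because in the $\lambda_i$-direction $W_0$ only ever involves $\pm\lambda_i\partial_{\lambda_i}$ with bounded coefficients, so $d\lambda_i(W_0)\le C_3\lambda_i$; and (5) holds because the only component that increases the $\lambda_i$'s is the one used on $R_1$, which is precisely the region where $(a_1,\dots,a_p)$ is close to some $(y_{i_1},\dots,y_{i_p})\in F^+$ with the $\lambda_i$'s comparable.

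The main obstacle is the gluing in the transition zones: one must fix the thresholds separating $R_1,\dots,R_4$ so that each local field contributes nonnegatively to $\langle-\nabla J(u),\cdot\rangle$ throughout the support of its cutoff, which requires a case-by-case sign analysis of $\lambda_j\partial_{\lambda_j}\varepsilon_{ij}$ and $\partial_{a_j}\varepsilon_{ij}$ over all sub-configurations (both orderings of $\lambda_i/\lambda_j$, $a_i$ and $a_j$ close or far, $a_j$ near or away from $\mathrm{Crit}(K)$). A secondary difficulty, needed for (1), is checking that the $\bar v$-corrections are genuinely lower order than the linear terms $|\nabla K(a_i)|\lambda_i^{-1}$ and $\lambda_i^{-2}$; this uses $\tfrac{n+2\gamma}{2n}<1<\tfrac{n+2\gamma}{n}$ and $n\ge4$, $\gamma\in(0,1)$.
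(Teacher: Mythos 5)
Your overall architecture --- a finite covering of $V(p,\varepsilon')$ by configuration regions, elementary moves $\pm\lambda_i\frac{\partial\delta_{a_i,\lambda_i}}{\partial\lambda_i}$ and $\frac{1}{\lambda_i}\frac{\partial\delta_{a_i,\lambda_i}}{\partial a_i}$ whose signs are read off from Lemmas \ref{l:expansionofgradient1} and \ref{l:expansionofgradient2}, a convex combination of the local fields, and the passage from (2) to (1) via the estimate on $\bar v$ and its derivatives --- is exactly the strategy of the paper (and of Bahri). Your regions $R_1$, $R_2$, $R_3$ and your derivation of (3)--(5) match what the paper does, and your treatment of (1) is, if anything, more explicit than the paper's.

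The genuine gap is in $R_4$, the region where some interaction $\varepsilon_{ij}$ is not dominated by $\sum_i\lambda_i^{-2}$; this is the combinatorial core of the lemma, and your prescription there would fail as stated. ``Pulling the nearly colliding points apart'' acts on $J$ only through $\frac{1}{\lambda_j}\partial_{a_j}\varepsilon_{ij}$, which satisfies $\bigl|\frac{1}{\lambda_j}\partial_{a_j}\varepsilon_{ij}\bigr|\le c\,\varepsilon_{ij}$ but admits no lower bound of that order (it vanishes when $a_i=a_j$), and in Lemma \ref{l:expansionofgradient2} it is in any case swallowed by the error $O\bigl(\sum_{i\neq j}\varepsilon_{ij}+\lambda_j^{-3}\bigr)$. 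A ``downward push on $\lambda_j$'' contributes $+c\,\lambda_j\partial_{\lambda_j}\varepsilon_{ij}$ to $\langle-\nabla J,W_0\rangle$, which has the right sign only when $\lambda_j$ is much smaller than $\lambda_i$ and is wrong when the concentrations are comparable, where $\lambda_j\partial_{\lambda_j}\varepsilon_{ij}\le -c\,\varepsilon_{ij}<0$. What the paper does instead is to order $\lambda_1\le\cdots\le\lambda_p$, single out the \emph{largest} $\lambda_i$ with $\sum_{j\neq i}\varepsilon_{ij}>C/\lambda_i^2$, and \emph{increase} it, using that $\lambda_i\partial_{\lambda_i}\varepsilon_{ij}=-n\varepsilon_{ij}(1+o(1))$ when $\lambda_i\gtrsim\lambda_j$ while $\lambda_i\partial_{\lambda_i}\varepsilon_{ij}=o(\lambda_i^{-2})$ when $\lambda_i=o(\lambda_j)$; for $C$ large this yields $-J'(u)\lambda_i\frac{\partial\delta_{a_i,\lambda_i}}{\partial\lambda_i}\ge C/(4\lambda_i^2)$ together with a lower bound of order $\sum_{j\neq i}\varepsilon_{ij}$, and the remaining indices are then handled inductively through the sets $I_i$ and the grouped fields $W_{(i_1,r)}$. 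Since this move \emph{increases} a concentration, one must also verify (as the paper does) that it is invoked only where the interactions dominate the $\Delta K/\lambda_i^2$ term, or else (5) would be violated. Your deferral of ``the case-by-case sign analysis of $\lambda_j\partial_{\lambda_j}\varepsilon_{ij}$'' to a later stage is precisely a deferral of this argument, so the proposal does not yet establish (2) on the interaction-dominated region.
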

\begin{proof} We follow the proof of \cite{BahriDukeMathJYambetypeflows} and \cite{YacoubANS2013}. We need to define $W_0$ so that the Palais Smale condition is satisfied on its decreasing flow lines and $W_0$ has no action on the $\alpha_i$'s variables. Moving the $a_i$'s contains no risk for the Palais Smale condition, since they lie in a compact set, so we only need to prove
$$
\forall s\geq 0,\quad \frac{\partial }{\partial s}(\sup_{1\leq i\leq p}\lambda_i)\leq 0,
$$
where $s$ is the time along a flow line of $W_0$.

Since $|\lambda_i\frac{\partial \varepsilon_{ij}}{\partial a_i}|\leq c\varepsilon_{ij}$, we derive from Lemma \ref{l:expansionofgradient2} that
\begin{equation*}
|J'(\sum_j \alpha_j\delta_{a_j, \lambda_j})\frac{1}{\lambda_i}\frac{\partial \delta_{a_i,\lambda_i}}{\partial a_i}|\geq c\frac{|\nabla K(a_i)|}{\lambda_i} - \frac{1}{c}(\sum_{j\neq i}\varepsilon_{ij} + \frac{1}{\lambda_i^2}),
\end{equation*}
where $c$ is a positive constant.

If for all $i = 1,\cdot\cdot\cdot, p$, it holds that
\begin{equation}\label{e:assumption1}
\sum_{j\neq i}\varepsilon_{ij}\leq \frac{C}{\lambda_i^2},
\end{equation}
where $C$ is a suitable constant, then we have
\begin{equation*}
|J'(\sum_j \alpha_j\delta_{a_j, \lambda_j})\frac{1}{\lambda_i}\frac{\partial \delta_{a_i,\lambda_i}}{\partial a_i}|\geq c\frac{|\nabla K(a_i)|}{\lambda_i} - \frac{1}{c'}\frac{1}{\lambda_i^2}
\end{equation*}
for a suitable constant $c'$.

If (\ref{e:assumption1}) does not hold for some index, we choose the index $i$ so that $\lambda_i$ is the largest concentration with
\begin{equation}\label{e:assumption2}
\sum_{j\neq i}\varepsilon_{ij} > \frac{C}{\lambda_i^2}.
\end{equation}
Then for $\lambda_j\geq \lambda_i$, we have
$$
\sum_{k\neq j}\varepsilon_{kj} \leq \frac{C}{\lambda_j^2}.
$$
Observe that, if $\lambda_j$ and $\lambda_i$ are comparable, or if $\lambda_i\geq \lambda_j$, then
$$
\lambda_i\frac{\partial \varepsilon_{ij}}{\partial \lambda_i} = -n\varepsilon_{ij}(1 + o(1)).
$$
If they are not and $\lambda_i = o(\lambda_j)$, then
$$
\lambda_i\frac{\partial \varepsilon_{ij}}{\partial \lambda_i} = O(\varepsilon_{ij})\leq \frac{C}{\lambda_j^2} = o(\frac{1}{\lambda_i^2}).
$$
Thus we have
$$
-\sum_{j\neq i}\lambda_i\frac{\partial \varepsilon_{ij}}{\partial \lambda_i} \geq \frac{n}{2}\sum_{j\neq i}\varepsilon_{ij} \geq \frac{C}{2\lambda_i^2}.
$$
Hence, choosing $C$ large enough, it holds that
\begin{equation*}
J'(\sum_j \alpha_j\delta_{a_j, \lambda_j})\lambda_i\frac{\partial \delta_{a_i, \lambda_i}}{\partial \lambda_i}\geq \frac{C}{4\lambda_i^2}.
\end{equation*}
Combine the estimates above, we have
\begin{equation*}\label{e:gradient1}
|J'(\sum_j \alpha_j\delta_{a_j, \lambda_j})\frac{1}{\lambda_i}\frac{\partial \delta_{a_i,\lambda_i}}{\partial a_i}| + \frac{\tilde{C}}{C}J'(\sum_j \alpha_j\delta_{a_j, \lambda_j})\lambda_i\frac{\partial \delta_{a_i, \lambda_i}}{\partial \lambda_i}\geq c\frac{|\nabla K(a_i)|}{\lambda_i} - \frac{1}{c}\frac{1}{\lambda_i^2}
\end{equation*}
for suitable positive constants $C$, $\tilde{C}$.

Assume now, that we have another index $i$ such that (\ref{e:assumption2}) holds, but $\lambda_i$ is not the largest. We introduce the set
$$
I_i = \{k| \lambda_k\geq \lambda_i, \sum_{k\neq j}\varepsilon_{kj} > \frac{C}{\lambda_k^2}\}.
$$
Observe that for $\lambda_k\geq \lambda_j$, we have
$$
2\lambda_k\frac{\varepsilon_{kj}}{\lambda_k} + \lambda_j\frac{\partial \varepsilon_{kj}}{\partial \lambda_j}\leq -\frac{n}{2}\varepsilon_{ij}(1 + o(1)).
$$
By similar arguments as above , we derive the existence of suitable bounded constants $c_k$ such that
\begin{equation*}
|J'(\sum_j \alpha_j\delta_{a_j, \lambda_j})\frac{1}{\lambda_i}\frac{\partial \delta_{a_i,\lambda_i}}{\partial a_i}| + \sum_{k\in I_i}c_k\frac{\tilde{C}}{C}J'(\sum_j \alpha_j\delta_{a_j, \lambda_j})\lambda_k\frac{\partial \delta_{a_k, \lambda_k}}{\partial \lambda_k}\geq c\frac{|\nabla K(a_i)|}{\lambda_i} - \frac{1}{c}\frac{1}{\lambda_i^2}.
\end{equation*}

We order the concentrations as follows,
$$
\lambda_1\leq \cdot\cdot\cdot\leq \lambda_p.
$$
If
$$
\frac{|\nabla K(a_1)|}{\lambda_1} \geq \frac{1}{c^2}\frac{1}{\lambda_1^2},
$$
then
\begin{equation*}
|J'(\sum_j \alpha_j\delta_{a_j, \lambda_j})\frac{1}{\lambda_1}\frac{\partial \delta_{a_1, \lambda_1}}{\partial a_1}| + \sum_{k\in I_1}c_k\frac{\tilde{C}}{C}J'(\sum_j \alpha_j\delta_{a_j, \lambda_j})\lambda_k\frac{\partial \delta_{a_k, \lambda_k}}{\partial \lambda_k}\geq c\frac{|\nabla K(a_i)|}{4\lambda_i} + \frac{1}{c}\frac{1}{4\lambda_i^2}.
\end{equation*}
This yields that
\begin{eqnarray*}
&&|J'(\sum_j \alpha_j\delta_{a_j, \lambda_j})\frac{1}{\lambda_1}\frac{\partial \delta_{a_1, \lambda_1}}{\partial a_1}| + |J'(\sum_j \alpha_j\delta_{a_j, \lambda_j})\frac{1}{\lambda_i}\frac{\partial \delta_{a_i,\lambda_i}}{\partial a_i}| + J'(\sum_j \alpha_j\delta_{a_j, \lambda_j})\lambda_i\frac{\partial \delta_{a_i, \lambda_i}}{\partial \lambda_i}\\ &&\quad+ \sum_{k\in I_1}c_k\frac{\tilde{C}}{C}J'(\sum_j \alpha_j\delta_{a_j, \lambda_j})\lambda_k\frac{\partial \delta_{a_k, \lambda_k}}{\partial \lambda_k} + \sum_{k\in I_i}c_k\frac{\tilde{C}}{C}J'(\sum_j \alpha_j\delta_{a_j, \lambda_j})\lambda_k\frac{\partial \delta_{a_k, \lambda_k}}{\partial \lambda_k}\\
&&\quad\geq -\sum_{j\neq i}c_{ij}\lambda_i\frac{\partial \varepsilon_{ij}}{\partial \lambda_i}  + \frac{|\nabla K(a_i)|}{\lambda_i} + \frac{\bar{c}}{\lambda_i^2}.
\end{eqnarray*}
So there exist nonnegative constants $\beta_i$, $\gamma_i$ such that
\begin{eqnarray*}
&&\sum_i \beta_i|J'(\sum_j \alpha_j\delta_{a_j, \lambda_j})\frac{1}{\lambda_i}\frac{\partial \delta_{a_i,\lambda_i}}{\partial a_i}| + J'(\sum_j \alpha_j\delta_{a_j, \lambda_j})\sum_i\gamma_i\lambda_i\frac{\partial \delta_{a_i, \lambda_i}}{\partial \lambda_i}\\ &&\quad\geq \bar{c}(\sum_i\frac{|\nabla K(a_i)|}{\lambda_i} + \frac{1}{\lambda_i^2} + \sum_{j\neq i}\varepsilon_{ij}).
\end{eqnarray*}
Moreover, $\beta_i$ can be chosen such that
$$
\beta_i = 0\quad \text{if}\quad |J'(\sum_j \alpha_j\delta_{a_j, \lambda_j})\frac{1}{\lambda_i}\frac{\partial \delta_{a_i,\lambda_i}}{\partial a_i}| < \frac{\bar{c}}{10p}\sum_i\frac{1}{\lambda_i^2}.
$$
Define
$$
W_0 = -\sum_i \beta_i \text{sign} (J'(\sum_j \alpha_j\delta_{a_j, \lambda_j})\frac{1}{\lambda_i}\frac{\partial \delta_{a_i,\lambda_i}}{\partial a_i})\frac{1}{\lambda_i}\frac{\partial \delta_{a_i,\lambda_i}}{\partial a_i} - \sum_i\gamma_i\lambda_i\frac{\partial \delta_{a_i, \lambda_i}}{\partial \lambda_i}.
$$
Then $W_0$ is a $C^1$ vector field and $\|W_0\|\leq C$. Since $d\lambda_i(W_0) = -\gamma_i\lambda_i$, thus $|d\lambda_i(W_0)|\leq C\lambda_i$, $\forall i = 1,\cdot\cdot\cdot, p$.

A similar proof can be repeated if we assume
$$
\sum_{j\neq i}\varepsilon_{ij} \geq \frac{C}{\lambda_1^2}.
$$

The above proof can be extended as follows. Assume that instead of $\lambda_1\leq \cdot\cdot\cdot\leq \lambda_p$, we single out a subsequence
$$
\lambda_{i_1}\leq \lambda_{i_1 + 1}\leq\cdot\cdot\cdot\leq \lambda_{i_1 + r}.
$$
We will construct a vector field $W_{(i_1, r)}$ in $span\{\frac{\partial \delta_{a_i, \lambda_i}}{\partial a_i}, \frac{\partial \delta_{a_i, \lambda_i}}{\partial \lambda_i}\}_{i_1\leq i\leq i_1 + r}$ such that
$$
\|W_{(i_1, r)}\|\leq C
$$
and
$$
0\leq -d(\sup_{i_1\leq i\leq i_1 + r}\lambda_i)(W_{(i_1, r)})\leq C \sup_{i_1\leq i\leq i_1 + r}\lambda_i.
$$

Under the assumption
\begin{equation}\label{e:assumption3}
\frac{|\nabla K(a_{i_1})|}{\lambda_{i_1}} \geq \frac{2}{c^2}\frac{1}{\lambda_{i_1}^2}\quad\text{or}\quad \sum_{s = 0}^r\sum_{j\leq i_1 + r, j\neq i_1 + s}\varepsilon_{i_1 + s, j}\geq \frac{1}{\lambda_{i_1}^2}
\end{equation}
we have a vector field $W_{(i_1, r)}$ such that
\begin{eqnarray*}
-J'(\sum_j\alpha_j\delta_{a_j, \lambda_j})(W_{(i_1, r)})&\geq & C(\sum_{s = 0}^r\sum_{j\leq i_1 + r, j\neq i_1 + s}\varepsilon_{i_1 + s, j} + \sum_{s = 0}^r \frac{1}{\lambda_{i_1 + s}^2}\\ &&+ \sum_{s = 0}^r \frac{|\nabla K(a_{i_1 + s})|}{\lambda_{i_1 + s}} - \frac{1}{\bar{c}}\sum_{j\geq i_1 + r + 1}\sum_{s = 0}^r\varepsilon_{i_1 + s, j}).
\end{eqnarray*}

We first assume that such indices $i_1$ satisfying (\ref{e:assumption3}) exist and we assume $i_1$ is the the smallest concentration satisfying (\ref{e:assumption3}). Since $\lambda_{i_1}\leq \cdot\cdot\cdot\leq \lambda_p$, we derive there exists a vector field $W_{(i_1, p - i_1)}$ such that
\begin{eqnarray}\label{e:assumption5}
-J'(\sum_j\alpha_j\delta_{a_j, \lambda_j})(W_{(i_1, p - i_1)})&\geq & C(\sum_{i\geq i_1} \frac{|\nabla K(a_{i})|}{\lambda_{i}} + \sum_{i\geq i_1} \frac{1}{\lambda_{i}^2} + \sum_{k = i_1}^p\varepsilon_{ik}).
\end{eqnarray}
If $i_1 = 1$, we have the result. Otherwise, for any $l < i_1$, it holds that
\begin{equation}\label{e:assumption4}
\frac{|\nabla K(a_{l})|}{\lambda_{l}} \leq \frac{2}{c^2}\frac{1}{\lambda_{l}^2}\quad\text{and}\quad \sum_{k = l}^p\sum_{i\neq k}\varepsilon_{ik}\leq \frac{c}{\lambda_{l}^2}.
\end{equation}
It is easy to see that the desired estimate follows from (\ref{e:assumption5}) and (\ref{e:assumption4}), unless:
\begin{equation}\label{e:assumption6}
\sum_{i\geq i_1} \frac{|\nabla K(a_{i})|}{\lambda_{i}} + \sum_{i\geq i_1} \frac{1}{\lambda_{i}^2} + \sum_{k = i_1}^p\varepsilon_{ik} = o(\frac{1}{\lambda_l^2}) = o(\frac{1}{\lambda_1^2})
\end{equation}
for some $l\leq i_1 - 1$. Assume that (\ref{e:assumption6}) holds, then for $i\leq i_1 - 1$, one has $\lambda_i|\nabla K(a_i)|\leq \frac{2}{c^2}$
and $|\nabla K(a_i)| = o(1)$. So, for $i\leq i_1 - 1$, $a_i$ is close to a critical point of $K$ which we denote by $y_i$. so $\lambda_i d(a_i, y_i)\leq C$ for $i \leq i_1 - 1$.  Consequently, if for $i, j\leq i_1 - 1$, $a_i$ and $a_j$ are close to the same critical point $y_i$, $\frac{\lambda_i}{\lambda_j}\to + \infty$ or $\frac{\lambda_j}{\lambda_i}\to +\infty$, and $\varepsilon_{ij} = o(\frac{1}{\inf(\lambda_i,\lambda_j)^2}) = o(\frac{1}{\lambda_1^2})$. Now, if $a_i$ and $a_j$ are close to distinct critical points $y_i$ and $y_j$, $\varepsilon_{ij} = o(\frac{1}{\lambda_1^2})$. Thus, for all $i, j\leq i_1 - 1$, the $\varepsilon_{ij}$'s are $o(\frac{1}{\lambda_1^2})$. From (\ref{e:assumption6}), this fact also holds for $i, j\geq i_1$. Thus we have
$$
\sum_{i\neq j}\varepsilon_{ij} = o(\frac{1}{\lambda_1^2}).
$$
This implies
$$
\sum_{i\geq i_1} \frac{|\nabla K(a_{i})|}{\lambda_{i}} = o(\frac{1}{\lambda_1^2}).
$$
But
$$
\sum_{i < i_1} \frac{|\nabla K(a_{i})|}{\lambda_{i}} \leq \sum_{i < i_1}\frac{c}{\lambda_i^2}\leq \frac{c}{\lambda_1^2}
$$
and so
$$
\sum_{i}^p \frac{|\nabla K(a_{i})|}{\lambda_{i}} \leq \frac{c}{\lambda_1^2}.
$$
Hence it holds that
\begin{eqnarray*}\label{e:assumption10}
-J'(\sum_j\alpha_j\delta_j)\lambda_1\frac{\partial \delta_{a_1, \lambda_1}}{\partial \lambda_1} &=& -c\frac{-\Delta K(y_1)}{\lambda_1^2} + o(\frac{1}{\lambda_1^2}) + \sum_{i\neq 1}c_{1,i}\lambda_1\frac{\partial \varepsilon_{1i}}{\partial \lambda_1}\\
&=& -c\frac{-\Delta K(y_1)}{\lambda_1^2} + o(\frac{1}{\lambda_1^2}).
\end{eqnarray*}
Furthermore, it holds that
$$
\frac{1}{\lambda_1^2}\geq c^2(\sum_{i}^p \frac{|\nabla K(a_{i})|}{\lambda_{i}} + \sum_{i = 1}^p \frac{1}{\lambda_i^2}+ \sum_{i\neq j}\varepsilon_{ij}),
$$
for some constant $c > 0$.

If
$$
-\Delta K(y_1) \geq c' > 0,
$$
then
\begin{equation*}\label{e:assumption7}
-J'(\sum_j\alpha_j\delta_j)\lambda_1\frac{\partial \delta_{a_1, \lambda_1}}{\partial \lambda_1}\geq c(\sum_{i}^p \frac{|\nabla K(a_{i})|}{\lambda_{i}} + \sum_{i = 1}^p \frac{1}{\lambda_i^2}+ \sum_{i\neq j}\varepsilon_{ij}).
\end{equation*}
Set
$$
W_0 = \lambda_1\frac{\partial \delta_{a_1, \lambda_1}}{\partial \lambda_1}.
$$
Then, $d(\sup_i \lambda_i)(W_0)\leq 0$.

If
$$
-\Delta K(y_1) < 0,
$$
set
$$
W_0 = -\lambda_1\frac{\partial \delta_{a_1, \lambda_1}}{\partial \lambda_1}
$$
Then it satisfies all the required properties. Lemma \ref{l:characterization} then follows as soon as $i_1$ exists.

Assume now that such $i_1$ satisfying (\ref{e:assumption3}) does not exist, that is to say, $\forall i\in \{1, \cdot\cdot\cdot, p\}$,
\begin{equation*}\label{e:assumption8}
\frac{|\nabla K(a_{i})|}{\lambda_{i}} \leq \frac{2}{c^2}\frac{1}{\lambda_{i}^2}\quad\text{and}\quad \sum_{k = l}^p\sum_{i\neq k}\varepsilon_{ik}\leq \frac{c}{\lambda_{i}^2}.
\end{equation*}
We assume that
$$
\inf d(a_i, a_j) < \frac{1}{2}\inf d(y_k, y_l).
$$
(otherwise, the proof is straightforward). Under this condition, $a_i$ and $a_j$ are close to some same critical point $y_i$, then $\inf(\lambda_i, \lambda_j) = o(\sup(\lambda_i, \lambda_j))$, so
\begin{equation*}\label{e:assumption9}
\frac{|\nabla K(a_{i})|}{\lambda_{i}} \leq \frac{2}{c^2}\frac{1}{\lambda_{1}^2}\quad\text{and}\quad \sum_{k = l}^p\sum_{i\neq k}\varepsilon_{ik}\leq o(\frac{1}{\lambda_{1}^2}).
\end{equation*}
Then the same argument used in the previous case can be repeated.

Since the same argument is valid when two concentrations are not comparable. So we will assume now that $\inf d(a_i, a_j)\geq d_0 > 0$, and all concentrations are comparable, that is, $\frac{1}{c}\leq \frac{\lambda_i}{\lambda_j}\leq c$. If some index $i$, $a_i$ is not close to some critical point $y_i$, then $\varepsilon_{ij} = o(\frac{1}{\lambda_i^2}) = o(\frac{1}{\lambda_j^2})$, and $\frac{|\nabla K(a_i)|}{\lambda_i}\geq \frac{1}{\lambda_i^2}$, so $|J'(\sum_j \alpha_j\delta_{a_j, \lambda_j})\frac{1}{\lambda_i}\frac{\partial \delta_{a_i,\lambda_i}}{\partial a_i}|\geq c\frac{|\nabla K(a_i)|}{\lambda_i} - \frac{1}{c}\frac{1}{\lambda_i^2}\geq \frac{c}{2\lambda_i^2}$, and
$$
|J'(\sum_j \alpha_j\delta_{a_j, \lambda_j})\frac{1}{\lambda_i}\frac{\partial \delta_{a_i,\lambda_i}}{\partial a_i}|\geq c(\sum_{k = 1}^p\frac{|\nabla K(a_k)|}{\lambda_k} + \sum_{k = 1}^p \frac{1}{\lambda_k^2} + \sum_{k\neq l}\varepsilon_{kl}).
$$
Hence $W_0 = \frac{1}{\lambda_i}\frac{\partial \delta_{a_i,\lambda_i}}{\partial a_i}$ is the desired vector field.
Now, we are left with the case where each point $a_i$ is close to a critical point $y_i$. It holds that
$$
-J'(\sum_j\alpha_j\delta_j)\lambda_i\frac{\partial \delta_{a_i, \lambda_i}}{\partial \lambda_i} = - c\frac{\Delta K(y_i)}{\lambda_i^2} + o(\frac{1}{\lambda_i^2}).
$$
Set $W_0 = \lambda_i\frac{\partial \delta_{a_i, \lambda_i}}{\partial \lambda_i}$. If $-\Delta K(y_i) < 0$, $d\lambda_i(W_0) = -\lambda_i$, so $d(\sup_i\lambda_i)(W_0)\leq 0$, and in this case $w_0$ is constructed.

The final pseudogradient vector field $W_0$ will be a convex combination of the vector field constructed in the above cases.

Finally, it remains the case where the points $a_i$ are close to distinct critical points $y_i$, having $-\Delta K(y_i) > 0$ and all the concentrations are comparable. In this region the decreasing flow lines of $W_0$ are attracted by the critical point at infinity $(y_{j_1},\cdot\cdot\cdot, y_{j_p})_\infty$. Thus condition (5) is satisfied and this completes the proof.
\end{proof}
\begin{lemma}\label{l:lemma4.4}
For any $u = \sum_{i = 1}^p\alpha_i \delta_{a_i, \lambda_i}\in V(p, \varepsilon_1)$ $(\varepsilon_1 < \varepsilon/2)$, we have
$$
J(\sum_{i= 1}^p\alpha_i\delta_{a_i, \lambda_i} + \bar{v}(\alpha, a, \lambda)) = J(\sum_{i= 1}^p\alpha_i\delta_{\tilde{a}_i, \tilde{\lambda}_i})
$$
with
$$
\sum_{i\neq j}\tilde{\varepsilon}_{ij} + \sum_{i}\frac{1}{\tilde{\lambda}_i^2} \to 0 \Leftrightarrow \sum_{i\neq j}\varepsilon_{ij} + \sum_{i}\frac{1}{\lambda_i^2} \to 0
$$
and
$$
|\tilde{a}_i - a_i|\to 0\text{ as } \sum_{i\neq j}\varepsilon_{ij} + \sum_{i}\frac{1}{\lambda_i^2} \to 0.
$$
\end{lemma}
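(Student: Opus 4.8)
The plan is to establish first the functional identity, by absorbing the correction $\bar v$ into a small deformation of the concentration data, and then to read off the comparison of the two families of parameters from a quantitative bound on the size of that deformation.

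For the identity, I would introduce on the base $B_{\varepsilon_1}$ the two functions
$$
h(\alpha,a,\lambda) = J\left(\sum_{i=1}^p\alpha_i\delta_{a_i,\lambda_i}\right),\qquad
g(\alpha,a,\lambda) = J\left(\sum_{i=1}^p\alpha_i\delta_{a_i,\lambda_i} + \bar v(\alpha,a,\lambda)\right).
$$
Applying Lemma \ref{l:change} once with $v=0$ and once with $v=\bar v$, and using that $Q$ is positive definite with norm bounded above and below (Lemma \ref{l:positivedefinite}), the first two blocks of that expansion (which do not depend on $v$) cancel, leaving $h-g$ equal to a bounded prefactor times $Q(\bar v,\bar v)+o(\|\bar v\|^2)+o(\sum_{i\neq j}\varepsilon_{ij})$; thus $|h-g|=O(\|\bar v\|^2)+o(\sum_{i\neq j}\varepsilon_{ij})$. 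Since $K\in C^2(\mathbb S^n)$ and $\mathbb S^n$ is compact, $|\nabla K|$ is bounded, so the estimate of Lemma \ref{l:vpartofu} gives $\|\bar v\|^2 = o\bigl(\sum_{i\neq j}\varepsilon_{ij}+\sum_i(|\nabla K(a_i)|\lambda_i^{-1}+\lambda_i^{-2})\bigr)$, hence
$$
|h-g| = o\Bigl(\sum_{i\neq j}\varepsilon_{ij}+\sum_i\bigl(|\nabla K(a_i)|\lambda_i^{-1}+\lambda_i^{-2}\bigr)\Bigr).
$$
On the other hand Lemma \ref{l:characterization} provides a bounded pseudogradient $W_0$ acting trivially on the $\alpha_i$'s, along which $-dh$, $-dg$, and therefore $-dh_s$ for all $h_s:=(1-s)h+sg$, $s\in[0,1]$, are bounded below by $c\bigl(\sum_{i\neq j}\varepsilon_{ij}+\sum_i(|\nabla K(a_i)|\lambda_i^{-1}+\lambda_i^{-2})\bigr)$. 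I would then run the Moser--Thom isotopy trick: solve
$$
\dot\Phi_s = -\frac{(g-h)\circ\Phi_s}{dh_s(W_0)\circ\Phi_s}\,W_0\circ\Phi_s,\qquad \Phi_0=\mathrm{id};
$$
because the right-hand side has scale-invariant size $o(1)$, this flow exists on $[0,1]$, stays uniformly close to the identity, keeps $\alpha$ fixed, and satisfies $h_1\circ\Phi_1 = h_0$, that is $g\circ\Phi_1 = h$. Setting $(\alpha,\tilde a,\tilde\lambda):=\Phi_1^{-1}(\alpha,a,\lambda)$ then gives $J(\sum\alpha_i\delta_{a_i,\lambda_i}+\bar v) = g(\alpha,a,\lambda) = h(\alpha,\tilde a,\tilde\lambda) = J(\sum\alpha_i\delta_{\tilde a_i,\tilde\lambda_i})$, together with the displacement estimate
$$
\Bigl|\log\frac{\tilde\lambda_i}{\lambda_i}\Bigr| + \lambda_i\,d(\tilde a_i,a_i) = o(1),\qquad i=1,\dots,p,
$$
where the $o(1)$ tends to $0$ as $\sum_{i\neq j}\varepsilon_{ij}+\sum_i\lambda_i^{-2}\to0$ (then $\lambda_i\to\infty$, so the gradient scale and $\|\bar v\|$ go to $0$). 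The local pieces are glued over the cover furnished by Lemma \ref{l:taylorformular}, exactly as in \cite{BenChenChtiouiHamamiDukeMath1996,YacoubANS2013}.

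It then remains to read the stated comparisons off the displacement estimate. It gives $d(\tilde a_i,a_i)=o(\lambda_i^{-1})\to0$, which is the last assertion, and $\tilde\lambda_i=\lambda_i(1+o(1))$, so $\sum_i\tilde\lambda_i^{-2}=(1+o(1))\sum_i\lambda_i^{-2}$. For the interaction terms, I would substitute $\tilde\lambda_i=\lambda_i(1+o(1))$ and $d(\tilde a_i,\tilde a_j)=d(a_i,a_j)+o(\lambda_i^{-1}+\lambda_j^{-1})$ into
$$
\tilde\varepsilon_{ij}^{-2/(n-2\gamma)} = \frac{\tilde\lambda_i}{\tilde\lambda_j}+\frac{\tilde\lambda_j}{\tilde\lambda_i}+\tilde\lambda_i\tilde\lambda_j\,d(\tilde a_i,\tilde a_j)^2,
$$
and absorb every cross term by means of $\lambda_i/\lambda_j+\lambda_j/\lambda_i\le\varepsilon_{ij}^{-2/(n-2\gamma)}$ and $(\lambda_i+\lambda_j)\,d(a_i,a_j)\le\sqrt2\,\varepsilon_{ij}^{-2/(n-2\gamma)}$ (the latter from $\lambda_i\lambda_j\,d(a_i,a_j)^2\le\varepsilon_{ij}^{-2/(n-2\gamma)}$); this leaves $\tilde\varepsilon_{ij}^{-2/(n-2\gamma)}=(1+o(1))\,\varepsilon_{ij}^{-2/(n-2\gamma)}$, i.e.\ $\tilde\varepsilon_{ij}=(1+o(1))\,\varepsilon_{ij}$. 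The equivalence $\sum_{i\neq j}\tilde\varepsilon_{ij}+\sum_i\tilde\lambda_i^{-2}\to0\Leftrightarrow\sum_{i\neq j}\varepsilon_{ij}+\sum_i\lambda_i^{-2}\to0$ follows at once in one direction; in the other direction one runs the same argument for $\Phi_1^{-1}$, whose deformation obeys the same bound since, by the estimates just proved, $\|\bar v\|$ is comparable to $\sum_{i\neq j}\tilde\varepsilon_{ij}^{(n+2\gamma)/2n}(\log\tilde\varepsilon_{ij}^{-1})^{(n-2\gamma)/n}+\sum_i\tilde\lambda_i^{-1}$ up to a factor $1+o(1)$.

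The main obstacle is the first step, the rigorous absorption of $\bar v$. It relies on: (i) the uniform, quantitative lower bound for the pseudogradient of $J$ in the $(a,\lambda)$-directions, which is exactly the content of Lemma \ref{l:characterization} and is what makes the isotopy vector field controllable; (ii) scale-invariant $C^1$-estimates $\|\partial_{(\alpha,a,\lambda)}\bar v\|=O(\|\bar v\|)$, needed so that $h-g$ is small together with its derivative along $W_0$; and (iii) gluing the local straightening charts into a single change of variables on $V(p,\varepsilon_1)$. Once the displacement estimate is in hand, the remainder is an elementary computation.
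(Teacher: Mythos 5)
Your proposal is correct, and it rests on exactly the two pillars the paper uses: the quantitative lower bound for $-dJ(W_0)$ in the $(a,\lambda)$-directions from Lemma \ref{l:characterization}, and the bound on $\|\bar v\|$ from Lemma \ref{l:vpartofu}, combined through the observation that $J(u+\bar v)-J(u)=O(\|\bar v\|^2)+o\bigl(\sum_{i\neq j}\varepsilon_{ij}\bigr)$ is negligible against the rate of decrease of $J$ along $W_0$. The implementation differs, though. The paper defines $(\tilde a,\tilde\lambda)$ as the point where the forward $W_0$-flow line through $(a,\lambda)$ reaches the level $J(u+\bar v)$, and must argue existence and uniqueness of the hitting time separately in each direction (forward from $(a,\lambda)$, backward from $(\tilde a,\tilde\lambda)$ with $-W_0$); since it only shows the hitting time $s_0$ is \emph{bounded}, it gets two-sided comparability $e^{-cs_0}\le\tilde\varepsilon_{ij}/\varepsilon_{ij}\le e^{cs_0}$ and $|\tilde\lambda_i/\lambda_i|\in[e^{-cs_0},e^{cs_0}]$, which suffices for the stated equivalence, plus $d(a_i,\tilde a_i)\to0$ from $|\dot a_i|\le C/\lambda_i$. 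Your Moser--Thom isotopy is essentially the rigorous ODE version of the same hitting-time construction (to first order $\Phi_1$ is the time-$\frac{g-h}{-dh(W_0)}$ map of $W_0$), but it packages the diffeomorphism, its inverse, and a sharper scale-invariant displacement estimate — hence $\tilde\varepsilon_{ij}=(1+o(1))\varepsilon_{ij}$, stronger than the lemma needs — into one stroke. The only points you should make explicit are that the lower bound on $dh_s(W_0)$ persists along $\Phi_s$ (a routine continuity/bootstrap argument, since the displacement is $o(1)$ in the scale-invariant metric), and that the control of $dg(W_0)$ is precisely Lemma \ref{l:characterization}(1) because the relevant direction there is the composite $W_0+\partial\bar v(W_0)$ — which you correctly include. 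Both routes are sound; yours is somewhat cleaner and avoids the separate forward/backward solvability discussion in the paper.
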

\begin{proof}
We follow the proof of \cite{BenChenChtiouiHamamiDukeMath1996} and \cite{GamaraPSCCRANS2002}. By Lemma \ref{l:characterization}, the vector field $W_0$ is Lipschitz. Hence, there is a one parameter group $h_s$ generated by $W_0$ satisfying
\begin{equation*}
\left\{
             \begin{array}{lr}
             \frac{\partial }{\partial s}h_s(\sum_{i = 1}^p\alpha_i \delta_{a_i, \lambda_i}) = W_0(h_s(\sum_{i = 1}^p\alpha_i \delta_{a_i, \lambda_i})),&  \\
             h_0(\sum_{i = 1}^p\alpha_i \delta_{a_i, \lambda_i}) = \sum_{i = 1}^p\alpha_i \delta_{a_i, \lambda_i}.
             \end{array}
             \right.
\end{equation*}
Therefore $J(h_s(\sum_{i = 1}^p\alpha_i \delta_{a_i, \lambda_i}))$, $J(h_s(\sum_{i = 1}^p\alpha_i \delta_{a_i, \lambda_i}) + \bar{v}(s))$ are both decreasing functions of $s$. By the definition of $\bar{v}$, it holds that
\begin{equation*}
J(\sum_{i = 1}^p\alpha_i \delta_{a_i, \lambda_i} + \bar{v}) \leq J(\sum_{i = 1}^p\alpha_i \delta_{a_i, \lambda_i}) = J(h_0(\sum_{i = 1}^p\alpha_i \delta_{a_i, \lambda_i})).
\end{equation*}
By Lemma \ref{l:characterization}, the flow line $h_s(\sum_{i = 1}^p\alpha_i \delta_{a_i, \lambda_i})$ satisfies the (PS) condition if it does not go to infinity. Since the flow line started far away from these critical points at infinity and $d\lambda_{i_0}\leq C\lambda_{i_0}$, then it will take an infinite time for the flow line to go to infinity. Then the flow line would be down the level
$$
J(\sum_{i = 1}^p\alpha_i \delta_{a_i, \lambda_i} + \bar{v}) = J(\sum_{i = 1}^p\alpha_i \delta_{a_i, \lambda_i}) + o(1)
$$
before it exists from $V(p, \varepsilon)$. Thus there is at most one solution of the equation
\begin{equation}\label{e:flowequation}
J(h_s(\sum_{i = 1}^p\alpha_i \delta_{a_i, \lambda_i})) = J(\sum_{i = 1}^p\alpha_i \delta_{a_i, \lambda_i} + \bar{v}).
\end{equation}
Indeed, we assume $\sum_{i = 1}^p\alpha_i \delta_{a_i, \lambda_i}\in V(p, \varepsilon_1)$ with $\varepsilon_1 < \varepsilon/2$. Then the flow line will travel from $\partial V(p, \varepsilon_1)$ to $\partial V(p, \varepsilon)$. During this strip, it holds that
$$
- J'(h_s(\sum_{i = 1}^p\alpha_i \delta_{a_i, \lambda_i}))\geq C\left(\sum_{i = 1}^p\left(\frac{|\nabla K(a_i)|}{\lambda_i} + \frac{1}{\lambda_i^2}\right) + \sum_{i\neq j} \varepsilon_{ij}\right)\geq C(\varepsilon) > 0
$$
and
$$
d(\partial V(p, \varepsilon_1), \partial V(p, \varepsilon)) = a(\varepsilon),\quad |W_0|\leq C.
$$
Let $\Delta s$ denote the time to travel from $\partial V(p, \varepsilon_1)$ to $\partial V(p, \varepsilon)$, then we have $a(\varepsilon)\leq C \Delta s$. Let $\gamma(\varepsilon) = - \frac{C(\varepsilon)a(\varepsilon)}{C}$, then $J(h_s(\sum_{i = 1}^p\alpha_i \delta_{a_i, \lambda_i}))$ decreases at least $-\gamma(\varepsilon)$ during this strip. Hence, (\ref{e:flowequation}) has a unique solution. 

Conversely, starting from $\sum_{i = 1}^p\tilde{\alpha}_i \delta_{\tilde{a}_i, \tilde{\lambda}_i}$ in $V(p, \varepsilon_2)$, $\varepsilon_2$ sufficiently small, we consider the vector field $-W_0$. The flow line is $h_{-s}(\sum_{i = 1}^p\tilde{\alpha}_i \delta_{\tilde{a}_i, \tilde{\lambda}_i})$, we have to solve
\begin{equation}\label{e:flowequation3}
J(h_{-s}(\sum_{i = 1}^p\tilde{\alpha}_i \delta_{\tilde{a}_i, \tilde{\lambda}_i}) + \bar{v}(h_{-s}(\sum_{i = 1}^p\tilde{\alpha}_i \delta_{\tilde{a}_i, \tilde{\lambda}_i}))) = J(\sum_{i = 1}^p\tilde{\alpha}_i \delta_{\tilde{a}_i, \tilde{\lambda}_i}).
\end{equation}
It holds that
\begin{eqnarray*}
&&\frac{d}{ds}(J(h_{-s}(\sum_{i = 1}^p\tilde{\alpha}_i \delta_{\tilde{a}_i, \tilde{\lambda}_i}) + \bar{v}(h_{-s}(\sum_{i = 1}^p\tilde{\alpha}_i \delta_{\tilde{a}_i, \tilde{\lambda}_i}))))\\
&&=J'(h_{-s}(\sum_{i = 1}^p\tilde{\alpha}_i \delta_{\tilde{a}_i, \tilde{\lambda}_i}) + \bar{v}(h_{-s}(\sum_{i = 1}^p\tilde{\alpha}_i \delta_{\tilde{a}_i, \tilde{\lambda}_i})))(-W_0 - \frac{\partial \bar{v}}{\partial (\alpha_i, a_i, \lambda_i)}(W_0))\\
&&\geq C\left(\sum_{i = 1}^p\left(\frac{|\nabla K(\tilde{a}_i(s))|}{\tilde{\lambda}_i(s)} + \frac{1}{\tilde{\lambda}_i^2(s)}\right) + \sum_{i\neq j} \tilde{\varepsilon}_{ij}(s)\right) > 0.
\end{eqnarray*}
So $J(h_{-s}(\sum_{i = 1}^p\tilde{\alpha}_i \delta_{\tilde{a}_i, \tilde{\lambda}_i}) + \bar{v}(h_{-s}(\sum_{i = 1}^p\tilde{\alpha}_i \delta_{\tilde{a}_i, \tilde{\lambda}_i})))$ is an increasing function of $s$. Therefore there is at most one solution of (\ref{e:flowequation3}). We have here the same problem discussed in the first case. If (PS) is satisfied along the increasing flow lines of the vector field, then we can apply the same argument above to show that the flow line does not exit from $V(p, \varepsilon)$.
If (PS) is not satisfied, since $|d\lambda_{i_0}(W_0)|\leq C \lambda_{i_0}$, it would be in an infinite time, and during this time $[0, +\infty)$, we have
$$
\bar{u}_s = h_{-s}(\sum_{i = 1}^p\tilde{\alpha}_i \delta_{\tilde{a}_i, \tilde{\lambda}_i}) + \bar{v}(h_{-s}(\sum_{i = 1}^p\tilde{\alpha}_i \delta_{\tilde{a}_i, \tilde{\lambda}_i}))
$$
and
$$
\frac{d}{ds}J(\bar{u}_s)\geq C\left(\sum_{i = 1}^p\left(\frac{|\nabla K(\tilde{a}_i(s))|}{\tilde{\lambda}_i(s)} + \frac{1}{\tilde{\lambda}_i^2(s)}\right) + \sum_{i\neq j} \tilde{\varepsilon}_{ij}(s)\right).
$$
Since $\limsup_{s\to \infty}J(\bar{u}_s) = J(\bar{u}_\infty) < \infty$, we have
$$
\int_{0}^\infty\left(\sum_{i = 1}^p\left(\frac{|\nabla K(\tilde{a}_i(s))|}{\tilde{\lambda}_i(s)} + \frac{1}{\tilde{\lambda}_i^2(s)}\right) + \sum_{i\neq j} \tilde{\varepsilon}_{ij}(s)\right)ds < \infty.
$$
Therefore, up to a subsequence $s_k\to\infty$, it holds that 
$$
\frac{1}{\tilde{\lambda}_i^2(s_k)} + \sum_{i\neq j} \tilde{\varepsilon}_{ij}(s_k)\to 0
$$
and
$$
J(\bar{u}_{s_k}) - J(h_{-s_k}(\sum_{i = 1}^p\tilde{\alpha}_i \delta_{\tilde{a}_i, \tilde{\lambda}_i}))\to 0,
$$
since $\bar{v}(h_{-s_k}(\sum_{i = 1}^p\tilde{\alpha}_i \delta_{\tilde{a}_i, \tilde{\lambda}_i})\to 0$. So
\begin{eqnarray*}
\limsup_{s\to\infty}J(\bar{u}_s)& =& \limsup_{s\to\infty}J(h_{-s}(\sum_{i = 1}^p\tilde{\alpha}_i \delta_{\tilde{a}_i, \tilde{\lambda}_i})) > 
J(\sum_{i = 1}^p\tilde{\alpha}_i \delta_{\tilde{a}_i, \tilde{\lambda}_i}).
\end{eqnarray*}
By continuity of $J(\bar{u}_s)$, equation (\ref{e:flowequation3}) has a solution. 

Next, we will show that
\begin{equation}\label{e:equation1}
\sum_{i\neq j}\tilde{\varepsilon}_{ij} + \sum_{i}\frac{1}{\tilde{\lambda}_i^2} \to 0 \quad\text{as}\quad \sum_{i\neq j}\varepsilon_{ij} + \sum_{i}\frac{1}{\lambda_i^2} \to 0
\end{equation}
and conversely, and $d(a_i, \tilde{a}_i)\to 0$ under the above conditions. Set $\sum_{i = 1}^p\alpha_i\delta_{a_i(s), \lambda_i(s)} = h_s(\sum_{i = 1}^p\alpha_i\delta_{a_i, \lambda_i})$. Since $W_0$ has no action on the variables $\alpha_i$, we can write $W_0$ as follows,
$$
W_0 = \sum_{i = 1}^p\alpha_i\left(\frac{1}{\lambda_i(s)}\frac{\partial \delta_{a_i(s), \lambda_i(s)}}{\partial a_i(s)}\right)(\lambda_i(s)\dot{a}_i(s)) + \sum_{i = 1}^p\alpha_i\left(\lambda_i(s)\frac{\partial \delta_{a_i(s), \lambda_i(s)}}{\partial \lambda_i(s)}\right)\left(\frac{\dot{\lambda}_i(s)}{\lambda_i(s)}\right),
$$
where $\dot{a}_i(s)$ and $\dot{\lambda}_i(s)$ denote the action of $W_0$ on the variables $a_i$ and $\lambda_i$, that is to say, $\dot{a}_i = \frac{\partial a_i}{\partial W_0}$ and $\dot{\lambda}_i = \frac{\partial \lambda}{\partial W_0}$. $\frac{1}{\lambda_i(s)}\frac{\partial \delta_{a_i(s), \lambda_i(s)}}{\partial a_i(s)}$ and $\lambda_i(s)\frac{\partial \delta_{a_i(s), \lambda_i(s)}}{\partial \lambda_i(s)}$ are nearly orthogonal, both are $O(\delta_{a_i(s), \lambda_i(s)})$ and $0 < C_1\leq |\frac{1}{\lambda_i(s)}\frac{\partial \delta_{a_i(s), \lambda_i(s)}}{\partial a_i(s)}|\leq C$ and $0 < |\lambda_i(s)\frac{\partial \delta_{a_i(s), \lambda_i(s)}}{\partial \lambda_i(s)}| \leq C$. Since $W_0$ is bounded, $|\lambda_i(s)\dot{a}_i(s)| + |\frac{\dot{\lambda}_i(s)}{\lambda_i(s)}|\leq C'$, $i = 1,\cdot\cdot\cdot, p$. On the other hand, since $\varepsilon_{ij} = o(1)$, $\lambda_i(s)\frac{\partial \varepsilon_{ij}}{\partial \lambda_i(s)}$ and $\frac{1}{\lambda_i(s)}\frac{\partial \varepsilon_{ij}}{\partial a_i(s)}$ are both $O(\varepsilon_{ij})$ and
$$
|\frac{\partial}{\partial s}\varepsilon_{ij}| = |\lambda_i(s)\frac{\partial \varepsilon_{ij}}{\partial \lambda_i(s)}\frac{\dot{\lambda}_i(s)}{\lambda_i(s)} + \frac{1}{\lambda_i(s)}\frac{\partial \varepsilon_{ij}}{\partial a_i(s)}\lambda_i(s)\dot{a}_i(s)|\leq C\varepsilon_{ij},
$$
$$
e^{-cs}\leq \frac{\varepsilon_{ij}(s)}{\varepsilon_{ij}(0)}\leq e^{cs}\quad\text{ and }e^{-cs}\leq \frac{\lambda_i(s)}{\lambda_i(0)}\leq e^{cs}.
$$
Thus we have (\ref{e:equation1}).
On the other hand, we have $|\dot{a}_i(s)|\leq \frac{C}{\lambda_i(s)}\leq \frac{Ce^{cs}}{\lambda_i(0)}$, thus $|a_i(s) - a_i(0)|\leq Cs\frac{Ce^{cs}}{\lambda_i(0)}$. Since $s_0$ satisfies equation (\ref{e:flowequation}), it is bounded and so we have $d(a_i, \tilde{a}_i)\to 0$. This completes the proof of this lemma.
\end{proof}
{\bf Proof of Lemma \ref{l:morselemma}}. Lemma \ref{l:morselemma} follows from Lemma \ref{l:taylorformular} and \ref{l:lemma4.4}, since for any $\varepsilon_1 > 0$, there exist $\varepsilon > 0$ and $\varepsilon' > 0$ such that
$$
V(p, \varepsilon)\stackrel{h_s}{\longrightarrow}V(p, \varepsilon_1)\stackrel{h_{-s}}{\longrightarrow}V(p, \varepsilon')\supseteq V(p, \varepsilon).
$$
\qed

\section{Proof of Theorem \ref{t:main}.}
\begin{lemma}\label{l:criticalpointatinfinity}
Assume that (\ref{e:main}) has no solution. Then, the set of critical point at infinity of $J$ in $\Sigma^+$ is
$$
F^+_\infty = \{\phi_{\infty}^p = (y_{i_1},\cdot\cdot\cdot, y_{i_p})_\infty| (y_{i_1}, \cdot\cdot\cdot, y_{i_p})\in F^+\}.
$$
\end{lemma}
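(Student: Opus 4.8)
The plan is to reduce the problem, via the Morse lemma at infinity, to the finite-dimensional dynamics of the concentration parameters and then to identify the admissible asymptotic configurations from the pseudogradient $W_0$ of Lemma~\ref{l:characterization}. First I would apply Lemma~\ref{l:morselemma}: after the diffeomorphism $\xi_l$ one has $J(\sum_{i=1}^p\alpha_i\delta_{a_i,\lambda_i}+v)=J(\sum_{i=1}^p\alpha_i\delta_{\tilde a_i,\tilde\lambda_i})+\frac12 J''(\sum_{i=1}^p\alpha_i\delta_{a_i,\lambda_i})V_l\cdot V_l$, where the quadratic form is positive definite on the relevant subspace by Lemma~\ref{l:positivedefinite}. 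Hence the variable $V_l$ is inert and carries no critical point at infinity, and by Lemma~\ref{l:lemma4.4} (which also gives $d(a_i,\tilde a_i)\to 0$) it is equivalent to work with the reduced functional in the variables $(\alpha,\tilde a,\tilde\lambda)$; moreover on $V(p,\varepsilon)$ the $\alpha_i$ are essentially determined by $(\tilde a_i,J)$ through the constraint $J(u)^{n/(n-2\gamma)}\alpha_j^{4\gamma/(n-2\gamma)}K(a_j)\to 1$, so only the $(\tilde a_i,\tilde\lambda_i)$ remain free.

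Next I would invoke Lemma~\ref{l:characterization}. Completing $W_0$ by the inert $V_l$-direction and the constrained $\alpha$-direction into a global pseudogradient $W$ for $J$ on $V(p,\varepsilon)$, estimates (1)--(3) give, along a decreasing flow line, $\frac{d}{ds}J\le -c\big(\sum_i(|\nabla K(a_i)|\lambda_i^{-1}+\lambda_i^{-2})+\sum_{i\ne j}\varepsilon_{ij}\big)$, while (4)--(5) show that the $\lambda_i$ can remain unbounded along such a flow only when $(a_1,\dots,a_p)$ is close to some $(y_{i_1},\dots,y_{i_p})\in F^+$ with comparable concentrations. Since a flow line defining a critical point at infinity must stay in $V(p,\varepsilon(s))$ with $\varepsilon(s)\to 0$, hence with all $\lambda_i(s)\to\infty$, and since the descending flow of $-\nabla J$ and that of $W$ determine the same critical points at infinity, the only candidates are the $(y_{i_1},\dots,y_{i_p})_\infty$ with $(y_{i_1},\dots,y_{i_p})\in F^+$. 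This proves the inclusion $F^+_\infty\subseteq\{(y_{i_1},\dots,y_{i_p})_\infty:(y_{i_1},\dots,y_{i_p})\in F^+\}$.

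For the reverse inclusion, fix $(y_{i_1},\dots,y_{i_p})\in F^+$ and start from data concentrated near $\sum_j K(y_{i_j})^{-(n-2\gamma)/(4\gamma)}\delta_{y_{i_j},\lambda}$. As established in the last case of the proof of Lemma~\ref{l:characterization}, in the region where the $a_i$ are near distinct critical points of $K$ with $-\Delta K(y_{i_j})>0$ and the $\lambda_i$ are comparable, the decreasing flow of $W$ is attracted by $(y_{i_1},\dots,y_{i_p})_\infty$: the $a_i$-directions are pulled to the $y_{i_j}$, the sign $-\Delta K(y_{i_j})>0$ makes the term $\frac{n-2\gamma}{n}\frac{c_2}{\Gamma_1}\sum_i\alpha_i^{2n/(n-2\gamma)}(-\Delta K(a_i))\lambda_i^{-2}$ of Lemma~\ref{l:change} positive and decreasing to $0$ as $\lambda_i\to\infty$, and at the balanced values of the $\alpha_j$ the coefficients of $\varepsilon_{ij}$ in Lemma~\ref{l:change} keep the bubbles separated. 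Thus the flow remains in $V(p,\varepsilon(s))$ with $\lambda_i(s)\to\infty$, so $(y_{i_1},\dots,y_{i_p})_\infty$ is a critical point at infinity; together with the first inclusion this gives the lemma.

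The step I expect to be the main obstacle --- though it is already dispatched inside the proof of Lemma~\ref{l:characterization}, on which the present argument rests --- is the exclusion of the borderline configurations: bubbles colliding at a common point, bubbles sitting at a critical point of $K$ with $-\Delta K<0$, and a bubble drifting towards a non-critical point of $K$. Each is ruled out by exhibiting a descent direction (respectively in the $\lambda$-, $\lambda$-, and $a$-variables) along which $J$ strictly decreases while the relevant $\lambda_i$ stays bounded, so that no such configuration can support a flow line that remains in $V(p,\varepsilon(s))$ with $\varepsilon(s)\to 0$. Consequently the present lemma is mostly a matter of assembling the Morse lemma at infinity, the sign structure in Lemma~\ref{l:change}, and the pseudogradient properties (1)--(5).
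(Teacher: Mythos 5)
Your proposal follows essentially the same route as the paper: Lemma \ref{l:characterization} (in particular its case (5)) to confine unbounded concentrations to configurations near distinct points of $I^+$ with comparable $\lambda_i$'s, the Morse lemma at infinity to discard the inert $V$-variable, and the sign $-\Delta K(y_{i_j})>0$ in the reduced normal form to show the flow indeed escapes to infinity there, producing $(y_{i_1},\dots,y_{i_p})_\infty$. The only difference is presentational — you separate the two inclusions explicitly, which the paper does more loosely — so the argument is correct and matches the paper's proof.
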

\begin{proof}
By Lemma \ref{l:characterization}, the Palais-Smale condition is satisfied along the flow lines of $W_0$ except in case (5). So we assume for a function $u = \sum_{i = 1}^j \alpha_i\delta_{a_j, \lambda_j} + v$ in $V(p, \varepsilon)$ that the concentration points $a_j$ converge to distinct critical points $y_{i_j}$ in $I^+$ and that all speeds of concentrations $\lambda_j$ are comparable. By the Morse lemma at infinity Lemma \ref{l:morselemma}, we have
$$
J(\sum_{i= 1}^p\alpha_i\delta_{a_i, \lambda_i} + v) = J(\sum_{i= 1}^p\alpha_i\delta_{\tilde{a}_i, \tilde{\lambda}_i}) + \frac{1}{2}J''(\sum_{i= 1}^p\alpha_i\delta_{a_i, \lambda_i})V_l\cdot V_l.
$$
The variable $V$ is completely independent from the others, and close to zero in a fixed Hilbert subspace. Minimizing with respect to $V$, the problem is reduced in a finite dimensional problem. In fact, we can do as $V$ is zero. Indeed, one can define on the $V$-variable the pseudogradient $\frac{\partial V}{\partial s} = -V$ and then $V(s) = e^{-s}V(0)$ will go to zero as $s$ goes to $+\infty$. From the proof of Lemma \ref{l:characterization}, we have $|\nabla K(\tilde{a}_j)| = o(1)$ and $\tilde{\varepsilon}_{ij} = o(\frac{1}{\tilde{\lambda}_i^2}) = o(\frac{1}{\tilde{\lambda}_j^2})$. Therefore, after a suitable change of variables, we have (drop the tilde for simplicity),
\begin{equation}\label{e:proof1}
J(u) = \frac{\sum_{i = 1}^p \alpha_i^2 S}{\left(\sum_{i = 1}^p \alpha_i^{\frac{2n}{n - 2\gamma}}K(a_i) S\right)^{\frac{n - 2\gamma}{n}}}\left[1 - \sum_{j= 1}^p\frac{\Delta K(y_{i_j})}{\lambda_j^2}\right].
\end{equation}
(\ref{e:proof1}) yields a split of the variables $a$ and $\lambda$. When $a = (a_1, \cdot\cdot\cdot, a_p)$ is equal to $y = (y_{i_1}, \cdot\cdot\cdot, y_{i_{p}})$, only the variables $\lambda_j$ can move. Since $-\Delta K(y_{i_j}) > 0$, in order to decrease $J(u)$, the only way is to increase the variables $\lambda_j$. When $\lambda_j = +\infty$ for $j = 1,\cdot\cdot\cdot,p$, one obtains the critical point at infinity $(y_{i_1},\cdot\cdot\cdot, y_{i_p})_{\infty}$. This completes the proof.
\end{proof}
\begin{lemma}\label{l:morseindex}
Let $\phi_{\infty}^p = (y_{i_1}, \cdot\cdot\cdot, y_{i_p})_\infty\in F_\infty^+$ and $i(\phi_\infty^{p})$ be the Morse index of $J$ at $\phi_\infty^p$. Then
$$
i(\phi_\infty^p) = p - 1 + \sum_{j = 1}^p(n - ind(K, y_{i_j})),
$$
where $ind(K, y_{i_j})$ is the Morse index of $K$ at $y_{i_j}$.
\end{lemma}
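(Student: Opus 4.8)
The plan is to reduce the computation of $i(\phi_\infty^p)$ to a finite dimensional non degenerate problem in the variables $(\alpha,a,\lambda)$ and then read off the contribution of each block of variables. First I would invoke the Morse lemma at infinity (Lemma \ref{l:morselemma}): near $\phi_\infty^p$ one has $J(\sum_i\alpha_i\delta_{a_i,\lambda_i}+v)=J(\sum_i\alpha_i\delta_{\tilde a_i,\tilde\lambda_i})+\frac12 J''(\sum_i\alpha_i\delta_{a_i,\lambda_i})V\cdot V$ where, by Lemma \ref{l:positivedefinite}, the quadratic form in $V$ is positive definite. Hence the $V$-variable produces no negative direction, and it suffices to compute the Morse index on $\Sigma^+$ of the finite dimensional function $(\alpha,a,\lambda)\mapsto J(\sum_i\alpha_i\delta_{\tilde a_i,\tilde\lambda_i})$ near $\phi_\infty^p$. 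By Lemma \ref{l:lemma4.4} the passage $(a,\lambda)\mapsto(\tilde a,\tilde\lambda)$ is an admissible change of variables, and in the region of case (5) of Lemma \ref{l:characterization} (the $a_j$'s near distinct $y_{i_j}$, all $\lambda_j$ comparable, $\varepsilon_{ij}=o(\lambda_i^{-2})$) the expansion (\ref{e:proof1}) holds; using also $\sum_i\alpha_i^2 S=1+o(1)$ on $\Sigma^+$ and dropping tildes, $J(u)=\big(\sum_{i}\alpha_i^{2n/(n-2\gamma)}K(a_i)S\big)^{-(n-2\gamma)/n}\big[1-\sum_j\Delta K(y_{i_j})\lambda_j^{-2}\big]+o(\sum_j\lambda_j^{-2})$.

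Next I would read off the Hessian at $\phi_\infty^p$ block by block. For the $\alpha$-block, set $a_j=y_{i_j}$ and $\lambda_j=+\infty$; the constraint $\sum_i\alpha_i^2 S=1$ leaves $p-1$ free directions, and one is left with $\alpha\mapsto(\sum_j K(y_{i_j})\alpha_j^{2n/(n-2\gamma)}S)^{-(n-2\gamma)/n}$. Writing $t_j=\alpha_j^2$, the function $t\mapsto\sum_j K(y_{i_j})t_j^{n/(n-2\gamma)}$ is strictly convex on the simplex $\{\sum_j t_j S=1\}$ (the exponent $n/(n-2\gamma)$ exceeds $1$), so it has a unique non degenerate interior minimum, attained at $\alpha_j\propto K(y_{i_j})^{-(n-2\gamma)/(4\gamma)}$, which is precisely the weight in $\phi_\infty^p$. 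Therefore $J$ restricted to this $(p-1)$-dimensional slice has a non degenerate maximum, contributing $p-1$. For each $a_j$-block, freeze $\alpha$ at this value and let $\lambda_j\to+\infty$, so that $J=(\sum_i\alpha_i^{2n/(n-2\gamma)}K(a_i)S)^{-(n-2\gamma)/n}$; its derivative in $a_j$ is a negative multiple of $\nabla K(a_j)$ (equivalently, by Lemma \ref{l:expansionofgradient2}, the $a_j$-component of $-\nabla J$ is a positive multiple of $\nabla K(a_j)$), so $a_j=y_{i_j}$ is critical and the $a_j$-Hessian there is a negative constant times the Hessian of $K$ at $y_{i_j}$; by condition ({\bf nd}) this block is non degenerate and has exactly $n-ind(K,y_{i_j})$ negative directions (the number of positive eigenvalues of the Hessian of $K$ at $y_{i_j}$). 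Equivalently, the $a_j$-component of the flow of $-\partial J$ is, up to time reparametrization, the \emph{positive} gradient flow of $K$, whose unstable manifold at $y_{i_j}$ has dimension $n-ind(K,y_{i_j})$.

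Finally I would treat the $\lambda_j$-block: by (\ref{e:proof1}), $\partial_{\lambda_j}J\sim 2\,\text{const}\cdot\Delta K(y_{i_j})\lambda_j^{-3}$, and since $-\Delta K(y_{i_j})>0$ the function $J$ is strictly decreasing in $\lambda_j$ for all large $\lambda_j$; there is no interior critical value, and $\lambda_j=+\infty$ is reached along decreasing flow lines of $-\partial J$, so these directions lie in the stable set of $\phi_\infty^p$ and contribute $0$. All mixed second derivatives at $\phi_\infty^p$ — between distinct $a_j$'s, between $\alpha$ and $a$, and between $(\alpha,a)$ and $\lambda$ — vanish, each carrying a factor $\nabla K(y_{i_j})=0$ or a factor tending to $0$ as $\lambda_j\to+\infty$, while the interaction terms $\varepsilon_{ij}=o(\lambda_i^{-2})$ are negligible; hence the reduced Hessian at $\phi_\infty^p$ is block diagonal and $i(\phi_\infty^p)=(p-1)+\sum_{j=1}^p\big(n-ind(K,y_{i_j})\big)$. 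The main obstacle is this last point: justifying rigorously, inside the critical points at infinity formalism, that the degenerate directions $\lambda_j\to+\infty$ genuinely do not enter the Morse index and that the $o(\cdot)$-errors in (\ref{e:proof1}) cannot flip the signature of the leading order Hessian; I would handle this exactly as in \cite{BahriDukeMathJYambetypeflows} and \cite{YacoubANS2013}, working with the pseudogradient of Lemma \ref{l:characterization} and the normal form coordinates it produces.
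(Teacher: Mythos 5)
Your proposal is correct and follows essentially the same route as the paper: both reduce, via the Morse lemma at infinity and the expansion (\ref{e:proof1}), to the finite-dimensional functional $\gamma_0(\alpha,a)$, whose $\alpha$-slice has a nondegenerate maximum contributing $p-1$ and whose $a_j$-Hessian is a negative multiple of $\nabla^2 K(y_{i_j})$ contributing $n-ind(K,y_{i_j})$ negative directions, leading to the normal form (\ref{e:equation2}). The paper states this normal form with far less justification than you supply (it does not spell out the convexity argument for the $\alpha$-block, the block-diagonality of the reduced Hessian, or the neutrality of the $\lambda$- and $V$-directions), so your additional care is a refinement of, not a departure from, the paper's argument.
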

\begin{proof}
By (\ref{e:proof1}), the Morse index of $J$ at $\phi_{\infty}^p$ is equal to the Morse index of the functional
$$\gamma_0(\alpha, a) = \frac{\sum_{i = 1}^p \alpha_i^2 S}{\left(\sum_{i = 1}^p \alpha_i^{\frac{2n}{n - 2\gamma}}K(a_i) S\right)^{\frac{n - 2\gamma}{n}}}$$
at its critical point. Since $\gamma_0$ is homogeneous in the variable $\alpha$ and has a maximum point $(\frac{1}{K(a_1)^{\frac{n - 2\gamma}{4\gamma}}}, \frac{1}{K(a_2)^{\frac{n - 2\gamma}{4\gamma}}},\cdot\cdot\cdot, \frac{1}{K(a_n)^{\frac{n - 2\gamma}{4\gamma}}})$ with critical value $S^{\frac{2\gamma}{n}}\left(\sum_{i = 1}^p\frac{1}{K(a_i)^{\frac{n - 2\gamma}{2\gamma}}}\right)^{\frac{2\gamma}{n}}$. On the other hand, $\gamma_0$ has a single critical point $y = (y_{i_1}, y_{i_2}, \cdot\cdot\cdot, y_{i_p})$ in the $a$ variable. Thus, after a change of variables, we have have the following normal form,
\begin{equation}\label{e:equation2}
J(u) = S^{\frac{2\gamma}{n}}\left(\sum_{j = 1}^p\frac{1}{K(y_{i_j})^{\frac{n - 2\gamma}{2\gamma}}}\right)^{\frac{2\gamma}{n}}\left[1 - |\alpha^-|^2 + \sum_{j = 1}^p(|a_j^+|^2 - |a_j^-|^2)\right].
\end{equation}
Here $\alpha^-\in \mathbb{R}^{p - 1}$ is the coordinate of $\alpha$ and $a_j^+$ ($a_j^-$) is the coordinate of $a_j$ along the stable (unstable) manifold of $y_{i_j}$. This completes the proof.
\end{proof}
{\bf Proof of Theorem \ref{t:main}}. We follow the argument of \cite{YacoubANS2013}. Let $N$ be a submanifold of $\Sigma^+$ with dimension $k$, and let $\phi_\infty^p$ be a critical point at infinity with Morse index $i(\phi_\infty^p)\leq k$. We say that $\phi_\infty^p$ is dominated by $N$ and denote as $\phi_\infty^p < N$, if
$$
N\cap W_s(\phi_\infty^p) \neq \emptyset.
$$

Now, for any $k\in\mathbb{N}$ and any subset $X_k$ of $\{\phi_\infty^p\in F^+_\infty: i(\phi_\infty^p)\leq k\}$, we recall the following set given by
$$
X_k^\infty = \displaystyle\cup_{\phi_\infty^p\in X_k}W_u(\phi_\infty^p).
$$
$X^\infty_k$ is a stratified set of top dimension $k$. Without loss of generality, we assume it equal to $k$. Since $\Sigma^+$ is a contractible set, $X_k^\infty$ is contractible in $\Sigma^+$. More precisely, there exists a contraction $h: [0,1]\times X_k^\infty\to \Sigma^+$, i.e., $h$ is continuous and such that $\forall u\in X_k^\infty$, $h(0, u) = u$ and $h(1, u) = \tilde{u}$ a fixed point in $X_k^\infty$. Let
$$
\psi(X_k^\infty) = h([0, 1]\times X_k^\infty).
$$
Then $\psi(X_k^\infty)$ is contractible of dimension $k + 1$. Now, deform $\psi(X_k^\infty)$ by the flow lines of $-\partial J$. By transversality arguments, we can assume that the deformation avoids all critical points at infinity of Morse index greater then or equal to $k + 2$.

Using a result of Bahri and Rabinowitz (Proposition 7.24 and Theorem 8.2 of \cite{BahriandRabinowitzAIHN}), we have
\begin{eqnarray*}
\psi(X_k^\infty)&\simeq & \cup_{\phi_\infty^p < \psi(X_k^\infty), ind(\phi_\infty^p)\leq k + 1}W_u(\phi_\infty^p)\\
&\simeq & X_k^\infty \cup \cup_{\phi_\infty^p\in F^p_\infty\setminus X_k, \phi_\infty^p < \psi(X_k^\infty), ind(\phi_\infty^p)\leq k + 1}W_u(\phi_\infty^p).
\end{eqnarray*}
Now, taking $k = k_0$, where $k_0$ is the integer in Theorem \ref{t:main}, and by condition (H2), we have
$$
\psi(X_{k_0}^\infty) \simeq X_{k_0}^\infty.
$$

Now $X_{k_0}^\infty$ is a finite CW complex in dimension $k_0$, and the $j$-dimensional cells of $X_{k_0}^\infty$ are the unstable manifolds of critical points at infinity of Morse index $ind(\phi_\infty^p) = j$.
Hence we have
$$
1 = \chi(\psi(X_{k_0}^p)) = \chi(X_{k_0}^\infty) = \sum_{\phi_\infty^p\in X_{k_0}}(-1)^{i(\phi_\infty^p)},
$$
which contradicts the assumption (H1). Hence, there is a solution $w$. And it is easy to derive from the above arguments that its Morse index $ind(w)\leq k_0 + 1$. This completes the proof. \qed 
\section{Proof of Theorem \ref{t:main1}.}
Since $K = 1 + \varepsilon K(x)$, the functional is
\begin{equation*}
J(u) = \frac{\|u\|^2}{\left(\int_{\mathbb{S}^{n}}(1 + \varepsilon K_0) u^{\frac{2n}{n - 2\gamma}}dvol_{g_0}\right)^{\frac{n -2\gamma}{n}}}.
\end{equation*}
If $\varepsilon = 0$, we obtain the Yamabe functional
\begin{equation*}
J_0(u) = \frac{\|u\|^2}{\left(\int_{\mathbb{S}^{n}} u^{\frac{2n}{n - 2\gamma}}dvol_{g_0}\right)^{\frac{n -2\gamma}{n}}}.
\end{equation*}

Let $\sigma$ be the minimum of $J_0$ on $\Sigma^+$, then
$$
\sigma = J_0(\delta_{a, \lambda}) = S^{\frac{2\gamma}{n}},
$$
where $S$ is the constant defined in \ref{l:sharpconstants}. Since
$$
J(u) = J_0(u)\frac{1}{\left(1 + \varepsilon(\int_{\mathbb{S}^{n}} u^{\frac{2n}{n - 2\gamma}}dvol_{g_0})^{-1}\int_{\mathbb{S}^{n}} K u^{\frac{2n}{n - 2\gamma}}dvol_{g_0}\right)^{\frac{n -2\gamma}{n}}},
$$
$$
J(u) = J_0(u)(1 + O(\varepsilon)),
$$
with $O(\varepsilon)$ is independent of $u$, when $|\varepsilon|$ is small enough,
\begin{equation}\label{e:equivalence}
J^{\sigma + \eta}\subset J_0^{\sigma + 2\eta}\subset J^{\sigma + 3\eta}.
\end{equation}
We recall that $J^\beta = \{u\in \Sigma^+: J(u)\leq \beta\}$, $\beta\in \mathbb{R}$.

By (\ref{e:equation2}), the critical level corresponding to a critical point at infinity with $p$ bubbles is
$$
J((y_1,\cdot\cdot\cdot, y_p)_\infty) = S^{\frac{2\gamma}{n}}\left(\sum_{j = 1}^p\frac{1}{K(y_{j})^{\frac{n - 2\gamma}{2\gamma}}}\right)^{\frac{2\gamma}{n}},
$$
which tends to $S^{\frac{2\gamma}{n}}p^{\frac{2\gamma}{n}}$ in our current case as $\varepsilon\to 0$. Let $\eta = \frac{\sigma}{4}$, when $|\varepsilon|$ is sufficiently small, then critical points at infinity made of two bubbles or more are above the level $\sigma + 3\sigma$ and the ones with a single bubble are below the level $\sigma + \eta$. Therefore, $J^{\sigma + 3\eta}\simeq J^{\sigma + \eta}$. By (\ref{e:equivalence}), $J^{\sigma + 2\eta}_0\simeq J^{\sigma + \eta}$. Since $J^{\sigma + 2\eta}_0$ is contractible, $J^{\sigma + \eta}$ is also contractible.

Let $k_0$ be the integer which achieves the maximum of (\ref{e:equation3}). Then
\begin{equation}\label{e:equation4}
\sum_{y\in I^+, n - ind(K, y)\leq k}(-1)^{n - ind(K, y)}\neq 1.
\end{equation}
Let
$$
X_{k_0} = \{(y)_\infty|y\in I^+, i(y)_\infty\leq k_0\},
$$
which is the set of critical points at infinity with a single bubble and having Morse index $\leq k_0$. By Lemma \ref{l:morseindex}, (\ref{e:equation4}) becomes
\begin{equation*}
\sum_{(y)_\infty\in X_{k_0}}(-1)^{n - ind(K, y)}\neq 1,
\end{equation*}
which is condition (H1) in Theorem \ref{t:main}.

Let
$$
X_{k_0}^\infty = \cup_{(y)_\infty\in X_{k_0}}W_u(y)_\infty.
$$
Then $X_{k_0}^\infty\subset J^{\sigma + \eta}$. Since $J^{\sigma + \eta}$ is contractible, $X_{k_0}^\infty$ is contractible in $J^{\sigma + \eta}$. More precisely, there exists a contraction $h: [0,1]\times X_{k_0}^\infty\to J^{\sigma + \eta}$, i.e., $h$ is continuous and such that $\forall u\in X_{k_0}^\infty$, $h(0, u) = u$ and $h(1, u) = \tilde{u}$ a fixed point in $X_{k_0}^\infty$. Set
$$
\psi(X_{k_0}^\infty) = h([0, 1]\times X_{k_0}^\infty).
$$

Now let $\phi_\infty^p\in F^+_\infty\setminus X_{k_0}$ such that $i(\phi_\infty^p)\leq k_0 + 1$. If $\phi_\infty^p$ has at least two bubbles, then $\psi(X_{k_0}^\infty)\cap W_s(\delta_\infty^p) = \emptyset$ since $\psi(X_{k_0}^\infty)\subset J^{\sigma + \eta}$. So it remains to consider the critical points at infinity with a single bubble $(y)_\infty\in X_{k_0}$ such that $i(y)_\infty = k_0 + 1$. Since $k_0\in \mathbb{T}$, such critical points at infinity does not exist. Thus condition (H2) in Theorem \ref{t:main} is satisfied. By Theorem \ref{t:main}, we have the desired result. This completes the proof.\qed 
\newpage
\begin{appendix}
\section{Proof of Lemma \ref{l:expansion}.}
We write
\begin{equation*}
J(u) = \frac{\|u\|^2}{\left(\int_{\mathbb{S}^{n}} K u^{\frac{2n}{n - 2\gamma}}dvol_{g_0}\right)^{\frac{n -2\gamma}{n}}} = \frac{N}{D}.
\end{equation*}
To prove Lemma \ref{l:expansion}, we only need to give the expansions of $N$ and $D$.
First, we expand the numerator $N$ as follows
\begin{eqnarray*}
N = \|u\|^2 = \|\sum_{i = 1}^p\alpha_i\delta_{a_i, \lambda_i} + v\|^2 = \sum_{i = 1}^p \alpha_i^2 \|\delta_i\|^2 + \sum_{i\neq j} \alpha_i\alpha_j \langle\delta_i, \delta_j\rangle + \|v\|^2.
\end{eqnarray*}
Here, the other terms are zero since $v$ satisfies condition (\ref{e:orthonormal}). 
\begin{lemma}\label{l:sharpconstants}
It holds that
$$
\int_{\mathbb{S}^{n}}\delta_{a_i, \lambda_i}P_\gamma \delta_{a_i, \lambda_i} dvol_{g_0} = S,
$$
where $S$ is the sharp constant for the following conformally invariant Sobolev inequality
\begin{equation*}
\|f\|^2_{L^{2^*}(\mathbb{S}^{n})}\leq S\int_{\mathbb{S}^{n}}fP_\gamma f dvol_{g_0}, \quad 2^* = \frac{2n}{n - 2\gamma}.
\end{equation*}
\end{lemma}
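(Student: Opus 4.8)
The strategy is to use that every $\delta_{a,\lambda}$ is at once an extremal function of the fractional Sobolev inequality on $\mathbb{S}^n$ and a solution of the Euler--Lagrange equation (\ref{e:yamabe2}). I would proceed in three steps: (i) show that the left-hand side of the claimed identity is independent of $(a,\lambda)$; (ii) use the equation to rewrite it as $\|\delta_{a,\lambda}\|_{L^{2^*}(\mathbb{S}^n)}^{2^*}$; (iii) invoke the classification of positive solutions of (\ref{e:yamabe3}) to conclude that $\delta_{a,\lambda}$ realizes the sharp Sobolev constant, so that the common value equals $S$.

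Step (i) is conformal invariance. By (\ref{e:conformalltransform}), the Dirichlet form $f\mapsto\int_{\mathbb{S}^n}fP_\gamma f\,dvol_{g_0}$ and the norm $\|\cdot\|_{L^{2^*}(\mathbb{S}^n)}$ are both invariant under the natural action of the conformal group of $(\mathbb{S}^n,g_0)$, namely precomposition with a conformal diffeomorphism followed by multiplication by the corresponding conformal factor. After stereographic projection the family $\{\delta_{a,\lambda}\}$ corresponds, up to this conformal factor, to the family $\{w_{g,\lambda}\}$ on $\mathbb{R}^n$, which is a single orbit under translations and dilations; hence $\int_{\mathbb{S}^n}\delta_{a,\lambda}P_\gamma\delta_{a,\lambda}\,dvol_{g_0}$ is a constant, independent of $a$ and $\lambda$. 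For Step (ii), since $c_n$ is chosen so that $\delta_{a,\lambda}$ solves $P_\gamma\delta_{a,\lambda}=\delta_{a,\lambda}^{(n+2\gamma)/(n-2\gamma)}$, pairing this equation with $\delta_{a,\lambda}$ gives
$$\int_{\mathbb{S}^n}\delta_{a,\lambda}P_\gamma\delta_{a,\lambda}\,dvol_{g_0}=\int_{\mathbb{S}^n}\delta_{a,\lambda}^{\frac{2n}{n-2\gamma}}\,dvol_{g_0}=\|\delta_{a,\lambda}\|_{L^{2^*}(\mathbb{S}^n)}^{2^*}.$$

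For Step (iii) I would quote the classification of positive regular solutions of (\ref{e:yamabe3}) from \cite{ChenWenxiongLicongmingOubiaoCPAM2006}, \cite{LiYanyan2004} and \cite{Lieb1983}: any extremal of the Sobolev quotient $f\mapsto\|f\|_{L^{2^*}(\mathbb{S}^n)}^{2}\big/\int_{\mathbb{S}^n}fP_\gamma f\,dvol_{g_0}$ solves $P_\gamma f=f^{2^*-1}$ after scaling, hence by uniqueness equals some $\delta_{a,\lambda}$, and conversely each $\delta_{a,\lambda}$ is such an extremal. Therefore the sharp constant $S$ is attained at $\delta_{a,\lambda}$, and comparing the value of the (scale-invariant) Sobolev quotient there with Step (ii) yields the asserted identity $\int_{\mathbb{S}^n}\delta_{a,\lambda}P_\gamma\delta_{a,\lambda}\,dvol_{g_0}=S$. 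The only substantive ingredient is this last point --- that the explicit bubbles are precisely the extremal functions of the sharp fractional Sobolev inequality on $\mathbb{S}^n$, equivalently the sharp Hardy--Littlewood--Sobolev inequality of Lieb together with the uniqueness of solutions of (\ref{e:yamabe3}) --- but this is exactly what is quoted from \cite{ChenWenxiongLicongmingOubiaoCPAM2006,LiYanyan2004,Lieb1983}, and once it is granted the identity follows by the elementary manipulations above.
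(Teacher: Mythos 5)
Your proposal follows essentially the same route as the paper: the paper's entire ``proof'' of this lemma is the one-line citation to Lieb's sharp Hardy--Littlewood--Sobolev theorem, and your steps (i)--(iii) are a fleshed-out version of exactly that reduction (conformal invariance of the Dirichlet form, testing the bubble equation against the bubble, and the classification of extremals from the cited references). Steps (i) and (ii) are correct and in fact already deliver the only property of $S$ that the rest of the paper uses, namely that $\int_{\mathbb{S}^n}\delta_{a,\lambda}P_\gamma\delta_{a,\lambda}\,dvol_{g_0}=\|\delta_{a,\lambda}\|_{L^{2^*}}^{2^*}$ is a universal constant independent of $(a,\lambda)$.

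One caveat on your final comparison, which you should not gloss over: with the normalization $P_\gamma\delta_{a,\lambda}=\delta_{a,\lambda}^{2^*-1}$, writing $A=\int_{\mathbb{S}^n}\delta_{a,\lambda}P_\gamma\delta_{a,\lambda}\,dvol_{g_0}$, extremality in the inequality $\|f\|_{L^{2^*}}^2\leq S\int_{\mathbb{S}^n}fP_\gamma f\,dvol_{g_0}$ together with Step (ii) gives $A^{2/2^*}=SA$, i.e.\ $S=A^{-2\gamma/n}$, hence $A=S^{-n/(2\gamma)}$ rather than $A=S$. So the ``elementary manipulations'' do not literally yield the asserted identity; they yield a power of the sharp constant. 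This is not really your error --- the lemma as stated is internally inconsistent, since it simultaneously normalizes the bubbles by the Euler--Lagrange equation and calls $S$ the sharp constant of the displayed inequality --- but your write-up should either record the correct exponent or, better, note that throughout the paper $S$ simply denotes the universal value $\int_{\mathbb{S}^n}\delta_{a,\lambda}^{2^*}\,dvol_{g_0}=\|\delta_{a,\lambda}\|^2$, which is what your Steps (i)--(ii) compute, with the relation $S_{\mathrm{Sob}}=S^{-2\gamma/n}$ to the actual sharp Sobolev constant left as a remark.
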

Lemma \ref{l:sharpconstants} was proved by Lieb in \cite{LiebSCHLSAnnMath1983}.

\begin{lemma}\label{l:A2}
For $i\neq j$, we have
$$
\int_{\mathbb{S}^{n}}\delta_{a_j, \lambda_j}P_\gamma \delta_{a_i, \lambda_i} dvol_{g_0} = c_0^{\frac{2n}{n-2\gamma}}c_1\omega_n\varepsilon_{ij}\left(1 + o(1)\right),\quad\text{with  } c_1 = \int_{0}^{\infty}\frac{r^{n - 1}}{(1 + r^2)^{\frac{n + 2\gamma}{2}}}dr.
$$
\end{lemma}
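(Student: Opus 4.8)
The strategy is to reduce the statement to the classical interaction estimate between two standard bubbles. \textbf{Step 1.} Since $\delta_{a_i,\lambda_i}$ solves (\ref{e:yamabe2}), $P_\gamma\delta_{a_i,\lambda_i}=\delta_{a_i,\lambda_i}^{\frac{n+2\gamma}{n-2\gamma}}$, so that
$$
\int_{\mathbb{S}^n}\delta_{a_j,\lambda_j}\,P_\gamma\delta_{a_i,\lambda_i}\,dvol_{g_0}=\int_{\mathbb{S}^n}\delta_{a_j,\lambda_j}\,\delta_{a_i,\lambda_i}^{\frac{n+2\gamma}{n-2\gamma}}\,dvol_{g_0}.
$$
By self-adjointness of $P_\gamma$ this quantity is symmetric in $i$ and $j$, and $\varepsilon_{ij}=\varepsilon_{ji}$; hence we may assume $\lambda_i\ge\lambda_j$, so that the factor raised to the critical power is the more concentrated one. \textbf{Step 2.} Pass to $\mathbb{R}^n$ via a stereographic projection with pole far from $a_i$ and $a_j$: this is a conformal diffeomorphism, under which (by the conformal covariance (\ref{e:conformalltransform})) the $\delta$'s become Euclidean bubbles $w_{g_i,\lambda_i}$, $w_{g_j,\lambda_j}$ with $g_i$, $g_j$ the images of $a_i$, $a_j$; the integral is unchanged because the integrand carries total conformal weight $n$, and the interaction parameter goes over to the Euclidean expression $\big(\lambda_i/\lambda_j+\lambda_j/\lambda_i+\lambda_i\lambda_j|g_i-g_j|^2\big)^{-\frac{n-2\gamma}{2}}$, which is $\varepsilon_{ij}(1+o(1))$.

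\textbf{Step 3.} Rescale $x=g_i+y/\lambda_i$. One computes $w_{g_i,\lambda_i}^{\frac{n+2\gamma}{n-2\gamma}}\,dx=c_n^{\frac{n+2\gamma}{n-2\gamma}}\lambda_i^{-\frac{n-2\gamma}{2}}(1+|y|^2)^{-\frac{n+2\gamma}{2}}\,dy$, and since $\int_{\mathbb{R}^n}(1+|y|^2)^{-\frac{n+2\gamma}{2}}\,dy=\omega_n c_1$, this measure behaves like $c_n^{\frac{n+2\gamma}{n-2\gamma}}\omega_n c_1\,\lambda_i^{-\frac{n-2\gamma}{2}}$ times an approximate identity concentrating at $g_i$. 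Evaluating the slowly varying factor $w_{g_j,\lambda_j}$ at $g_i$ gives $c_n\big(\lambda_j/(1+\lambda_j^2|g_i-g_j|^2)\big)^{\frac{n-2\gamma}{2}}$, and
$$
\lambda_i^{-\frac{n-2\gamma}{2}}\Big(\frac{\lambda_j}{1+\lambda_j^2|g_i-g_j|^2}\Big)^{\frac{n-2\gamma}{2}}=\Big(\frac{\lambda_i}{\lambda_j}+\lambda_i\lambda_j|g_i-g_j|^2\Big)^{-\frac{n-2\gamma}{2}}=\varepsilon_{ij}(1+o(1)),
$$
the last step because $\lambda_i\ge\lambda_j$ forces $\lambda_j/\lambda_i$ to be negligible against $\lambda_i/\lambda_j+\lambda_i\lambda_j|g_i-g_j|^2$ once $\varepsilon_{ij}\to0$. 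Collecting constants, $c_n^{\frac{n+2\gamma}{n-2\gamma}}\cdot c_n=c_n^{\frac{2n}{n-2\gamma}}$, which is the $c_0^{\frac{2n}{n-2\gamma}}$ of the statement, so the integral equals $c_0^{\frac{2n}{n-2\gamma}}c_1\omega_n\,\varepsilon_{ij}(1+o(1))$.

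\textbf{Step 4 (the main obstacle).} What has to be checked carefully is that the $o(1)$ is uniform over $B_\varepsilon$, i.e.\ over all relative sizes of $\lambda_i/\lambda_j$ and $\lambda_i\lambda_j d(a_i,a_j)^2$. Splitting $\mathbb{R}^n=\{|y|\le R\}\cup\{|y|>R\}$: on the ball the relative variation of $w_{g_j,\lambda_j}(g_i+y/\lambda_i)$ is controlled by $|g_i+y/\lambda_i-g_j|^2=|g_i-g_j|^2+O(|y|\,|g_i-g_j|/\lambda_i)+O(|y|^2/\lambda_i^2)$ together with $\lambda_j\le\lambda_i$, yielding a factor $1+o_R(1)+o_\varepsilon(1)$; outside the ball the tail $\int_{|y|>R}(1+|y|^2)^{-\frac{n+2\gamma}{2}}\,dy$ tends to $0$ as $R\to\infty$ and dominates the contribution of that region. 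Treating the degeneration regimes ($\lambda_i/\lambda_j$ large, $\lambda_i\lambda_j d(a_i,a_j)^2$ large, or both moderate) separately before sending $R\to\infty$ is exactly the technical bookkeeping performed in \cite{BahriPRNIMS1989,BahriDukeMathJYambetypeflows}; the rest is the constant-accounting carried out above.
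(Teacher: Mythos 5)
Your strategy is the same as the paper's: transfer the integral to $\mathbb{R}^n$ using $P_\gamma\delta_{a_i,\lambda_i}=\delta_{a_i,\lambda_i}^{\frac{n+2\gamma}{n-2\gamma}}$, treat the critical-power bubble as an approximate identity at scale $1/\lambda_i$, evaluate the other bubble at the concentration point, and then verify uniformity over the degeneration regimes. Your symmetry reduction (placing the critical power on the more concentrated bubble) is legitimate and slightly streamlines the paper's three-way case split on $\mu=\max\bigl(\tfrac{\lambda_i}{\lambda_j},\tfrac{\lambda_j}{\lambda_i},\lambda_i\lambda_j d_{ij}^2\bigr)$, and the constant accounting in Step 3 is correct.

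The one genuine weak point is the outer-region estimate in Step 4. Bounding the contribution of $\{|y|>R\}$ by $\sup w_{g_j,\lambda_j}$ times the tail $\int_{|y|>R}(1+|y|^2)^{-\frac{n+2\gamma}{2}}dy$ gives only $O\bigl((\lambda_j/\lambda_i)^{\frac{n-2\gamma}{2}}\tau(R)\bigr)$ with $\tau(R)\to 0$; in the regime $\lambda_i\asymp\lambda_j$ with $\lambda_i\lambda_j d(a_i,a_j)^2\to\infty$ this is $O(\tau(R))$, which is \emph{not} $o(\varepsilon_{ij})$, so the tail does not "dominate the contribution of that region" in the required sense. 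The problematic piece is the neighborhood of $g_j$, where $w_{g_j,\lambda_j}$ peaks while the approximate-identity factor is small; there one must integrate both decays, which yields a contribution of order $(\lambda_i\lambda_j d_{ij}^2)^{-\frac{n}{2}}=o(\varepsilon_{ij})$. This is precisely what the paper's decomposition into $B_1=\{|\zeta'+\lambda_i d_{ij}|\le\tfrac{1}{10}\lambda_i|d_{ij}|\}$, $B_2=\{|\zeta'|\le\tfrac{\sqrt\mu}{10}\}$ and $(B_1\cup B_2)^c$ accomplishes in its third case. You correctly sense that the degeneration regimes need separate treatment, but you defer exactly this computation to the references instead of supplying it, and the one quantitative claim you do make about the outer region would fail there as stated.
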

\begin{proof}
\begin{eqnarray*}
I &=& \int_{\mathbb{S}^{n}}\delta_{a_j, \lambda_j}P_\gamma \delta_{a_i, \lambda_i} dvol_{g_0} = c_0^{\frac{2n}{n-2\gamma}}\int_{\mathbb{R}^n}\frac{\lambda_i^{\frac{n +2 \gamma}{2}}}{|1 + \lambda_i^2|\zeta - g_i|^2|^{\frac{n +2 \gamma}{2}}}\frac{\lambda_j^{\frac{n-2\gamma}{2}}}{|1 + \lambda_j^2|\zeta - g_j|^2|^{\frac{n-2\gamma}{2}}}d\zeta\\
& = & c_0^{\frac{2n}{n-2\gamma}}\left(\frac{\lambda_j}{\lambda_i}\right)^{^{\frac{n - 2\gamma}{2}}}\int_{\mathbb{R}^n}\frac{1}{|1 + |\zeta'|^2|^{\frac{n +2 \gamma}{2}}}\frac{1}{|1 + |\frac{\lambda_j}{\lambda_i}\zeta' + \lambda_j d_{ij}|^2|^{\frac{n - 2\gamma}{2}}}d\zeta'.
\end{eqnarray*}
Let
$$
\mu = \max\left(\frac{\lambda_i}{\lambda_j}, \frac{\lambda_j}{\lambda_i}, \lambda_i\lambda_jd_{ij}^2\right),\quad \text{where } d_{ij} = d(g_i, g_j).
$$
First, we assume $\mu = \frac{\lambda_i}{\lambda_j}$. Note that
$$
1 + |\frac{\lambda_j}{\lambda_i}\zeta' + \lambda_j d_{ij}|^2 = \left(1 + \lambda_j^2|d_{ij}|^2\right)\left(1 + 2\frac{\lambda_j}{1 + \lambda_j^2|d_{ij}|^2}d_{ij}\frac{\lambda_j}{\lambda_i}\zeta' +\left(\frac{\lambda_j}{\lambda_i}\right)^2 \frac{|\zeta'|^2}{1 + \lambda_j^2|d_{ij}|^2}\right).
$$
If $|\zeta'|\leq \frac{1}{4}\frac{\lambda_i}{\lambda_j}$, then
\begin{eqnarray*}
\left(1 + |\frac{\lambda_j}{\lambda_i}\zeta' + \lambda_j d_{ij}|^2\right)^{-\frac{n - 2\gamma}{2}} &=& \left(1 + \lambda_j^2|d_{ij}|^2\right)^{-\frac{n - 2\gamma}{2}}\\
&&\left(1 - (n - 2\gamma)\frac{\lambda_j}{1 + \lambda_j^2|d_{ij}|^2}d_{ij}\frac{\lambda_j}{\lambda_i}\zeta' + O\left(\frac{\lambda_j}{\lambda_i}\right)^2 |\zeta'|^2\right).
\end{eqnarray*}
Thus we have
$$
\int_{B(0, \frac{1}{4}\frac{\lambda_i}{\lambda_j})}\frac{1}{|1 + |\zeta'|^2|^{\frac{n + 2\gamma}{2}}}d\zeta' = \omega_n\int_{0}^{\infty}\frac{r^{n-1}}{(1 + r^2)^{\frac{n + 2\gamma}{2}}}dr + O\left(\left(\frac{\lambda_j}{\lambda_i}\right)^{2\gamma}\right),
$$
$$
\int_{B(0, \frac{1}{4}\frac{\lambda_i}{\lambda_j})^c}\frac{1}{|1 + |\zeta'|^2|^{\frac{n + 2\gamma}{2}}}d\zeta' = O\left(\left(\frac{\lambda_j}{\lambda_i}\right)^{2\gamma}\right),
$$
and
$$
\int_{B(0, \frac{1}{4}\frac{\lambda_i}{\lambda_j})}\frac{|\zeta'|^2}{|1 + |\zeta'|^2|^{\frac{n + 2\gamma}{2}}}d\zeta' = O\left(\left(\frac{\lambda_i}{\lambda_j}\right)^{2 - 2\gamma}\right).
$$
Hence
$$
I = c_0^{\frac{2n}{n-2\gamma}}\left(\frac{\lambda_j}{\lambda_i}\right)^{^{\frac{n - 2\gamma}{2}}}\left(1 + \lambda_j^2|d_{ij}|^2\right)^{-\frac{n -2 \gamma}{2}}\left[\omega_n\int_{0}^{\infty}\frac{r^{n-1}}{(1 + r^2)^{\frac{n + 2\gamma}{2}}}dr + O\left(\left(\frac{\lambda_j}{\lambda_i}\right)^{2\gamma}\right)\right].
$$
Under the assumption $\mu = \frac{\lambda_i}{\lambda_j}$, we have, as $\varepsilon_{ij}\to 0$,
$$I = c_0^{\frac{2n}{n-2\gamma}}\varepsilon_{ij}\left[\omega_n\int_{0}^{\infty}\frac{r^{n-1}}{(1 + r^2)^{\frac{n + 2\gamma}{2}}}dr + O\left(\varepsilon_{ij}^{\frac{4\gamma}{n - 2\gamma}}\right)\right].$$

Similarly, the result of the lemma holds in the case $\mu = \frac{\lambda_j}{\lambda_i}$.

Finally, we consider the case $\mu = \lambda_i\lambda_j|d_{ij}|^2$.
We rewrite $I$ as
\begin{eqnarray*}
I  =  c_0^{\frac{2n}{n - 2\gamma}}\int_{\mathbb{R}^n}\frac{1}{|1 + |\zeta'|^2|^{\frac{n+2\gamma}{2}}}\frac{1}{|\frac{\lambda_i}{\lambda_j} + |\sqrt{\frac{\lambda_j}{\lambda_i}}\zeta' + \sqrt{\lambda_i\lambda_j} d_{ij}|^2|^{\frac{n-2\gamma}{2}}}d\zeta'.
\end{eqnarray*}
Without loss of generality, we assume $\lambda_i\leq \lambda_j$, then we have
$$
\frac{\lambda_i}{\lambda_j} + |\sqrt{\frac{\lambda_j}{\lambda_i}}\zeta' + \sqrt{\lambda_i\lambda_j} d_{ij}|^2 = \left(\frac{\lambda_i}{\lambda_j} + \lambda_i\lambda_j d_{ij}^2\right)\left(1 + \frac{\frac{\lambda_j}{\lambda_i}|\zeta'|^2 + 2\lambda_j\zeta' d_{ij}}{\frac{\lambda_i}{\lambda_j} + \lambda_i\lambda_j d_{ij}^2}\right).
$$
By the same arguments used in the first case we have
\begin{eqnarray*}
&&\int_{|\zeta'|\leq \frac{\sqrt{\mu}}{10}}\frac{1}{|1 + |\zeta'|^2|^{\frac{n+2\gamma}{2}}}\frac{1}{|\frac{\lambda_i}{\lambda_j} + |\sqrt{\frac{\lambda_j}{\lambda_i}}\zeta' + \sqrt{\lambda_i\lambda_j} d_{ij}|^2|^{\frac{n-2\gamma}{2}}}d\zeta'\\ &&= \varepsilon_{ij}\left[\omega_n\int_{0}^{\infty}\frac{r^{n - 1}}{(1 + r^2)^{\frac{n + 2\gamma}{2}}}dr + O\left(\varepsilon_{ij}^{\frac{4\gamma}{n -2 \gamma}}\right)\right].
\end{eqnarray*}
Set
$$
B_1: = \left\{\zeta'\in \mathbb{R}^n|  |\zeta' + \lambda_i d_{ij}|\leq \frac{1}{10}\lambda_i|d_{ij}|\right\},\quad B_2:=\left\{\zeta'\in \mathbb{R}^n|  |\zeta'|\leq \frac{\sqrt{\mu}}{10}\right\}.
$$
Then we have
$$
\int_{(B_1 \cup B_2)^c}\frac{1}{|1 + |\zeta'|^2|^{\frac{n+2\gamma}{2}}}\frac{1}{|\frac{\lambda_i}{\lambda_j} + |\sqrt{\frac{\lambda_j}{\lambda_i}}\zeta' + \sqrt{\lambda_i\lambda_j} d_{ij}|^2|^{\frac{n-2\gamma}{2}}}d\zeta'\leq \frac{c}{\mu^{\frac{n-2\gamma}{2}}}\int_{\sqrt{\mu}}^\infty \frac{r^{n - 1}}{(1 + r^2)^{\frac{n+2\gamma}{2}}}dr.
$$
Therefore,
$$
\int_{(B_1\cup B_2)^c}\frac{1}{|1 + |\zeta'|^2|^{\frac{n+2\gamma}{2}}}\frac{1}{|\frac{\lambda_i}{\lambda_j} + |\sqrt{\frac{\lambda_j}{\lambda_i}}\zeta' + \sqrt{\lambda_i\lambda_j} d_{ij}|^2|^{\frac{n-2\gamma}{2}}}d\zeta' = O(\frac{1}{\mu^2}) = O(\varepsilon_{ij}^{\frac{4}{n - 2\gamma}}).
$$
On $B_1$, we have $|\zeta'|\geq \frac{9}{10}\lambda_i|d_{ij}|$, this leads to
$$
\int_{B_1}\frac{1}{|1 + |\zeta'|^2|^{\frac{n+2\gamma}{2}}}\frac{1}{|\frac{\lambda_i}{\lambda_j} + |\sqrt{\frac{\lambda_j}{\lambda_i}}\zeta' + \sqrt{\lambda_i\lambda_j} d_{ij}|^2|^{\frac{n-2\gamma}{2}}}d\zeta' = O(\frac{1}{\mu^4}) = O(\varepsilon_{ij}^{\frac{8}{n - 2\gamma}}).
$$
This completes the proof Lemma \ref{l:A2}.
\end{proof}
Now let us consider the denominator $D$ of $J$,
$$
D^{\frac{n}{n - 2\gamma}} = \int_{\mathbb{S}^n} K \left(\sum_{i = 1}^p\alpha_i\delta_{a_i, \lambda_i} + v\right)^{\frac{2n}{n - 2\gamma}} dvol_{g_0}.
$$
First, since
\begin{eqnarray*}
&&\left|\int_{\mathbb{S}^n} K \left(\sum_{i = 1}^p\alpha_i\delta_{a_i, \lambda_i} + v\right)^{\frac{2n}{n - 2\gamma}}dvol_{g_0} - \int_{\mathbb{S}^n} K \left(\sum_{i = 1}^p\alpha_i\delta_{a_i, \lambda_i}\right)^{\frac{2n}{n - 2\gamma}}dvol_{g_0}\right.\\
&&\quad\quad\quad\quad\quad\quad\quad\quad\quad\quad\quad\quad\quad\quad\quad\quad - \frac{2n}{n - 2\gamma}\int_{\mathbb{S}^n} K \left(\sum_{i = 1}^p\alpha_i\delta_{a_i, \lambda_i}\right)^{\frac{n + 2\gamma}{n - 2\gamma}}v\,\,dvol_{g_0}\\
&&\left.\quad\quad\quad\quad\quad\quad\quad\quad\quad\quad\quad\quad\quad\quad- \frac{2n^2 + 4n\gamma}{n^2 - 4\gamma^2}\int_{\mathbb{S}^n} K \left(\sum_{i = 1}^p\alpha_i\delta_{a_i, \lambda_i}\right)^{\frac{4\gamma}{n - 2\gamma}}v^2\,\,dvol_{g_0}\right|\\
&&\leq C\left[\int_{\mathbb{S}^n} v^{\frac{2n}{n - 2\gamma}}\,\,dvol_{g_0} + \int_{\mathbb{S}^n}\left(\sum_{i = 1}^p\alpha_i\delta_{a_i, \lambda_i}\right)^{\frac{6\gamma - n}{n -2\gamma}}\inf\left((\sum_{i = 1}^p\alpha_i\delta_{a_i, \lambda_i})^3, |v|^3\right)\,\,dvol_{g_0}\right]\\
&& \leq C\left[\int_{\mathbb{S}^n} v^{\frac{2n}{n - 2\gamma}}\,\,dvol_{g_0} + \int_{\{x|\sum_{i = 1}^p\alpha_i\delta_{a_i, \lambda_i}\geq |v(x)|\}}\left(\sum_{i = 1}^p\alpha_i\delta_{a_i, \lambda_i}\right)^{\frac{6\gamma - n}{n - 2\gamma}}|v|^3\,\,dvol_{g_0}\right]\\
&& \leq C\int_{\mathbb{S}^n} v^{\frac{2n}{n - 2\gamma}}\,\,dvol_{g_0}\leq C\left(\int_{\mathbb{S}^n}vP_\gamma v\,\,dvol_{g_0}\right)^{\frac{n}{n - 2\gamma}},
\end{eqnarray*}
we have
\begin{eqnarray*}
&&\int_{\mathbb{S}^{n}} K \left(\sum_{i = 1}^p\alpha_i\delta_{a_i, \lambda_i} + v\right)^{\frac{2n}{n - 2\gamma}}dvol_{g_0} = \int_{\mathbb{S}^{n}} K \left(\sum_{i = 1}^p\alpha_i\delta_{a_i, \lambda_i}\right)^{\frac{2n}{n - 2\gamma}}dvol_{g_0}\\
&&\quad\quad\quad\quad\quad\quad\quad\quad\quad\quad\quad\quad\quad\quad\quad\quad + \frac{2n}{n - 2\gamma}\int_{\mathbb{S}^{n}} K \left(\sum_{i = 1}^p\alpha_i\delta_{a_i, \lambda_i}\right)^{\frac{n + 2\gamma}{n - 2\gamma}}v\,\,dvol_{g_0}\\ & & \quad\quad\quad\quad\quad + \frac{2n^2 + 4n\gamma}{ n^2 - 4\gamma^2}\int_{\mathbb{S}^{n}} K \left(\sum_{i = 1}^p\alpha_i\delta_{a_i, \lambda_i}\right)^{\frac{4\gamma}{n - 2 \gamma}}v^2\,\,dvol_{g_0}+ O\left(\int_{\mathbb{S}^n}vP_\gamma v\,\,dvol_{g_0}\right)^{\frac{n}{n - 2\gamma}}.
\end{eqnarray*}
\begin{lemma}
\begin{eqnarray*}
&&\int_{\mathbb{S}^{n}} K \left(\sum_{i = 1}^p\alpha_i\delta_{a_i, \lambda_i}\right)^{\frac{2n}{n - 2\gamma}}dvol_{g_0} =
\sum_{i = 1}^p\alpha_i^{\frac{2n}{n - 2\gamma}}K(a_i)S + \sum_{i = 1}^p\alpha_i^{\frac{2n}{n - 2\gamma}}c_2\frac{\Delta K(x_i)}{\lambda_i^2}\\ &&+ o\left(\sum_{i = 1}^p\frac{1}{\lambda_i^{2}}\right)+ \frac{2n}{n - 2\gamma}\sum_{i \neq j}\alpha_i^{\frac{n + 2\gamma}{n - 2\gamma}}\alpha_j K(a_i)c_0^{\frac{2n}{n-2\gamma}}c_1\omega_n\varepsilon_{ij}+ o\left(\sum_{i \neq j}\varepsilon_{ij}\right)\\ &&+ O\left(\sum_{i \neq j}\frac{1}{\lambda_i^{\frac{n + 2 \gamma}{2}}\lambda_j^{\frac{n - 2\gamma}{2}}}\right)+ O\left(\sum_{i\neq j}\varepsilon_{ij}^{\frac{n}{n - 2\gamma}}\log \varepsilon_{ij}^{-1}\right), \quad c_2 = c_0^{\frac{2n}{n - 2\gamma}}\int_{\mathbb{R}^n}|\zeta|^2w_{0, 1}^{\frac{2n}{n - 2\gamma}}d\zeta.
\end{eqnarray*}
\end{lemma}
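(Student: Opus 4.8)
Writing $q:=\tfrac{2n}{n-2\gamma}$, the plan is the classical one for interacting bubbles: expand the integrand $K\big(\sum_i\alpha_i\delta_{a_i,\lambda_i}\big)^q$ pointwise into a "diagonal" part, a "principal interaction" part, and a remainder, then integrate each piece separately, using the conformal (stereographic) change of variables to transport everything to $\mathbb R^n$ and to reduce to integrals against the standard bubble $w_{0,1}$. The $v$-contributions have already been peeled off in the display preceding the statement, so here only the pure bubble part is at issue.

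\textbf{Algebraic expansion and the diagonal terms.} First I would use the standard pointwise inequality: since $q=\tfrac{2n}{n-2\gamma}\in(2,4)$ for $n\ge 4$, $\gamma\in(0,1)$, one has for nonnegative reals
$$\Big|\Big(\sum_{i=1}^p t_i\Big)^q-\sum_{i=1}^p t_i^q-q\sum_{i\neq j}t_i^{q-1}t_j\Big|\le C\sum_{i\neq j}t_i^{q/2}t_j^{q/2},$$
with $t_i=\alpha_i\delta_{a_i,\lambda_i}$. Multiplying by $K$ and integrating reduces the lemma to estimating $\alpha_i^q\int_{\mathbb S^n}K\delta_{a_i,\lambda_i}^q\,dvol_{g_0}$, the interaction integrals $q\,\alpha_i^{q-1}\alpha_j\int_{\mathbb S^n}K\delta_{a_i,\lambda_i}^{q-1}\delta_{a_j,\lambda_j}\,dvol_{g_0}$, and the error integrals $\int_{\mathbb S^n}\delta_{a_i,\lambda_i}^{q/2}\delta_{a_j,\lambda_j}^{q/2}\,dvol_{g_0}$. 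For the diagonal term I would pass to stereographic coordinates centred so that $\delta_{a_i,\lambda_i}$ becomes $w_{g_i,\lambda_i}$, obtaining $\int_{\mathbb S^n}K\delta_{a_i,\lambda_i}^q\,dvol_{g_0}=c_0^{q}\int_{\mathbb R^n}\widehat K(g_i+\zeta/\lambda_i)(1+|\zeta|^2)^{-n}\,d\zeta$, where $\widehat K$ is the transported curvature, $\widehat K(g_i)=K(a_i)$ and $\Delta\widehat K(g_i)=\Delta K(a_i)$ to the needed order. A second-order Taylor expansion of $\widehat K$ at $g_i$ gives $K(a_i)S$ for the zeroth-order term (this identifies $S$ via Lemma \ref{l:sharpconstants}); the first-order term vanishes by the radial symmetry of $(1+|\zeta|^2)^{-n}$; averaging the Hessian produces $c_2\,\Delta K(a_i)/\lambda_i^2$ with $c_2=c_0^{q}\int_{\mathbb R^n}|\zeta|^2 w_{0,1}^{q}$; and the remainder is $o(1/\lambda_i^2)$ because $K\in C^2$ supplies a modulus of continuity for $D^2K$ (split at $|\zeta|\le\eta\lambda_i$, and bound the tail by $\int_{|\zeta|\ge\eta\lambda_i}(1+|\zeta|^2)^{-n}\,d\zeta=O(\lambda_i^{-n})=o(\lambda_i^{-2})$).

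\textbf{Interaction and error integrals.} Since $\delta_{a_i,\lambda_i}$ solves $P_\gamma u=u^{q-1}$ (equation \eqref{e:yamabe2}), one has $\int_{\mathbb S^n}\delta_{a_i,\lambda_i}^{q-1}\delta_{a_j,\lambda_j}\,dvol_{g_0}=\int_{\mathbb S^n}\delta_{a_j,\lambda_j}P_\gamma\delta_{a_i,\lambda_i}\,dvol_{g_0}=c_0^{q}c_1\omega_n\varepsilon_{ij}(1+o(1))$ by Lemma \ref{l:A2}. Replacing $K$ by $K(a_i)$ in the interaction integral costs $\int_{\mathbb S^n}|K-K(a_i)|\,\delta_{a_i,\lambda_i}^{q-1}\delta_{a_j,\lambda_j}\,dvol_{g_0}$, which is $o(\varepsilon_{ij})$ from the continuity of $\nabla K$ (the mass of $\delta_{a_i,\lambda_i}^{q-1}$ concentrates at $a_i$), together with the contribution of a fixed neighbourhood of $a_j$, which a direct estimate shows is $O(\lambda_i^{-(n+2\gamma)/2}\lambda_j^{-(n-2\gamma)/2})$. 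This yields the term $\tfrac{2n}{n-2\gamma}\sum_{i\ne j}\alpha_i^{q-1}\alpha_j K(a_i)c_0^{q}c_1\omega_n\varepsilon_{ij}$ and the two stated $O(\cdot)$ errors. Finally, a computation in $\mathbb R^n$ gives $\int_{\mathbb S^n}\delta_{a_i,\lambda_i}^{q/2}\delta_{a_j,\lambda_j}^{q/2}\,dvol_{g_0}=O(\varepsilon_{ij}^{n/(n-2\gamma)}\log\varepsilon_{ij}^{-1})$, which absorbs the algebraic remainder; collecting the three pieces gives the asserted expansion.

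\textbf{Main obstacle.} The real work is the bookkeeping in the interaction step: one must decompose $\mathbb S^n$ into regions near $a_i$, near $a_j$, and the complement, track the $\log$-factor and the exact powers of $\lambda_i,\lambda_j$ — in particular in the regime where $\lambda_i$ and $\lambda_j$ are of very different orders, where the naive bound $\varepsilon_{ij}\lesssim(\lambda_i\lambda_j)^{-(n-2\gamma)/2}$ is far from sharp — and verify that the $K$-expansion error is genuinely $o(\varepsilon_{ij})$ and not merely $O(\varepsilon_{ij})$. Likewise, upgrading the diagonal remainder from $O(1/\lambda_i^2)$ to $o(1/\lambda_i^2)$ uses in an essential way that $K$ is $C^2$ (so $D^2K$ is uniformly continuous), rather than merely Lipschitz-differentiable. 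These estimates are standard in the Bahri--Coron framework, but the fractional kernel forces one to redo them with the exponent $\tfrac{n+2\gamma}{2}$ in place of the local exponent, which is precisely what is carried out in Appendix A.
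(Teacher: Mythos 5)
Your proposal follows essentially the same route as the paper's Appendix A: the same algebraic splitting into diagonal, principal interaction, and a remainder of size $O\bigl(\sum_{i\neq j}\varepsilon_{ij}^{n/(n-2\gamma)}\log\varepsilon_{ij}^{-1}\bigr)$, the same stereographic transfer and second-order Taylor expansion of $K$ for the diagonal term, and the same use of Lemma \ref{l:A2} together with a Taylor/H\"older argument for the cross terms. The only cosmetic difference is the form of the pointwise remainder (you use $(t_it_j)^{q/2}$ where the paper uses $\delta_i^{4\gamma/(n-2\gamma)}\inf(\delta_i,\delta_j)^2$), which leads to the same bound, so the argument is correct and matches the paper.
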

\begin{proof}
\begin{eqnarray*}
&&\int_{\mathbb{S}^{n}} K \left(\sum_{i = 1}^p\alpha_i\delta_{a_i, \lambda_i}\right)^{\frac{2n}{n - 2\gamma}}dvol_{g_0} = \sum_{i = 1}^p\alpha_i^{\frac{2n}{n - 2\gamma}}\int_{\mathbb{S}^{n}} K \delta_{a_i, \lambda_i}^{\frac{2n}{n - 2\gamma}}dvol_{g_0}+\\ && \frac{2n}{n - 2\gamma}\sum_{i \neq j}\alpha_i^{\frac{n + 2\gamma}{n - 2\gamma}}\alpha_j\int_{\mathbb{S}^{n}} K \delta_{a_i, \lambda_i}^{\frac{n + 2\gamma}{n - 2\gamma}}\delta_{a_j, \lambda_j}dvol_{g_0} + O\left(\sum_{i \neq j}\int_{\mathbb{S}^{n}} \delta_{a_i, \lambda_i}^{\frac{4\gamma}{n - 2\gamma}}\inf(\delta_{a_j, \lambda_j}, \delta_{a_i, \lambda_i})^2dvol_{g_0}\right).
\end{eqnarray*}
And it holds that
$$
\sum_{i \neq j}\int_{\mathbb{S}^{n}} \delta_{a_i, \lambda_i}^{\frac{4\gamma}{n - 2\gamma}}\inf(\delta_{a_j, \lambda_j}, \delta_{a_i, \lambda_i})^2dvol_{g_0}\leq C \sum_{i \neq j} \varepsilon_{ij}^{\frac{n}{n - 2\gamma}}\log \varepsilon_{ij}^{-1}.
$$
Since
\begin{eqnarray*}
\int_{\mathbb{S}^{n}} K \delta_{a_i, \lambda_i}^{\frac{2n}{n - 2\gamma}}dvol_{g_0} &=& \int_{\mathbb{R}^n} \tilde{K}(x') w_{g_i, \lambda_i}^{\frac{2n}{n - 2\gamma}}\theta_0dx'\\
&=& \int_{\mathbb{R}^n} \left(\tilde{K}(x') - \tilde{K}(g_i)\right)w_{g_i, \lambda_i}^{\frac{2n}{n - 2\gamma}}dx' +  \tilde{K}(g_i)\int_{\mathbb{R}^n} w_{g_i, \lambda_i}^{\frac{2n}{n - 2\gamma}}dx'
\end{eqnarray*}
\begin{eqnarray*}
&= &\int_{B(g_i,\rho)} \left(\tilde{K}(x') - \tilde{K}(g_i)\right)w_{g_i, \lambda_i}^{\frac{2n}{n - 2\gamma}}dx'\\ &&+ \int_{B(g_i,\rho)^c} \left(\tilde{K}(x') - \tilde{K}(g_i)\right)w_{g_i, \lambda_i}^{\frac{2n}{n - 2\gamma}}dx'\\ &&+  \tilde{K}(g_i)\int_{\mathbb{R}^n} w_{g_i, \lambda_i}^{\frac{2n}{n - 2\gamma}}dx'\\
&=& \int_{B(0,\rho')} \left(\tilde{K}(x') - \tilde{K}(0)\right)w_{0, \lambda_i}^{\frac{2n}{n - 2\gamma}}dx'\\ &&+ \int_{B(0,\rho')^c} \left(\tilde{K}(x') - \tilde{K}(0)\right)w_{0, \lambda_i}^{\frac{2n}{n - 2\gamma}}dx'\\ &&+  K(a_i)\int_{\mathbb{S}^3} \delta_{a_i, \lambda_i}^{\frac{2n}{n - 2\gamma}}dvol_{g_0}
\end{eqnarray*}
$$
\int_{B(0,\rho')^c} \left(\tilde{K}(x') - \tilde{K}(0)\right)w_{0, \lambda_i}^{\frac{2n}{n - 2\gamma}}dx'\leq \frac{C}{\lambda_i^{n}},
$$
and
$$
\int_{B(0,\rho')} \left(\tilde{K}(x') - \tilde{K}(0)\right)w_{0, \lambda_i}^{\frac{2n}{n - 2\gamma}}dx' = c_0^{\frac{2n}{n - 2\gamma}}\int_{\mathbb{R}^n}|\zeta|^2w_{0, 1}^{\frac{2n}{n - 2\gamma}}dx'\frac{\Delta K(x_i)}{\lambda_i^2} + O\left(\frac{1}{\lambda_i^{n}}\right),
$$
we have
$$
\int_{\mathbb{S}^{n}} K \delta_{a_i, \lambda_i}^{\frac{2n}{n - 2\gamma}}dvol_{g_0} = K(a_i)S + c_0^{\frac{2n}{n - 2\gamma}}\int_{\mathbb{R}^n}|\zeta|^2w_{0, 1}^{\frac{2n}{n - 2\gamma}}dx'\frac{\Delta K(x_i)}{\lambda_i^2} + o\left(\frac{1}{\lambda_i^{2}}\right).
$$
Finally, we expand the term $\int_{\mathbb{S}^{n}} K(x) \delta_{a_i, \lambda_i}^{\frac{n + 2\gamma}{n - 2\gamma}}\delta_{a_j, \lambda_j}dvol_{g_0}$.
\begin{eqnarray*}
\int_{\mathbb{S}^{n}} K(x) \delta_{a_i, \lambda_i}^{\frac{n + 2\gamma}{n - 2\gamma}}\delta_{a_j, \lambda_j}dvol_{g_0} &=& \int_{B(g_i, \rho)} \left(\tilde{K}(x') - \tilde{K}(g_i)\right) w_{g_i, \lambda_i}^{\frac{n + 2\gamma}{n - 2\gamma}}w_{g_j, \lambda_j}dx'\\
&& + \int_{B(g_i, \rho)^c} \left(\tilde{K}(x') - \tilde{K}(g_i)\right) w_{g_i, \lambda_i}^{\frac{n + 2\gamma}{n - 2\gamma}}w_{g_j, \lambda_j}dx'\\
&& + \tilde{K}(g_i)\int_{\mathbb{R}^n}w_{g_i, \lambda_i}^{\frac{n + 2\gamma}{n - 2\gamma}}w_{g_j, \lambda_j}dx'.
\end{eqnarray*}
And we have
\begin{eqnarray*}
&&\sum_{i\neq j}\int_{B(g_i, \rho)} \left(\tilde{K}(x') - \tilde{K}(g_i)\right) w_{g_i, \lambda_i}^{\frac{n + 2\gamma}{n - 2\gamma}}w_{g_j, \lambda_j}dx'\\
&& = O\left(\sum_{i\neq j}\|\nabla \tilde{K}(g_i)\|\int_{B(g_i, \rho)} \|x'\| w_{g_i, \lambda_i}^{\frac{n + 2\gamma}{n - 2\gamma}}w_{g_j, \lambda_j}dx'\right)\\
&& = O\left(\sum_{i\neq j}\left(\int_{B(g_i, \rho)} \|x'\|^{\frac{n}{2\gamma}} w_{g_i, \lambda_i}^{\frac{n}{n - 2\gamma}}dx'\right)^{\frac{2\gamma}{n}}\left(\int_{B(g_i, \rho)} w_{g_i, \lambda_i}^{\frac{n}{n - 2\gamma}}w_{g_j, \lambda_j}^{\frac{n}{n -2\gamma}}dx'\right)^{\frac{n - 2\gamma}{n}}\right)\\
&& = O\left(\sum_{i\neq j}\left(\frac{1}{\lambda_i}\right)\left(\varepsilon_{ij}^{\frac{n}{n - 2\gamma}}\log \varepsilon_{ij}^{-1}\right)^{\frac{n-2\gamma}{n}}\right)\\
&& = O\left(\sum_{i = 1}^p\left(\frac{1}{\lambda_i}\right)^{\frac{n}{2\gamma}} + \sum_{i\neq j}\varepsilon_{ij}^{\frac{n}{n- 2\gamma}}\log \varepsilon_{ij}^{-1}\right).
\end{eqnarray*}
Hence
\begin{eqnarray*}
\int_{\mathbb{S}^{n}} K(x) \delta_{a_i, \lambda_i}^{\frac{n + 2\gamma}{n - 2\gamma}}\delta_{a_j, \lambda_j}dvol_{g_0} &=& c_0^{\frac{4}{2-\gamma}}K(a_i)c_1\omega_3\varepsilon_{ij}\left(1 + o(1)\right)\\
&+& O\left(\sum_{i = 1}^p\left(\frac{1}{\lambda_i}\right)^{\frac{n}{2\gamma}} +  \sum_{i \neq j}\frac{1}{\lambda_i^{\frac{n + 2\gamma}{2}}\lambda_j^{\frac{n - 2\gamma}{2}}} + \sum_{i\neq j}\varepsilon_{ij}^{\frac{n}{n- 2\gamma}}\log \varepsilon_{ij}^{-1}\right).
\end{eqnarray*}
Combine the above estimates, we have the assertion of this lemma.
\end{proof}
\begin{lemma}
If $0 < \gamma \leq \frac{n}{6}$, then
\begin{eqnarray*}
&&\int_{\mathbb{S}^{n}} K \left(\sum_{i = 1}^p\alpha_i\delta_{a_i, \lambda_i}\right)^{\frac{n + 2\gamma}{n - 2\gamma}}v\,\,dvol_{g_0}\\ && = O\left(\left(\int_{\mathbb{S}^n}vP_\gamma v\right)^{\frac{1}{2}}\right)\left(O\left(\sum_{i\neq j} \left(\varepsilon_{ij}\right)^{\frac{n + 2\gamma}{2(n - 2\gamma)}} \left(\log \varepsilon_{ij}^{-1}\right)^{\frac{n + 2\gamma}{2n}}\right)+ \sum_{i = 1}^p\left(\frac{|\nabla K(a_i)|}{\lambda_i}+\frac{1}{\lambda_i^2}\right)\right).
\end{eqnarray*}
If $1 > \gamma > \frac{n}{6}$, then
\begin{eqnarray*}
&&\int_{\mathbb{S}^{n}} K \left(\sum_{i = 1}^p\alpha_i\delta_{a_i, \lambda_i}\right)^{\frac{n + 2\gamma}{n - 2\gamma}}v\,\,dvol_{g_0}\\ && = O\left(\left(\int_{\mathbb{S}^n}vP_\gamma v\right)^{\frac{1}{2}}\right)\left(O\left(\sum_{i\neq j} \left(\varepsilon_{ij}\right)^{\frac{n + 2\gamma}{2n}} \left(\log \varepsilon_{ij}^{-1}\right)^{\frac{n - 2\gamma}{n}}\right)+ \sum_{i = 1}^p\left(\frac{|\nabla K(a_i)|}{\lambda_i}+\frac{1}{\lambda_i^2}\right)\right).
\end{eqnarray*}
\end{lemma}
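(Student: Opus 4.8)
To estimate $\int_{\mathbb{S}^{n}}K\big(\sum_{i=1}^p\alpha_i\delta_{a_i,\lambda_i}\big)^{\frac{n+2\gamma}{n-2\gamma}}v\,dvol_{g_0}$, write $\tau=\frac{n+2\gamma}{n-2\gamma}$ and recall $2^{*}=\frac{2n}{n-2\gamma}$, with conjugate exponent $(2^{*})'=\frac{2n}{n+2\gamma}$. The plan is to view this integral as a bounded linear functional of $v$ and to control it by $\|v\|$ times the $L^{(2^{*})'}$-size of the part of $K\big(\sum_i\alpha_i\delta_{a_i,\lambda_i}\big)^{\tau}$ not captured by the directions against which $v$ is orthogonal. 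First I would split
\[
K\Big(\sum_{i}\alpha_i\delta_{a_i,\lambda_i}\Big)^{\tau}=\sum_{i}K(a_i)\alpha_i^{\tau}\delta_{a_i,\lambda_i}^{\tau}+\sum_{i}\big(K-K(a_i)\big)\alpha_i^{\tau}\delta_{a_i,\lambda_i}^{\tau}+K\Big[\Big(\sum_{i}\alpha_i\delta_{a_i,\lambda_i}\Big)^{\tau}-\sum_{i}\alpha_i^{\tau}\delta_{a_i,\lambda_i}^{\tau}\Big],
\]
pair with $v$, and treat the three groups in turn. Because $\delta_{a_i,\lambda_i}$ solves $P_\gamma\delta_{a_i,\lambda_i}=\delta_{a_i,\lambda_i}^{\tau}$, the first group equals $\sum_i K(a_i)\alpha_i^{\tau}\langle\delta_{a_i,\lambda_i},v\rangle$, which vanishes by the orthogonality relations (\ref{e:orthonormal}); this cancellation is the whole point and is the only place (\ref{e:orthonormal}) is used.

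For the second group I would Taylor expand $K(x)-K(a_i)=\nabla K(a_i)\cdot(x-a_i)+O(d(x,a_i)^2)$ near $a_i$ and apply H\"older together with the Sobolev embedding $\|v\|_{L^{2^{*}}}\le C\|v\|$, reducing to the quantities $\big\|d(\cdot,a_i)^m\,\delta_{a_i,\lambda_i}^{\tau}\big\|_{L^{(2^{*})'}}$ for $m=1,2$. Passing to $\mathbb{R}^{n}$ via the stereographic projection and rescaling by $\lambda_i$, the explicit bubble $w_{0,1}$ gives $\big\|d(\cdot,a_i)^m\,\delta_{a_i,\lambda_i}^{\tau}\big\|_{L^{(2^{*})'}}=O(\lambda_i^{-m})$, the relevant integral $\int_{\mathbb{R}^{n}}|x|^{m(2^{*})'}w_{0,1}^{2^{*}}\,dx$ being finite precisely when $2m<n+2\gamma$ — which holds for $m=1,2$ once $n\ge4$. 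This accounts for the $\sum_{i}\big(\frac{|\nabla K(a_i)|}{\lambda_i}+\frac1{\lambda_i^2}\big)$ term.

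The third group is the main point, and it is here that the split $6\gamma\lessgtr n$, i.e. $\tau\lessgtr2$, is forced. For $\tau\le2$ ($0<\gamma\le\frac n6$) I would majorize $\big|(\sum_i x_i)^{\tau}-\sum_i x_i^{\tau}\big|\le C\sum_{i\ne j}(x_ix_j)^{\tau/2}$, while for $\tau\ge2$ ($\frac n6\le\gamma<1$) I would use $\big|(\sum_i x_i)^{\tau}-\sum_i x_i^{\tau}\big|\le C\sum_{i\ne j}\big(x_i^{\tau-1}x_j+x_ix_j^{\tau-1}\big)$. After H\"older and the Sobolev bound for $v$, both cases reduce to estimating a product of two bubbles in $L^{(2^{*})'}$ whose exponents add up to $2^{*}$: respectively $\big\|(\delta_{a_i,\lambda_i}\delta_{a_j,\lambda_j})^{\tau/2}\big\|_{L^{(2^{*})'}}$, whose $(2^{*})'$-power is $\int\delta_{a_i,\lambda_i}^{\sigma}\delta_{a_j,\lambda_j}^{\sigma}$ with $\sigma=\frac n{n-2\gamma}$, or $\big\|\delta_{a_i,\lambda_i}^{\tau-1}\delta_{a_j,\lambda_j}\big\|_{L^{(2^{*})'}}$, whose $(2^{*})'$-power is $\int\delta_{a_i,\lambda_i}^{(\tau-1)(2^{*})'}\delta_{a_j,\lambda_j}^{(2^{*})'}$. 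These interaction integrals are evaluated by the usual decomposition of $\mathbb{S}^{n}$, after stereographic projection and rescaling, into the concentration region of each bubble and the intermediate region, distinguishing the sub-cases $\lambda_i\sim\lambda_j$, $\lambda_i\ll\lambda_j$ and $\lambda_i\gg\lambda_j$; the logarithmic factor is produced in the intermediate region. The balanced integral $\int\delta_{a_i,\lambda_i}^{\sigma}\delta_{a_j,\lambda_j}^{\sigma}\sim\varepsilon_{ij}^{\sigma}\log\varepsilon_{ij}^{-1}$, and taking the $\frac1{(2^{*})'}$-th power gives $\varepsilon_{ij}^{\frac{n+2\gamma}{2(n-2\gamma)}}(\log\varepsilon_{ij}^{-1})^{\frac{n+2\gamma}{2n}}$; in the range $\gamma>\frac n6$ the analogous estimate yields a bound dominated by $\varepsilon_{ij}^{\frac{n+2\gamma}{2n}}(\log\varepsilon_{ij}^{-1})^{\frac{n-2\gamma}{n}}$. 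Summing over $i\ne j$ and collecting the three groups gives the two asserted estimates. The hard part is precisely this last step — carrying out the two-bubble interaction estimates with the exact powers of $\varepsilon_{ij}$ and $\log\varepsilon_{ij}^{-1}$, uniformly over all configurations $(\alpha,a,\lambda)\in B_\varepsilon$ — and it is there that the threshold $\gamma=n/6$ genuinely appears, being the value at which $\tau-1$ crosses $1$ and the bubbles' tail exponents change integrability at the matching scale.
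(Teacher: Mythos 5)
Your proposal follows essentially the same route as the paper: split off the diagonal terms $\sum_i\alpha_i^{\tau}K\delta_{a_i,\lambda_i}^{\tau}v$, kill the constant-$K$ part via $P_\gamma\delta_{a_i,\lambda_i}=\delta_{a_i,\lambda_i}^{\tau}$ and the orthogonality (\ref{e:orthonormal}), Taylor expand $K-K(a_i)$ to produce $\frac{|\nabla K(a_i)|}{\lambda_i}+\frac{1}{\lambda_i^2}$, and control the off-diagonal remainder by H\"older against $\|v\|_{L^{2^*}}\le C\|v\|$ together with two-bubble interaction integrals, with the threshold $\gamma=n/6$ arising exactly as $\tau=\frac{n+2\gamma}{n-2\gamma}$ crosses $2$ (the paper writes the cross term uniformly as $\delta_{a_i,\lambda_i}^{\tau-1}\inf(\delta_{a_i,\lambda_i},\delta_{a_j,\lambda_j})$ in $L^{(2^*)'}$, which is equivalent to your case-dependent elementary inequalities). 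This matches the paper's argument in substance and in the level of detail given for the final interaction estimates.
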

\begin{proof}
\begin{eqnarray*}
&&\int_{\mathbb{S}^{n}} K \left(\sum_{i = 1}^p\alpha_i\delta_{a_i, \lambda_i}\right)^{\frac{n + 2\gamma}{n - 2\gamma}}v\,\,dvol_{g_0} = \sum_{i = 1}^p\alpha_i^{\frac{n + 2\gamma}{n - 2\gamma}}\int_{\mathbb{S}^{n}} K \delta_{a_i, \lambda_i}^{\frac{n + 2\gamma}{n - 2\gamma}}v\,\,dvol_{g_0} + \\
&& O\left(\left(\int_{\mathbb{S}^n}vP_\gamma v\right)^{\frac{1}{2}}\right)\left[\int_{\mathbb{S}^n}\sum_{i\neq j}(\alpha_i\delta_{a_i, \lambda_i})^{\frac{8n\gamma}{n^2 - 4\gamma^2}}\inf[(\alpha_i\delta_{a_i, \lambda_i})^{\frac{2n}{n + 2\gamma}}, (\alpha_j\delta_{a_j, \lambda_j})^{\frac{2n}{n +2 \gamma}}]\,\,dvol_{g_0}\right]^{\frac{n + 2\gamma}{2n}}.
\end{eqnarray*}
Since
\begin{eqnarray*}
&&\left[\int_{\mathbb{S}^n}\sum_{i\neq j}(\alpha_i\delta_{a_i, \lambda_i})^{\frac{8n\gamma}{n^2 - 4\gamma^2}}\inf[(\alpha_i\delta_{a_i, \lambda_i})^{\frac{2n}{n +2\gamma}}, (\alpha_j\delta_{a_j, \lambda_j})^{\frac{2n}{n + 2\gamma}}]dvol_{g_0}\right]^{\frac{n + 2\gamma}{2n}}\\ &&= O\left(\sum_{i\neq j} \left(\varepsilon_{ij}\right)^{\frac{n + 2\gamma}{2(n - 2\gamma)}} \left(\log \varepsilon_{ij}^{-1}\right)^{\frac{n + 2\gamma}{2n}}\right)
\end{eqnarray*}
when $0 < \gamma \leq \frac{n}{6}$ and
\begin{eqnarray*}
&&\left[\int_{\mathbb{S}^3}\sum_{i\neq j}(\alpha_i\delta_{a_i, \lambda_i})^{\frac{8n\gamma}{n^2 - 4\gamma^2}}\inf[(\alpha_i\delta_{a_i, \lambda_i})^{\frac{2n}{n+2\gamma}}, (\alpha_j\delta_{a_j, \lambda_j})^{\frac{2n}{n + 2\gamma}}]\,\,dvol_{g_0}\right]^{\frac{n + 2\gamma}{2n}}\\ &&= O\left(\sum_{i\neq j} \left(\varepsilon_{ij}\right)^{\frac{n + 2\gamma}{2n}} \left(\log \varepsilon_{ij}^{-1}\right)^{\frac{n - 2\gamma}{n}}\right)
\end{eqnarray*}
when $1 > \gamma > \frac{n}{6}$.
Now, we estimate $\int_{\mathbb{S}^{n}} K \alpha_i^{\frac{n + 2\gamma}{n - 2\gamma}}\delta_{a_i, \lambda_i}^{\frac{n + 2\gamma}{n - 2\gamma}}v\,\,dvol_{g_0}$.
\begin{eqnarray*}
\int_{\mathbb{S}^{n}} K\delta_{a_i, \lambda_i}^{\frac{n + 2\gamma}{n - 2\gamma}}v\,\,dvol_{g_0} &=& \int_{\mathbb{R}^{n}} \left(\tilde{K}(x') - \tilde{K}(g_i)\right)w_{g_i, \lambda_i}^{\frac{n + 2\gamma}{n - 2\gamma}}v\,\,dx',\quad x' = F(x),\,\,g_i = F(a_i)\\
& = &\int_{B(g_i, \rho)} \left(\nabla \tilde{K}(g_i)(x' - g_i) + o(|x' - g_i|)\right)w_{g_i, \lambda_i}^{\frac{n + 2\gamma}{n - 2\gamma}}v\,\,dx'\\
&& + \int_{B^c(g_i, \rho)} \left(\tilde{K}(x') - \tilde{K}(g_i)\right)w_{g_i, \lambda_i}^{\frac{n + 2\gamma}{n - 2\gamma}}v\,\,dx'\\
& = &O\left(\left(\int_{\mathbb{S}^n}vP_\gamma v\right)^{\frac{1}{2}}\right)\left(\frac{|\nabla K(a_i)|}{\lambda_i} + \frac{1}{\lambda_i^2}\right).
\end{eqnarray*}
This completes the proof.
\end{proof}
\begin{lemma}
For any $u = \sum_{i = 1}^p \alpha_i \delta_{a_i, \lambda_i} + v \in V(p, \varepsilon)$, we have
\begin{eqnarray*}
&&\int_{\mathbb{S}^{n}} K(x) \left(\sum_{i = 1}^p\alpha_i\delta_{a_i, \lambda_i}\right)^{\frac{4\gamma}{n - 2\gamma}}v^2dvol_{g_0}\\  &&=  \sum_{i = 1}^p\alpha_i^{\frac{4\gamma}{n - 2\gamma}}K(a_i)\int_{\mathbb{S}^{n}} \delta_{a_i, \lambda_i}^{\frac{4\gamma}{n - 2\gamma}}v^2dvol_{g_0}
\\ &&+O\left(\int_{\mathbb{S}^n}v P_\gamma v\,\,dvol_{g_0}\right)\left(\sum_{i = 1}^p \frac{|\nabla K(a_i)|}{\lambda_i} + \sum_{i = 1}^p\frac{1}{\lambda_i^{2\gamma}} + \sum_{i\neq j}\varepsilon_{ij}^{\frac{2\gamma}{n - 2\gamma}}(\log \varepsilon_{ij}^{-1})^{\frac{2\gamma}{n}}\right).
\end{eqnarray*}
\end{lemma}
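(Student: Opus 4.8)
The plan is to estimate directly the difference of the two sides, exploiting the concentration of the bubbles. Set $q=\frac{4\gamma}{n-2\gamma}$; since $n\geq 4$ and $\gamma\in(0,1)$ one has $0<q<2$. The starting point is the elementary pointwise inequality
\[
\Bigl|\bigl(\textstyle\sum_{i=1}^p b_i\bigr)^q-\sum_{i=1}^p b_i^q\Bigr|\leq C\sum_{i\neq j}(b_ib_j)^{q/2}\qquad(b_i\geq 0),
\]
valid precisely because $q\in(0,2)$; one proves it from the two--bubble case, treating $q\leq 1$ and $q\geq 1$ separately (mean--value theorem on $(a+b)^q-a^q-b^q$) and bounding the mixed term in each case by $(ab)^{q/2}$. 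Taking $b_i=\alpha_i\delta_{a_i,\lambda_i}$ this reads
\[
\Bigl(\sum_{i=1}^p\alpha_i\delta_{a_i,\lambda_i}\Bigr)^{q}=\sum_{i=1}^p\alpha_i^{q}\,\delta_{a_i,\lambda_i}^{q}+R,\qquad |R|\leq C\sum_{i\neq j}\bigl(\delta_{a_i,\lambda_i}\delta_{a_j,\lambda_j}\bigr)^{q/2}.
\]

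For the contribution of $R$ I would use H\"older's inequality with the exponents $\frac{n}{2\gamma}$ and $\frac{n}{n-2\gamma}$, together with the Sobolev embedding $\|v\|_{L^{2^{*}}(\mathbb S^n)}^{2}\leq C\int_{\mathbb S^n}vP_\gamma v$, where $2^{*}=\frac{2n}{n-2\gamma}$. Since $\frac{q}{2}\cdot\frac{n}{2\gamma}=\frac{n}{n-2\gamma}=\frac{2^{*}}{2}$, this gives
\[
\Bigl|\int_{\mathbb S^n}KRv^{2}\Bigr|\leq C\Bigl(\int_{\mathbb S^n}vP_\gamma v\Bigr)\sum_{i\neq j}\Bigl(\int_{\mathbb S^n}\delta_{a_i,\lambda_i}^{2^{*}/2}\delta_{a_j,\lambda_j}^{2^{*}/2}\,dvol_{g_0}\Bigr)^{\frac{2\gamma}{n}}.
\]
Invoking the borderline bubble--interaction estimate $\int_{\mathbb S^n}\delta_{a_i,\lambda_i}^{2^{*}/2}\delta_{a_j,\lambda_j}^{2^{*}/2}\,dvol_{g_0}\leq C\,\varepsilon_{ij}^{n/(n-2\gamma)}\log\varepsilon_{ij}^{-1}$ (of the same type as the interaction estimates already used above) and raising to the power $\tfrac{2\gamma}{n}$ produces exactly $\varepsilon_{ij}^{\frac{2\gamma}{n-2\gamma}}(\log\varepsilon_{ij}^{-1})^{\frac{2\gamma}{n}}$, so the $R$--term is absorbed by the third error sum.

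It then remains to treat the diagonal part $\sum_i\alpha_i^{q}\int_{\mathbb S^n}K\delta_{a_i,\lambda_i}^{q}v^{2}$. Splitting off $\sum_i\alpha_i^{q}K(a_i)\int_{\mathbb S^n}\delta_{a_i,\lambda_i}^{q}v^{2}$ leaves $\sum_i\alpha_i^{q}\int_{\mathbb S^n}\bigl(K-K(a_i)\bigr)\delta_{a_i,\lambda_i}^{q}v^{2}$, and for each $i$ I would split the integral over $B(a_i,\rho)$ and its complement. On $B(a_i,\rho)$ write $K(x)-K(a_i)=\nabla K(a_i)\cdot(x-a_i)+O(|x-a_i|^{2})$; H\"older (same exponents) plus Sobolev, combined with the scaling computations
\[
\Bigl(\int_{B(a_i,\rho)}|x-a_i|^{\frac{n}{2\gamma}}\delta_{a_i,\lambda_i}^{2^{*}}\Bigr)^{\frac{2\gamma}{n}}=O\!\Bigl(\tfrac1{\lambda_i}+\tfrac1{\lambda_i^{2\gamma}}\Bigr),\qquad \Bigl(\int_{B(a_i,\rho)}|x-a_i|^{\frac{n}{\gamma}}\delta_{a_i,\lambda_i}^{2^{*}}\Bigr)^{\frac{2\gamma}{n}}=O\!\Bigl(\tfrac1{\lambda_i^{2\gamma}}\Bigr),
\]
give the bound $O\bigl((\int vP_\gamma v)\,(|\nabla K(a_i)|/\lambda_i+\lambda_i^{-2\gamma})\bigr)$ (the $\lambda_i^{-1}$ weight is the operative one when $\gamma>\tfrac12$; when $\gamma\leq\tfrac12$ the linear term produces $\lambda_i^{-2\gamma}$, absorbed into the $\lambda_i^{-2\gamma}$ error since $\nabla K$ is bounded). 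On $B(a_i,\rho)^{c}$ one has $|K-K(a_i)|\leq C$ and the tail bound $\int_{B(a_i,\rho)^{c}}\delta_{a_i,\lambda_i}^{2^{*}}\leq C\lambda_i^{-n}$, so H\"older plus Sobolev gives $O\bigl((\int vP_\gamma v)\,\lambda_i^{-2\gamma}\bigr)$, using $q\cdot\frac{n-2\gamma}{2}=2\gamma$. Summing over $i$ and collecting with the $R$--estimate yields the assertion.

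The calculations are routine H\"older and scaling estimates; the point requiring care — and the one I expect to be the real obstacle — is checking that every remainder (the mixed--bubble term $R$, the Taylor remainder of $K$, and the far--field tail) lands uniformly, for all $\gamma\in(0,1)$, inside the three error sums $\sum_i|\nabla K(a_i)|/\lambda_i$, $\sum_i\lambda_i^{-2\gamma}$ and $\sum_{i\neq j}\varepsilon_{ij}^{2\gamma/(n-2\gamma)}(\log\varepsilon_{ij}^{-1})^{2\gamma/n}$. In particular, unlike the estimate for the $f(v)$--term one does not need to split into the cases $\gamma\lessgtr n/6$: the uniform bound $|R|\leq C\sum_{i\neq j}(\delta_{a_i,\lambda_i}\delta_{a_j,\lambda_j})^{q/2}$ always lands on the \emph{balanced} interaction integral, and keeping both $|\nabla K(a_i)|/\lambda_i$ and $\lambda_i^{-2\gamma}$ in the error makes the case $\gamma\leq\tfrac12$ go through without a separate statement.
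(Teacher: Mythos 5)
Your proposal is correct and follows essentially the same route as the paper's proof: expand the power of the sum of bubbles into its diagonal part plus a cross term controlled by the borderline interaction integral $\left(\int\delta_i^{2^*/2}\delta_j^{2^*/2}\right)^{2\gamma/n}$, then split $K-K(a_i)$ on a ball around each concentration point and its complement, using H\"older with exponents $\tfrac{n}{2\gamma}$, $\tfrac{n}{n-2\gamma}$ and Sobolev throughout. You merely supply details (the pointwise inequality for $0<q<2$ and the explicit scaling computations) that the paper leaves implicit.
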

\begin{proof}
First, we have
\begin{eqnarray*}
&&\int_{\mathbb{S}^{n}} K(x) \left(\sum_{i = 1}^p\alpha_i\delta_{a_i, \lambda_i}\right)^{\frac{4\gamma}{n - 2\gamma}}v^2dvol_{g_0}\\ &&= \int_{\mathbb{R}^{n}} \tilde{K}(x') \left(\sum_{i = 1}^p\alpha_iw_{g_i, \lambda_i}\right)^{\frac{4\gamma}{n - 2\gamma}}v^2dx'\\
&& = \sum_{i = 1}^p\alpha_i^{\frac{4\gamma}{n - 2\gamma}}\int_{\mathbb{R}^{n}} \tilde{K}(x')w_{g_i, \lambda_i}^{\frac{4\gamma}{n - 2\gamma}}v^2dx'\\ &&+ O\left(\left(\int_{\mathbb{S}^n}v P_\gamma v\,\,dvol_{g_0}\right)\left(\varepsilon_{ij}^{\frac{2\gamma}{n - 2\gamma}}(\log \varepsilon_{ij}^{-1})^{\frac{2\gamma}{n}}\right)\right).
\end{eqnarray*}
Now, we compute $$\sum_{i = 1}^p\alpha_i^{\frac{4\gamma}{n - 2\gamma}}\int_{\mathbb{R}^{n}} \tilde{K}(x')w_{g_i, \lambda_i}^{\frac{4\gamma}{n - 2\gamma}}v^2dx'.$$ Set $\tilde{K}(x') = \tilde{K}(x') - \tilde{K}(g_i) + \tilde{K}(g_i)$, it yields that
\begin{eqnarray*}
&&\sum_{i = 1}^p\alpha_i^{\frac{4\gamma}{n - 2\gamma}}\int_{\mathbb{R}^{n}} \tilde{K}(x')w_{g_i, \lambda_i}^{\frac{4\gamma}{n - 2\gamma}}v^2dx'\\ &&= \sum_{i = 1}^p\alpha_i^{\frac{4\gamma}{n - 2\gamma}}\tilde{K}(g_i)\int_{\mathbb{R}^{n}} w_{g_i, \lambda_i}^{\frac{4\gamma}{n - 2\gamma}}v^2dx'\\
&&+\sum_{i = 1}^p\alpha_i^{\frac{4\gamma}{n - 2\gamma}}\int_{B_i} \left(\tilde{K}(x') - \tilde{K}(g_i)\right)w_{g_i, \lambda_i}^{\frac{4\gamma}{n - 2\gamma}}v^2dx'\\
&&+\sum_{i = 1}^p\alpha_i^{\frac{4\gamma}{n - 2\gamma}}\int_{B_i^c} \left(\tilde{K}(x') - \tilde{K}(g_i)\right)w_{g_i, \lambda_i}^{\frac{4\gamma}{n - 2\gamma}}v^2dx'.
\end{eqnarray*}
Therefore
\begin{eqnarray*}
\sum_{i = 1}^p\alpha_i^{\frac{4\gamma}{n - 2\gamma}}\int_{\mathbb{R}^{n}} \tilde{K}(x')w_{g_i, \lambda_i}^{\frac{4\gamma}{n - 2\gamma}}v^2dx' &=& \sum_{i = 1}^p\alpha_i^{\frac{4\gamma}{n - 2\gamma}}\tilde{K}(g_i)\int_{\mathbb{R}^{n}} w_{g_i, \lambda_i}^{\frac{4\gamma}{n - 2\gamma}}v^2dx'\\
&+&\sum_{i = 1}^p\alpha_i^{\frac{4\gamma}{n - 2\gamma}}\int_{B(0, \rho)} \left(\nabla\tilde{K}(0)\cdot x' + o(|x'|)\right)w_{0, \lambda_i}^{\frac{4\gamma}{n - 2\gamma}}v^2dx'\\
&+&\sum_{i = 1}^p\alpha_i^{\frac{4\gamma}{n - 2\gamma}}\int_{B_i^c} \left(\tilde{K}(x') - \tilde{K}(g_i)\right)w_{g_i, \lambda_i}^{\frac{4\gamma}{n - 2\gamma}}v^2dx'\\
& = & \sum_{i = 1}^p\alpha_i^{\frac{4\gamma}{n - 2\gamma}}\tilde{K}(g_i)\int_{\mathbb{R}^{n}} w_{g_i, \lambda_i}^{\frac{4\gamma}{n - 2\gamma}}v^2dx'\\
&+&O\left(\int_{\mathbb{S}^n}v P_\gamma v\,\,dvol_{g_0}\right)\left(\sum_{i = 1}^p \frac{|\nabla K(a_i)|}{\lambda_i} + \sum_{i = 1}^p\frac{1}{\lambda_i^{2\gamma}}\right).
\end{eqnarray*}
This completes the proof.
\end{proof} 
\section{Proof of Lemma \ref{l:expansionofgradient1}.}
First, we have
$$
J(u) = \lambda (u)\int_{\mathbb{S}^{n}}u P_\gamma u\,\, dvol_{g_0},\quad \lambda(u) = \left(\int_{\mathbb{S}^{n}} K u^{\frac{2n}{n - 2\gamma}}dvol_{g_0}\right)^{-\frac{n - 2\gamma}{n}},
$$
\begin{eqnarray*}
\lambda'(u) W &=& -2\left(\int_{\mathbb{S}^{n}} K u^{\frac{2n}{n - 2\gamma}}dvol_{g_0}\right)^{-\frac{2n-2\gamma}{n}}\left(\int_{\mathbb{S}^{n}} K u^{\frac{n + 2\gamma}{n - 2\gamma}}Wdvol_{g_0}\right)\\
& = & -2\lambda(u)^{\frac{2n - 2\gamma}{n - 2\gamma}}\left(\int_{\mathbb{S}^{n}} K u^{\frac{n + 2\gamma}{n - 2\gamma}}Wdvol_{g_0}\right).
\end{eqnarray*}
Therefore
\begin{eqnarray*}
&&J'(u) W = \lambda' (u)W\int_{\mathbb{S}^{n}}u P_\gamma u\,\, dvol_{g_0} + 2\lambda(u)\int_{\mathbb{S}^{n}}P_\gamma u W\,\, dvol_{g_0}\\
&& =  2\lambda(u)\left[-\lambda(u)^{\frac{n}{n-2\gamma}}\left(\int_{\mathbb{S}^{n}} K u^{\frac{n + 2\gamma}{n - 2\gamma}}Wdvol_{g_0}\right)\int_{\mathbb{S}^{n}}u P_\gamma u\,\, dvol_{g_0} + \int_{\mathbb{S}^{n}}P_\gamma u W\,\, dvol_{g_0}\right]\\
&& =  2\lambda(u)\left[\int_{\mathbb{S}^{n}}P_\gamma u W\,\, dvol_{g_0} -\lambda(u)^{\frac{n}{n-2\gamma}}\left(\int_{\mathbb{S}^{n}} K u^{\frac{n + 2\gamma}{n - 2\gamma}}Wdvol_{g_0}\right)\int_{\mathbb{S}^{n}}u P_\gamma u\,\, dvol_{g_0} \right].
\end{eqnarray*}
Since $u = \sum_{i = 1}^p\alpha_i \delta_{a_i, \lambda_i}\in V(p, \varepsilon)\subset \Sigma^+$, we have $\int_{\mathbb{S}^{n}}u P_\gamma u\,\, dvol_{g_0} = 1$. Thus
\begin{eqnarray*}
J'(u) W = 2\lambda(u)\left[\int_{\mathbb{S}^{n}}P_\gamma u W\,\, dvol_{g_0} -\lambda(u)^{\frac{n}{n-2\gamma}}\left(\int_{\mathbb{S}^{n}} K u^{\frac{n+2\gamma}{n-2\gamma}}Wdvol_{g_0}\right)\right].
\end{eqnarray*}

Let $W = \lambda_j\frac{\partial \delta_{a_j, \lambda_j}}{\partial \lambda_j}$, then we have
\begin{eqnarray*}
&&J'(u) \left(\lambda_j\frac{\partial \delta_{a_j, \lambda_j}}{\partial \lambda_j}\right) =\\ &&2\lambda(u)\left[\langle \sum_{i = 1}^p\alpha_i \delta_{a_i, \lambda_i} ,\lambda_j\frac{\partial \delta_{a_j, \lambda_j}}{\partial \lambda_j}\rangle-\lambda(u)^{\frac{n}{n-2\gamma}}\left(\int_{\mathbb{S}^{n}} K \left(\sum_{i = 1}^p\alpha_i \delta_{a_i, \lambda_i}\right)^{\frac{n + 2\gamma}{n - 2\gamma}}\lambda_j\frac{\partial \delta_{a_j, \lambda_j}}{\partial \lambda_j}dvol_{g_0}\right)\right]
\end{eqnarray*}
and
\begin{eqnarray*}
&&\int_{\mathbb{S}^{n}} K \left(\sum_{i = 1}^p\alpha_i \delta_{a_i, \lambda_i}\right)^{\frac{n + 2\gamma}{n - 2\gamma}}\lambda_j\frac{\partial \delta_{a_j, \lambda_j}}{\partial \lambda_j}dvol_{g_0}= \\ &&\int_{\mathbb{S}^{n}} K \left[\alpha_j^{\frac{n + 2\gamma}{n - 2\gamma}} \delta_{a_j, \lambda_j}^{\frac{n + 2\gamma}{n - 2\gamma}} + \sum_{i\neq j}\alpha_i^{\frac{n + 2\gamma}{n - 2\gamma}} \delta_{a_i, \lambda_i}^{\frac{n + 2\gamma}{n - 2\gamma}} + \frac{n + 2\gamma}{n - 2\gamma}\alpha_j^{\frac{4\gamma}{n-2\gamma}} \delta_{a_j, \lambda_j}^{\frac{4\gamma}{n-2\gamma}}\left(\sum_{i\neq j}\alpha_i \delta_{a_i, \lambda_i}\right)\right]\lambda_j\frac{\partial \delta_{a_j, \lambda_j}}{\partial \lambda_j}dvol_{g_0}\\
&& + \int_{\mathbb{S}^{n}} K \left[\sum_{k\neq j, i\neq j}O\left(\delta_{a_k, \lambda_k}^{\frac{4\gamma}{n-2\gamma}}\delta_{a_i, \lambda_i}\right) + \sum_{k\neq j}O\left(\delta^{\frac{6\gamma - n}{n - 2\gamma}}_{a_j, \lambda_j}\delta^{2}_{a_k, \lambda_k}\right)\right]\lambda_j\frac{\partial \delta_{a_j, \lambda_j}}{\partial \lambda_j}dvol_{g_0}.
\end{eqnarray*}
Moreover, the following estimates hold.
\begin{lemma}
\begin{eqnarray*}
\langle \delta_{a_j,\lambda_j}, \lambda_i\frac{\partial \delta_{a_i, \lambda_i}}{\partial \lambda_i}\rangle = \lambda_i\frac{\partial}{\partial \lambda_i}\left(\int_{\mathbb{S}^3}\delta_{a_j,\lambda_j}^{\frac{n + 2\gamma}{n - 2\gamma}}\delta_{a_i, \lambda_i}dvol_{g_0}\right) = c_0^{\frac{2n}{n - 2\gamma}}c_1\omega_n\lambda_i\frac{\partial\varepsilon_{ij}}{\partial \lambda_i} + o(\varepsilon_{ij}).
\end{eqnarray*}
\end{lemma}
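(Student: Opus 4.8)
The plan is to treat the two equalities separately. The first is a simple integration-by-parts identity: since $P_\gamma$ is self-adjoint for $\langle\cdot,\cdot\rangle$ and $\delta_{a_j,\lambda_j}$ solves \eqref{e:yamabe2}, we have $P_\gamma\delta_{a_j,\lambda_j}=\delta_{a_j,\lambda_j}^{\frac{n+2\gamma}{n-2\gamma}}$, which does not involve $\lambda_i$. As $\lambda\mapsto\delta_{a,\lambda}$ is a $C^1$ curve in $S^2_\gamma(\mathbb{S}^n)$ and $u\mapsto\langle u,\varphi\rangle$ is a bounded linear functional, I would differentiate under the pairing to get
\begin{align*}
\langle\delta_{a_j,\lambda_j},\,\lambda_i\tfrac{\partial\delta_{a_i,\lambda_i}}{\partial\lambda_i}\rangle
&=\lambda_i\tfrac{\partial}{\partial\lambda_i}\langle\delta_{a_j,\lambda_j},\delta_{a_i,\lambda_i}\rangle\\
&=\lambda_i\tfrac{\partial}{\partial\lambda_i}\int_{\mathbb{S}^n}\delta_{a_j,\lambda_j}^{\frac{n+2\gamma}{n-2\gamma}}\delta_{a_i,\lambda_i}\,dvol_{g_0},
\end{align*}
which is the first identity (the $\mathbb{S}^3$ in the statement is a typo for $\mathbb{S}^n$).

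For the second equality I would bring $\lambda_i\partial_{\lambda_i}$ inside the integral. Using that $P_\gamma$ is symmetric, so $\int_{\mathbb{S}^n}\delta_{a_j,\lambda_j}^{\frac{n+2\gamma}{n-2\gamma}}\delta_{a_i,\lambda_i}\,dvol_{g_0}=\int_{\mathbb{S}^n}\delta_{a_j,\lambda_j}\delta_{a_i,\lambda_i}^{\frac{n+2\gamma}{n-2\gamma}}\,dvol_{g_0}$, together with the explicit pointwise identity for the bubbles on $\mathbb{S}^n$,
\[
\lambda_i\tfrac{\partial\delta_{a_i,\lambda_i}}{\partial\lambda_i}=\tfrac{n-2\gamma}{2}\,h_i\,\delta_{a_i,\lambda_i},\qquad h_i=\frac{1-\tfrac{\lambda_i^2+1}{2}\bigl(1-\cos d(x,a_i)\bigr)}{1+\tfrac{\lambda_i^2-1}{2}\bigl(1-\cos d(x,a_i)\bigr)},\quad|h_i|\le1,
\]
this reduces the problem to the estimate
\[
\lambda_i\tfrac{\partial}{\partial\lambda_i}\int_{\mathbb{S}^n}\delta_{a_j,\lambda_j}^{\frac{n+2\gamma}{n-2\gamma}}\delta_{a_i,\lambda_i}\,dvol_{g_0}=\tfrac{n+2\gamma}{2}\int_{\mathbb{S}^n}\delta_{a_j,\lambda_j}\,\delta_{a_i,\lambda_i}^{\frac{n+2\gamma}{n-2\gamma}}\,h_i\,dvol_{g_0}.
\]
The right-hand side is exactly the integral computed in Lemma~\ref{l:A2}, save for the extra bounded, \emph{sign-changing} weight $h_i$. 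The plan is to rerun that proof: split into the three cases according to which of $\lambda_i/\lambda_j$, $\lambda_j/\lambda_i$, $\lambda_i\lambda_j d(a_i,a_j)^2$ realizes the maximum, rescale as there (e.g.\ $\zeta'=\lambda_i(\zeta-g_i)$), and observe that every remainder in that proof is dominated in absolute value by the corresponding remainder there (using $|h_i|\le1$), hence remains $o(\varepsilon_{ij})$. In the leading term, after the rescaling $h_i$ becomes $\tfrac{1-|\zeta'|^2}{1+|\zeta'|^2}+o(1)$, so the leading constant becomes $\tfrac{n+2\gamma}{2}\,\omega_n\int_0^\infty\tfrac{1-r^2}{1+r^2}\cdot\tfrac{r^{n-1}}{(1+r^2)^{\frac{n+2\gamma}{2}}}\,dr$ times a factor $\tfrac{\lambda_i\partial_{\lambda_i}t}{t}$ with $t:=\varepsilon_{ij}^{-\frac{2}{n-2\gamma}}$. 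The identification with $c_0^{\frac{2n}{n-2\gamma}}c_1\omega_n\,\lambda_i\partial_{\lambda_i}\varepsilon_{ij}$ then follows from the elementary Beta-integral identity
\[
\tfrac{n+2\gamma}{2}\int_0^\infty\frac{1-r^2}{1+r^2}\cdot\frac{r^{n-1}}{(1+r^2)^{\frac{n+2\gamma}{2}}}\,dr=-\tfrac{n-2\gamma}{2}\,c_1,
\]
obtained by writing $1-r^2=2-(1+r^2)$ and evaluating $\int_0^\infty r^{n-1}(1+r^2)^{-s}\,dr=\tfrac12 B(\tfrac n2,s-\tfrac n2)$, together with $\lambda_i\partial_{\lambda_i}\varepsilon_{ij}=-\tfrac{n-2\gamma}{2}\varepsilon_{ij}\,\tfrac{\lambda_i\partial_{\lambda_i}t}{t}$ and the bound $|\lambda_i\partial_{\lambda_i}t|\le t$, which makes the remaining terms $o(\varepsilon_{ij})$.

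The hard part will be precisely this last step. One may not simply differentiate the $(1+o(1))$ in Lemma~\ref{l:A2}, since that remainder is controlled only in size, not in its $\lambda_i$-derivative, so the expansion has to be carried out afresh for the integrand $\delta_{a_j,\lambda_j}\delta_{a_i,\lambda_i}^{\frac{n+2\gamma}{n-2\gamma}}h_i$. Moreover, because $h_i$ changes sign, the correct leading constant appears only after making explicit the cancellation encoded in the Beta-integral identity above — the crude bound $|h_i|\le1$ alone would overestimate the order of magnitude. Finally, one has to check the routine but necessary bookkeeping that the logarithmic derivative $\lambda_i\partial_{\lambda_i}t/t$ produced by the estimate agrees up to $o(1)$ with the one occurring in $\lambda_i\partial_{\lambda_i}\varepsilon_{ij}$ in each of the three regimes, which follows from $|\lambda_i\partial_{\lambda_i}t|\le t$ and the fact that $t$ and $\lambda_i\partial_{\lambda_i}t$ share the same dominant summand.
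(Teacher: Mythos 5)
The paper states this lemma in Appendix~B with no proof at all, so there is no argument of the authors' to compare yours against; I can only judge the proposal on its own terms. Its architecture is the standard one (the fractional analogue of the estimates in Bahri's appendix), and the computational pillars check out: the first equality follows exactly as you say from self-adjointness of $P_\gamma$ and $P_\gamma\delta_{a_j,\lambda_j}=\delta_{a_j,\lambda_j}^{\frac{n+2\gamma}{n-2\gamma}}$ (the $\mathbb{S}^3$ is indeed a typo); the pointwise identity $\lambda_i\partial_{\lambda_i}\delta_{a_i,\lambda_i}=\tfrac{n-2\gamma}{2}h_i\delta_{a_i,\lambda_i}$ with $|h_i|\le 1$ is correct; and your Beta-integral identity is right — writing $1-r^2=2-(1+r^2)$ and using $I_{s+1}=\tfrac{s-n/2}{s}I_s$ with $s=\tfrac{n+2\gamma}{2}$ gives $\tfrac{n+2\gamma}{2}(2I_{s+1}-I_s)=-\tfrac{n-2\gamma}{2}c_1$, which is exactly the constant needed to match $c_1\lambda_i\partial_{\lambda_i}\varepsilon_{ij}=-\tfrac{n-2\gamma}{2}c_1\varepsilon_{ij}\tfrac{\lambda_i\partial_{\lambda_i}t}{t}$ when $\tfrac{\lambda_i\partial_{\lambda_i}t}{t}=1+o(1)$. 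You are also right that one may not differentiate the $(1+o(1))$ of Lemma~\ref{l:A2}.

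The step that fails as written is the final matching in the regime $\mu=\lambda_j/\lambda_i$. Your claim that ``$t$ and $\lambda_i\partial_{\lambda_i}t$ share the same dominant summand'' is false there: the dominant summand $\lambda_j/\lambda_i$ enters $\lambda_i\partial_{\lambda_i}t=\tfrac{\lambda_i}{\lambda_j}-\tfrac{\lambda_j}{\lambda_i}+\lambda_i\lambda_jd_{ij}^2$ with a minus sign while $\lambda_i\lambda_jd_{ij}^2\le\lambda_j/\lambda_i$ can be of the same order with a plus sign, so that $\lambda_i\partial_{\lambda_i}\varepsilon_{ij}=\tfrac{n-2\gamma}{2}\varepsilon_{ij}\tfrac{1-\lambda_i^2d_{ij}^2}{1+\lambda_i^2d_{ij}^2}+o(\varepsilon_{ij})$ may be $o(\varepsilon_{ij})$ when $\lambda_id_{ij}\to1$. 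Your single rescaling $\zeta'=\lambda_i(\zeta-g_i)$ plus the Beta identity does not produce this factor: that mechanism is adapted to $\mu=\lambda_i/\lambda_j$ (and to $\mu=\lambda_i\lambda_jd_{ij}^2$), where the bubble of index $i$ is the concentrating one. Moreover the $i\leftrightarrow j$ symmetry that Lemma~\ref{l:A2} invokes to dispose of the case $\mu=\lambda_j/\lambda_i$ is broken by the weight $h_i$. The fix is to use, in that regime, the other form of the derivative, $\tfrac{n-2\gamma}{2}\int\delta_{a_j,\lambda_j}^{\frac{n+2\gamma}{n-2\gamma}}\delta_{a_i,\lambda_i}h_i$, rescale around $g_j$, and observe that over the concentration set of $\delta_{a_j,\lambda_j}^{\frac{n+2\gamma}{n-2\gamma}}$ the slowly varying factor $h_i$ equals $\tfrac{1-\lambda_i^2d_{ij}^2}{1+\lambda_i^2d_{ij}^2}+o(1)$, which is precisely $-\tfrac{\lambda_i\partial_{\lambda_i}t}{t}+o(1)$; there the cancellation is pointwise rather than an integral identity. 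With that case added, the proposal is complete.
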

\begin{lemma}
\begin{eqnarray*}
\langle \delta_{a_i,\lambda_i}, \lambda_i\frac{\partial \delta_{a_i, \lambda_i}}{\partial \lambda_i}\rangle = 0.
\end{eqnarray*}
\end{lemma}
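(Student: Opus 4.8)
The plan is to reduce the claim to the scale invariance of the energy of the standard bubble. Recall from Lemma \ref{l:sharpconstants} that $\delta_{a_i,\lambda_i}$ is (up to normalization) a solution of $P_\gamma u = u^{\frac{n+2\gamma}{n-2\gamma}}$ and that
$$
\|\delta_{a_i,\lambda_i}\|^2 = \int_{\mathbb{S}^{n}}\delta_{a_i,\lambda_i}P_\gamma\delta_{a_i,\lambda_i}\,dvol_{g_0} = S,
$$
a constant that does \emph{not} depend on $a_i$ or $\lambda_i$. First I would note that $P_\gamma$ is self-adjoint with respect to $\langle\cdot,\cdot\rangle$, so that the map $\lambda_i\mapsto\|\delta_{a_i,\lambda_i}\|^2$ is smooth (the family $\delta_{a_i,\lambda_i}$ depends smoothly on $\lambda_i$ and, together with its $\lambda_i$-derivative, decays fast enough to differentiate under the integral sign), and
$$
\frac{\partial}{\partial\lambda_i}\|\delta_{a_i,\lambda_i}\|^2 = 2\Big\langle \delta_{a_i,\lambda_i},\,\frac{\partial\delta_{a_i,\lambda_i}}{\partial\lambda_i}\Big\rangle .
$$

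Then the conclusion is immediate: multiplying by $\tfrac{1}{2}\lambda_i$,
$$
\Big\langle \delta_{a_i,\lambda_i},\,\lambda_i\frac{\partial\delta_{a_i,\lambda_i}}{\partial\lambda_i}\Big\rangle = \frac{\lambda_i}{2}\,\frac{\partial}{\partial\lambda_i}\|\delta_{a_i,\lambda_i}\|^2 = \frac{\lambda_i}{2}\,\frac{\partial S}{\partial\lambda_i} = 0 .
$$

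As a cross-check one could instead argue directly from the explicit formula: using $P_\gamma\delta_{a_i,\lambda_i} = \delta_{a_i,\lambda_i}^{\frac{n+2\gamma}{n-2\gamma}}$, one has
$$
\Big\langle \delta_{a_i,\lambda_i},\,\lambda_i\frac{\partial\delta_{a_i,\lambda_i}}{\partial\lambda_i}\Big\rangle = \int_{\mathbb{S}^{n}}\delta_{a_i,\lambda_i}^{\frac{n+2\gamma}{n-2\gamma}}\,\lambda_i\frac{\partial\delta_{a_i,\lambda_i}}{\partial\lambda_i}\,dvol_{g_0},
$$
and, passing to $\mathbb{R}^n$ via stereographic projection and rescaling $y=\lambda_i(x-g_i)$, the right-hand side becomes a $\lambda_i$-independent integral of the form $\int_{\mathbb{R}^n} w_{0,1}^{\frac{n+2\gamma}{n-2\gamma}}\,\partial_t\big|_{t=1}w_{0,t}\,dy$, which one recognizes as $\tfrac{1}{2}\partial_t|_{t=1}\int_{\mathbb{R}^n} w_{0,t}^2$ ... — the same computation again shows it vanishes. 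I do not expect any real obstacle here; the only point requiring (routine) care is the justification of differentiation under the integral sign, which follows from the smoothness and decay of the bubbles.
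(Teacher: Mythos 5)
Your main argument is correct and is precisely the one the paper relies on (the lemma is stated without proof in Appendix~B): since $\|\delta_{a_i,\lambda_i}\|^2=S$ is independent of $\lambda_i$ by Lemma~\ref{l:sharpconstants} and $P_\gamma$ is self-adjoint, differentiating in $\lambda_i$ gives the claim at once. The only blemish is in your optional cross-check, where the scale-invariant quantity should be $\int_{\mathbb{R}^n}w_{0,t}^{2n/(n-2\gamma)}$ (equivalently $\|w_{0,t}\|^2$), not $\int_{\mathbb{R}^n}w_{0,t}^{2}$; that aside is not needed for the proof.
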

\begin{lemma}
\begin{eqnarray*}
&&\int_{\mathbb{S}^n}K(x)\delta_{a_j,\lambda_j}^{\frac{n + 2\gamma}{n - 2\gamma}}\lambda_i\frac{\partial \delta_{a_i, \lambda_i}}{\partial \lambda_i}dvol_{g_0}\\ &&= \int_{\mathbb{R}^n}\tilde{K}(x')w_{g_j,\lambda_j}^{\frac{n + 2\gamma}{n - 2\gamma}}\lambda_i\frac{\partial w_{g_i, \lambda_i}}{\partial \lambda_i}\theta_0\wedge d\theta_0\\
&& = \tilde{K}(g_j)\int_{\mathbb{R}^n}w_{g_i,\lambda_i}^{\frac{n + 2\gamma}{n - 2\gamma}}\lambda_i\frac{\partial w_{g_i, \lambda_i}}{\partial \lambda_i}\theta_0\wedge d\theta_0 +
\int_{B_i}\left(\tilde{K}(x') - \tilde{K}(g_j)\right)w_{g_j,\lambda_j}^{\frac{n + 2\gamma}{n - 2\gamma}}\lambda_i\frac{\partial w_{g_i, \lambda_i}}{\partial \lambda_i}\theta_0\wedge d\theta_0\\
&& +\int_{B_i^c}\left(\tilde{K}(x') - \tilde{K}(g_j)\right)w_{g_j,\lambda_j}^{\frac{n + 2\gamma}{n - 2\gamma}}\lambda_i\frac{\partial w_{g_i, \lambda_i}}{\partial \lambda_i}\theta_0\wedge d\theta_0\\ &&= K(a_j)c_0^{\frac{2n}{n - 2\gamma}}c_1\omega_n\lambda_i\frac{\partial\varepsilon_{ij}}{\partial \lambda_i} + o(\varepsilon_{ij})\\
&& + O\left(\frac{1}{\lambda_i^{\frac{n - 2\gamma}{2}}\lambda_j^{\frac{n+2\gamma}{2}}}\right) + O(\varepsilon_{ij}^{\frac{n}{n-2\gamma}}\log \varepsilon_{ij}^{-1}).
\end{eqnarray*}
\end{lemma}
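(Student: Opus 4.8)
The plan is to transport the integral to $\mathbb{R}^n$, extract the leading interaction from Lemma \ref{l:A2}, and dispose of the remainder by the same interaction estimates used in Appendix A; the three chained equalities in the statement are then established in turn. For the first, apply the stereographic projection $x\mapsto x'=F(x)$ together with the conformal covariance (\ref{e:conformalltransform}) of $P_\gamma$: since $P_\gamma\delta_{a_j,\lambda_j}=\delta_{a_j,\lambda_j}^{(n+2\gamma)/(n-2\gamma)}$ by (\ref{e:yamabe2}) and $\delta_{a,\lambda}$ corresponds to $w_{g,\lambda}$ with $g=F(a)$ (the conformal factor of $dvol_{g_0}$ being absorbed into the normalisation of the bubbles, and $\lambda_i\partial_{\lambda_i}$ being unaffected by the change of variables), the integral becomes $\int_{\mathbb{R}^n}\tilde K(x')\,w_{g_j,\lambda_j}^{(n+2\gamma)/(n-2\gamma)}\,\lambda_i\partial_{\lambda_i}w_{g_i,\lambda_i}\,dx'$ with $\tilde K(g_j)=K(a_j)$. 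Then the decomposition $\tilde K(x')=\tilde K(g_j)+\big(\tilde K(x')-\tilde K(g_j)\big)$ and the splitting of the second piece over the fixed small ball $B_i$ around $g_j$ and its complement produce the three integrals displayed in the lemma.

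For the main term, $w_{g_j,\lambda_j}$ does not depend on $\lambda_i$, so $\lambda_i\partial_{\lambda_i}$ may be pulled out of the integral and the term equals $\tilde K(g_j)\,\lambda_i\partial_{\lambda_i}\big(\int_{\mathbb{R}^n}w_{g_j,\lambda_j}^{(n+2\gamma)/(n-2\gamma)}w_{g_i,\lambda_i}\,dx'\big)$. Up to the normalising constant $c_0^{2n/(n-2\gamma)}$ and the identity $P_\gamma\delta_{a_j,\lambda_j}=\delta_{a_j,\lambda_j}^{(n+2\gamma)/(n-2\gamma)}$, the inner integral is $\langle\delta_{a_i,\lambda_i},\delta_{a_j,\lambda_j}\rangle$, which by Lemma \ref{l:A2} equals $c_0^{2n/(n-2\gamma)}c_1\omega_n\,\varepsilon_{ij}(1+o(1))$. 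Differentiating in $\log\lambda_i$ — the differentiated error term being controlled exactly as in the lemma on $\langle\delta_{a_j,\lambda_j},\lambda_i\partial_{\lambda_i}\delta_{a_i,\lambda_i}\rangle$ stated just above — and using $\tilde K(g_j)=K(a_j)$ gives the announced leading contribution $K(a_j)\,c_0^{2n/(n-2\gamma)}c_1\omega_n\,\lambda_i\partial_{\lambda_i}\varepsilon_{ij}+o(\varepsilon_{ij})$.

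It remains to bound the two remainder integrals. On $B_i$ the $C^2$ regularity of $K$ gives $|\tilde K(x')-\tilde K(g_j)|\le C|x'-g_j|$, and since $|\lambda_i\partial_{\lambda_i}w_{g_i,\lambda_i}|\le C\,w_{g_i,\lambda_i}$ pointwise, matters reduce to integrals of the form $\int|x'-g_j|\,w_{g_j,\lambda_j}^{(n+2\gamma)/(n-2\gamma)}w_{g_i,\lambda_i}\,dx'$; estimating these by Hölder together with the convexity/overlap inequalities already exploited in the proof of Lemma \ref{l:A2} and in the denominator expansion of Appendix A yields contributions absorbed into $o(\varepsilon_{ij})$ and $O\big(\varepsilon_{ij}^{n/(n-2\gamma)}\log\varepsilon_{ij}^{-1}\big)$. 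On $B_i^c$, $|\tilde K(x')-\tilde K(g_j)|$ is merely bounded while $w_{g_j,\lambda_j}^{(n+2\gamma)/(n-2\gamma)}\le C\lambda_j^{-(n+2\gamma)/2}$ there; using $\|w_{g_i,\lambda_i}\|_{L^1(B(g_i,\rho))}\le C\lambda_i^{-(n-2\gamma)/2}$ in the region near $g_i$ and an overlap estimate in the region near $g_j$ produces the terms $O\big(\lambda_i^{-(n-2\gamma)/2}\lambda_j^{-(n+2\gamma)/2}\big)$ and again $O\big(\varepsilon_{ij}^{n/(n-2\gamma)}\log\varepsilon_{ij}^{-1}\big)$. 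Summing the three pieces gives the claimed identity.

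The main obstacle is this last step: obtaining the sharp exponents in the error terms forces one to separate the regimes according to whether $g_i$ and $g_j$ are close or far apart and whether $\lambda_i$ and $\lambda_j$ are comparable, and in each case to invoke the appropriate interaction estimate — the same delicate bookkeeping as in the proof of Lemma \ref{l:A2}, which is where essentially all the work lies; extracting the leading interaction term in the second step is routine once that lemma is available.
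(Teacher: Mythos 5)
Your proposal is correct and follows essentially the same route as the paper: the paper's entire ``proof'' is the displayed chain of equalities, i.e.\ the decomposition $\tilde K(x')=\tilde K(g_j)+(\tilde K(x')-\tilde K(g_j))$ split over a small ball and its complement, with the leading term obtained by pulling $\lambda_i\partial_{\lambda_i}$ out of the interaction integral of Lemma \ref{l:A2} and the remainders bounded by the same H\"older/overlap estimates used in Appendix A. Your write-up in fact supplies more justification than the paper does (and sensibly resolves the paper's ambiguity about whether the ball is centred at $g_i$ or at the Taylor base point $g_j$).
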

\begin{lemma}
\begin{eqnarray*}
\int_{\mathbb{S}^n}K(x)\delta_{a_i,\lambda_i}^{\frac{n + 2\gamma}{n - 2\gamma}}\lambda_i\frac{\partial \delta_{a_i, \lambda_i}}{\partial \lambda_i}dvol_{g_0} &=& \int_{\mathbb{R}^n}\tilde{K}(x')w_{g_i,\lambda_i}^{\frac{n + 2\gamma}{n - 2\gamma}}\lambda_i\frac{\partial w_{g_i, \lambda_i}}{\partial \lambda_i}\theta_0\wedge d\theta_0\\
& = &\Delta\tilde{K}(g_i)\int_{B_i}|x' - g_i|^2w_{g_i,\lambda_i}^{\frac{n + 2\gamma}{n - 2\gamma}}\lambda_i\frac{\partial w_{g_i, \lambda_i}}{\partial \lambda_i}\theta_0\wedge d\theta_0\\
&&+O\left(\int_{B_i}|x' - g_i|^2w_{g_i,\lambda_i}^{\frac{n + 2\gamma}{n - 2\gamma}}\lambda_i\frac{\partial w_{g_i, \lambda_i}}{\partial \lambda_i}\theta_0\wedge d\theta_0\right)\\
&&+\int_{B_i^c}\left(\tilde{K}(x') - \tilde{K}(g_i)\right)w_{g_i,\lambda_i}^{\frac{n + 2\gamma}{n - 2\gamma}}\lambda_i\frac{\partial w_{g_i, \lambda_i}}{\partial \lambda_i}\theta_0\wedge d\theta_0\\
&=& - \frac{n - 2\gamma}{2n}\frac{\Delta K(a_i)}{\lambda_i^2}c_2(1 + o(1)).
\end{eqnarray*}
\end{lemma}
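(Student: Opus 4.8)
The plan is to project to $\mathbb{R}^n$, discard the part of $K$ that cannot contribute, and then read off the main term from a second-order Taylor expansion of $K$ at $a_i$. The equality on the first line is conformal covariance: writing $x'=F(x)$ for the stereographic chart, with $g_i=F(a_i)$, the bubble $\delta_{a_i,\lambda_i}$ corresponds to $w_{g_i,\lambda_i}$ and the volume form to $\theta_0\,dx'$ (the conformal factor $\theta_0$ appearing exactly as in the earlier computations of this appendix), while $K$ becomes $\tilde K$; so the integral becomes $\int_{\mathbb{R}^n}\tilde K\, w_{g_i,\lambda_i}^{(n+2\gamma)/(n-2\gamma)}\lambda_i\partial_{\lambda_i}w_{g_i,\lambda_i}\,\theta_0\,dx'$. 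I would next note that the constant part of $K$ drops out: by Lemma \ref{l:sharpconstants}, $\int_{\mathbb{S}^n}\delta_{a_i,\lambda_i}^{2n/(n-2\gamma)}\,dvol_{g_0}=\int_{\mathbb{S}^n}\delta_{a_i,\lambda_i}P_\gamma\delta_{a_i,\lambda_i}\,dvol_{g_0}=S$ is independent of $\lambda_i$, so differentiating in $\lambda_i$ yields $\int_{\mathbb{S}^n}\delta_{a_i,\lambda_i}^{(n+2\gamma)/(n-2\gamma)}\lambda_i\partial_{\lambda_i}\delta_{a_i,\lambda_i}\,dvol_{g_0}=0$, i.e. $\int_{\mathbb{R}^n}w_{g_i,\lambda_i}^{(n+2\gamma)/(n-2\gamma)}\lambda_i\partial_{\lambda_i}w_{g_i,\lambda_i}\,\theta_0\,dx'=0$. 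Hence the integral equals $\int_{\mathbb{R}^n}(\tilde K(x')-\tilde K(g_i))\,w_{g_i,\lambda_i}^{(n+2\gamma)/(n-2\gamma)}\lambda_i\partial_{\lambda_i}w_{g_i,\lambda_i}\,\theta_0\,dx'$, which I split over $B_i=B(g_i,\rho)$ and $B_i^c$.

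On $B_i$ I would Taylor expand $\tilde K(x')-\tilde K(g_i)=\nabla\tilde K(g_i)\cdot(x'-g_i)+\frac12\langle D^2\tilde K(g_i)(x'-g_i),(x'-g_i)\rangle+o(|x'-g_i|^2)$. Since $w_{g_i,\lambda_i}^{(n+2\gamma)/(n-2\gamma)}\lambda_i\partial_{\lambda_i}w_{g_i,\lambda_i}$ is radial about $g_i$, the linear term integrates to zero against it over $B_i$ up to the contribution of the non-radial part of $\theta_0$, which is $O(|\nabla K(a_i)|/\lambda_i^2)$ and so is lower order in the regime in which this expansion is used; in the quadratic term only the trace survives, leaving $\frac{1}{2n}\Delta\tilde K(g_i)\int_{B_i}|x'-g_i|^2 w_{g_i,\lambda_i}^{(n+2\gamma)/(n-2\gamma)}\lambda_i\partial_{\lambda_i}w_{g_i,\lambda_i}\,\theta_0\,dx'$ and an $o(1/\lambda_i^2)$ remainder. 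I would evaluate the remaining integral by rescaling $x'=g_i+\zeta/\lambda_i$ together with the identity $w^{(n+2\gamma)/(n-2\gamma)}\lambda\partial_\lambda w=\frac{n-2\gamma}{2n}\,\lambda\partial_\lambda(w^{2n/(n-2\gamma)})$ and the scaling $\int_{\mathbb{R}^n}|x'-g_i|^2 w_{g_i,\lambda_i}^{2n/(n-2\gamma)}\,dx'=\lambda_i^{-2}\int_{\mathbb{R}^n}|\zeta|^2 w_{0,1}^{2n/(n-2\gamma)}\,d\zeta$; this produces a factor $-\frac{n-2\gamma}{n}\lambda_i^{-2}\int_{\mathbb{R}^n}|\zeta|^2 w_{0,1}^{2n/(n-2\gamma)}\,d\zeta$, and, recalling $c_2=c_0^{2n/(n-2\gamma)}\int_{\mathbb{R}^n}|\zeta|^2 w_{0,1}^{2n/(n-2\gamma)}\,d\zeta$ and that $\Delta\tilde K(g_i)$ equals $\Delta K(a_i)$ up to the conformal factor (which at a critical point of $K$ is precisely the normalisation absorbed into $c_2$), one arrives at $-\frac{n-2\gamma}{2n}\,\frac{\Delta K(a_i)}{\lambda_i^2}\,c_2\,(1+o(1))$.

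Finally, on $B_i^c$ one has $|x'-g_i|\ge\rho$, hence $w_{g_i,\lambda_i}^{(n+2\gamma)/(n-2\gamma)}\lambda_i\partial_{\lambda_i}w_{g_i,\lambda_i}=O(\lambda_i^{-n})$ uniformly while $\tilde K(x')-\tilde K(g_i)$ is bounded, so that tail is $O(\lambda_i^{-n})=o(\lambda_i^{-2})$ and disappears into the $(1+o(1))$; the same decay bound controls the difference between $\int_{B_i}$ and $\int_{\mathbb{R}^n}$ used above and the $o(|x'-g_i|^2)$ Taylor remainder. The real work, I expect, is in the bookkeeping rather than in any single estimate: checking that the constant term cancels exactly, that the linear term and the $\theta_0$-corrections are genuinely of lower order (this is precisely where Lemma \ref{l:sharpconstants} and the radial symmetry of the bubble are used), and that the Taylor remainder and the $B_i^c$ tail are uniformly $o(\lambda_i^{-2})$, so that the trace of the Hessian of $K$ alone governs the leading behaviour with the stated constant.
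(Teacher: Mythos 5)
Your proposal is correct and follows essentially the same route as the paper, whose ``proof'' is just the displayed chain of equalities: pass to $\mathbb{R}^n$, drop the constant part of $\tilde K$ using $\lambda_i\partial_{\lambda_i}\int\delta_{a_i,\lambda_i}^{2n/(n-2\gamma)}dvol_{g_0}=0$, kill the linear Taylor term by the radial symmetry of $w_{g_i,\lambda_i}^{(n+2\gamma)/(n-2\gamma)}\lambda_i\partial_{\lambda_i}w_{g_i,\lambda_i}$, evaluate the quadratic trace term by rescaling together with the identity $w^{(n+2\gamma)/(n-2\gamma)}\lambda\partial_\lambda w=\tfrac{n-2\gamma}{2n}\lambda\partial_\lambda\bigl(w^{2n/(n-2\gamma)}\bigr)$, and bound the exterior tail by $O(\lambda_i^{-n})=o(\lambda_i^{-2})$. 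You are in fact more explicit than the paper on the two delicate points (the exact cancellation of the constant term and the size of the linear-term correction coming from the conformal factor), so nothing is missing.
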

\begin{lemma}For $i\neq k$,
\begin{eqnarray*}
&&\int_{\mathbb{S}^n}K(x)\delta_{a_i,\lambda_i}^{\frac{4\gamma}{n-2\gamma}}\lambda_i\frac{\partial \delta_{a_i, \lambda_i}}{\partial \lambda_i}\delta_{a_k,\lambda_k}dvol_{g_0}\\ &&= \frac{n - 2\gamma}{n + 2\gamma}c_0^{\frac{2n}{n - 2\gamma}}K(a_i)c_1\omega_n\lambda_i\frac{\partial\varepsilon_{ik}}{\partial \lambda_i} + o(\varepsilon_{ik})+O\left(\frac{1}{\lambda_i^{\frac{n +2 \gamma}{2}}\lambda_k^{\frac{n - 2\gamma}{2}}}\right).
\end{eqnarray*}
\end{lemma}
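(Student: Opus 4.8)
The computation rests on the elementary identity
\[
\delta_{a_i,\lambda_i}^{\frac{4\gamma}{n-2\gamma}}\,\lambda_i\frac{\partial\delta_{a_i,\lambda_i}}{\partial\lambda_i}
=\frac{n-2\gamma}{n+2\gamma}\,\lambda_i\frac{\partial}{\partial\lambda_i}\Big(\delta_{a_i,\lambda_i}^{\frac{n+2\gamma}{n-2\gamma}}\Big),
\]
valid because $\frac{n+2\gamma}{n-2\gamma}-1=\frac{4\gamma}{n-2\gamma}$. Since $\delta_{a_k,\lambda_k}$ is independent of $\lambda_i$, and both the integrand and its $\lambda_i$-derivative are, locally uniformly in $\lambda_i$, dominated by a fixed $L^1(\mathbb{S}^n)$ function, one may differentiate under the integral sign and reduce the claim to
\[
\int_{\mathbb{S}^n}K\,\delta_{a_i,\lambda_i}^{\frac{4\gamma}{n-2\gamma}}\lambda_i\frac{\partial\delta_{a_i,\lambda_i}}{\partial\lambda_i}\,\delta_{a_k,\lambda_k}\,dvol_{g_0}
=\frac{n-2\gamma}{n+2\gamma}\,\lambda_i\frac{\partial}{\partial\lambda_i}\int_{\mathbb{S}^n}K\,\delta_{a_i,\lambda_i}^{\frac{n+2\gamma}{n-2\gamma}}\delta_{a_k,\lambda_k}\,dvol_{g_0}.
\]
It therefore suffices to know the expansion of $\int_{\mathbb{S}^n}K\,\delta_{a_i,\lambda_i}^{\frac{n+2\gamma}{n-2\gamma}}\delta_{a_k,\lambda_k}\,dvol_{g_0}$ — already obtained in Appendix A — in a version that is stable under one application of $\lambda_i\partial_{\lambda_i}$.

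To produce such a version I would re-run the Appendix A computation with $\lambda_i\partial_{\lambda_i}$ inserted from the start rather than differentiating the final asymptotics. Passing to $\mathbb{R}^n$ by the stereographic projection and writing, on a small ball $B(g_i,\rho)$, $\tilde K(x')-\tilde K(g_i)=\nabla\tilde K(g_i)\cdot(x'-g_i)+O(|x'-g_i|^2)$, the integral splits into the principal term $\tilde K(g_i)\int_{\mathbb{R}^n}w_{g_i,\lambda_i}^{\frac{n+2\gamma}{n-2\gamma}}w_{g_k,\lambda_k}\,dx'$, a gradient term, a quadratic remainder, and an exterior tail. For the principal term I would use the explicit integral evaluated in Lemma \ref{l:A2}, together with the fact that its $\lambda_i$-derivative is obtained by differentiating the very same one-variable integral in the scale parameter; this gives $\lambda_i\partial_{\lambda_i}$ of the principal term equal to $c_0^{\frac{2n}{n-2\gamma}}K(a_i)c_1\omega_n\,\lambda_i\partial_{\lambda_i}\varepsilon_{ik}\,(1+o(1))$, hence, after the factor $\frac{n-2\gamma}{n+2\gamma}$, the announced main term. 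The gradient term, the quadratic remainder and the exterior tail, upon applying $\lambda_i\partial_{\lambda_i}$, only acquire bounded multiplicative factors (and at worst an extra $\log\varepsilon_{ik}^{-1}$), so they keep the orders $o(\varepsilon_{ik})$ and $O\big(\lambda_i^{-\frac{n+2\gamma}{2}}\lambda_k^{-\frac{n-2\gamma}{2}}\big)$; collecting everything yields the stated formula.

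The only delicate point — and the main obstacle — is precisely the legitimacy of ``differentiating the asymptotic expansion'': an estimate of the form $f=g+o(\varepsilon_{ik})$ does not by itself give $\lambda_i\partial_{\lambda_i}f=\lambda_i\partial_{\lambda_i}g+o(\varepsilon_{ik})$, which is why the remainder estimates must be carried out with the derivative already in place. Concretely, one checks that every remainder appearing in the Appendix A computation is a finite sum of terms of the shape $\lambda_i^{-a}\lambda_k^{-b}$, $\varepsilon_{ik}^{c}$ or $\varepsilon_{ik}^{c}(\log\varepsilon_{ik}^{-1})^{d}$ with positive exponents, and that $|\lambda_i\partial_{\lambda_i}\varepsilon_{ik}|\le C\,\varepsilon_{ik}$; for each such term $\lambda_i\partial_{\lambda_i}$ preserves the order of magnitude, so no new phenomenon occurs. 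No geometric input beyond Lemma \ref{l:A2} and the Appendix A expansion of $\int_{\mathbb{S}^n}K\delta_{a_i,\lambda_i}^{\frac{n+2\gamma}{n-2\gamma}}\delta_{a_k,\lambda_k}$ is needed.
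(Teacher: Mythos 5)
Your proposal is correct, and it is essentially the argument the paper intends: the paper states this lemma with no proof at all, but your reduction --- the power-rule identity $\delta_{a_i,\lambda_i}^{\frac{4\gamma}{n-2\gamma}}\lambda_i\partial_{\lambda_i}\delta_{a_i,\lambda_i}=\frac{n-2\gamma}{n+2\gamma}\,\lambda_i\partial_{\lambda_i}\bigl(\delta_{a_i,\lambda_i}^{\frac{n+2\gamma}{n-2\gamma}}\bigr)$ followed by the $\lambda_i$-derivative of the Appendix A expansion of $\int_{\mathbb{S}^n}K\delta_{a_i,\lambda_i}^{\frac{n+2\gamma}{n-2\gamma}}\delta_{a_k,\lambda_k}$ --- is exactly the mechanism the paper itself invokes in the neighbouring unproved lemma $\langle\delta_{a_j,\lambda_j},\lambda_i\partial_{\lambda_i}\delta_{a_i,\lambda_i}\rangle=\lambda_i\partial_{\lambda_i}\bigl(\int\delta_{a_j,\lambda_j}^{\frac{n+2\gamma}{n-2\gamma}}\delta_{a_i,\lambda_i}\bigr)=c_0^{\frac{2n}{n-2\gamma}}c_1\omega_n\lambda_i\partial_{\lambda_i}\varepsilon_{ij}+o(\varepsilon_{ij})$. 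The one point you should state explicitly is the pointwise bound $\bigl|\lambda_i\partial_{\lambda_i}\delta_{a_i,\lambda_i}\bigr|\le\frac{n-2\gamma}{2}\,\delta_{a_i,\lambda_i}$, which is what simultaneously justifies differentiating under the integral sign and your claim that every remainder integral, recomputed with the derivative already inside, retains its original order of magnitude.
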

\begin{lemma}
For $i\neq k$, it holds that
\begin{eqnarray*}
&&\int_{\mathbb{S}^n}K(x)\delta^{\frac{6\gamma - n}{n - 2\gamma}}_{a_i, \lambda_i}\delta^{2}_{a_k, \lambda_k}\lambda_i\frac{\partial \delta_{a_i, \lambda_i}}{\partial \lambda_i}\delta_{a_i,\lambda_i}dvol_{g_0} = O\left(\varepsilon_{ik}^{\frac{n}{n-2\gamma}}\log \varepsilon_{ik}^{-1}\right).
\end{eqnarray*}
\end{lemma}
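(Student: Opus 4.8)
The plan is to reduce this bound to the borderline bubble--interaction estimate already established in Appendix~A, namely that
\[
\int_{\mathbb{S}^{n}}\delta_{a_i,\lambda_i}^{\frac{4\gamma}{n-2\gamma}}\,\inf\bigl(\delta_{a_k,\lambda_k},\delta_{a_i,\lambda_i}\bigr)^{2}\,dvol_{g_0}\le C\,\varepsilon_{ik}^{\frac{n}{n-2\gamma}}\log\varepsilon_{ik}^{-1}.
\]
First I would record the two pointwise ingredients that make the reduction work: since $K$ is continuous on the compact sphere, $\|K\|_{L^{\infty}(\mathbb{S}^{n})}<\infty$; and $|\lambda_i\,\partial_{\lambda_i}\delta_{a_i,\lambda_i}|\le C\,\delta_{a_i,\lambda_i}$ uniformly in $(a_i,\lambda_i,x)$, which is immediate from the explicit formula for $\delta_{a,\lambda}$ by computing $\lambda\,\partial_{\lambda}\log\delta_{a,\lambda}$ and noting that it is bounded. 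Feeding these into the integrand and collecting the powers of $\delta_{a_i,\lambda_i}$ (using $\frac{6\gamma-n}{n-2\gamma}+1=\frac{4\gamma}{n-2\gamma}$) bounds the modulus of the integrand by $C\,\delta_{a_i,\lambda_i}^{\frac{4\gamma}{n-2\gamma}}\,\delta_{a_k,\lambda_k}^{2}$.

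The decisive second step is to use where this term comes from: it is the second-order Taylor remainder of $\bigl(\sum_{l}\alpha_l\delta_{a_l,\lambda_l}\bigr)^{\frac{n+2\gamma}{n-2\gamma}}$ expanded about the $i$-th bubble, so the bound $O\bigl(\delta_{a_i,\lambda_i}^{\frac{6\gamma-n}{n-2\gamma}}\delta_{a_k,\lambda_k}^{2}\bigr)$ is the effective one precisely on the region $\{\delta_{a_k,\lambda_k}\lesssim\delta_{a_i,\lambda_i}\}$; on the complementary region the corresponding remainder is controlled by $\delta_{a_k,\lambda_k}^{\frac{n+2\gamma}{n-2\gamma}}$, that is, by the bubble-wise terms already treated in the lemma. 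On the relevant region $\delta_{a_k,\lambda_k}^{2}$ coincides, up to a constant, with $\inf(\delta_{a_i,\lambda_i},\delta_{a_k,\lambda_k})^{2}$, so the integrand is dominated there by $C\,\delta_{a_i,\lambda_i}^{\frac{4\gamma}{n-2\gamma}}\inf(\delta_{a_i,\lambda_i},\delta_{a_k,\lambda_k})^{2}$, and the displayed estimate yields exactly $O\bigl(\varepsilon_{ik}^{\frac{n}{n-2\gamma}}\log\varepsilon_{ik}^{-1}\bigr)$.

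Should one prefer a self-contained computation instead of quoting the earlier estimate, the alternative is the usual one: pass to $\mathbb{R}^{n}$ by stereographic projection so that $\delta_{a,\lambda}\mapsto w_{g,\lambda}$, put $d=d(g_i,g_k)$, split $\mathbb{R}^{n}$ into $B(g_i,d/3)$, $B(g_k,d/3)$ and the complement, replace the subordinate bubble on each piece by its size there, integrate in polar coordinates, and separate the regimes $\lambda_i\gg\lambda_k$, $\lambda_i\ll\lambda_k$, $\lambda_i\sim\lambda_k$. The power $\varepsilon_{ik}^{n/(n-2\gamma)}$ falls out of this case split, while the logarithmic factor is produced by the borderline non-integrability at infinity of $w_{g_i,\lambda_i}^{4\gamma/(n-2\gamma)}$, valid since $4\gamma<n$ when $\gamma\in(0,1)$ and $n\ge4$. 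I expect the main obstacle to be exactly this bookkeeping: one must keep the $\inf$--structure (equivalently, restrict to the region in which this Taylor remainder is actually the dominant contribution) before invoking the crude bound $|\lambda_i\partial_{\lambda_i}\delta_{a_i,\lambda_i}|\le C\delta_{a_i,\lambda_i}$, since $\int_{\mathbb{S}^{n}}\delta_{a_i,\lambda_i}^{4\gamma/(n-2\gamma)}\delta_{a_k,\lambda_k}^{2}\,dvol_{g_0}$, estimated without that refinement, is only $O(\varepsilon_{ik}^{4\gamma/(n-2\gamma)})$, which is much larger than the claimed $O(\varepsilon_{ik}^{n/(n-2\gamma)})$.
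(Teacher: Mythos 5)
The paper states this lemma with no proof at all, so there is nothing to compare line by line; your reduction to the interaction estimate quoted in Appendix~A, namely $\int_{\mathbb{S}^{n}}\delta_{a_i,\lambda_i}^{\frac{4\gamma}{n-2\gamma}}\inf(\delta_{a_k,\lambda_k},\delta_{a_i,\lambda_i})^{2}\,dvol_{g_0}\le C\varepsilon_{ik}^{\frac{n}{n-2\gamma}}\log\varepsilon_{ik}^{-1}$, is the natural and surely the intended argument. You have also put your finger on the one genuinely delicate point: since $\frac{4\gamma}{n-2\gamma}<\frac{n}{n-2\gamma}<2$, the integral of $\delta_{a_i,\lambda_i}^{4\gamma/(n-2\gamma)}\delta_{a_k,\lambda_k}^{2}$ over the whole sphere is only $O(\varepsilon_{ik}^{4\gamma/(n-2\gamma)})$, so the claimed rate is false without the $\inf$-structure; one must remember that the term $O(\delta_{a_i,\lambda_i}^{(6\gamma-n)/(n-2\gamma)}\delta_{a_k,\lambda_k}^{2})$ is the second-order Taylor remainder of $(\sum_l\alpha_l\delta_{a_l,\lambda_l})^{(n+2\gamma)/(n-2\gamma)}$ and is the operative bound only on $\{\delta_{a_k,\lambda_k}\lesssim\delta_{a_i,\lambda_i}\}$, the complementary region being absorbed into the $\delta_{a_k,\lambda_k}^{(n+2\gamma)/(n-2\gamma)}$-type terms. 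That observation is correct and is exactly what makes the lemma true; the pointwise inequality $\delta_i^{\frac{4\gamma}{n-2\gamma}}\inf(\delta_i,\delta_k)^2\le\delta_i^{\frac{n}{n-2\gamma}}\delta_k^{\frac{n}{n-2\gamma}}$ (valid since $n>4\gamma$) then feeds into the standard estimate. (A small quibble with your optional self-contained route: the logarithm does not come from non-integrability of $w^{4\gamma/(n-2\gamma)}$ at infinity, which is not borderline since $n>4\gamma$, but from the transition annulus where the two bubbles are comparable and the product decays exactly like $|x|^{-n}$.)

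One bookkeeping discrepancy you should address explicitly: the lemma as printed carries an extra factor $\delta_{a_i,\lambda_i}$ at the end of the integrand, so after the crude bound $|\lambda_i\partial_{\lambda_i}\delta_{a_i,\lambda_i}|\le C\delta_{a_i,\lambda_i}$ the total power of $\delta_{a_i,\lambda_i}$ would be $\frac{6\gamma-n}{n-2\gamma}+2=\frac{n+2\gamma}{n-2\gamma}$, not the $\frac{4\gamma}{n-2\gamma}$ you compute. Your power count silently drops that trailing factor. This is almost certainly a typo in the paper — the term that actually occurs in the displayed expansion of $J'(u)(\lambda_j\partial_{\lambda_j}\delta_{a_j,\lambda_j})$ is $\int K\,O(\delta_{a_j,\lambda_j}^{(6\gamma-n)/(n-2\gamma)}\delta_{a_k,\lambda_k}^{2})\,\lambda_j\partial_{\lambda_j}\delta_{a_j,\lambda_j}$ with no extra $\delta$ — and with the extra factor the asserted rate would in general fail (a test with $\lambda_i=\lambda_k$ and fixed separation gives a contribution of order $\lambda^{-(n+6\gamma)/2}\gg\lambda^{-n}$ when $6\gamma<n$). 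So you are proving the right statement, but you should say that this is what you are doing rather than let the exponent arithmetic pass unremarked.
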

\begin{lemma}
For $j\neq k$, $k\neq i$ and $i\neq j$, it holds that
\begin{eqnarray*}
&&\int_{\mathbb{S}^n}K(x)\delta_{a_k,\lambda_k}^{\frac{4\gamma}{n-2\gamma}}\lambda_i\frac{\partial \delta_{a_i, \lambda_i}}{\partial \lambda_i}\delta_{a_j,\lambda_j}dvol_{g_0} = O\left(\varepsilon_{ik}^{\frac{n}{n-2\gamma}}\log \varepsilon_{ik}^{-1}\right) + O\left(\varepsilon_{jk}^{\frac{n}{n-2\gamma}}\log \varepsilon_{jk}^{-1}\right).
\end{eqnarray*}
\end{lemma}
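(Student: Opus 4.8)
The plan is to discard the dilation derivative by a pointwise bound, reduce the claim to a triple bubble overlap integral whose total exponent is critical, and then split the integrand pointwise into products of two bubbles, each of which is controlled by the balanced two--bubble overlap estimate already used in Appendix A. First, since $K$ is bounded on $\mathbb{S}^n$ and a direct computation from the explicit form of $\delta_{a_i,\lambda_i}$ gives $|\lambda_i\,\partial_{\lambda_i}\delta_{a_i,\lambda_i}|\le C\,\delta_{a_i,\lambda_i}$, it is enough to prove the same $O$--bound for
\[
I:=\int_{\mathbb{S}^n}\delta_{a_k,\lambda_k}^{\frac{4\gamma}{n-2\gamma}}\,\delta_{a_i,\lambda_i}\,\delta_{a_j,\lambda_j}\,dvol_{g_0},
\]
and here the three exponents add up to the critical one: $\tfrac{4\gamma}{n-2\gamma}+1+1=\tfrac{2n}{n-2\gamma}$.

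Next, decompose $\mathbb{S}^n=A_i\cup A_j\cup A_k$, where $A_\ell$ is the set on which $\delta_{a_\ell,\lambda_\ell}=\min(\delta_{a_i,\lambda_i},\delta_{a_j,\lambda_j},\delta_{a_k,\lambda_k})$. On $A_j$ one has $\delta_{a_j,\lambda_j}\le\delta_{a_i,\lambda_i}$ and $\delta_{a_j,\lambda_j}\le\delta_{a_k,\lambda_k}$; writing $1=\tfrac{2\gamma}{n-2\gamma}+\tfrac{n-4\gamma}{n-2\gamma}$, whose two exponents are nonnegative because $4\gamma<n$, this yields $\delta_{a_j,\lambda_j}\le\delta_{a_i,\lambda_i}^{\frac{2\gamma}{n-2\gamma}}\,\delta_{a_k,\lambda_k}^{\frac{n-4\gamma}{n-2\gamma}}$ on $A_j$, hence
\[
\delta_{a_k,\lambda_k}^{\frac{4\gamma}{n-2\gamma}}\,\delta_{a_i,\lambda_i}\,\delta_{a_j,\lambda_j}\le\delta_{a_i,\lambda_i}^{\frac{n}{n-2\gamma}}\,\delta_{a_k,\lambda_k}^{\frac{n}{n-2\gamma}}\qquad\text{on }A_j .
\]
Integrating and invoking the balanced estimate $\int_{\mathbb{S}^n}\delta_{a_i,\lambda_i}^{\frac{n}{n-2\gamma}}\delta_{a_k,\lambda_k}^{\frac{n}{n-2\gamma}}\,dvol_{g_0}=O\bigl(\varepsilon_{ik}^{\frac{n}{n-2\gamma}}\log\varepsilon_{ik}^{-1}\bigr)$, which is one of the building blocks established in Appendix A, bounds the integral over $A_j$ by the first term in the statement; the region $A_i$ is symmetric and produces the second term $O\bigl(\varepsilon_{jk}^{\frac{n}{n-2\gamma}}\log\varepsilon_{jk}^{-1}\bigr)$.

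The main obstacle is the region $A_k$, where $\delta_{a_k,\lambda_k}$ is itself the smallest bubble, so the same device ``spends'' it against the other two: on $A_k$ one has $\delta_{a_k,\lambda_k}^{\frac{4\gamma}{n-2\gamma}}=\delta_{a_k,\lambda_k}^{\frac{2\gamma}{n-2\gamma}}\delta_{a_k,\lambda_k}^{\frac{2\gamma}{n-2\gamma}}\le\delta_{a_i,\lambda_i}^{\frac{2\gamma}{n-2\gamma}}\delta_{a_j,\lambda_j}^{\frac{2\gamma}{n-2\gamma}}$, so the integral over $A_k$ is $\le\int_{\mathbb{S}^n}\delta_{a_i,\lambda_i}^{\frac{n}{n-2\gamma}}\delta_{a_j,\lambda_j}^{\frac{n}{n-2\gamma}}\,dvol_{g_0}=O\bigl(\varepsilon_{ij}^{\frac{n}{n-2\gamma}}\log\varepsilon_{ij}^{-1}\bigr)$. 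One must then check that this contribution is acceptable: since $\tfrac{n}{n-2\gamma}>1$ it is $o(\varepsilon_{ij})$, hence negligible against the interaction terms $\varepsilon_{ij}$ retained in every expansion of Appendix B, and in the configurations in which this lemma is actually invoked — a neighbourhood of a critical point at infinity, where the $a_\ell$ are mutually separated and the $\lambda_\ell$ comparable, so that $\varepsilon_{ij}\simeq\varepsilon_{ik}\simeq\varepsilon_{jk}$ — it is dominated by the two displayed terms. I expect the delicate point to be precisely this treatment of $A_k$: one has to recognise that bounding $I$ by a naive Cauchy--Schwarz in $\delta_{a_i,\lambda_i}\delta_{a_j,\lambda_j}$ against the weight $\delta_{a_k,\lambda_k}^{4\gamma/(n-2\gamma)}$ is too lossy, and that the sharp $\varepsilon^{n/(n-2\gamma)}\log$ rate is obtained only by pairing the bubbles at each point according to which of the three is locally the smallest. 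Adding the three regional bounds then gives the assertion.
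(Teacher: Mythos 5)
The paper states this lemma without proof, so there is nothing to compare your argument against; I am judging it on its own merits. Your reduction (boundedness of $K$, the pointwise bound $|\lambda_i\partial_{\lambda_i}\delta_{a_i,\lambda_i}|\le C\delta_{a_i,\lambda_i}$, total exponent $\tfrac{4\gamma}{n-2\gamma}+2=\tfrac{2n}{n-2\gamma}$), the decomposition according to which bubble is pointwise smallest, and the treatment of $A_i$ and $A_j$ via the splitting $1=\tfrac{2\gamma}{n-2\gamma}+\tfrac{n-4\gamma}{n-2\gamma}$ (legitimate since $4\gamma<n$ for $n\ge 4$, $\gamma<1$) together with the balanced estimate $\int\delta_i^{\frac{n}{n-2\gamma}}\delta_k^{\frac{n}{n-2\gamma}}=O(\varepsilon_{ik}^{\frac{n}{n-2\gamma}}\log\varepsilon_{ik}^{-1})$ are all correct; these two regions do give exactly the two displayed terms.

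The gap is in your dismissal of the $A_k$ contribution. The term $O(\varepsilon_{ij}^{\frac{n}{n-2\gamma}}\log\varepsilon_{ij}^{-1})$ you obtain there cannot be absorbed into $O(\varepsilon_{ik}^{\frac{n}{n-2\gamma}}\log\varepsilon_{ik}^{-1})+O(\varepsilon_{jk}^{\frac{n}{n-2\gamma}}\log\varepsilon_{jk}^{-1})$ in general: inside $V(p,\varepsilon)$ the three interaction parameters need not be comparable, and the lemma is used as a building block for Lemma \ref{l:expansionofgradient1}, which is in turn applied in regimes (e.g.\ in the pseudogradient construction of Lemma \ref{l:characterization}) where the concentrations are explicitly \emph{not} comparable. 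Concretely, take $\lambda_i=\lambda_j=\lambda_k=\lambda$, $d(a_i,a_j)=\lambda^{-1/2}$ and $a_k$ at distance of order $1$ from both; then $\varepsilon_{ik}\approx\varepsilon_{jk}\approx\lambda^{-(n-2\gamma)}$ while a direct computation on the ball $B(a_j,\tfrac14\lambda^{-1/2})$ (where $\delta_{a_i,\lambda_i}\approx c$, $\delta_{a_k,\lambda_k}^{4\gamma/(n-2\gamma)}\approx\lambda^{-2\gamma}$ and $\lambda_i\partial_{\lambda_i}\delta_{a_i,\lambda_i}$ has a fixed sign of size $\approx\delta_{a_i,\lambda_i}$, so no cancellation helps) gives a contribution of order $\lambda^{-(n+4\gamma)/2}$, which is much larger than $\lambda^{-n}\log\lambda$ because $4\gamma<n$. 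So the statement as displayed is not literally correct, and no refinement of your splitting on $A_k$ can rescue it. The honest conclusion of your argument is the weaker estimate with the extra term $O(\varepsilon_{ij}^{\frac{n}{n-2\gamma}}\log\varepsilon_{ij}^{-1})$ on the right-hand side; this is in fact harmless for the paper, since the assembled expansion at the end of Appendix B already carries an error $O(\sum_{i\neq j}\varepsilon_{ij}^{\frac{n}{n-2\gamma}}\log\varepsilon_{ij}^{-1})=o(\sum_{i\neq j}\varepsilon_{ij})$, which is all that Lemma \ref{l:expansionofgradient1} asserts. You should therefore state and prove the corrected right-hand side rather than argue, as you do, that the $\varepsilon_{ij}$-term can be discarded.
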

Using the estimates above, we have
\begin{eqnarray*}
&&J'(u) \left(\lambda_j\frac{\partial \delta_{a_j, \lambda_j}}{\partial \lambda_j}\right) =\\ &&2\lambda(u)\left[\langle \sum_{i = 1}^p\alpha_i \delta_{a_i, \lambda_i} ,\lambda_j\frac{\partial \delta_{a_j, \lambda_j}}{\partial \lambda_j}\rangle-\lambda(u)^{\frac{n}{n-2\gamma}}\left(\int_{\mathbb{S}^{n}} K \left(\sum_{i = 1}^p\alpha_i \delta_{a_i, \lambda_i}\right)^{\frac{n + 2\gamma}{n - 2\gamma}}\lambda_j\frac{\partial \delta_{a_j, \lambda_j}}{\partial \lambda_j}dvol_{g_0}\right)\right]\\
&& = 2\lambda(u)\left[\sum_{i\neq j}\alpha_i c_0^{\frac{2n}{n - 2\gamma}}c_1\omega_n\lambda_j\frac{\partial\varepsilon_{ij}}{\partial \lambda_j}(1 + o(1)) + o(\sum_{i\neq j}\varepsilon_{ij})\right]
- 2\lambda(u)^{\frac{2n - 2\gamma}{n - 2\gamma}}\\ &&\left[- \alpha_j^{\frac{n + 2\gamma}{n - 2\gamma}}\frac{n - 2\gamma}{2n}c_2\frac{\Delta K(a_j)}{\lambda_j^2} (1 + o(1))+ \sum_{i\neq j}\alpha_i^{\frac{n + 2\gamma}{n - 2\gamma}}c_0^{\frac{2n}{n - 2\gamma}}c_1\omega_n\lambda_j\frac{\partial\varepsilon_{ij}}{\partial \lambda_j}K(a_i)(1 + o(1)) \right]\\
&&- 2\lambda(u)^{\frac{2n - 2\gamma}{n - 2\gamma}} \left[\sum_{i\neq j}\alpha_i\alpha_j^{\frac{4\gamma}{n-2\gamma}}\frac{n - 2\gamma}{n + 2\gamma}c_0^{\frac{2n}{n - 2\gamma}}c_1\omega_n\lambda_j\frac{\partial\varepsilon_{ij}}{\partial \lambda_j}K(a_j)(1 + o(1))\right.
\end{eqnarray*}
\begin{eqnarray*}
\left. + o(\sum_{i\neq j}\varepsilon_{ij}) + O\left(\sum_{i\neq j}\varepsilon_{ij}^{\frac{n}{n-2\gamma}}\log \varepsilon_{ij}^{-1}\right)\right].
\end{eqnarray*}
Since $\lambda(u)^{\frac{n}{n - 2\gamma}}\alpha_j^{\frac{4\gamma}{n-2\gamma}}K(a_j)\to 1$, we have the desired estimate of Lemma \ref{l:expansionofgradient1}.

\section{Proof of Lemma \ref{l:expansionofgradient2}.}
Let $W = \frac{1}{\lambda_j}\frac{\partial \delta_{a_j, \lambda_j}}{\partial a_j}$, then we have
\begin{eqnarray*}
&& J'(u) \left(\frac{1}{\lambda_j}\frac{\partial \delta_{a_j, \lambda_j}}{\partial a_j}\right) =\\
&&2\lambda(u)\left[\langle \sum_{i = 1}^p\alpha_i \delta_{a_i, \lambda_i} ,\frac{1}{\lambda_j}\frac{\partial \delta_{a_j, \lambda_j}}{\partial a_j}\rangle-\lambda(u)^{\frac{n}{n - 2\gamma}}\left(\int_{\mathbb{S}^{n}} K \left(\sum_{i = 1}^p\alpha_i \delta_{a_i, \lambda_i}\right)^{\frac{n + 2\gamma}{2-2\gamma}}\frac{1}{\lambda_j}\frac{\partial \delta_{a_j, \lambda_j}}{\partial a_j}\right)\right].
\end{eqnarray*}
and
\begin{eqnarray*}
&&\int_{\mathbb{S}^{n}} K \left(\sum_{i = 1}^p\alpha_i \delta_{a_i, \lambda_i}\right)^{\frac{n+2\gamma}{n-2\gamma}}\frac{1}{\lambda_j}\frac{\partial \delta_{a_j, \lambda_j}}{\partial a_j}dvol_{g_0}=\\ && \int_{\mathbb{S}^{n}} K \left[\alpha_j^{\frac{n+2\gamma}{n-2\gamma}} \delta_{a_j, \lambda_j}^{\frac{n+2\gamma}{n-2\gamma}} + \sum_{i\neq j}\alpha_i^{\frac{n+2\gamma}{n-2\gamma}} \delta_{a_i, \lambda_i}^{\frac{n+2\gamma}{n-2\gamma}} + \frac{n+2\gamma}{n-2\gamma}\alpha_j^{\frac{4\gamma}{n-2\gamma}} \delta_{a_j, \lambda_j}^{\frac{4\gamma}{n-2\gamma}}\left(\sum_{i\neq j}\alpha_i \delta_{a_i, \lambda_i}\right)\right]\frac{1}{\lambda_j}\frac{\partial \delta_{a_j, \lambda_j}}{\partial a_j}dvol_{g_0}\\
&& + \int_{\mathbb{S}^{n}} K \left[\sum_{k\neq j, i\neq j}O\left(\delta_{a_k, \lambda_k}^{\frac{4\gamma}{n-2\gamma}}\delta_{a_i, \lambda_i}\right) + \sum_{k\neq j}O\left(\delta^{\frac{6\gamma - n}{n - 2\gamma}}_{a_j, \lambda_j}\delta^{2}_{a_k, \lambda_k}\right)\right]\frac{1}{\lambda_j}\frac{\partial \delta_{a_j, \lambda_j}}{\partial a_j}dvol_{g_0}.
\end{eqnarray*}
Moreover, we have the following estimates.
\begin{lemma}
\begin{eqnarray*}
\langle \delta_{a_j,\lambda_j}, \frac{1}{\lambda_j}\frac{\partial \delta_{a_j, \lambda_j}}{\partial a_j}\rangle = 0.
\end{eqnarray*}
\end{lemma}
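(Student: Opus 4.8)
The plan is to reduce the identity to the rotational symmetry of the standard bubbles. First I would use the definition of the inner product together with the fact that $\delta_{a_j,\lambda_j}$ solves (\ref{e:yamabe2}), that is $P_\gamma\delta_{a_j,\lambda_j}=\delta_{a_j,\lambda_j}^{(n+2\gamma)/(n-2\gamma)}$, to rewrite
\[
\Big\langle \delta_{a_j,\lambda_j},\,\frac{1}{\lambda_j}\frac{\partial \delta_{a_j,\lambda_j}}{\partial a_j}\Big\rangle
=\frac{1}{\lambda_j}\int_{\mathbb{S}^{n}}\frac{\partial \delta_{a_j,\lambda_j}}{\partial a_j}\,\delta_{a_j,\lambda_j}^{\frac{n+2\gamma}{n-2\gamma}}\,dvol_{g_0}.
\]
Since $\frac{n+2\gamma}{n-2\gamma}+1=\frac{2n}{n-2\gamma}$, the integrand is an exact $a_j$-derivative,
\[
\frac{\partial \delta_{a_j,\lambda_j}}{\partial a_j}\,\delta_{a_j,\lambda_j}^{\frac{n+2\gamma}{n-2\gamma}}
=\frac{n-2\gamma}{2n}\,\frac{\partial}{\partial a_j}\Big(\delta_{a_j,\lambda_j}^{\frac{2n}{n-2\gamma}}\Big),
\]
so the quantity above equals
\[
\frac{n-2\gamma}{2n\lambda_j}\,\frac{\partial}{\partial a_j}\int_{\mathbb{S}^{n}}\delta_{a_j,\lambda_j}^{\frac{2n}{n-2\gamma}}\,dvol_{g_0}.
\]

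Next I would show that the function $a_j\mapsto\int_{\mathbb{S}^{n}}\delta_{a_j,\lambda_j}^{2n/(n-2\gamma)}\,dvol_{g_0}$ is constant. Indeed, given $a,a'\in\mathbb{S}^{n}$, choose a rotation $R\in O(n+1)$ with $Ra=a'$; then $d(Rx,a')=d(x,a)$ for every $x$, hence $\delta_{a',\lambda_j}\circ R=\delta_{a,\lambda_j}$, and the change of variables $x\mapsto Rx$, which preserves $dvol_{g_0}$, gives the equality of the two integrals. Consequently its $a_j$-derivative vanishes (in every tangent direction at $a_j$), and the lemma follows. An equivalent route, staying intrinsic, is to differentiate the identity $\int_{\mathbb{S}^{n}}\delta_{a_j,\lambda_j}P_\gamma\delta_{a_j,\lambda_j}\,dvol_{g_0}=S$ of Lemma~\ref{l:sharpconstants} in $a_j$ and use the self-adjointness of $P_\gamma$ to get $2\langle\delta_{a_j,\lambda_j},\partial_{a_j}\delta_{a_j,\lambda_j}\rangle=0$.

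There is essentially no analytic difficulty here; the only point deserving a word of care is that $\partial_{a_j}$ is a derivative with respect to a point of $\mathbb{S}^{n}$, so the identity is to be read as holding for the directional derivative along each $\xi\in T_{a_j}\mathbb{S}^{n}$, the variation of $a_j$ along the geodesic issuing from $a_j$ in the direction $\xi$ being realised by a one-parameter subgroup of rotations — precisely the symmetry invoked above. The same remark underlies the companion identity $\langle\delta_{a_i,\lambda_i},\lambda_i\partial_{\lambda_i}\delta_{a_i,\lambda_i}\rangle=0$ stated earlier, which instead follows by differentiating $\|\delta_{a_i,\lambda_i}\|^2=S$ in $\lambda_i$.
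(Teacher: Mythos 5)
Your argument is correct, and in fact the paper states this lemma without any proof, so there is nothing to compare against; your second route (differentiating the constant $\|\delta_{a_j,\lambda_j}\|^2=\int_{\mathbb{S}^n}\delta_{a_j,\lambda_j}P_\gamma\delta_{a_j,\lambda_j}\,dvol_{g_0}=S$ in $a_j$ and using self-adjointness of $P_\gamma$) is exactly the standard justification the authors implicitly rely on, and your first route via the exact derivative of $\delta_{a_j,\lambda_j}^{2n/(n-2\gamma)}$ and rotational invariance is an equally valid alternative.
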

\begin{lemma}
\begin{eqnarray*}
\langle \delta_{a_i,\lambda_i}, \frac{1}{\lambda_j}\frac{\partial \delta_{a_j, \lambda_j}}{\partial a_j}\rangle = c_0^{\frac{2n}{n-2\gamma}}c_1\omega_n\frac{1}{\lambda_j}\frac{\partial \varepsilon_{ij}}{\partial a_j}+ \frac{1}{\lambda_j}O(\varepsilon_{ij}^{\frac{n + 1}{n-2\gamma}}\lambda_i\lambda_j d(a_i, a_j)).
\end{eqnarray*}
\end{lemma}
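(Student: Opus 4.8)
\textbf{Reduction to a derivative of Lemma~\ref{l:A2}.} The plan is to recognise the left-hand side as an $a_j$-derivative of the interaction already expanded in Lemma~\ref{l:A2}. Since $P_\gamma$ is self-adjoint and $P_\gamma\delta_{a_i,\lambda_i}=\delta_{a_i,\lambda_i}^{\frac{n+2\gamma}{n-2\gamma}}$ by (\ref{e:yamabe2}), for $i\ne j$ one has
$$
\Big\langle \delta_{a_i,\lambda_i},\,\tfrac{1}{\lambda_j}\tfrac{\partial \delta_{a_j,\lambda_j}}{\partial a_j}\Big\rangle
=\tfrac{1}{\lambda_j}\int_{\mathbb{S}^n}\tfrac{\partial \delta_{a_j,\lambda_j}}{\partial a_j}\,\delta_{a_i,\lambda_i}^{\frac{n+2\gamma}{n-2\gamma}}\,dvol_{g_0}
=\tfrac{1}{\lambda_j}\,\frac{\partial}{\partial a_j}\int_{\mathbb{S}^n}\delta_{a_j,\lambda_j}\,P_\gamma\delta_{a_i,\lambda_i}\,dvol_{g_0}
=\tfrac{1}{\lambda_j}\,\frac{\partial}{\partial a_j}\langle\delta_{a_i,\lambda_i},\delta_{a_j,\lambda_j}\rangle .
$$
Hence it suffices to differentiate in $a_j$ the quantity $\langle\delta_{a_i,\lambda_i},\delta_{a_j,\lambda_j}\rangle=c_0^{\frac{2n}{n-2\gamma}}\int_{\mathbb{R}^n}w_{g_i,\lambda_i}^{\frac{n+2\gamma}{n-2\gamma}}w_{g_j,\lambda_j}\,d\zeta$ studied in the proof of Lemma~\ref{l:A2}, passing to stereographic coordinates exactly as there ($g_i,g_j$ the images of $a_i,a_j$; the differential of the projection is smooth and bounded away from $0$ and $\infty$ near $g_j$, and I would carry it along as in that proof).

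\textbf{Differentiating and re-running the case analysis.} Differentiation under the integral sign is legitimate since, for $g_i\ne g_j$, the integrand is smooth in $g_j$ and uniformly dominated on a neighbourhood; it yields $\int_{\mathbb{R}^n}w_{g_i,\lambda_i}^{\frac{n+2\gamma}{n-2\gamma}}\,\partial_{g_j}w_{g_j,\lambda_j}\,d\zeta$ with $\partial_{g_j}w_{g_j,\lambda_j}=(n-2\gamma)c_n\lambda_j^{\frac{n-2\gamma}{2}+2}(\zeta-g_j)\big(1+\lambda_j^2|\zeta-g_j|^2\big)^{-\frac{n-2\gamma}{2}-1}$. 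I would then repeat, essentially verbatim, the three-case analysis of the proof of Lemma~\ref{l:A2} — according to which of $\lambda_i/\lambda_j$, $\lambda_j/\lambda_i$, $\lambda_i\lambda_jd(a_i,a_j)^2$ is the maximum $\mu$ — splitting each integral over the same balls $B(0,\tfrac{\sqrt{\mu}}{10})$, $B_1$, $B_2$ and their complements, but now with this weight in the integrand. The principal ball reproduces, after restoring the prefactor $\tfrac{1}{\lambda_j}$, the leading term $c_0^{\frac{2n}{n-2\gamma}}c_1\omega_n\,\tfrac{1}{\lambda_j}\tfrac{\partial\varepsilon_{ij}}{\partial a_j}$ of the statement (it is the $g_j$-derivative of $c_0^{\frac{2n}{n-2\gamma}}c_1\omega_n\varepsilon_{ij}(1+o(1))$), where I use the elementary identity obtained directly from the definition of $\varepsilon_{ij}$,
$$
\frac{\partial\varepsilon_{ij}}{\partial a_j}=-\tfrac{n-2\gamma}{2}\,\varepsilon_{ij}^{\,1+\frac{2}{n-2\gamma}}\,\lambda_i\lambda_j\,\frac{\partial}{\partial a_j}d(a_i,a_j)^2=O\big(\varepsilon_{ij}^{\,1+\frac{2}{n-2\gamma}}\lambda_i\lambda_jd(a_i,a_j)\big).
$$
The tail and sub-principal contributions, with the factor $d(a_i,a_j)$ kept explicit everywhere rather than absorbed into $o(1)$, I expect to be each $O\big(\varepsilon_{ij}^{\frac{n+1}{n-2\gamma}}\lambda_i\lambda_jd(a_i,a_j)\big)$, so that, after dividing by $\lambda_j$, the asserted identity follows.

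\textbf{Main obstacle.} The delicate point is the new weight $\zeta-g_j$: it grows linearly and so erodes the decay that made the remainder integrals of Lemma~\ref{l:A2} negligible, and — more importantly — in the regime $\mu=\lambda_i\lambda_jd(a_i,a_j)^2$ one must propagate the factor $d(a_i,a_j)$ through every tail estimate in order to land exactly on $O\big(\varepsilon_{ij}^{\frac{n+1}{n-2\gamma}}\lambda_i\lambda_jd(a_i,a_j)\big)$, rather than on a bound that is merely $o\big(\tfrac{\partial}{\partial a_j}\varepsilon_{ij}\big)$ with the $d$-dependence lost. Once the weighted analogues of the four region estimates in the proof of Lemma~\ref{l:A2} are re-derived, summing them as there completes the argument.
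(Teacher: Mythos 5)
Your reduction of the left-hand side to $\tfrac{1}{\lambda_j}\,\partial_{a_j}\langle\delta_{a_i,\lambda_i},\delta_{a_j,\lambda_j}\rangle$ via self-adjointness of $P_\gamma$ and the equation $P_\gamma\delta_{a_i,\lambda_i}=\delta_{a_i,\lambda_i}^{\frac{n+2\gamma}{n-2\gamma}}$, followed by a weighted re-run of the region-by-region analysis of Lemma \ref{l:A2}, is exactly the route the paper takes (it states this lemma without proof, but the companion estimate in Appendix B is obtained by the same derivative-of-the-interaction device applied to Lemma \ref{l:A2}). The weighted tail estimates you defer are routine adaptations of the four region bounds in Lemma \ref{l:A2} with the extra factor $\zeta-g_j$ tracked as you describe, so the argument is sound and essentially the intended one.
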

\begin{lemma}
Since $w_{g_j,\lambda_j}^{\frac{n + 2\gamma}{n - 2\gamma}}\frac{1}{\lambda_j}\frac{\partial w_{g_j, \lambda_j}}{\partial g_j} = \frac{n -2 \gamma}{2n}\frac{1}{\lambda_j}\frac{\partial}{\partial g_j}\left(w_{g_j, \lambda_j}\right)^{\frac{2n}{n - 2\gamma}} = (n - 2\gamma)|x' - g_j|w_{g_j, \lambda_j}^{\frac{2n + 2}{n - 2\gamma}}$, it holds that
\begin{eqnarray*}
\int_{\mathbb{S}^n}K(x)\delta_{a_j,\lambda_j}^{\frac{n + 2\gamma}{n - 2\gamma}}\frac{1}{\lambda_j}\frac{\partial \delta_{a_j, \lambda_j}}{\partial a_j}dvol_{g_0} &=& \int_{\mathbb{R}^n}\tilde{K}(x')w_{g_j,\lambda_j}^{\frac{n + 2\gamma}{n - 2\gamma}}\frac{1}{\lambda_j}\frac{\partial w_{g_j, \lambda_j}}{\partial g_j}dx'\\
& = &\int_{B_j}\left(\tilde{K}(x') - \tilde{K}(g_j)\right)w_{g_j,\lambda_j}^{\frac{n + 2\gamma}{n - 2\gamma}}\frac{1}{\lambda_j}\frac{\partial w_{g_j, \lambda_j}}{\partial a_j}dx'\\
&&+\int_{B_j^c}\left(\tilde{K}(x') - \tilde{K}(g_j)\right)w_{g_j,\lambda_j}^{\frac{n + 2\gamma}{n - 2\gamma}}\frac{1}{\lambda_j}\frac{\partial w_{g_j, \lambda_j}}{\partial a_j}dx'\\
& = &\int_{B_j}w_{g_j,\lambda_j}^{\frac{n + 2\gamma}{n - 2\gamma}}\frac{1}{\lambda_j}\frac{\partial w_{g_j, \lambda_j}}{\partial a_j}\nabla \tilde{K}(g_j)(x' - g_j)dx'\\
&&+ O\left(\sup\left|\nabla^2 \tilde{K}(g_j)\right|\int_{B_j}|x' - g_j|^3w_{g_j, \lambda_j}^{\frac{2n + 2}{n - 2\gamma}}dx'\right)\\
&&+\int_{B_j^c}\left(\tilde{K}(x') - \tilde{K}(g_j)\right)w_{g_j,\lambda_j}^{\frac{n + 2\gamma}{n - 2\gamma}}\frac{1}{\lambda_j}\frac{\partial w_{g_j, \lambda_j}}{\partial a_j}dx'\\
&=& \frac{n - 2\gamma}{2n}c_0^{\frac{2n}{n-2\gamma}}c_1\omega_n\frac{\nabla K(a_j)}{\lambda_j} + O(\frac{1}{\lambda_j^3}).
\end{eqnarray*}
\end{lemma}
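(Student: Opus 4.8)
The plan is to reduce the integral over $\mathbb{S}^n$ to a weighted integral over $\mathbb{R}^n$ and then extract the leading behaviour by a rescaling. First I would use the stereographic projection $F$ (writing $x'=F(x)$, $g_j=F(a_j)$, $\tilde K=K\circ F^{-1}$) together with the conformal covariance (\ref{e:conformalltransform}) of $P_\gamma$, under which $\delta_{a_j,\lambda_j}$ corresponds to $w_{g_j,\lambda_j}$ and all conformal factors cancel, to write
\[
\int_{\mathbb{S}^n}K\,\delta_{a_j,\lambda_j}^{\frac{n+2\gamma}{n-2\gamma}}\frac{1}{\lambda_j}\frac{\partial\delta_{a_j,\lambda_j}}{\partial a_j}\,dvol_{g_0}
=\int_{\mathbb{R}^n}\tilde K(x')\,w_{g_j,\lambda_j}^{\frac{n+2\gamma}{n-2\gamma}}\frac{1}{\lambda_j}\frac{\partial w_{g_j,\lambda_j}}{\partial g_j}\,dx'.
\]
The algebraic point that makes the right-hand side tractable is the identity $w_{g_j,\lambda_j}^{\frac{n+2\gamma}{n-2\gamma}}\frac{1}{\lambda_j}\frac{\partial w_{g_j,\lambda_j}}{\partial g_j}=\frac{n-2\gamma}{2n}\frac{1}{\lambda_j}\frac{\partial}{\partial g_j}\bigl(w_{g_j,\lambda_j}^{\frac{2n}{n-2\gamma}}\bigr)$, which is a completely explicit function, proportional to $(x'-g_j)\,w_{g_j,\lambda_j}^{\frac{2n+2}{n-2\gamma}}$, and in particular odd about $g_j$.

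Next I would split $\tilde K(x')=\tilde K(g_j)+(\tilde K(x')-\tilde K(g_j))$ and split $\mathbb{R}^n=B_j\cup B_j^c$ with $B_j=B(g_j,\rho)$, $\rho$ fixed and small. The constant piece integrated over all of $\mathbb{R}^n$ vanishes: $\int_{\mathbb{R}^n}w_{g_j,\lambda_j}^{\frac{2n}{n-2\gamma}}\,dx'$ is independent of $g_j$ by translation invariance, so its $g_j$-derivative is $0$ (equivalently, the integrand is odd about $g_j$). On $B_j^c$ the factor $\tilde K(x')-\tilde K(g_j)$ is bounded and the kernel decays like negative powers of $\lambda_j|x'-g_j|$, so that piece is $O(\lambda_j^{-(n+1)})=O(\lambda_j^{-3})$. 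On $B_j$ I would Taylor expand $\tilde K$ about $g_j$ to second order: the quadratic term $\tfrac12(x'-g_j)^T\nabla^2\tilde K(g_j)(x'-g_j)$ is even about $g_j$ while the kernel is odd, so it contributes nothing over the symmetric ball $B_j$; the remainder, together with the cut-off tail, is $O(\lambda_j^{-3})$ (the fourth radial moment $\int_{\mathbb{R}^n}|y|^4(1+|y|^2)^{-(n+1)}\,dy$ being finite for $n\ge3$). The surviving linear term, after the substitution $y=\lambda_j(x'-g_j)$, equals $\frac{1}{\lambda_j}$ times a convergent integral whose off-diagonal contributions vanish by symmetry, leaving $\frac{1}{n}\nabla\tilde K(g_j)\int_{\mathbb{R}^n}|y|^2(1+|y|^2)^{-(n+1)}\,dy$; collecting the normalization constants of the $w_{g_j,\lambda_j}$'s, identifying the radial integral with $c_1\omega_n$, and relating $\nabla\tilde K(g_j)$ to $\nabla K(a_j)$ through the differential of $F$ at $a_j$ yields $\frac{n-2\gamma}{2n}c_0^{\frac{2n}{n-2\gamma}}c_1\omega_n\frac{\nabla K(a_j)}{\lambda_j}$; adding the three error contributions gives the claimed $O(\lambda_j^{-3})$ remainder.

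The serious work here is purely the bookkeeping of constants: one must check that the powers of $c_n$ coming from $w_{g_j,\lambda_j}^{\frac{n+2\gamma}{n-2\gamma}}$ and from $\partial_{g_j}w_{g_j,\lambda_j}$ assemble into exactly $c_0^{\frac{2n}{n-2\gamma}}$, and that the Beta-type integral produced by the rescaling reproduces the constant $c_1$ once the normalization $P_\gamma\delta_{a,\lambda}=\delta_{a,\lambda}^{\frac{n+2\gamma}{n-2\gamma}}$ (which fixes $c_n$) is used. The only other subtlety is to ensure that the cubic remainder actually yields $O(\lambda_j^{-3})$ rather than merely $o(\lambda_j^{-2})$, which is exactly what the oddness cancellation of the quadratic Taylor term buys.
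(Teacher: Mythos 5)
Your proposal follows essentially the same route as the paper: stereographic projection to $\mathbb{R}^n$, the odd-kernel identity $w_{g_j,\lambda_j}^{\frac{n+2\gamma}{n-2\gamma}}\frac{1}{\lambda_j}\partial_{g_j}w_{g_j,\lambda_j}\propto (x'-g_j)\,w_{g_j,\lambda_j}^{\frac{2n+2}{n-2\gamma}}$, splitting off the constant $\tilde K(g_j)$ (whose contribution vanishes by translation invariance), the $B_j$/$B_j^c$ decomposition with the tail bounded by $O(\lambda_j^{-3})$, Taylor expansion at $g_j$ with the diagonal-only symmetry reduction, and rescaling $y=\lambda_j(x'-g_j)$ to identify $\frac{n-2\gamma}{2n}c_0^{\frac{2n}{n-2\gamma}}c_1\omega_n\frac{\nabla K(a_j)}{\lambda_j}$. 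The only cosmetic difference is that you push the Taylor expansion to second order and kill the quadratic term by parity (which strictly presumes a cubic, i.e.\ $C^3$-type, remainder), whereas the paper stops at first order and bounds the $C^2$ remainder by $\sup|\nabla^2\tilde K(g_j)|\,|x'-g_j|^2$, the kernel's extra factor of $|x'-g_j|$ then already producing the claimed $O(\lambda_j^{-3})$.
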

\begin{lemma}
For $i\neq j$, it holds that
\begin{eqnarray*}
&&\int_{\mathbb{S}^n}K(x)\delta_{a_j,\lambda_j}^{\frac{n + 2\gamma}{n - 2\gamma}}\frac{1}{\lambda_i}\frac{\partial \delta_{a_i, \lambda_i}}{\partial a_i}dvol_{g_0}\\ &&= \int_{\mathbb{R}^n}\tilde{K}(x')w_{g_j,\lambda_j}^{\frac{n + 2\gamma}{n - 2\gamma}}\frac{1}{\lambda_i}\frac{\partial w_{g_i, \lambda_i}}{\partial a_i}dx'\\
&& = \tilde{K}(g_j)\int_{\mathbb{R}^n}w_{g_j,\lambda_j}^{\frac{n + 2\gamma}{n - 2\gamma}}\frac{1}{\lambda_i}\frac{\partial w_{g_i, \lambda_i}}{\partial a_i}dx' +
\int_{B_i}\left(\tilde{K}(x') - \tilde{K}(g_j)\right)w_{g_j,\lambda_j}^{\frac{n + 2\gamma}{n - 2\gamma}}\frac{1}{\lambda_i}\frac{\partial w_{g_i, \lambda_i}}{\partial a_i}dx'\\
&& +\int_{B_i^c}\left(\tilde{K}(x') - \tilde{K}(g_j)\right)w_{g_j,\lambda_j}^{\frac{n + 2\gamma}{n - 2\gamma}}\frac{1}{\lambda_i}\frac{\partial w_{g_i, \lambda_i}}{\partial a_i}dx'\\ &&= c_0^{\frac{2n}{n-2\gamma}}c_1\omega_n\frac{K(a_j)}{\lambda_j}\frac{\partial \varepsilon_{ij}}{\partial a_j} + o(\varepsilon_{ij})+ O\left(\frac{1}{\lambda_j^{\frac{n + 2\gamma}{2}}\lambda_i^{\frac{n - 2\gamma}{2}}}\right).
\end{eqnarray*}
\end{lemma}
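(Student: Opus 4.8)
I follow the pattern of the preceding lemmas of Appendices B and C. The first step is to pass from $\mathbb{S}^n$ to $\mathbb{R}^n$ via the stereographic projection $F$: by the conformal covariance property (\ref{e:conformalltransform}) and the correspondence between $\delta_{a,\lambda}$ and $w_{g,\lambda}$ with $g=F(a)$, the conformal factor is absorbed into $\tilde K:=K\circ F^{-1}$ and the integral $\int_{\mathbb{S}^n}K\,\delta_{a_j,\lambda_j}^{(n+2\gamma)/(n-2\gamma)}\tfrac1{\lambda_i}\tfrac{\partial\delta_{a_i,\lambda_i}}{\partial a_i}\,dvol_{g_0}$ becomes $\int_{\mathbb{R}^n}\tilde K(x')\,w_{g_j,\lambda_j}^{(n+2\gamma)/(n-2\gamma)}\tfrac1{\lambda_i}\tfrac{\partial w_{g_i,\lambda_i}}{\partial a_i}\,dx'$, which is the first displayed equality. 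Writing $\tilde K(x')=\tilde K(g_j)+\bigl(\tilde K(x')-\tilde K(g_j)\bigr)$ and cutting the domain of the remainder into $B_i:=B(g_i,\rho)$ and $B_i^c$ produces the three-term decomposition in the statement, and it remains to estimate each of the three pieces.

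For the principal term $\tilde K(g_j)\int_{\mathbb{R}^n}w_{g_j,\lambda_j}^{(n+2\gamma)/(n-2\gamma)}\tfrac1{\lambda_i}\tfrac{\partial w_{g_i,\lambda_i}}{\partial a_i}\,dx'$ I would first move $\tfrac1{\lambda_i}\partial_{a_i}$ outside the integral (legitimate since $w_{g_j,\lambda_j}$ is independent of $a_i$), so that this term equals $\tilde K(g_j)\,\tfrac1{\lambda_i}\partial_{a_i}\!\int_{\mathbb{R}^n}w_{g_j,\lambda_j}^{(n+2\gamma)/(n-2\gamma)}w_{g_i,\lambda_i}\,dx'$; since Lemma \ref{l:A2} evaluates this overlap integral as $c_0^{2n/(n-2\gamma)}c_1\omega_n\varepsilon_{ij}(1+o(1))$ and $\tilde K(g_j)$ contributes the factor $K(a_j)$, this yields the leading contribution $c_0^{2n/(n-2\gamma)}c_1\omega_n\,\tfrac{K(a_j)}{\lambda_j}\tfrac{\partial\varepsilon_{ij}}{\partial a_j}+o(\varepsilon_{ij})$. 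The delicate point is that one may not simply differentiate an $o(1)$-estimate; so rather than applying Lemma \ref{l:A2} as a black box I would redo its computation directly on $\int w_{g_j,\lambda_j}^{(n+2\gamma)/(n-2\gamma)}\tfrac1{\lambda_i}\partial_{a_i}w_{g_i,\lambda_i}\,dx'$: rescale by $\zeta'=\lambda_i(x'-g_i)$ or $\zeta'=\lambda_j(x'-g_j)$ according to which of $\lambda_i/\lambda_j$, $\lambda_j/\lambda_i$, $\lambda_i\lambda_j d(a_i,a_j)^2$ realizes the maximum $\mu$, expand the integrand to first order in the corresponding small parameter, and track the orders exactly as in the proof of Lemma \ref{l:A2}; the explicit derivative of $\varepsilon_{ij}$ then emerges from differentiating the closed-form expansion of the overlap integral, and the remainder is of order $o(\varepsilon_{ij})$ and survives the differentiation.

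It remains to handle the two remainders. On $B_i$ I would use $|\tilde K(x')-\tilde K(g_j)|\le C|x'-g_j|$ together with the pointwise bound $\bigl|\tfrac1{\lambda_i}\tfrac{\partial w_{g_i,\lambda_i}}{\partial a_i}(x')\bigr|\le C\lambda_i|x'-g_i|\,\lambda_i^{(n-2\gamma)/2}\bigl(1+\lambda_i^2|x'-g_i|^2\bigr)^{-(n-2\gamma+2)/2}$ and a Hölder splitting of the product $w_{g_j,\lambda_j}^{(n+2\gamma)/(n-2\gamma)}\cdot\tfrac1{\lambda_i}\partial_{a_i}w_{g_i,\lambda_i}$ of the same kind used in the proof of Lemma \ref{l:expansion}, to bound this piece by $o(\varepsilon_{ij})+O\bigl(\lambda_j^{-(n+2\gamma)/2}\lambda_i^{-(n-2\gamma)/2}\bigr)$. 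On $B_i^c$ one has $|x'-g_i|\ge\rho$, so $w_{g_i,\lambda_i}$ and its $a_i$-derivative are controlled by their polynomial tails; splitting $B_i^c$ once more according to whether $x'$ lies near $g_j$ or not and integrating the tails, this piece is $O\bigl(\lambda_j^{-(n+2\gamma)/2}\lambda_i^{-(n-2\gamma)/2}\bigr)$ (and in fact smaller). Collecting the three contributions gives the asserted expansion. The main obstacle is the principal term: one has to make sure that the $a_i$-derivative of the error hidden in Lemma \ref{l:A2} is again $o(\varepsilon_{ij})$, which forces the expansion to be carried out with the derivative already inside the integral rather than applied afterward.
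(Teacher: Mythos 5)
Your proposal matches the paper's own argument, which consists precisely of the displayed chain: transfer to $\mathbb{R}^n$ by stereographic projection, write $\tilde K(x')=\tilde K(g_j)+(\tilde K(x')-\tilde K(g_j))$, split the remainder over $B_i$ and $B_i^c$, extract the leading term from the overlap integral of Lemma \ref{l:A2} (with the derivative carried out inside the integral, exactly the precaution you flag), and bound the two remainders by $o(\varepsilon_{ij})+O\bigl(\lambda_j^{-(n+2\gamma)/2}\lambda_i^{-(n-2\gamma)/2}\bigr)$. The only discrepancy is the paper's writing of the leading term as $\frac{K(a_j)}{\lambda_j}\frac{\partial\varepsilon_{ij}}{\partial a_j}$ rather than the $\frac{1}{\lambda_i}\partial_{a_i}$ form your computation naturally produces; since both are $O(\varepsilon_{ij})$ and enter the final expansion only through such bounds, this is an indexing convention of the paper rather than a gap in your argument.
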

\begin{lemma}
For $i\neq k$, we have
\begin{eqnarray*}
&&\int_{\mathbb{S}^n}K(x)\delta_{a_i,\lambda_i}^{\frac{4\gamma}{n - 2\gamma}}\frac{1}{\lambda_i}\frac{\partial \delta_{a_i, \lambda_i}}{\partial a_i}\delta_{a_k,\lambda_k}dvol_{g_0} = O\left(\varepsilon_{ik}\right).
\end{eqnarray*}
\end{lemma}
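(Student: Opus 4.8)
The plan is to reduce this estimate, which only asks for the order of magnitude and not a sharp leading coefficient, to the interaction bound already proved in Lemma~\ref{l:A2}. The key observation is that the vector-valued factor $\frac{1}{\lambda_i}\frac{\partial \delta_{a_i,\lambda_i}}{\partial a_i}$ is controlled pointwise by the bubble $\delta_{a_i,\lambda_i}$ itself. First I would pass to $\mathbb{R}^n$ through the stereographic projection, so that $\delta_{a_i,\lambda_i}$ corresponds (as in the preceding lemmas of this appendix) to $w_{g_i,\lambda_i}(x')=c_n\lambda_i^{\frac{n-2\gamma}{2}}(1+\lambda_i^2|x'-g_i|^2)^{-\frac{n-2\gamma}{2}}$. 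Differentiating in the center $g_i$ gives
\[
\frac{1}{\lambda_i}\left|\frac{\partial w_{g_i,\lambda_i}}{\partial g_i}\right|
=(n-2\gamma)\,c_n\,\lambda_i^{\frac{n-2\gamma}{2}+1}\,|x'-g_i|\,\bigl(1+\lambda_i^2|x'-g_i|^2\bigr)^{-\frac{n-2\gamma}{2}-1},
\]
and since $\lambda_i|x'-g_i|\le\bigl(1+\lambda_i^2|x'-g_i|^2\bigr)^{1/2}$ the right-hand side is bounded by $(n-2\gamma)\,w_{g_i,\lambda_i}(x')$. Transferring back to $\mathbb{S}^n$, this yields the uniform pointwise bound $\bigl|\frac{1}{\lambda_i}\frac{\partial \delta_{a_i,\lambda_i}}{\partial a_i}\bigr|\le C\,\delta_{a_i,\lambda_i}$ with $C=C(n,\gamma)$.

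Given this, the estimate follows in two more steps. Since $K$ is continuous on the compact sphere it is bounded, $0<K\le\bar K$; combining this with the pointwise bound and the elementary identity $\frac{4\gamma}{n-2\gamma}+1=\frac{n+2\gamma}{n-2\gamma}$ gives
\[
\left|\int_{\mathbb{S}^n}K(x)\,\delta_{a_i,\lambda_i}^{\frac{4\gamma}{n-2\gamma}}\,\frac{1}{\lambda_i}\frac{\partial \delta_{a_i,\lambda_i}}{\partial a_i}\,\delta_{a_k,\lambda_k}\,dvol_{g_0}\right|
\le C\bar K\int_{\mathbb{S}^n}\delta_{a_i,\lambda_i}^{\frac{n+2\gamma}{n-2\gamma}}\,\delta_{a_k,\lambda_k}\,dvol_{g_0}.
\]
Finally, since $\delta_{a_i,\lambda_i}$ solves $P_\gamma\delta_{a_i,\lambda_i}=\delta_{a_i,\lambda_i}^{\frac{n+2\gamma}{n-2\gamma}}$ by (\ref{e:yamabe2}), the last integral equals $\int_{\mathbb{S}^n}\delta_{a_k,\lambda_k}P_\gamma\delta_{a_i,\lambda_i}\,dvol_{g_0}$, which by Lemma~\ref{l:A2} equals $c_0^{\frac{2n}{n-2\gamma}}c_1\omega_n\varepsilon_{ik}(1+o(1))=O(\varepsilon_{ik})$, as claimed.

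This argument has essentially no hard part: the only point deserving a little care is the pointwise gradient bound, where one should interpret $\partial/\partial a_i$ componentwise in a stereographic chart and check that the constant is uniform in $(\alpha,a,\lambda)\in B_\varepsilon$ once the (bounded) conformal factor is absorbed into the volume element, exactly as in the computations of this appendix. Because the statement only requires the order $O(\varepsilon_{ik})$, there is no need for the finer regional decomposition (near $a_i$, near $a_k$, and away from both) used in the sharper lemmas above; the crude pointwise bound suffices.
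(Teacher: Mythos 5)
Your argument is correct: the pointwise bound $\bigl|\frac{1}{\lambda_i}\frac{\partial \delta_{a_i,\lambda_i}}{\partial a_i}\bigr|\le C\,\delta_{a_i,\lambda_i}$, the identity $\frac{4\gamma}{n-2\gamma}+1=\frac{n+2\gamma}{n-2\gamma}$, and the reduction to $\int_{\mathbb{S}^n}\delta_{a_i,\lambda_i}^{\frac{n+2\gamma}{n-2\gamma}}\delta_{a_k,\lambda_k}\,dvol_{g_0}=O(\varepsilon_{ik})$ via Lemma~\ref{l:A2} together give exactly the claimed $O(\varepsilon_{ik})$. The paper states this sub-lemma without proof, and your reduction is the standard argument consistent with how the neighbouring interaction terms are handled in the appendices, so nothing further is needed.
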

\begin{lemma}
For $i\neq k$, we have
\begin{eqnarray*}
&&\int_{\mathbb{S}^n}K(x)\delta_{a_i,\lambda_i}^{\frac{6\gamma - n}{n - 2\gamma}}\frac{1}{\lambda_i}\frac{\partial \delta_{a_i, \lambda_i}}{\partial a_i}\delta^2_{a_k,\lambda_k}dvol_{g_0} = O\left(\varepsilon_{ik}^{\frac{n}{n - 2\gamma}}\log \varepsilon_{ik}^{-1}\right).
\end{eqnarray*}
\end{lemma}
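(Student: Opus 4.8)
The plan is to reduce the statement to a two–bubble interaction integral and then quote the estimates already recorded in Appendix~A. Since $\mathbb{S}^n$ is compact and $K\in C^2$, we have $|K|\le C$, and from the explicit formula for $\delta_{a_i,\lambda_i}$ one checks, differentiating in the center, that $\bigl|\tfrac{1}{\lambda_i}\tfrac{\partial\delta_{a_i,\lambda_i}}{\partial a_i}(x)\bigr|\le C\,\delta_{a_i,\lambda_i}(x)$ (in fact $\le C\,\delta_{a_i,\lambda_i}(x)\,(1+\lambda_i d(x,a_i))^{-1}$). Because $\tfrac{6\gamma-n}{n-2\gamma}+1=\tfrac{4\gamma}{n-2\gamma}$, this already yields
\[
\Bigl|\int_{\mathbb{S}^n}K\,\delta_{a_i,\lambda_i}^{\frac{6\gamma-n}{n-2\gamma}}\tfrac{1}{\lambda_i}\tfrac{\partial\delta_{a_i,\lambda_i}}{\partial a_i}\,\delta_{a_k,\lambda_k}^2\,dvol_{g_0}\Bigr|\le C\int_{\mathbb{S}^n}\delta_{a_i,\lambda_i}^{\frac{4\gamma}{n-2\gamma}}\,\delta_{a_k,\lambda_k}^2\,dvol_{g_0},
\]
so it suffices to control the right–hand side.

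I would then split $\mathbb{S}^n=\Omega\cup\Omega^c$ with $\Omega=\{\delta_{a_i,\lambda_i}\ge\delta_{a_k,\lambda_k}\}$. On $\Omega$ one has $\delta_{a_k,\lambda_k}^2=\inf(\delta_{a_i,\lambda_i},\delta_{a_k,\lambda_k})^2$, so that part is dominated by $\int_{\mathbb{S}^n}\delta_{a_i,\lambda_i}^{\frac{4\gamma}{n-2\gamma}}\inf(\delta_{a_i,\lambda_i},\delta_{a_k,\lambda_k})^2\,dvol_{g_0}$, which is exactly the quantity shown in Appendix~A to be $O\bigl(\varepsilon_{ik}^{\frac{n}{n-2\gamma}}\log\varepsilon_{ik}^{-1}\bigr)$. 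On $\Omega^c$, where the $k$–th bubble dominates, the crude pointwise bound is too lossy, and one exploits the origin of the term: it arises from the Taylor remainder of $\bigl(\sum_\ell\alpha_\ell\delta_{a_\ell,\lambda_\ell}\bigr)^{\frac{n+2\gamma}{n-2\gamma}}$, which on $\{\delta_{a_k,\lambda_k}\gtrsim\delta_{a_i,\lambda_i}\}$ is also $O\bigl(\delta_{a_k,\lambda_k}^{\frac{n+2\gamma}{n-2\gamma}}\bigr)$; combined with the factor $\delta_{a_i,\lambda_i}$ carried by $\tfrac1{\lambda_i}\partial_{a_i}\delta_{a_i,\lambda_i}$, this contributes $O\bigl(\int\delta_{a_k,\lambda_k}^{\frac{n+2\gamma}{n-2\gamma}}\delta_{a_i,\lambda_i}\,dvol_{g_0}\bigr)=O(\varepsilon_{ik})$ by Lemma~\ref{l:A2}. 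Equivalently, one may integrate by parts in the variable $a_i$, transferring the derivative onto $K\,\delta_{a_k,\lambda_k}^2$ and using $|\nabla\delta_{a_k,\lambda_k}|\le C\lambda_k\delta_{a_k,\lambda_k}$ together with the stereographic change of variables $x=F(x')$ and the usual splitting into $B(g_i,\rho)$ and its complement. Adding the two contributions produces the asserted estimate.

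The main obstacle is precisely the region $\Omega^c$: there the two bubbles effectively decouple, the naive bound $|\tfrac1{\lambda_i}\partial_{a_i}\delta_{a_i,\lambda_i}|\le C\delta_{a_i,\lambda_i}$ discards the crucial extra decay $(1+\lambda_i d(x,a_i))^{-1}$ of the center derivative, and one must either retain this factor or, as above, invoke the sharper form of the expansion remainder and fall back on the elementary interaction integral of Lemma~\ref{l:A2}. Once that region is handled, the remaining steps — the change of variables, the near/far decomposition around $g_i$ and $g_k$, and the convergence of the model integrals (which requires only $n>4\gamma$, hence holds since $\gamma<1\le n/4$) — are entirely routine and parallel to the computations already carried out in Appendices~A and~C.
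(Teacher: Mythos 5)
The paper offers no proof of this lemma (it is merely asserted in Appendix~C), so your proposal stands on its own. Your reduction via $\bigl|\tfrac{1}{\lambda_i}\partial_{a_i}\delta_{a_i,\lambda_i}\bigr|\le C\delta_{a_i,\lambda_i}$ and your treatment of the region $\Omega=\{\delta_{a_i,\lambda_i}\ge\delta_{a_k,\lambda_k}\}$ are correct: there $\delta_{a_i,\lambda_i}^{\frac{4\gamma}{n-2\gamma}}\delta_{a_k,\lambda_k}^{2}\le\delta_{a_i,\lambda_i}^{\frac{n}{n-2\gamma}}\delta_{a_k,\lambda_k}^{\frac{n}{n-2\gamma}}$, which integrates to $O\bigl(\varepsilon_{ik}^{\frac{n}{n-2\gamma}}\log\varepsilon_{ik}^{-1}\bigr)$. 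The gap is entirely in $\Omega^c$, and it is twofold. First, even accepting your substitution, what you obtain there is $O(\varepsilon_{ik})$; since $\tfrac{n}{n-2\gamma}>1$, we have $\varepsilon_{ik}^{\frac{n}{n-2\gamma}}\log\varepsilon_{ik}^{-1}=o(\varepsilon_{ik})$, so ``adding the two contributions'' yields only $O(\varepsilon_{ik})$ and \emph{not} the asserted estimate. Your closing claim is therefore false as written. Second, on $\Omega^c$ you never estimate the integral that appears in the statement: you replace the integrand by $\delta_{a_k,\lambda_k}^{\frac{n+2\gamma}{n-2\gamma}}\delta_{a_i,\lambda_i}$ by appealing to ``the origin of the term.'' That is a re-interpretation of the lemma, not a proof of it.

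This second point is not cosmetic, because the lemma as literally stated appears to be false. Take $\lambda_i=\lambda_k=\lambda$ and $d=d(a_i,a_k)$ fixed, so $\varepsilon_{ik}\sim(\lambda d)^{-(n-2\gamma)}$. Near $a_k$ one has $\delta_{a_i,\lambda_i}^{\frac{6\gamma-n}{n-2\gamma}}\bigl|\tfrac1{\lambda_i}\partial_{a_i}\delta_{a_i,\lambda_i}\bigr|\sim\delta_{a_i,\lambda_i}^{\frac{4\gamma}{n-2\gamma}}(\lambda d)^{-1}\sim\lambda^{-2\gamma}d^{-4\gamma}(\lambda d)^{-1}$ with no sign cancellation (the direction $\tfrac{x-a_i}{|x-a_i|}$ is essentially constant there), while $\int_{B(a_k,d/2)}\delta_{a_k,\lambda_k}^{2}\sim\lambda^{-2\gamma}$; the product is $(\lambda d)^{-(4\gamma+1)}$, which dominates $\varepsilon_{ik}^{\frac{n}{n-2\gamma}}\log\varepsilon_{ik}^{-1}\sim(\lambda d)^{-n}\log(\lambda d)$ whenever $4\gamma+1<n$, i.e.\ for every $n\ge5$ and for $n=4$ with $\gamma<3/4$ (and it even exceeds $\varepsilon_{ik}$ when $6\gamma+1<n$). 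The statement that is actually true, and the one needed in the expansion (compare the explicit $\inf(\delta_{a_j,\lambda_j},\delta_{a_i,\lambda_i})^2$ in Appendix~A), carries $\inf(\delta_{a_i,\lambda_i},\delta_{a_k,\lambda_k})^{2}$ in place of $\delta_{a_k,\lambda_k}^{2}$; for that version your $\Omega$-argument alone already suffices and $\Omega^c$ contributes nothing new. Finally, your integration-by-parts alternative does not rescue the literal statement either: transferring the $x$-derivative onto $\delta_{a_k,\lambda_k}^{2}$ via $|\nabla\delta_{a_k,\lambda_k}|\le C\lambda_k\delta_{a_k,\lambda_k}$ produces an uncontrolled factor $\lambda_k/\lambda_i$.
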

\begin{lemma}
For $j\neq k$, $k\neq i$ and $i\neq j$, it holds that
\begin{eqnarray*}
&&\int_{\mathbb{S}^n}K(x)\delta_{a_k,\lambda_k}^{\frac{4\gamma}{n - 2\gamma}}\frac{1}{\lambda_i}\frac{\partial \delta_{a_i, \lambda_i}}{\partial a_i}\delta_{a_j,\lambda_j}dvol_{g_0} = O\left(\varepsilon_{ik}^{\frac{n}{n-2\gamma}}\log \varepsilon_{ik}^{-1}\right) + O\left(\varepsilon_{jk}^{\frac{n}{n-2\gamma}}\log \varepsilon_{jk}^{-1}\right).
\end{eqnarray*}
\end{lemma}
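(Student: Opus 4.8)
The aim is to reduce this three--bubble quantity to the two--bubble estimates already obtained in Appendix A. Since $\left|\tfrac{1}{\lambda_i}\tfrac{\partial\delta_{a_i,\lambda_i}}{\partial a_i}\right|\le C\,\delta_{a_i,\lambda_i}$ on $\mathbb S^{n}$ and $0<K\le\max_{\mathbb S^{n}}K<\infty$, it is enough to bound
\[
\mathcal I:=\int_{\mathbb S^{n}}\delta_{a_k,\lambda_k}^{\frac{4\gamma}{n-2\gamma}}\,\delta_{a_i,\lambda_i}\,\delta_{a_j,\lambda_j}\,dvol_{g_0}.
\]
Using the stereographic projection exactly as in the other appendices, $\mathcal I\le C\int_{\mathbb R^{n}}w_{g_k,\lambda_k}^{\frac{4\gamma}{n-2\gamma}}w_{g_i,\lambda_i}w_{g_j,\lambda_j}\,dx$ up to the bounded conformal factor $\theta_0$, where $g_\ell$ are the images of the $a_\ell$. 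Note the three exponents add up to $\tfrac{2n}{n-2\gamma}$, so the integrand is scale invariant; this is the structural reason a bound purely in terms of the $\varepsilon_{\ell m}$'s can hold.

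Next I would split $\mathbb R^{n}$ into four pieces: the balls $B_\ell=B(g_\ell,\rho_\ell)$ with $\rho_\ell$ a fixed small fraction of $\min_{m\ne\ell}d(g_\ell,g_m)$, together with the ``far'' region $(B_i\cup B_j\cup B_k)^{c}$. On $B_i$ the factors $w_{g_j,\lambda_j}$ and $w_{g_k,\lambda_k}$ are comparable (within a bounded factor) to their values at $g_i$, so the contribution is $\lesssim w_{g_k,\lambda_k}(g_i)^{\frac{4\gamma}{n-2\gamma}}w_{g_j,\lambda_j}(g_i)\int_{B_i}w_{g_i,\lambda_i}$, and symmetrically on $B_j$. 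On $B_k$ the large factor is $w_{g_k,\lambda_k}^{\frac{4\gamma}{n-2\gamma}}$; since $4\gamma<n$ this power is \emph{not} integrable on all of $\mathbb R^{n}$, so the integral there is controlled by the cut--off radius $\rho_k$, and freezing $w_{g_i,\lambda_i},w_{g_j,\lambda_j}$ at $g_k$ gives a contribution $\lesssim w_{g_i,\lambda_i}(g_k)\,w_{g_j,\lambda_j}(g_k)\int_{B_k}w_{g_k,\lambda_k}^{\frac{4\gamma}{n-2\gamma}}$. On the far region each $w_{g_\ell,\lambda_\ell}$ is replaced by its polynomial tail and one checks the resulting contribution is negligible with respect to the others.

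To finish I would express each of $w_{g_\ell,\lambda_\ell}(g_m)$, $\int_{B_\ell}w_{g_\ell,\lambda_\ell}$ and the borderline integral $\int_{B_k}w_{g_k,\lambda_k}^{\frac{4\gamma}{n-2\gamma}}$ in terms of $\lambda_\ell,\lambda_m$ and $d(a_\ell,a_m)$, and compare with $\varepsilon_{\ell m}=(\lambda_\ell/\lambda_m+\lambda_m/\lambda_\ell+\lambda_\ell\lambda_m d(a_\ell,a_m)^2)^{-\frac{n-2\gamma}{2}}$; this is precisely the computation already carried out for $\int\delta_{a_i,\lambda_i}^{\frac{4\gamma}{n-2\gamma}}\inf(\delta_{a_i,\lambda_i},\delta_{a_j,\lambda_j})^{2}$ in Appendix A, and the logarithmic factor $\log\varepsilon_{ik}^{-1}$, $\log\varepsilon_{jk}^{-1}$ comes from exactly the same borderline (critical) integral. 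Adding the four contributions yields $\mathcal I=O\!\big(\varepsilon_{ik}^{\frac{n}{n-2\gamma}}\log\varepsilon_{ik}^{-1}+\varepsilon_{jk}^{\frac{n}{n-2\gamma}}\log\varepsilon_{jk}^{-1}\big)$; as a sanity check, in the relevant regime where $\lambda_i\sim\lambda_j\sim\lambda_k$ and the $a_\ell$ stay at mutual distance bounded below, every one of the four pieces is $\asymp\lambda^{-n}\asymp\varepsilon_{ik}^{\frac{n}{n-2\gamma}}$, matching the claim.

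The main obstacle is the three--body bookkeeping in the middle step: because $\delta_{a_k,\lambda_k}$ enters with the sub--critical power $\tfrac{4\gamma}{n-2\gamma}<\tfrac{n}{n-2\gamma}$, its contribution near $a_k$ is localized not by $\delta_{a_k,\lambda_k}$ itself but by the cut--off dictated by the positions of $a_i$ and $a_j$, so one must track simultaneously all three scales $\lambda_i,\lambda_j,\lambda_k$ and all three distances $d(a_i,a_j),d(a_i,a_k),d(a_j,a_k)$ (and which term dominates in each $\varepsilon_{\ell m}$) to verify that the product of the frozen values with the (borderline) partial integrals never exceeds the stated right--hand side. Once this case analysis is organized, the remaining manipulations are routine and entirely parallel to those in Appendices A--C.
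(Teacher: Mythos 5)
The paper itself offers no proof of this estimate: it is one of several auxiliary lemmas in Appendix C that are stated and used without argument, so there is nothing to compare your write-up against except the standard literature it implicitly invokes. Your reduction steps are correct and are certainly what the authors intend: $\bigl|\tfrac{1}{\lambda_i}\tfrac{\partial\delta_{a_i,\lambda_i}}{\partial a_i}\bigr|\le C\,\delta_{a_i,\lambda_i}$, $K$ is bounded, the three exponents sum to the critical one $\tfrac{2n}{n-2\gamma}$, and your sanity check in the regime of comparable scales and separated points is right. The overall strategy (localize around each concentration point, freeze the non-concentrated factors, compare with $\varepsilon_{\ell m}$) is the standard Bahri-type argument.

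However, as written this is a plan rather than a proof, and the two things you defer are precisely where the lemma lives. First, your decomposition uses balls $B_\ell=B(g_\ell,\rho_\ell)$ with $\rho_\ell$ a fixed fraction of $\min_{m\ne\ell}d(g_\ell,g_m)$; this degenerates when two of the three points collide, a configuration that is perfectly admissible in $V(p,\varepsilon)$ (smallness of $\varepsilon_{\ell m}$ can come entirely from scale separation $\lambda_\ell/\lambda_m\to\infty$ with $a_\ell=a_m$). In that case $\rho_\ell\to 0$, the "freeze at the center" step loses all content, and one must instead decompose according to which bubble dominates which (the $\inf$-type splittings used for the two-bubble estimates in Appendix A), or use balls at scale $1/\lambda_\ell$ together with intermediate annuli. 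Second, the verification that each piece is bounded by $\varepsilon_{ik}^{\frac{n}{n-2\gamma}}\log\varepsilon_{ik}^{-1}+\varepsilon_{jk}^{\frac{n}{n-2\gamma}}\log\varepsilon_{jk}^{-1}$ \emph{uniformly} over all relative sizes of $\lambda_i,\lambda_j,\lambda_k$ and $d(a_i,a_j),d(a_i,a_k),d(a_j,a_k)$ is not routine: the right-hand side contains no $\varepsilon_{ij}$ term, so you must in particular show that the contributions near $a_i$ and $a_j$ (where $\rho_i,\rho_j$ may be dictated by $d(a_i,a_j)$ rather than by the distances to $a_k$) are never controlled only by $\varepsilon_{ij}$; and a soft substitute such as H\"older against $\bigl(\int\delta_j^{2n/(n-2\gamma)}\bigr)^{(n-2\gamma)/2n}$ provably loses, giving a power of $\varepsilon_{ik}$ strictly below $\tfrac{n}{n-2\gamma}$ unless $n=6\gamma$. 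Until that case analysis is actually carried out, the proof has a genuine gap.
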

Using the lemmas above, we have
\begin{eqnarray*}
&&J'(u) \left(\frac{1}{\lambda_j}\frac{\partial \delta_{a_j, \lambda_j}}{\partial a_j}\right) =\\
&&2\lambda(u)\left[\langle \sum_{i = 1}^p\alpha_i \delta_{a_i, \lambda_i} ,\frac{1}{\lambda_j}\frac{\partial \delta_{a_j, \lambda_j}}{\partial a_j}\rangle-\lambda(u)^{\frac{n}{n - 2\gamma}}\left(\int_{\mathbb{S}^{n}} K \left(\sum_{i = 1}^p\alpha_i \delta_{a_i, \lambda_i}\right)^{\frac{n + 2\gamma}{n - 2\gamma}}\frac{1}{\lambda_j}\frac{\partial \delta_{a_j, \lambda_j}}{\partial a_j}\right)\right]\\
&& = 2\lambda(u)\left[\sum_{i\neq j}\alpha_i c_0^{\frac{2n}{n-2\gamma}}c_1\omega_n\frac{1}{\lambda_j}\frac{\partial \varepsilon_{ij}}{\partial a_j}(1 + o(1)) \right]\\
&&- 2\lambda(u)^{\frac{2n - 2\gamma}{n - 2\gamma}}\left[\alpha_j^{\frac{n + 2\gamma}{n - 2\gamma}}\frac{n - 2\gamma}{2n}c_0^{\frac{2n}{n-2\gamma}}c_1\omega_n\frac{\nabla K(a_j)}{\lambda_j}(1 + o(1))+ O(\sum_{i = 1}^p\frac{1}{\lambda_j^3})\right]\\ &&- 2\lambda(u)^{\frac{2n - 2\gamma}{n - 2\gamma}} \left[\sum_{i\neq j}\alpha_i^{\frac{n+2\gamma}{n-2\gamma}}K(a_i)c_0^{\frac{2n}{n-2\gamma}}c_1\omega_n\frac{1}{\lambda_j}\frac{\partial \varepsilon_{ij}}{\partial a_j}(1 + o(1)) \right]\\
&&- 2\lambda(u)^{\frac{2n - 2\gamma}{n - 2\gamma}} \left[O(\sum_{i\neq j}\varepsilon_{ij}) + O\left(\sum_{i\neq j}\varepsilon_{ij}^{\frac{n}{n-2\gamma}}\log \varepsilon_{ij}^{-1}\right)\right].
\end{eqnarray*}
Since $\lambda(u)^{\frac{n}{n - 2\gamma}}\alpha_j^{\frac{4\gamma}{n-2\gamma}}K(a_j)\to 1$, we have the desired estimate of Lemma \ref{l:expansionofgradient2}. 
\end{appendix}
\newcommand{\Toappear}{to appear in}
\bibliography{mrabbrev,mlabbr2003-0,localbib}
\bibliographystyle{plain}
\end{document}